\setlist[enumerate]{label=(\roman*),font=\normalfont}
\numberwithin{equation}{section}
\theoremstyle{plain}
\newtheorem*{theorem*}{Theorem}
\newtheorem{theorem}[equation]{Theorem}
\newtheorem{proposition}[equation]{Proposition}
\newtheorem{lemma}[equation]{Lemma}
\newtheorem{corollary}[equation]{Corollary}
\theoremstyle{definition}
\newtheorem{definition}[equation]{Definition}
\newtheorem{construction}[equation]{Construction}
\newtheorem{example}[equation]{Example}
\newtheorem{remark}[equation]{Remark}
\let\scr=\mathcal
\let\bb=\mathbb
\let\rm=\mathrm
\def\Z{\bb Z}
\def\Q{\bb Q}
\def\A{\bb A}
\def\P{\bb P}
\def\V{\bb V}
\def\1{\mathbf 1}
\def\h{\mathrm h}
\def\G{\mathbb G}
\def\ph{\mathord-}
\def\pt{{\mathpalette\pt@{.75}}}
\def\pt@#1#2{\mathord{\scalebox{#2}{$\m@th#1\bullet$}}}
\def\L{\mathrm{L}{}}
\def\LB{\mathrm{LB}{}}
\let\into=\hookrightarrow
\let\onto=\twoheadrightarrow
\def\simto{\xrightarrow{\sim}}
\def\suchthat{\:\vert\:}
\let\emptyset=\varnothing
\DeclareMathOperator{\Sym}{Sym}
\def\id{\mathrm{id}}
\def\Hom{\mathrm{Hom}}
\def\iHom{\underline{\Hom}}
\def\Aut{\mathrm{Aut}}
\def\Map{\mathrm{Map}}
\DeclareMathOperator{\Spec}{Spec}
\DeclareMathOperator{\Proj}{Proj}
\def\Th{\mathrm{Th}}
\def\Pic{\mathrm{Pic}}
\def\M{\mathrm{M}}
\def\B{\mathbb{B}}
\def\E{\mathbb{E}}
\DeclareMathOperator{\fib}{fib}
\DeclareMathOperator{\cofib}{cofib}
\DeclareMathOperator{\rk}{rk}
\def\Zar{\mathrm{Zar}}
\def\et{\mathrm{\acute et}}
\def\hyp{\mathrm{hyp}}
\def\Bl{\mathrm{Bl}}
\let\cat=\mathrm
\def\Gr{\mathrm{Gr}{}}
\def\BGL{\mathrm{BGL}}
\def\MGL{\mathrm{MGL}}
\def\PMGL{\mathrm{PMGL}}
\def\KGL{\mathrm{KGL}}
\def\K{\mathrm{K}{}}
\def\Mod{\cat{M}\mathrm{od}{}}
\def\Sch{\cat{S}\mathrm{ch}{}}
\def\Sm{{\cat{S}\mathrm{m}}}
\def\Fin{\cat F\mathrm{in}}
\def\op{\mathrm{op}}
\def\Vect{\cat{V}\mathrm{ect}{}}
\def\MotSp{\mathrm{MS}}
\def\Ani{\mathrm{Ani}}
\def\Sp{\mathrm{Sp}}
\def\Pr{\mathrm{P}\mathrm{r}}
\def\St{\mathrm{St}}
\def\CAlg{\mathrm{CAlg}{}}
\def\ev{\mathrm{ev}}
\def\coev{\mathrm{coev}}
\def\Cat{\mathrm{C}\mathrm{at}{}}
\def\Ab{\mathrm{A}\mathrm{b}}
\def\GL{\mathrm{GL}}
\def\Tw{\mathrm{Tw}{}}
\def\fp{\mathrm{fp}}
\def\Fun{\mathrm{Fun}}
\def\st{\mathrm{st}}
\def\Span{\mathrm{Span}}
\def\fin{\mathrm{fin}}
\def\Pro{\mathrm{Pro}}
\def\epi{\mathrm{epi}}
\def\ex{\mathrm{ex}}
\def\sbu{\mathrm{sbu}}
\def\ebu{\mathrm{ebu}}
\def\un{\mathrm{un}}
\def\fd{\mathrm{fd}}
\def\sncd{\mathrm{sncd}}
\def\Cube{\mathrm{Cube}}
\newcommand{\SpP}{\mathrm{Sp}_{\mathbb{P}^1}}
\newcommand{\SigmaP}{\Sigma_{\mathbb{P}^1}}
\newcommand{\Ori}{\mathrm{Ori}}
\newcommand{\TOri}{\mathrm{TOri}}
\newcommand{\Sel}{\mathrm{Sel}}
\newcommand{\PCob}{\underline{\Omega}}
\let\lim=\relax
\DeclareMathOperator*{\lim}{lim}
\DeclareMathOperator*{\colim}{colim}
\let\phi=\varphi
\let\epsilon=\varepsilon
\title{Algebraic cobordism and a Conner--Floyd isomorphism for algebraic K-theory}
\date{\today}
\author{Toni Annala}
\address{School of Mathematics\\
Institute for Advanced Study\\
1 Einstein Drive,
08540 Princeton, NJ, USA
}
\email{\href{mailto:tannala@ias.edu}{tannala@ias.edu}}
\urladdr{\url{https://www.math.ias.edu/~tannala/}}
\thanks{This article was written when T.A. was in residence at the Institute for Advanced Study in Princeton. }
\author{Marc Hoyois}
\address{Fakultät für Mathematik\\
Universität Regensburg\\
Universitätsstr. 31\\
93040 Regensburg\\
Germany}
\email{\href{mailto:marc.hoyois@ur.de}{marc.hoyois@ur.de}}
\urladdr{\url{https://hoyois.app.uni-regensburg.de}}
\thanks{M.H.\ was partially supported by the Collaborative Research Center SFB 1085 \emph{Higher Invariants} funded by the DFG}
\author{Ryomei Iwasa}
\address{Laboratoire de Math\'ematiques d'Orsay, Universit\'e Paris-Saclay, 307 rue Michel Magat, F-91405 Orsay.}
\email{\href{mailto:ryomei.iwasa@cnrs.fr}{ryomei.iwasa@cnrs.fr}}
\urladdr{\url{http://ryomei.com}}
\thanks{R.I.\ was supported by the European Research Council (ERC) under the European Union’s Horizon 2020 research and innovation programme (grant agreement No. 101001474).}
\begin{document}
	
\maketitle

\begin{abstract}
	We formulate and prove a Conner--Floyd isomorphism for the algebraic K-theory of arbitrary qcqs derived schemes.
	To that end, we study a stable $\infty$-category of non-$\A^1$-invariant motivic spectra, which turns out to be equivalent to the $\infty$-category of fundamental motivic spectra satisfying elementary blowup excision, previously introduced by the first and third authors.
	 We prove that this $\infty$-category satisfies $\P^1$-homotopy invariance and weighted $\A^1$-homotopy invariance, which we use in place of $\A^1$-homotopy invariance to obtain analogues of several key results from $\A^1$-homotopy theory.
	 These allow us in particular to define a universal oriented motivic $\E_\infty$-ring spectrum $\MGL$.
	We then prove that the algebraic K-theory of a qcqs derived scheme $X$ can be recovered from its $\MGL$-cohomology via a Conner--Floyd isomorphism
	\[
	\MGL^{**}(X)\otimes_{\L}\Z[\beta^{\pm 1}]\simeq \K^{**}(X),
	\]
	where $\L$ is the Lazard ring and $\K^{p,q}(X)=\K_{2q-p}(X)$. Finally, we prove a Snaith theorem for the periodized version of $\MGL$.
\end{abstract}

\tableofcontents

\section{Introduction}

In this article we construct \textit{algebraic cobordism} as a non-$\A^1$-invariant cohomology theory on derived schemes, and establish its basic expected properties: we show that it is the universal oriented cohomology theory, that it is related to algebraic K-theory via a Conner--Floyd isomorphism, and that its periodic version can be obtained from the infinite Grassmannian by inverting the Bott element (Snaith theorem).
These results refine the analogous theorems in $\A^1$-homotopy theory proven in \cite{Panin:2008}, \cite{Spitzweck-Bott}, and \cite{GepnerSnaith}, respectively. 
To establish our results, we study a stable $\infty$-category of non-$\A^1$-invariant motivic spectra as in \cite{AnnalaIwasa2}, which contains the stable $\A^1$-homotopy category of Morel--Voevodsky as its full subcategory of $\A^1$-invariant objects. We prove in particular that this category satisfies $\P^1$-homotopy invariance and weighted $\A^1$-homotopy invariance, which are weak forms of $\A^1$-homotopy invariance, allowing us to do ``homotopy theory'' in algebraic geometry while keeping the affine line $\A^1$ non-contractible. For example, we prove that the stack of vector bundles $\BGL_n$ is equivalent to the infinite Grassmannian $\Gr_n$ in this setting.

The $\A^1$-homotopy theory of Morel--Voevodsky \cite{MV} has been highly successful in the study of $\A^1$-invariant cohomology theories, playing an instrumental role in the resolution of the Milnor \cite{OVV, VV} and the Bloch--Kato \cite{Voevodsky:2008} conjectures. On the other hand, $\A^1$-homotopy theory is not useful for studying $p$-adic cohomology theories like crystalline and prismatic cohomology \cite{BMS,BS:2022}, since these are usually not $\A^1$-invariant. This is unfortunate, because deeper understanding of the $p$-torsion is often important for various applications: for instance, cohomological obstructions to the existence of resolution of singularities by blowups in characteristic $p$, if they exist, are expected to be $p$-torsion due to the existence of resolution by $p$-alterations \cite{sullivan:2004,temkin:2017}. Other important examples of non-$\A^1$-invariant cohomology theories are the algebraic and hermitian K-theory of singular schemes \cite{cortinas:2007,Schlichting}.

Here, we continue to develop the framework for non-$\A^1$-invariant motivic homotopy theory introduced in \cite{AnnalaIwasa2}, based on the idea of replacing $\A^1$-invariance with (a non-oriented version of) the projective bundle formula. More precisely, for a derived scheme $S$, we say that a Zariski sheaf on the $\infty$-category $\Sm_S$ of smooth $S$-schemes satisfies \textit{elementary blowup excision} if it carries the blowup square
\[
\begin{tikzcd}
	\P^{n-1}_X \ar[r] \ar[d] & \mathrm{Bl}_{\{0\}}(\A^n)_X \ar[d] \\
	\{0\}_X \ar[r] & \A^n_X
\end{tikzcd}
\]
to a cartesian square for every $X\in\Sm_S$ and $n\ge 1$.
Let $\scr{P}_{\Zar,\ebu}(\Sm_S,\Sp)$ denote the $\infty$-category of Zariski sheaves of spectra on $\Sm_S$ satisfying elementary blowup excision. Then, for the purposes of this paper, we define the \emph{$\infty$-category of motivic spectra} as the presentably symmetric monoidal $\infty$-category obtained from the latter by $\otimes$-inverting the pointed projective line $\P^1$:
\[
	\MotSp_S = \scr{P}_{\Zar,\ebu}(\Sm_S,\Sp)[(\P^1)^{-1}]\in\CAlg(\Pr^\L).
\]
The ultimate goal of our framework is to provide efficient tools to study non-$\A^1$-invariant cohomology theories and their cohomology operations. As most cohomology theories in algebraic geometry, including all the ones mentioned above, satisfy the projective bundle formula, this framework is widely applicable.

The main object of interest in this paper is algebraic cobordism. Our treatment of it follows closely the now classical treatment in $\A^1$-homotopy theory \cite{Voevodsky:1996,VoevodskyICM}. Namely, we define algebraic cobordism as the cohomology theory represented by a non-$\A^1$-invariant Thom spectrum $\MGL$. For a finite locally free sheaf $\scr{E}$ on $S$, we define the \textit{Thom space} of $\scr{E}$ by
\[
	\Th_S(\scr{E}) = \P(\scr{E}\oplus\scr{O})/\P(\scr{E}) \in \scr{P}(\Sm_S)_*.
\]
An important technical point is that we are able to promote $\Th_S(\ph)$ to a symmetric monoidal functor
\[
	\Th_S \colon \Vect(S) \to \scr{P}_{\Zar,\ebu}(\Sm_S)_*,
\]
which factors through the K-theory space $\K(S)$ after inverting $\P^1$ on the target.
Following \cite[Section 16]{norms}, we then define the \textit{algebraic cobordism spectrum} $\MGL$ to be the colimit of the Thom spectra of rank-zero K-theory classes over $\Sm_S$, i.e., 
\[
	\MGL = \colim_{\substack{X\in\Sm_S\\\rk\xi=0}}\Th_X(\xi) \in \MotSp_S.
\]
It is clear from this construction that $\MGL$ is canonically equipped with an $\E_\infty$-algebra structure. 
After imposing $\A^1$-invariance, our motivic spectrum $\MGL$ reduces to Voevodsky's.
In this sense, the latter theory should be regarded as the \emph{homotopy cobordism theory}, analogously to how algebraic K-theory reduces to homotopy K-theory after $\A^1$-localization \cite{Weibel, Cisinski}.

Beyond the construction, three main results on $\MGL$ are established. First of all, we prove that $\MGL$ is the universal homotopy commutative oriented ring spectrum, providing a non-$\A^1$-invariant refinement for the analogous claim in $\A^1$-homotopy theory \cite{Panin:2008}.

\begin{theorem}[Universality of $\MGL$, Theorem \ref{thm:universality}]\label{intro:MGL}
The algebraic cobordism spectrum $\MGL$ is the initial oriented object in $\CAlg(\h\MotSp_S)$.
\end{theorem}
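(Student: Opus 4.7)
The plan is to follow the classical Panin strategy \cite{Panin:2008}, adapted to this non-$\A^1$-invariant setting: I will first reduce the notion of an orientation on a ring $E \in \CAlg(\h\MotSp_S)$ to a coherent multiplicative system of Thom classes on vector bundles, then read off the required morphism $\MGL \to E$ from the description of $\MGL$ as a colimit of Thom spectra of rank-zero K-theory classes.

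To begin with, I would unpack the orientation data: an orientation on $E$ should be equivalent to assigning, for every $X \in \Sm_S$ and every vector bundle $\scr{E}$ on $X$, a Thom class $t_{\scr{E}} \colon \Th_X(\scr{E}) \to \SigmaP^{\rk \scr{E}} E$ in $\h\MotSp_S$, natural in pullbacks, normalized by $t_{\scr{O}} = \mathrm{id}$, and multiplicative in the sense that $t_{\scr{E} \oplus \scr{F}} = t_{\scr{E}} \cdot t_{\scr{F}}$ under the identification $\Th_X(\scr{E} \oplus \scr{F}) \simeq \Th_X(\scr{E}) \wedge \Th_X(\scr{F})$. In $\A^1$-homotopy theory this reduction is a standard consequence of the projective bundle formula (cf.\ \cite{Panin:2008}); in the present context the projective bundle formula is already built into the definition of $\MotSp_S$ via elementary blowup excision, and so the same arguments apply with only cosmetic changes.

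Next I would construct the morphism $\MGL \to E$. Since the symmetric monoidal refinement of $\Th_S$ factors through $\K(S)$ after inverting $\P^1$ — as established earlier in the paper — the Thom classes above extend to arbitrary K-theory classes. The colimit presentation
\[
\MGL = \colim_{\substack{X \in \Sm_S \\ \rk \xi = 0}} \Th_X(\xi),
\]
together with the Thom maps $t_\xi \colon \Th_X(\xi) \to E$ (which require no suspension since $\rk \xi = 0$), then assembles into a canonical morphism $\MGL \to E$ in $\h\MotSp_S$; this morphism is multiplicative by the symmetric monoidality of $\Th_S$ combined with the multiplicative compatibility of the Thom classes.

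For uniqueness, I would observe that any algebra map $f \colon \MGL \to E$ in $\h\MotSp_S$ determines an orientation on $E$ by transporting the tautological orientation on $\MGL$ (arising from the inclusion of $\Th_{\P^\infty}(\scr{O}(-1))$ as a summand), and then verify that this procedure inverts the construction above. The main obstacle is the very first reduction step: identifying orientations with multiplicative Thom class systems without the benefit of $\A^1$-contractibility. Once that equivalence is secured — with elementary blowup excision serving as the substitute for $\A^1$-invariance — the universal property of $\MGL$ follows essentially formally from its colimit definition and the symmetric-monoidal lift of $\Th_S$ through $\K(S)$.
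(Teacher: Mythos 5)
Your overall strategy -- trade the orientation for a multiplicative system of Thom classes and then use the colimit presentation of $\MGL$ -- is the same Panin-style route the paper takes, but there are two places where you assert what the paper actually has to prove, and both are genuine gaps. The first is the ``assembly'' step. Since $E$ lives only in $\CAlg(\h\MotSp_S)$, a family of Thom maps $t_\xi\colon\Th_X(\xi)\to E$ compatible up to homotopy does \emph{not} automatically glue to a map out of the $\infty$-categorical colimit defining $\MGL$: one must either produce coherent compatibilities or control the $\Rlim$ obstruction. The paper does the latter: it first reduces to $\MGL=\colim_n\MGL(n)$ (Proposition~\ref{prop:MGL-colim}), proves that the restriction maps $E^{**}(\Vect_{n+1})\to E^{**}(\Vect_n)$ are \emph{surjective} using the explicit computation $E^{**}(\Vect_n)\simeq E^{**}[[c_1,\dots,c_n]]$ (Theorem~\ref{thm:EVect}), and then invokes the Milnor exact sequence so that $E^0(\MGL)=\lim_n E^0(\MGL(n))$ and the class $1\in E^0(\Vect_\infty)$ lifts uniquely. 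Note that Theorem~\ref{thm:EVect} rests on the equivalence $\Gr_n\simeq\Vect_n$ (Theorem~\ref{thm:Gr-Vect}), one of the main new inputs of the paper; calling the changes from the $\A^1$-invariant setting ``cosmetic'' hides exactly the point where the new geometry (infinite excision, weighted $\A^1$-homotopy invariance) enters.

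The second gap is uniqueness, which you leave as ``verify that this procedure inverts the construction above.'' The content here is that an orientation-preserving algebra map $t'\colon\MGL\to E$ is determined on each $\MGL(n)$: since $t'$ is multiplicative, it agrees with $t$ after restriction along $\MGL(1)^{\otimes n}\to\MGL(n)$, and the decisive step is the injectivity of
\[
E^0(\MGL(n))\to E^0(\MGL(1)^{\otimes n}),
\]
which is again extracted from the ring computation of Theorem~\ref{thm:EVect} (a splitting-principle statement). Without identifying this injectivity, the uniqueness claim does not follow from the $n=1$ case. So the skeleton of your argument is correct, but the two load-bearing inputs -- surjectivity of restriction maps (to kill $\Rlim$ and define the map at all) and injectivity of restriction to a product of $\Pic$'s (to get uniqueness) -- both come from the oriented cohomology of $\Vect_n$, and neither is supplied in your write-up.
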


Secondly, we prove that the algebraic K-groups can formally be recovered from the $\MGL$-cohomology groups, by imposing the multiplicative formal group law for Chern classes of line bundles. This is a non-$\A^1$-invariant refinement of \cite[Theorem 1.2]{Spitzweck-Bott}. Morally, it should also be the ``higher version'' of the Conner--Floyd theorem proved in \cite{annala-chern}. 

\begin{theorem}[Conner--Floyd isomorphism, Theorem \ref{thm:Conner-Floyd}]\label{intro:Conner-Floyd}
For any qcqs derived scheme $X$, there is an isomorphism of bigraded rings
\[
	\MGL^{**}(X)\otimes_{\mathrm{L}}\Z[\beta^{\pm 1}] \simeq \K^{**}(X),
\]
where $\L$ is the Lazard ring and $\K^{p,q}(X)=\K_{2q-p}(X)$.
\end{theorem}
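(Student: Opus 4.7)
My plan is to follow the strategy of Spitzweck \cite{Spitzweck-Bott}, adapted to the non-$\A^1$-invariant setting made available by the framework of this paper; the Snaith theorem for $\PMGL$ announced in the abstract will play a decisive role at the end.

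\textbf{Step 1: Realize $\K$ as a motivic $\E_\infty$-ring spectrum.} I would first verify that algebraic $\K$-theory defines an object of $\CAlg(\MotSp_S)$. By Thomason--Trobaugh and the projective bundle formula for $\K$-theory of qcqs derived schemes, $\K$ is a Zariski sheaf on $\Sm_S$ satisfying elementary blowup excision (the formula $\K(\Bl_ZX)\simeq \K(X)\times_{\K(Z)}\K(\P(N))$ restricts to the squares in the definition of $\ebu$), so it lies in $\scr{P}_{\Zar,\ebu}(\Sm_S,\Sp)$. Multiplication by the Bott class $\beta\in \K_0(\P^1)/\K_0(\pt)$ provides the $\P^1$-periodicity, making $\K$ a $\P^1$-spectrum.

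\textbf{Step 2: Construct the comparison map and its factorization.} The tautological line bundle on $\P^\infty$ yields the class $[\mathcal{O}(1)]-1$, hence a multiplicative orientation of $\K$, i.e.\ a lift of $\K$ to an oriented object of $\CAlg(\h\MotSp_S)$. By the universality of $\MGL$ (Theorem~\ref{intro:MGL}), this orientation is represented by a unique map of $\E_\infty$-algebras $\MGL\to\K$ in the homotopy category. Computing the induced ring homomorphism $\L\to\MGL^{*,*}(\pt)\to\K^{*,*}(\pt)$ on formal group law parameters, the orientation on $\K$ is by construction the multiplicative one $F(x,y)=x+y-\beta xy$, so the composite factors through the classifying homomorphism $\L\to\Z[\beta^{\pm 1}]$ of this formal group law. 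This yields the desired natural map
\[
\MGL^{*,*}(X)\tens_{\L}\Z[\beta^{\pm 1}]\longrightarrow \K^{*,*}(X).
\]

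\textbf{Step 3: Show it is an isomorphism.} This is the substantive step, and I would reduce it to the Snaith theorem (promised in the abstract) for the periodization $\PMGL$. Inverting $\beta$ on the left already lives in the periodized world, since the image of $\beta$ generates the periodization of the orientation class; more precisely, $\MGL\tens_{\L}\Z[\beta^{\pm 1}]\simeq \PMGL\tens_{\L^{\per}}\Z[\beta^{\pm 1}]$, where $\L^{\per}$ denotes the periodic Lazard ring. The Snaith theorem identifies $\PMGL$ with the group-completed motivic suspension spectrum of $\Gr = \colim_n\Gr_n$ with the Bott element inverted, and the quotient by the non-multiplicative part of $\L$ amounts to restricting attention to line bundles, i.e.\ to $\Gr_1=\P^\infty$. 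Combining this with the identification $\BGL_n\simeq\Gr_n$ (cited from the introduction) and the ``algebraic'' form of Snaith's theorem for $\K$ itself, which in the non-$\A^1$-invariant setting should read $\K\simeq \Sigma^\infty_{\P^1,+}\P^\infty[\beta^{-1}]$, the comparison map is identified on both sides with the same Snaith-type spectrum, hence is an equivalence.

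\textbf{Main obstacle.} The critical step is Step~3: rigorously identifying $\PMGL\tens_{\L^{\per}}\Z[\beta^{\pm 1}]$ with $\K$ through the two Snaith descriptions. This requires one to trace the Chern-class formalism developed in the paper through the periodization and formal-group-law quotient, and in particular to check that the periodization of the map $\MGL\to\K$ is compatible with both Snaith presentations. Since the paper has already set up $\P^1$-homotopy invariance, weighted $\A^1$-invariance, and the Thom-spectrum construction $\Th_S\colon\K(S)\to \scr{P}_{\Zar,\ebu}(\Sm_S)_{*}$, the needed tools are available, but aligning all identifications so as to give a map of bigraded rings (as opposed to just an isomorphism of underlying bigraded abelian groups) demands care.
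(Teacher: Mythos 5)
Your Steps 1 and 2 are consistent with the paper (the orientation of $\KGL$ and the unique orientation-preserving map $\MGL\to\KGL$ from Theorem~\ref{thm:universality} are all there), but Step 3 — which you correctly identify as the substantive step — has a genuine gap, and the paper's actual argument is structured quite differently precisely to avoid it. The fundamental problem is that $\MGL^{**}(X)\otimes_{\L}\Z[\beta^{\pm 1}]$ is defined only at the level of cohomology groups: there is no spectrum in sight whose cohomology this is, so the expression ``$\MGL\otimes_\L\Z[\beta^{\pm 1}]\simeq\PMGL\otimes\dotsb$'' and the plan of matching ``both sides with the same Snaith-type spectrum'' are not meaningful as stated. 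Before one can run any comparison, one must show that $X\mapsto \MGL^{**}(X)\otimes_\L\Z[\beta^{\pm 1}]$ is still a cohomology theory, i.e.\ that $-\otimes_\L\Z[\beta^{\pm1}]$ preserves the exact sequences coming from cofiber sequences. This is exactly where the paper invests its effort: it computes $\MGL_{**}\MGL$ (Proposition~\ref{prop:homology_MGL}, Corollary~\ref{cor:homology_MGL}), shows that $\MGL$-cohomology of compact objects is valued in $(\L,\LB)$-comodules (Lemma~\ref{lem:MGL-comod}), and then invokes Landweber exactness of $\Z[\beta^{\pm1}]$ over $(\L,\LB)$. Your proposal never touches this point, and without it the statement cannot even be set up, let alone proved.

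The second divergence is the endgame. The paper does not deduce Conner--Floyd from the Snaith theorem for $\PMGL$ (which is proved afterwards, in Section 8, by the same method); instead it works entirely with cohomology theories on $\MotSp_S^\omega$ and shows that \emph{both} sides are initial among $\G_m$-oriented cohomology theories: the left side via Proposition~\ref{prop:MGL-cohomology} (universality of $\MGL$-cohomology among oriented theories, which needs $\Gr_n\simeq\Vect_n$) together with the dictionary of Lemma~\ref{lem:G_m-orientation} between $\G_m$-orientations and orientations with multiplicative formal group law; the right side via the Snaith theorem for $\KGL$ from \cite{AnnalaIwasa2} plus the Yoneda-type Lemma~\ref{lem:universal-pre-G_m}. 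Your heuristic that ``quotienting by the non-multiplicative part of $\L$ amounts to restricting to line bundles'' gestures at the same dictionary, but as written it is not an argument, and the route through $\PMGL$ would still leave you to relate $\PMGL^{**}$ to $\MGL^{**}\otimes_\L\Z[\beta^{\pm1}]$ integrally — a comparison the paper only carries out rationally (Theorem~\ref{thm:QPMGL}), where all formal group laws agree. To repair the proposal: drop the detour through $\PMGL$, establish the comodule structure and Landweber exactness to make the left side a cohomology theory, and then prove the isomorphism by matching the two universal properties rather than by constructing and then checking a map.
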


The above result may be regarded as a sanity check for our construction, because it establishes a precise relationship between $\MGL$ and K-theory, the latter of which has a generally-accepted definition. 

Let $\PMGL$ be the \textit{periodic algebraic cobordism spectrum}, defined to be the colimit of the Thom spectra of \textit{all} K-theory classes over $\Sm_S$. Our third main result provides a concrete geometric model for $\PMGL$ as a motivic spectrum, refining the $\A^1$-homotopical Snaith theorem proved in \cite{GepnerSnaith}.

\begin{theorem}[Snaith theorem for $\PMGL$, Theorem \ref{thm:Snaith}]\label{intro:Snaith}
There is a canonical isomorphism
\[
	\PMGL \simeq \SigmaP^\infty\Vect_{\infty,+}[\beta^{-1}]
\]
in $\CAlg(\h\MotSp_S)$.
\end{theorem}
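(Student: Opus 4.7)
The plan is to adapt the Gepner--Snaith \cite{GepnerSnaith} strategy to the non-$\A^1$-invariant setting: construct the Snaith comparison map via the Thom functor, and then show that it is an equivalence by expressing both sides as filtered colimits indexed by the tautological bundles on the Grassmannians $\Vect_n \simeq \BGL_n \simeq \Gr_n$.

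First, I would construct the comparison map. The symmetric monoidal Thom functor $\Th_S \colon \Vect(S) \to \PSh_{\Zar,\ebu}(\Sm_S)_*$ (sending direct sum to smash product), introduced in the excerpt, factors through the group completion $\Vect_\infty = \K(S)$ after $\otimes$-inverting $\P^1$ and lands in the Picard groupoid of $\MotSp_S$. This produces an $\E_\infty$-ring map
\[
	\eta \colon \SigmaP^\infty \Vect_{\infty,+} \longrightarrow \PMGL.
\]
The Bott class $\beta \in \pi_{0,0}(\SigmaP^\infty \Vect_{\infty,+})$, represented by the map $\P^1 \to \Vect_\infty$ classifying the virtual line bundle $\mathcal{O}(1) - 1$, should map under $\eta$ to the Bott element of $\PMGL$, which is invertible because $\PMGL$ incorporates Thom spectra of all (including negative-rank) virtual bundles. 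Hence $\eta$ descends to
\[
	\bar\eta \colon \SigmaP^\infty \Vect_{\infty,+}[\beta^{-1}] \longrightarrow \PMGL,
\]
and the $\E_\infty$-structure is retained by the universal property of Bott inversion.

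Next, to show $\bar\eta$ is an equivalence, I would use the $\BGL_n \simeq \Gr_n$ identification (stated in the abstract). On the target, $\PMGL = \colim_{(X,\xi)} \Th_X(\xi)$ should admit a cofinal subsystem indexed by the universal pairs $(\Gr_n, \gamma_n)$, where $\gamma_n$ is the tautological rank-$n$ bundle, since every virtual bundle on a smooth scheme is classified (up to stabilization) by a map to some $\Gr_n$. On the source, $\beta$-inversion should identify rank-stabilization by the trivial line bundle with $\P^1$-suspension, thereby rewriting $\SigmaP^\infty \Vect_{\infty,+}[\beta^{-1}]$ as a filtered colimit of desuspended Thom spectra over $\{\Gr_n\}_n$, with suspension shifts supplied by the Thom isomorphism for $\gamma_n \oplus \mathcal{O}$. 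Matching the two presentations would then yield the equivalence.

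The principal obstacle is the geometric computation of the $\beta$-periodization of $\SigmaP^\infty \Vect_{\infty,+}$. In the classical setup of Gepner--Snaith this step is simplified by motivic Bott periodicity $\Vect_\infty \simeq \Omega^\infty_{\P^1} \KGL$, which is not directly available in the non-$\A^1$-invariant framework. One would instead rely on the weighted $\A^1$-homotopy invariance and $\P^1$-homotopy invariance properties of $\MotSp_S$ established earlier in the paper, together with $\BGL_n \simeq \Gr_n$, to carry out the $\beta$-periodization computation geometrically via stabilization along the tautological bundles on the Grassmannians. A secondary concern would be to verify that the Thom construction is sufficiently coherent to produce a \emph{ring} map rather than merely a spectrum map, but this should follow from its symmetric monoidality combined with the universal property of $\PMGL$ as a colimit of Thom spectra.
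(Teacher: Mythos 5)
Your overall strategy is the original Gepner--Snaith one, and it runs into two obstacles that the paper deliberately avoids; both are flagged explicitly in the text. First, your reduction of $\PMGL=\colim_{(X,\xi)}\Th_X(\xi)$ to ``a cofinal subsystem indexed by the universal pairs $(\Gr_n,\gamma_n)$'' is not available: Theorem \ref{thm:Gr-Vect} gives $\Gr_n\simeq\Vect_n$ in $\MotSp_S$, but this does not imply that the induced map of Thom spectra $\Th_{\Gr_n}(\gamma_n)\to\Th_{\Vect_n}(\scr E_n)$ is an isomorphism --- the Thom spectrum is a colimit over the fibration classified by $\K$, not a functor of the motivic homotopy type of the base --- and the remark following Theorem \ref{thm:universality} states that whether $\MGL$ (hence $\PMGL$) is a colimit of Thom spectra over Grassmannians is an open question in this framework. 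Second, the step you yourself identify as the ``principal obstacle'' --- computing $\SigmaP^\infty\Vect_{\infty,+}[\beta^{-1}]$ geometrically as a filtered colimit of desuspended Thom spectra --- is precisely where the argument of \cite{GepnerSnaith} is problematic (the footnote in the Snaith section notes that the proof of their Theorem 3.9 invokes an invalid universal property of ring-spectrum localization in the homotopy category), and appealing to weighted $\A^1$-invariance does not by itself produce this identification. So as written the proposal defers the two essential difficulties rather than resolving them.

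The paper sidesteps both by running the same corepresentability argument as for the Conner--Floyd theorem. The comparison map goes in the opposite direction: $\beta^{-1}(1-u|_{\Pic})$ is an orientation of the periodic ring spectrum $\SigmaP^\infty\Vect_{\infty,+}[\beta^{-1}]$, so the universal property of $\PMGL$ (Corollary \ref{cor:universality}) yields a map $\PMGL\to\SigmaP^\infty\Vect_{\infty,+}[\beta^{-1}]$ in $\CAlg(\h\MotSp_S)$, with no coherence issue about upgrading a Thom-class-induced map to a ring map. To see it is an isomorphism, one checks that both sides corepresent the initial object among the same class of cohomology theories on $\MotSp_S^\omega$: $\SigmaP^\infty\Vect_{\infty,+}$ tautologically corepresents $t$-preorientations (Remark \ref{rmk:E-hat}), Lemma \ref{lem:GL-ori} identifies $t$-orientations with pairs consisting of a unit $\beta$ and an orientation $c$ via the Whitney sum formula, and Corollary \ref{cor:PMGL-cohomology} says $\PMGL^0$ is initial among periodic oriented cohomology theories. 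If you want to salvage your colimit-matching approach, you would first have to prove the Grassmannian cofinality statement, which the authors only conjecture.
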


This result allows us to easily compute maps from $\PMGL$ to other motivic spectra. An analogous result was proven in \cite{AnnalaIwasa2} for algebraic K-theory. The advantage of the cobordism version is that algebraic cobordism has a much richer structure than algebraic K-theory, owing to the fact that K-theory is confined to the first chromatic level.
Hence, algebraic cobordism should be more useful than K-theory in studying, e.g., torsion in crystalline and syntomic cohomology and other infinite-chromatic-height phenomena.

In order to obtain the aforementioned results, we significantly advance the foundational understanding of non-$\A^1$-invariant motivic spectra. Our main insight is that $\otimes$-inverting the pointed projective line $\P^1$ leads, in a nontrivial fashion, to $\P^1$-homotopy invariance, and more generally to a twisted form thereof that we call \textit{$\P$-homotopy invariance}.

\begin{theorem}[$\P$-homotopy invariance, Theorem~\ref{thm:euler}]\label{intro:P-homotopy}
Let $\scr{E}$ be a finite locally free sheaf on $X\in\scr{P}(\Sm_S)$ and $\sigma\colon\scr{E}\to\scr{O}_X$ a linear map.
Then there is a canonical homotopy $h(\sigma)$ in $\MotSp_S$ between 
\[
	X \xrightarrow{\sigma} \V(\scr{E}) \subset \P(\scr{E}\oplus\scr{O}_X)
\]
and the zero section.
The homotopy $h(\sigma)$ is functorial in $(S,X,\scr{E},\sigma)$ and is the identity if $\sigma=0$.
\end{theorem}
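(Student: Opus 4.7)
The plan is to promote the naive $\A^1$-homotopy $(t,x)\mapsto t\sigma(x)$ connecting $0$ to $\sigma$ into a genuine homotopy in $\MotSp_S$, by combining elementary blowup excision with the invertibility of $(\P^1,\infty)$.

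The first step is to replace the $\A^1$-family by its extension to a rational map
\[
H\colon \P^1 \times X \dashrightarrow \P(\scr{E} \oplus \scr{O}_X), \qquad ([\lambda : \mu], x) \mapsto [\lambda\sigma(x) : \mu],
\]
whose indeterminacy is supported on $W := \{\infty\} \times Z(\sigma)$, with $Z(\sigma) \subset X$ the derived vanishing locus of $\sigma$. Upon restriction to $\{[0:1]\}\times X$ and $\{[1:1]\}\times X$, we recover the zero section and $\sigma$ respectively. Blowing up $W$ yields a regular morphism $\widetilde H \colon \Bl_W(\P^1 \times X) \to \P(\scr{E} \oplus \scr{O}_X)$. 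Zariski-locally, in the affine chart of $\P^1$ at $\infty$ and after trivializing $\scr{E}$, the embedding $W \hookrightarrow \P^1 \times X$ becomes the standard $\{0\} \hookrightarrow \A^{n+1}$, so elementary blowup excision applies directly in this local model; Zariski descent then promotes this to an equivalence $\Bl_W(\P^1 \times X) \simeq \P^1 \times X$ in $\MotSp_S$. Composing, we obtain $\bar H\colon \P^1 \times X \to \P(\scr{E} \oplus \scr{O}_X)$ in $\MotSp_S$ whose ``values'' at $[0:1]$ and $[1:1]$ are still the zero section and $\sigma$.

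The homotopy $h(\sigma)$ is then extracted from $\bar H$ by invoking the invertibility of $(\P^1,\infty)$ in $\MotSp_S$: one expects that the difference of the two specializations of $\bar H$ factors through an object forced to be null by $(\P^1,\infty)$-inversion, canonically producing the required $1$-simplex in the mapping space. Naturality in $(S,X,\scr{E},\sigma)$ is manifest from the geometric construction at each step, and the normalization $h(0)=\id$ holds because for $\sigma=0$ the map $H$ is everywhere the constant zero section, so the whole construction degenerates trivially.

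The main obstacle will be precisely this last extraction: rigorously showing how $\otimes$-inverting $(\P^1,\infty)$ converts the $\P^1$-parametrized family $\bar H$ into a homotopy between its two specializations. This is the ``nontrivial fashion'' in which $\P$-homotopy invariance follows from $\P^1$-inversion announced in the introduction, and it is the principal departure from classical $\A^1$-homotopy theory, where the analogous step is automatic because $\A^1$- or $\P^1$-parametrized families are already homotopies. A secondary subtlety is that $Z(\sigma)$ must be handled in the derived sense for the construction to be uniform in $\sigma$ and in particular to interpolate continuously to the case $\sigma=0$.
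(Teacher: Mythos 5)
There is a genuine gap, and it sits exactly where you flag ``the main obstacle'': the step of converting the $\P^1$-parametrized family $\bar H$ into a homotopy between its fibers over $[0:1]$ and $[1:1]$ is not a technical loose end but is the theorem itself (in the special case $\scr E=\scr O$, $\sigma=1$, it \emph{is} the statement that $0,1\colon X\to\P^1\times X$ become homotopic, which the paper deduces as Corollary~\ref{cor:P^1-homotopy} \emph{from} Theorem~\ref{thm:euler}, not the other way around). You offer no mechanism: in the stable category the difference of the two sections $0-1\colon \Sigma^\infty X_+\to\Sigma^\infty(\P^1\times X)_+$ factors through $(\P^1,\mathrm{pt})\otimes X_+$, which after inverting $\P^1$ is an \emph{invertible} object tensored with $X_+$, not a null one, so nothing is ``forced to be null'' by the inversion. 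The paper's actual mechanism is entirely different: the section $\sigma$ is the image of the zero section under the shear automorphism $e_{21}(\sigma)$ of $\P(\scr E\oplus\scr O)$, and (a stabilization of) this automorphism is exhibited as a commutator $[e_{32}(\tau),e_{21}(\phi)]$ of endomorphisms of the Thom space $\Th_S(\scr E\oplus\scr F)$; since that object is $\otimes$-invertible (by the multiplicativity of Thom spaces built in Section~\ref{sec:thom}), its endomorphism monoid is $\E_\infty$, so the commutator is canonically the identity. That is the ``nontrivial fashion'' in which $\P^1$-inversion enters, and your plan does not reproduce or replace it.

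Two further steps would also fail as written. First, the center $W=\{\infty\}\times Z(\sigma)$ is only quasi-smooth: for general $\sigma$ (e.g.\ $X=\A^1$, $\scr E=\scr O$, $\sigma=t^2$) the derived zero locus $Z(\sigma)$ is not smooth over $S$, so after trivializing $\scr E$ the immersion $W\into\P^1\times X$ is cut out by $n+1$ functions that need not extend to coordinates; it is therefore neither elementary nor even a closed immersion in $\Sm_S$, and neither elementary nor smooth blowup excision applies. Second, even when excision does apply, it only asserts that the blowup square is (co)cartesian, i.e.\ $\Bl_W(\P^1\times X)/E\simeq(\P^1\times X)/W$; it does not make the blowup equivalent to $\P^1\times X$ (just as $\Bl_0(\A^n)\not\simeq\A^n$ in $\MotSp_S$, where $\A^n$ is not contractible), so $\widetilde H$ does not descend to the map $\bar H$ you need.
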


Using $\P$-homotopy invariance in place of $\A^1$-homotopy invariance, we are able to prove several useful results for motivic spectra.

\begin{theorem}\label{intro:equivalences}
The following results hold in $\MotSp_S$.
\begin{enumerate}[beginpenalty=10000]
\item\label{item:intro-bass} {\upshape (Bass fundamental theorem, Proposition \ref{prop:fundamental})}
The canonical pointed map 
\[
	\P^1\to\Sigma\G_m
\]
admits a retraction.
\item\label{item:intro-euler} {\upshape (Euler class of $\scr{O}(1)$, Proposition \ref{prop:EulerP1})}
Let $s,i\colon\P^1\rightrightarrows\P_{\P^1}(\scr{O}(1)\oplus\scr{O})$ be the zero section and the inclusion of the fiber at infinity, respectively.
Then the two composites
\[
\begin{tikzcd}[cramped, sep=scriptsize]
	\P^1_+ \ar[r, shift left, "s"] \ar[r, shift right, "-i" swap] & \P_{\P^1}(\scr{O}(1)\oplus\scr{O})_+ \ar[r] & \P_{\P^1}(\scr{O}(1)\oplus\scr{O})/ \P_{\P^1}(\scr{O}(1))=\Th_{\P^1}(\scr O(1))
\end{tikzcd}
\]
are homotopic.
\item\label{item:intro-weight} {\upshape (Weighted $\A^1$-homotopy invariance, Corollary \ref{cor:A^1/G_m-homotopy})}
Let $\A^1/\G_m$ be the quotient stack with respect to a $\G_m$-action of nonzero weight.
Then the canonical map
\[
	\A^1/\G_m \to\rm B\G_m=\Pic
\]
is an equivalence.
\item\label{item:intro-Ainf} {\upshape (Infinite excision, Proposition \ref{prop:Stiefel})}
The open embedding
\[
	\A^\infty-0\into\A^\infty
\]
is an equivalence.
\item\label{item:intro-vect} {\upshape (Geometric model of the stack of vector bundles, Theorem \ref{thm:Gr-Vect})}
The canonical map
\[
	\Gr_n \to \Vect_n
\]
is an equivalence.
\end{enumerate}
\end{theorem}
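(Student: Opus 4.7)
The five items share the common input of $\P$-homotopy invariance (Theorem~\ref{intro:P-homotopy}) together with basic structural features of $\MotSp_S$; item (iv) is the main technical crux, and (v) builds on it via descent.

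Item (ii) is the most direct consequence of Theorem~\ref{intro:P-homotopy}: take $X = \P^1$, $\scr E = \scr O(-1)$, and let $\sigma\colon \scr O(-1) \to \scr O$ be the tautological section vanishing at $\infty$. The two composites in the statement correspond to the zero section and to $\sigma$ viewed as a map into $\V(\scr O(-1)) \subset \P(\scr O(-1) \oplus \scr O)$; Theorem~\ref{intro:P-homotopy} supplies the required homotopy, and the sign is absorbed upon collapsing $\P(\scr O(-1))$ via the inversion of the tautological line. Item (i) follows from (ii): the Bass map $\P^1 \to \Sigma \G_m$ arises from the cofiber sequence $\G_m \hookrightarrow \A^1 \to \P^1$, and the explicit Euler class provided by (ii), combined with $\P^1$-invertibility in $\MotSp_S$, yields a splitting. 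For (iii), identify $\A^1/\G_m^{(n)}$ with the total space $\V(L^{\otimes n})$ of a nontrivial (for $n \ne 0$) line bundle on $B\G_m$; a projective-bundle argument combined with Theorem~\ref{intro:P-homotopy} and (i) shows that the projection $\V(L^{\otimes n}) \to B\G_m$ is an equivalence in $\MotSp_S$.

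Item (iv) is the crux. The cofibers $\A^n/(\A^n - 0) \simeq (\P^1)^{\wedge n}$ arrange into a direct system whose transition maps are $S^0 \to \P^1$ smashed with $(\P^1)^{\wedge n}$, and (iv) reduces to showing that this colimit vanishes in $\MotSp_S$. My plan is an Eilenberg-swindle-type argument: the shift endomorphism $(x_1, x_2, \ldots) \mapsto (0, x_1, x_2, \ldots)$ of $\A^\infty$ should be homotopic to the identity modulo controlled error supplied by $\P$-homotopy invariance, and this self-absorption collapses the colimit once $\P^1$ is inverted. Given (iv), item (v) follows: the ind-Stiefel variety $V_n(\A^\infty) = \colim_m V_n(\A^m)$ is a $\GL_n$-torsor over $\Gr_n$, so it suffices to verify $V_n(\A^\infty) \simeq *$ in $\MotSp_S$; since $V_n(\A^m) \subset (\A^m)^n$ has complement of codimension tending to infinity, iterated application of (iv) identifies $V_n(\A^\infty)$ with $(\A^\infty)^n$, which collapses to a point by a further swindle. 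Effective descent along the $\GL_n$-torsor then yields $\Gr_n = V_n(\A^\infty)/\GL_n \simeq B\GL_n = \Vect_n$.

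The hardest step will be (iv). Without $\A^1$-contractibility, the classical Eilenberg swindle fails, and constructing compatible null-homotopies on all $(\P^1)^{\wedge n}$ requires carefully interlacing $\P^1$-invertibility with the null-homotopies supplied by $\P$-homotopy invariance; the same swindle must be reinvoked in (v) to contract $(\A^\infty)^n$. Once (iv) is settled, the remaining items should follow formally.
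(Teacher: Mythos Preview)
Your overall plan has serious gaps in items (ii), (iv), and (v), each stemming from implicitly using $\A^1$-contractibility where it is not available in $\MotSp_S$.

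\textbf{Item (ii).} The map $i$ is the inclusion of the \emph{fiber} of $\P_{\P^1}(\scr O(1)\oplus\scr O)\to\P^1$ over $\infty$, not a section; it does not even commute with the projection to $\P^1$. Theorem~\ref{intro:P-homotopy} only compares two sections of $\P(\scr E\oplus\scr O)\to X$, so it cannot be applied directly to relate $s$ and $i$, and the remark about the sign being ``absorbed'' does not account for the genuine minus sign in the statement. The paper instead identifies $Y=\P_{\P^1}(\scr O(1)\oplus\scr O)$ with $\Bl_{\mathrm{pt}}\P^2$, uses elementary blowup excision to descend the endomorphism $\id-s_0\circ p$ of $Y_+$ to $\P^2_+$, and then invokes the fact (itself a consequence of Theorem~\ref{intro:P-homotopy}) that any two linear embeddings $\P^1\hookrightarrow\P^2$ are homotopic. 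The sign emerges from this computation, not from tensoring by a line.

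\textbf{Item (iv).} The identification $\A^n/(\A^n-0)\simeq(\P^1)^{\wedge n}$ is not available in $\MotSp_S$. By Zariski excision one has $\A^n/(\A^n-0)\simeq\P^n/(\P^n-0)$, while $(\P^1)^{\wedge n}\simeq\Th_S(\scr O^n)=\P^n/\P^{n-1}$; these agree only if $\P^n-0\simeq\P^{n-1}$, i.e., if the total space of a line bundle over $\P^{n-1}$ is equivalent to its base, which is a form of $\A^1$-invariance. Already for $n=1$ the claim would force $\A^1/\G_m\simeq(\P^1,\infty)$, contradicting the fact that (i) gives only a retraction, not an equivalence. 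The paper circumvents this by first passing to the projective compactification via a Zariski pushout square and then constructing explicit closed immersions $f_k\colon\P^k\hookrightarrow\P^{k+m}-0$ together with $\P$-homotopies (coming from Theorem~\ref{intro:P-homotopy} applied to $\scr O(-1)$ on the relevant projective spaces) between $f_k$ and the standard linear embedding $e_k$. This is indeed a swindle-type argument, but it operates on the $\P^k$'s, not on the cofibers.

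\textbf{Item (v).} Your argument requires $(\A^\infty)^n\simeq *$, hence $\A^\infty\simeq *$, which is false in $\MotSp_S$: the affine line is not contractible, and no swindle will make $\colim_k\A^k$ contractible without collapsing $\A^1$. The paper instead decomposes $\Gr_n\to\Vect_n$ as
\[
\St_n(\scr O^\infty)/\GL_n \to \V(\scr O^\infty,\scr O^n)/\GL_n \to \rm B\GL_n.
\]
The first map is an equivalence by (iv) applied levelwise to the simplicial bar construction, and the second is an equivalence by weighted $\A^1$-invariance (item (iii)), since the central $\G_m\subset\GL_n$ acts with weight $1$ on $\V(\scr O^k,\scr O^n)$. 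The key point is that contractibility is only needed \emph{after} quotienting by $\GL_n$, where weighted invariance applies.

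For (i), the paper does not deduce it from (ii) but rather shows directly that $(\A^1,0)\to(\P^1,0)$ is nullhomotopic using $\P$-homotopy invariance (the embedding $\V(\scr F)\hookrightarrow\P(\scr F\oplus\scr O)$ is homotopic to the constant map at the origin). For (iii), the paper's route is simpler than yours: the effective Cartier divisor $0\in\P^1$ gives a map $\P^1\to\A^1/\G_m$ sending $0,1$ to $0,1$, so $\P^1$-homotopy invariance immediately implies weighted $\A^1$-homotopy invariance; no appeal to (i) or to a projective-bundle argument is needed.
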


In fact, \ref*{item:intro-weight} and \ref*{item:intro-Ainf} hold more generally; see the mentioned references and Lemma \ref{lem:V/G}.
The results listed in Theorem \ref{intro:equivalences} are all either well-known or obvious after $\A^1$-localization; \ref*{item:intro-bass}, \ref*{item:intro-weight}, \ref*{item:intro-Ainf} are obvious, \ref*{item:intro-euler} is contained in \cite[proof of Lemma 3.8]{PaninSmirnov}, and \ref*{item:intro-vect} is \cite[Section 4, Proposition 3.7]{MV}.
Without $\A^1$-localization, \ref*{item:intro-vect} was previously proved when restricted to oriented theories in \cite[Theorem 4.4.6]{AnnalaIwasa2} by adapting an argument from \cite{AnnalaIwasa}. The proof presented here is logically independent from the previous one, more general, and simpler.

The stable $\infty$-category $\MotSp_S$ is by definition the stabilization of $\MotSp^\un_S = \scr{P}_{\Zar,\ebu}(\Sm_S)_*[(\P^1)^{-1}]$. Combining the Bass fundamental theorem \ref*{item:intro-bass} with the stability theorem \cite[Theorem 2.4.5]{AnnalaIwasa2}, we obtain the following version of Bass delooping.

\begin{theorem}[Bass delooping, Corollary \ref{cor:delooping}]\label{intro:delooping}
The functor
\[
	\Omega^\infty \colon \MotSp_S \to \MotSp_S^\un 
\]
is fully faithful and its essential image consists exactly of the fundamental objects, i.e., those $E\in\MotSp^\un_S$ such that the canonical map $\P^1\otimes E\to \Sigma\G_m\otimes E$ admits a retraction.
\end{theorem}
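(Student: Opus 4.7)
Proof proposal. The plan is to combine two previously established results: the Bass fundamental theorem (Proposition~\ref{prop:fundamental}, i.e., item~(i) of Theorem~\ref{intro:equivalences}), which gives a retraction of the canonical map $\P^1 \to \Sigma\G_m$ in $\MotSp_S$, and the general stability theorem \cite[Theorem~2.4.5]{AnnalaIwasa2}, which characterizes the essential image of $\Omega^\infty\colon \Sp(\scr C) \to \scr C$ for suitable presentable pointed $\infty$-categories $\scr C$ in terms of a formal Bass-type retraction condition.

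Applied to $\scr C = \MotSp^\un_S$ (so that $\Sp(\scr C) = \MotSp_S$ by definition), the stability theorem directly yields fully faithfulness of $\Omega^\infty \colon \MotSp_S \to \MotSp^\un_S$ and identifies its essential image with the objects satisfying its abstract fundamental condition. The work is then to translate this into the explicit retraction condition stated in the theorem. For the forward inclusion, I would apply the retraction provided by Proposition~\ref{prop:fundamental}, tensor it with an arbitrary $F \in \MotSp_S$ to obtain a retraction of $\P^1 \otimes F \to \Sigma\G_m \otimes F$ in $\MotSp_S$, and then transport this along $\Omega^\infty$ to produce a retraction of $\P^1 \otimes \Omega^\infty F \to \Sigma\G_m \otimes \Omega^\infty F$ on the underlying unstable object, showing that the essential image consists of fundamental objects in the sense of the theorem. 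For the reverse inclusion, starting with $E \in \MotSp^\un_S$ satisfying the retraction condition, the Bass retraction and the criterion of the stability theorem combine to show that this concrete condition implies the abstract fundamental condition, and hence places $E$ in the essential image.

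The main obstacle will be cleanly matching the abstract fundamental condition furnished by \cite[Theorem~2.4.5]{AnnalaIwasa2} with the specific retraction condition in the theorem, as well as tracking the compatibility of tensor products on $\MotSp^\un_S$ and $\MotSp_S$ with $\Omega^\infty$ (which is only lax monoidal a priori). These checks should be essentially formal bookkeeping, as all the genuine geometric input is already concentrated in the Bass fundamental theorem.
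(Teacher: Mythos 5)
Your proposal follows the paper's proof of Corollary \ref{cor:delooping} essentially verbatim: the paper likewise combines Proposition \ref{prop:fundamental} with the stability theorem \cite[Theorem 2.4.5]{AnnalaIwasa2}, using that $\Omega^\infty$ is lax $\MotSp^\un_S$-linear and commutes with $\Sigma_{\P^1}$ to see that everything in the essential image is fundamental, and the "abstract" condition in \emph{loc.\ cit.}\ is literally the retraction condition in the statement. The one step you leave implicit is that the stability theorem needs a compact generation hypothesis, so for non-qcqs $S$ the paper first reduces to the qcqs case by Zariski descent.
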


\begin{remark}
	Theorem~\ref{intro:equivalences} applies in particular to any spectrum-valued cohomology theory satisfying the projective bundle formula (since this implies elementary blowup excision \cite[Lemma 3.3.5]{AnnalaIwasa2}), such as the syntomic cohomology of schemes defined in \cite{BhattLurie}.
	This shows that the computation of the syntomic cohomology of $X\times\BGL_n$ \cite[Theorem 9.3.1]{BhattLurie} and of the classifying stack of a parabolic subgroup \cite[Corollary 9.2.10]{BhattLurie} are ``formal'' consequences of the projective bundle formula \cite[Theorem 9.1.1]{BhattLurie}, and hence that the $p$-quasisyntomicity assumption in the statements of these results is not necessary.
\end{remark}

\begin{remark}
The cohomology groups $\MGL^{**}(X)$ are expected to provide \textit{higher algebraic cobordism groups}, extending the non-$\A^1$-invariant algebraic cobordism groups $\Omega^*(X)$ constructed in characteristic 0 in \cite{annala-cob} and over a general Noetherian base ring $A$ in \cite{AY, annala-chern, annala-pre-and-cob}. More precisely, for all quasi-projective derived $A$-schemes $X$, we expect canonical isomorphisms $\Omega^n(X) \simeq \MGL^{2n,n}(X)$ for all $n \in \Z$. We prove this with rational coefficients (Corollary~\ref{cor:QMGLOmega}), but establishing such a comparison with integer coefficients seems difficult, and is not pursued here. 
\end{remark}

\subsection*{Related work}

Other constructions of motivic homotopy categories without $\A^1$-invariance have been developed based on extensions of the category of schemes itself: one by Kelly and Miyazaki using modulus pairs \cite{KellyMiyazaki} and one by Binda, Park, and Østvær using log schemes \cite{BPOe}. 
Our construction is in some sense more naive, as it is simply a variant without $\A^1$-invariance of the $\P^1$-stable motivic homotopy category of Morel and Voevodsky. Since any scheme may be viewed as either a modulus pair or a log scheme, there are canonical functors from our $\infty$-category of motivic spectra to theirs. 

Binda, Park, and Østvær also prove similar results to ours in the logarithmic setting; they define in particular the logarithmic cobordism spectrum $\mathrm{logMGL}$ and prove its universality. Although their definition looks slightly different than our definition of $\MGL$, the universal properties imply that $\mathrm{logMGL}$ is the image of $\MGL$.

\subsection*{Conventions and notation}

We use the word ``anima'' for spaces/$\infty$-groupoids and we denote by $\Ani$ the $\infty$-category of anima. We write $\scr P(\scr C)$ for the $\infty$-category of presheaves of anima on $\scr C$. If $\tau$ is a Grothendieck topology on $\scr C$, $\scr P_\tau(\scr C)\subset\scr P(\scr C)$ is the full subcategory of $\tau$-sheaves.
We write $\Sp$ for the $\infty$-category of spectra and $\Sp(\scr C)$ for the $\infty$-category of spectrum objects in $\scr C$.
If $\scr C$ admits filtered colimits, we write $\scr C^\omega\subset\scr C$ for the full subcategory of compact objects.
We write $\h\scr C$ for the homotopy category of an $\infty$-category $\scr C$.

For a presentably symmetric monoidal $\infty$-category $\scr V$ and an object $X\in\scr V$, we write $\Sp_X(\scr V)=\scr V[X^{-1}]$ for the symmetric monoidal $\infty$-category of symmetric $X$-spectra \cite[Section 1]{AnnalaIwasa2} and $\Sigma^\infty_X\colon \scr V\to\Sp_X(\scr V)$ for the canonical functor.

We use the following indexing conventions for cohomology theories represented by $\P^1$-spectra. If $X$ is pointed, we write $\tilde E^n(X)=\pi_0\Map(\SigmaP^\infty X,\SigmaP^n E)$ and $\tilde E^{p,q}(X)=(\Sigma^{p-2q}\tilde E)^q(X)$. If $X$ is unpointed, we write $E^{*}(X)=\tilde E^{*}(X_+)$ and $E^{**}(X)=\tilde E^{**}(X_+)$.

A scheme is a derived scheme by default. Note that we often use hooked arrows $\into$ for immersions of derived schemes, even though these are not monomorphisms.
We write $\Sch_X$ for the $\infty$-category of $X$-schemes and $\Sm_X\subset \Sch_X$ for the full subcategory of smooth $X$-schemes. The superscript ``$\fp$'' means ``of finite presentation''.

We write $\Vect(X)$ for the anima of finite locally free sheaves over a scheme $X$, and $\Pic(X)=\Vect_1(X)$ for the subanima of invertible sheaves.
For a sheaf $\scr E\in\Vect(X)$, we denote by $\V(\scr E)=\Spec(\Sym\scr E)$ and $\P(\scr E)=\Proj(\Sym\scr E)$ the associated vector and projective bundles.

\subsection*{Acknowledgments} 
We thank Dustin Clausen for helpful discussions about weighted $\A^1$-homotopy invariance and infinite excision, Jacob Lurie for useful discussions and answering questions about his recent work with Bhargav Bhatt, and Vova Sosnilo for useful discussions and insights about weighted $\A^1$-homotopy invariance.

\section{Smooth blowup excision}
\label{sec:sbu}

Let $S$ be a derived scheme. We refer to \cite{KhanVCD} for the definition of the blowup of a derived scheme at a quasi-smooth closed subscheme.

\begin{definition}
	Let $\scr C$ be an $\infty$-category and $F\colon\Sm_S^\op\to\scr C$ a $\scr C$-valued presheaf.
	\begin{enumerate}[beginpenalty=10000]
		\item\label{item:sbu} We say that $F$ satisfies \emph{smooth blowup excision} if $F(\emptyset)$ is a final object of $\scr C$ and for every closed immersion $i\colon Z\into X$ in $\Sm_S$, $F$ sends the blowup square
		\[
		\begin{tikzcd}
			E \ar[hookrightarrow]{r} \ar{d} & \Bl_ZX \ar{d} \\
			Z \ar[hookrightarrow]{r}{i} & X
		\end{tikzcd}
		\]
		to a cartesian square.
		\item\label{item:ebu} A closed immersion $i\colon Z\into X$ is called \emph{elementary} if, Zariski-locally on $X$, it is the zero section of $\A^n_Z\sqcup Y$ for some $n\ge 0$ and some $Y$. We say that $F$ satisfies \emph{elementary blowup excision} if \ref*{item:sbu} holds whenever $i$ is elementary.
	\end{enumerate}
	We denote by $\scr P_\sbu(\Sm_S)\subset \scr P_\ebu(\Sm_S)$ the corresponding full subcategories of $\scr P(\Sm_S)$, and by $\L_\sbu$ and $\L_\ebu$ the corresponding localization functors, which preserve finite products.
\end{definition}

The above definition of elementary blowup excision is slightly less elementary than \cite[Definition 3.1.1]{AnnalaIwasa2}, but it is obviously equivalent for Zariski sheaves.
Note that if $F\colon\Sm_S^\op\to\scr C$ satisfies elementary blowup excision, then $F$ preserves finite products.
In particular, we have $\scr P_\ebu(\Sm_S)\subset \scr P_\Sigma(\Sm_S)$.
For Nisnevich sheaves of spectra, there is no difference between elementary and smooth blowup excision:

\begin{proposition}\label{prop:Nis-sbu}
	Suppose that $\scr C$ is stable and that $F\colon\Sm_S^\op\to\scr C$ satisfies Nisnevich descent and elementary blowup excision. Then $F$ satisfies smooth blowup excision.
\end{proposition}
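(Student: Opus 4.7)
The plan is to use Nisnevich descent of $F$ together with the local structure theory of (quasi-)smooth closed immersions to reduce smooth blowup excision to elementary blowup excision. The vanishing $F(\emptyset)\simeq\ast$ is already part of elementary blowup excision, so the only remaining point is that every smooth blowup square is sent to a cartesian square. First I would reduce to the case where the closed immersion is the zero section of a vector bundle. The input is the derived analogue of the Morel--Voevodsky local structure theorem: for any closed immersion $i\colon Z\into X$ in $\Sm_S$ with normal bundle $\scr N_i$, there exists a Nisnevich cover $u\colon X'\to X$ and, writing $Z' = Z\times_X X'$, an étale morphism $p\colon X'\to \V(\scr N_i|_{Z'})$ under which $Z'\into X'$ identifies with the pullback of the zero section of $\V(\scr N_i|_{Z'})$. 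By compatibility of derived blowups with flat base change, the blowup square of $Z\into X$ pulls back along $u$ to the blowup square of $Z'\into X'$, which is in turn the pullback along $p$ of the blowup square of the zero section in $\V(\scr N_i|_{Z'})$. Thus, by Nisnevich descent for $F$ applied to the cover $u$, it suffices to check that $F$ sends every blowup square of a zero section $Z\into \V(\scr E)$ of a vector bundle over a smooth $S$-scheme to a cartesian square.

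Next I would trivialize the vector bundle Zariski-locally on $Z$. Choose a Zariski cover $\{Z_\alpha\to Z\}$ on which $\scr E$ becomes trivial; this induces a Zariski cover $\{\V(\scr E)|_{Z_\alpha}\to \V(\scr E)\}$. On each chart, the zero section identifies with $Z_\alpha\into \A^{n_\alpha}_{Z_\alpha}$, which is an elementary closed immersion. Invoking flat base change for blowups and Zariski descent (a special case of Nisnevich descent) once more, this reduces the problem to elementary blowup excision, which holds by hypothesis.

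The main obstacle is the geometric input used in the first reduction, namely a Nisnevich-local étale model presenting any quasi-smooth closed immersion as the zero section of its normal bundle. In the classical smooth case this is a standard step in the Morel--Voevodsky proof of homotopy purity, and its derived extension should follow from Khan's work on quasi-smooth closed immersions and derived blowups. Granted this input, the rest of the argument --- flat base change for blowups and assembly via Nisnevich/Zariski descent --- is formal; the assumption that $\scr C$ is stable is not actually used beyond what is needed to make Nisnevich descent via the \v{C}ech nerve of $u$ behave well.
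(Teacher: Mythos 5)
Your overall strategy is the same as the paper's: use the Morel--Voevodsky local structure of smooth closed immersions to present $Z\into X$ Nisnevich-locally as pulled back from a zero section, and then use Nisnevich descent plus compatibility of blowups with flat base change. However, there is a genuine gap at the key transfer step. You argue that since the blowup square of $Z'\into X'$ is the \emph{pullback along the \'etale map} $p\colon X'\to \V(\scr N_i|_{Z'})$ of the blowup square of the zero section, and $F$ sends the latter to a cartesian square, $F$ must send the former to a cartesian square as well. This does not follow: $F$ is merely a presheaf, and the property of being sent to a cartesian square is not preserved under pulling back a square along an \'etale morphism. The correct argument uses that $p$ is an isomorphism over $Z'$, so that $p$ together with the open complement of the zero section forms a Nisnevich cover of $\V(\scr N_i|_{Z'})$; one then compares the two blowup squares through a commutative cube whose connecting faces are Nisnevich excision squares. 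In that cube, the implication from ``base'' to ``\'etale cover'' (which is the direction you need, going from the vector bundle to $X'$) requires cancelling a cartesian square on the wrong side of the pasting lemma, and this is exactly where the stability of $\scr C$ is used. Your closing remark that stability is only needed to make \v{C}ech descent behave is therefore incorrect: stability is essential for this transfer, and the transfer is the step your proposal omits.

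Two smaller points. First, your formulation of the local structure theorem is not literally correct: one cannot have a Nisnevich cover $X'\to X$ of all of $X$ together with an \'etale map $X'\to\V(\scr N_i|_{Z'})$ identifying $Z'$ with the pullback of the zero section, since the part of $X'$ covering $X-Z$ would have to map to $\V(\scr N_i|_\emptyset)=\emptyset$. The correct statement (which is what the paper uses, Zariski-locally on $X$) produces an \'etale neighborhood $V$ of $Z$ in $X$ together with an \'etale map $V\to\A^n_Z$ restricting to the identity on $Z$; the Nisnevich cover is then $\{V, X-Z\}$. Second, your final Zariski-local trivialization of the vector bundle is essentially redundant, since the zero section of a vector bundle is already an elementary closed immersion in the sense of the paper's definition.
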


\begin{proof}
	Let $i\colon Z\into X$ be a closed immersion in $\Sm_S$. Zariski-locally on $X$, there exist cartesian squares
	\[
	\begin{tikzcd}
		Z \ar[hookrightarrow]{d}[swap]{i} & Z \ar[equal]{l} \ar[equal]{r} \ar[hookrightarrow]{d}[swap]{t} & Z \ar[hookrightarrow]{d}[swap]{s} \\
		X & V \ar{l} \ar{r} & \A^n_Z\rlap,
	\end{tikzcd}
	\]
	where the horizontal maps are étale and $s$ is the zero section (the proof of \cite[Section~3, Lemma~2.28]{MV} works without change for derived schemes; alternatively, one can observe that the claim depends only on the underlying classical schemes, by the topological invariance of the étale site).
	Let $B\to X$ be the blowup of $X$ at $Z$, $E\subset B$ the exceptional divisor, $U=X-Z$ and $W=V-Z$. We then have a commutative diagram
	\[
	\begin{tikzcd}[row sep={25,between origins}, column sep={25,between origins}]
	      & E \ar[hookrightarrow]{rr}\ar{dd}\ar[equal]{dl} & & B\times_XV \ar{dd}\ar{dl}  \ar[hookleftarrow]{rr} & & W \ar[equal]{dd}\ar{dl} \\
	    E \ar[crossing over,hookrightarrow]{rr} \ar{dd} & & B \ar[crossing over,hookleftarrow]{rr} & & U  \\
	      & Z  \ar[hookrightarrow]{rr}[near start]{t} \ar[equal]{dl} & &  V \ar{dl} \ar[hookleftarrow]{rr} & &  W \ar{dl} \\
	    Z \ar[hookrightarrow]{rr}{i} && X \ar[from=uu,crossing over] \ar[hookleftarrow]{rr} && U\rlap. \ar[from=uu,equal,crossing over]
	\end{tikzcd}
	\]
	Since $F$ satisfies Nisnevich descent, it takes the top and bottom face on the right-hand side to cartesian squares, hence also the middle face in the diagram. Hence, on the left-hand side, we see that $F$ sends the blowup square for $i$ to a cartesian square if it sends the blowup square for $t$ to a cartesian square, and also conversely since $\scr C$ is stable.
	Since the same applies with $s$ instead of $i$, the claim follows.
\end{proof}

\begin{construction}[Cubes and total cofibers]
	\label{cst:cubes}
	For a finite set $I$, we denote by $\Box^I$ the poset of subsets of $I$.
	We let $\Cube\subset\Cat_\infty$ be the subcategory whose objects are the cubes $\Box^I$ and whose morphisms are the colimit-preserving functors.
	A morphism of cubes is called \emph{strict} if it sends non-initial objects to non-initial objects; we denote by $\Cube^s\subset\Cube$ the wide subcategory of strict morphisms. Thus:
	\[
	\Map_{\Cube}(\Box^I,\Box^J)\simeq\Map(I,\Box^J)\quad\text{and}\quad \Map_{\Cube^s}(\Box^I,\Box^J)\simeq\Map(I,\Box^J-\{\emptyset\}).
	\]
	The cartesian symmetric monoidal structure on $\Cat_\infty$ restricts to a symmetric monoidal structure on both $\Cube$ and $\Cube^s$ (which is cartesian on $\Cube$ but not on $\Cube^s$).
	Note that the $1$-cube $\Box^1$ is a final object of $\Cube^s$ and hence admits a unique structure of commutative monoid in $\Cube^s$, whose multiplication is $\cup$.
	
	Let $\scr C$ be an $\infty$-category. We denote by $\Cube(\scr C)\to \Cube$ the cartesian fibration classified by the functor
	\[
	\Cube^\op \to \Cat_\infty, \quad \Box^I\mapsto \Fun(\Box^{I,\op},\scr C),
	\]
	and by $\Cube^s(\scr C)\subset \Cube(\scr C)$ the wide subcategory given by the preimage of $\Cube^s\subset\Cube$. Thus:
	\begin{itemize}
		\item An object of $\Cube(\scr C)$ is a pair $(I,X)$ consisting of a finite set $I$ and an $I$-cube $X\colon \Box^{I,\op}\to \scr C$.
		\item A morphism $(I,X)\to (J,Y)$ in $\Cube(\scr C)$ is a map of cubes $\alpha\colon \Box^I\to \Box^J$ together with a natural transformation $X\to \alpha^* (Y)$. It is a morphism in $\Cube^s(\scr C)$ if and only if $\alpha$ is strict.
	\end{itemize}
	If $\scr C$ admits finite colimits, then each functor $\alpha^*$ admits a left adjoint $\alpha_!$, so that the cartesian fibration $\Cube(\scr C)\to \Cube$ is also cocartesian.

	Suppose now that $\scr C$ has a symmetric monoidal structure.
	Then the above functor $\Cube^\op \to \Cat_\infty$ is lax symmetric monoidal, so that $\Cube(\scr C)\to \Cube$ is a symmetric monoidal cartesian fibration: the tensor product $(I,X)\otimes (J,Y)$ is the $I\sqcup J$-cube $K\sqcup L\mapsto X(K)\otimes Y(L)$.
	If moreover $\scr C$ admits finite colimits that are preserved by the tensor product in each variable, then each functor $\alpha_!$ is symmetric monoidal, so that $\Cube(\scr C)\to \Cube$ is also a symmetric monoidal cocartesian fibration. Over $\Cube^s$, the total pushforward to the final object $\Box^1$ then gives a symmetric monoidal functor
	\begin{align*}
		\Cube^s(\scr C) &\to \Fun(\Box^{1,\op},\scr C)\simeq \Fun(\Delta^1,\scr C),\\
		(I,X) & \mapsto \left(\colim_{\emptyset\neq J\subset I} X(J) \to X(\emptyset)\right),
	\end{align*}
	where $\Fun(\Delta^1,\scr C)$ is equipped with the Day convolution (also known as the pushout-product).
	If $\scr C$ has a final object $*$, there is a further symmetric monoidal functor $\cofib\colon \Fun(\Delta^1,\scr C)\to\scr C_*$. Hence, we obtain a symmetric monoidal functor
	\[
	\operatorname{tcofib}\colon\Cube^s(\scr C) \to \scr C_*
	\]
	sending a cube in $\scr C$ to its total cofiber.
\end{construction}

\begin{definition}
	Let $S$ be a derived scheme and let $X$ be a smooth $S$-scheme.
	A \emph{relative strict normal crossings divisor} $\partial X$ on $X$ is the data of a finite set $I$ and of an $I$-cube 
	\[
	\Box^{I,\op} \to (\Sm_S)_{/X},\quad J\mapsto \partial_JX,
	\]
	such that $\partial_\emptyset X=X$ and:
	\begin{enumerate}
		\item the cube is strongly cartesian in $\Sch_S$ (i.e., it is right Kan extended from $\Box^{I,\op}_{\leq 1}$);
		\item for each subset $J\subset I$, the map $\partial_JX\to X$ in $\Sm_S$ is a closed immersion, which is everywhere of codimension $\lvert J\rvert$.
	\end{enumerate}
	We let $\Sm_S^\sncd$ denote the full subcategory of $\Cube^s(\Sm_S)$ spanned by the relative strict normal crossings divisors $(X,\partial X)$.
\end{definition}

By definition, a relative strict normal crossings divisor $\partial X$ on $X$ is uniquely determined by the smooth divisors $\partial_i X\to X$ with $i\in I$, called the \emph{smooth components} of $\partial X$.
Note that the symmetric monoidal structure on $\Cube^s(\Sm_S)$ restricts to $\Sm_S^\sncd$: if $\partial X$ has smooth components $\partial_iX$ and $\partial Y$ has smooth components $\partial_jY$, then the tensor product $(X,\partial X)\otimes (Y,\partial Y)$ is given by the relative strict normal crossings divisor on the smooth $S$-scheme $X\times_SY$ with smooth components $\partial_i X\times_SY$ and $X\times_S\partial_j Y$.
Given $(X,\partial X)\in \Sm_S^\sncd$, we will also denote by $\partial X$ the colimit of the punctured cube in $\scr P(\Sm_S)_{/X}$.
As explained in Construction~\ref{cst:cubes}, we then have symmetric monoidal functors
\begin{align*}
	\Sm_S^\sncd &\to \Fun(\Delta^1,\scr P(\Sm_S))\to \scr P(\Sm_S)_*,\\
	(X,\partial X) & \mapsto (\partial X\to X)\mapsto X/\partial X.
\end{align*}

\begin{remark}
	If $S$ is a classical scheme, one can show that the image of the presheaf $\partial X$ in $\scr P_\Sigma(\Sm_S)_{/X}$ depends only on the underlying Cartier divisor $\sum_{i\in I}\partial_i X$ on $X$, i.e., it is independent of the choice of smooth components of that divisor (indeed, such choices form a poset, and one can obtain a common refinement of any two choices by taking finite coproduct decompositions, see \cite[Proposition A.0.7]{NS}). We do not expect this to remain true for derived schemes, which is why we take the smooth components as part of the data of a relative strict normal crossings divisor.
\end{remark}

\begin{remark}[Functoriality of quasi-smooth blowups]
	An \emph{excess intersection square} is a commutative square of derived schemes
	\[
	\begin{tikzcd}
		Z' \ar[hook]{r}{i'} \ar{d}[swap]{g} & X' \ar{d}{f} \\
		Z \ar[hook]{r}{i} & X\rlap,
	\end{tikzcd}
	\]
	where $i$ and $i'$ are quasi-smooth closed immersions, such that the underlying square of topological spaces is cartesian and such that the induced map $g^*(\scr N_{i})\to \scr N_{i'}$ is surjective \cite{KhanExcess}. The blowup of quasi-smooth closed immersions is then functorial with respect to excess intersection squares.
	In classical geometry, given $i\colon Z\into X$ and $f\colon X'\to X$ as above, one often speaks of the \emph{strict transform} of $X'$ with respect to the blowup of $X$ at $Z$, which means the blowup of $X'$ at $Z\times_XX'$. In derived geometry, however, there are usually many ways of forming an excess intersection square, and it might not be the actual pullback $Z\times_XX'$ that is geometrically relevant; for example, given quasi-smooth closed immersions $Z\into Y\into X$, the relevant ``strict transform'' of $Y$ is often the blowup of $Y$ at $Z$ and not at $Z\times_XY$. In some of the geometric arguments in Section~\ref{sec:thom}, we will nevertheless use the term ``strict transform'' in cases where the intended excess intersection square is clear from the context. In fact, the geometric situations we will deal with are always classical in the sense that the universal example lives over a classical stack, where ``strict transform'' has its classical meaning.
\end{remark}

If $X$ is smooth over $S$ and $Z\subset X$ is a smooth closed subscheme, then $Z$ is Zariski-locally on $X$ the zero locus of a map $X\to \A^n$. We will refer to such a map as \emph{coordinates} along $Z$. If $\partial X$ is a relative strict normal crossings divisor on $X$ with smooth components $(\partial_i X)_{i\in I}$, then for every $J\subset I$ there are coordinates along $\partial_J X$ in which the divisors $\partial_i X$ with $i\in J$ are the coordinate hyperplanes.

By a \emph{smooth center} $Z$ on $(X,\partial X)$, we mean a closed immersion of $I$-cubes $(Z_J)_{J}\to (\partial_JX)_J$ in $\Sm_S$ such that, for each $J\subset I$, there are coordinates along $Z_J$ in which the divisors $\partial_i X$ with $i\in J$ are \emph{some} of the coordinate hyperplanes while $Z_\emptyset$ is the vanishing locus of \emph{some} subset of the coordinates. We will also write $Z$ for the underlying smooth closed subscheme $Z_\emptyset\into X$. Given a smooth center $Z$ on $(X,\partial X)$, each square
\[
\begin{tikzcd}
	Z_J \ar[hook]{r} \ar{d} & \partial_J X \ar{d} \\
	Z \ar[hook]{r} & X
\end{tikzcd}
\]
is an excess intersection square, and we call the blowup of $\partial_JX$ at $Z_J$ the \emph{strict transform} of $\partial_JX$. The strict transforms $\tilde\partial_iX$ of the components $\partial_i X$ together with the exceptional divisor $E$ then form a relative strict normal crossings divisor $\tilde\partial X\cup E$ on the blowup $\Bl_ZX$, with underlying finite set $I\sqcup\{e\}$.
Moreover, the intersection $\bigcap_{i\in J}\tilde\partial_i X$ is the strict transform $\tilde\partial_JX$ of $\partial_J X$, and $E\cap \tilde\partial_JX$ is the exceptional divisor of this blowup.

Let $Z$ be a smooth center on $(X,\partial X)$. Given a subset $K\subset I$, we say that $Z$ is \emph{contained in $\partial_K X$} if $Z_{J\cup K}\simto Z_J$ for all $J\subset I$. If $K$ is \emph{nonempty}, we obtain a morphism
\[
(\Bl_ZX,\tilde\partial X\cup E) \to (X,\partial X)
\]
in $\Sm_S^\sncd$, whose underlying strict morphism of cubes $\Box^{I\sqcup\{e\}}\to \Box^I$ sends $\{i\}$ to $\{i\}$ and $\{e\}$ to $K$.

\begin{proposition}\label{prop:blowup-square}
	Let $X$ be a smooth $S$-scheme, $\partial X$ a relative strict normal crossings divisor on $X$, and $Z$ a smooth center on $(X,\partial X)$ contained in $\partial_K X$ for some $K\neq\emptyset$.
	Then the square
	\[
	\begin{tikzcd}
		\tilde\partial X\cup E \ar{r} \ar{d} & \Bl_ZX \ar{d} \\
		\partial X \ar{r} & X
	\end{tikzcd}
	\]
	is cocartesian in $\scr P_{\sbu}(\Sm_S)$. If moreover each closed immersion $Z_J\into \partial_JX$ is elementary, then the square is cocartesian in $\scr P_\ebu(\Sm_S)$.
\end{proposition}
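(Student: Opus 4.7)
The plan is to show the square is cocartesian in $\scr P_\sbu(\Sm_S)$ (respectively $\scr P_\ebu$) by checking that the induced map between pointed cofibers $\Bl_ZX/(\tilde\partial X\cup E)\to X/\partial X$ is an equivalence in $\scr P_\sbu(\Sm_S)_*$. Via Construction~\ref{cst:cubes}, both sides are total cofibers of cubes: the right side is $\operatorname{tcofib}$ of the $I$-cube $J\mapsto\partial_JX$, and the left side is $\operatorname{tcofib}$ of the $(I\sqcup\{e\})$-cube. Iterating the total cofiber in the $\{e\}$-direction first, the left side rewrites as $\operatorname{tcofib}(J\mapsto\cofib(E_J\to\tilde\partial_JX))$ over $J\in\Box^{I,\op}$.

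For each $J\subset I$, the closed immersion $Z_J\hookrightarrow\partial_JX$ is smooth, and elementary under the additional hypothesis, so the corresponding blowup excision square with corners $E_J,\tilde\partial_JX,Z_J,\partial_JX$ is cocartesian in $\scr P_\sbu$ (respectively $\scr P_\ebu$). This produces a natural-in-$J$ equivalence $\cofib(E_J\to\tilde\partial_JX)\simeq\cofib(Z_J\to\partial_JX)$, and commuting pointed colimits with cofibers identifies the left side with the total cofiber $X/(\partial X\sqcup_{\tilde Z}Z)$ of the square
\[
\begin{tikzcd}
\tilde Z\ar[r]\ar[d] & \partial X\ar[d]\\
Z\ar[r] & X\rlap,
\end{tikzcd}
\]
where $\tilde Z=\colim_{\emptyset\neq J\subset I}Z_J$. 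Inspection of the cube morphism $(\Bl_ZX,\tilde\partial X\cup E)\to(X,\partial X)$, which sends $\{e\}$ to $K$, shows that $\tilde Z\to\partial X$ factors as $\tilde Z\to Z\hookrightarrow\partial_KX\hookrightarrow\partial X$.

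The crucial step is to show that the canonical map $\tilde Z\to Z$ is an equivalence, since then $\partial X\sqcup_{\tilde Z}Z\simeq\partial X\sqcup_ZZ\simeq\partial X$ and the identification $X/(\partial X\sqcup_{\tilde Z}Z)\simeq X/\partial X$ follows. Writing $L=I\setminus K$, the hypothesis $Z\subset\partial_KX$ reads $Z_J\simeq Z_{J\cap L}$, so the cube $J\mapsto Z_J$ is pulled back along the projection $p\colon\Box^{I,\op}_{\neq\emptyset}\to\Box^{L,\op}$, $J\mapsto J\cap L$. I would invoke Quillen's Theorem~A: each slice $\scr C_{J_L/}$ identifies with the opposite of $\Box^{K\sqcup J_L}_{\neq\emptyset}$, whose classifying space is the barycentric subdivision of the simplex on $K\sqcup J_L$ and is therefore contractible---the assumption $K\neq\emptyset$ is essential here, since it ensures $K\sqcup J_L$ is nonempty even when $J_L=\emptyset$. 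Cofinality of $p$ then yields $\tilde Z\simeq\colim_{\Box^{L,\op}}Z_{J_L}=Z_\emptyset=Z$, using that $\emptyset$ is terminal in $\Box^{L,\op}$.

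The main technical obstacle will be naturality bookkeeping: verifying that under the cofinality equivalence $\tilde Z\simeq Z$ the map $\tilde Z\to\partial X$ corresponds to the composite $Z\hookrightarrow\partial_KX\hookrightarrow\partial X$, and that the iterated tcofib and blowup-excision identifications compose to the canonical map induced by the original square, so that the identification $\Bl_ZX/(\tilde\partial X\cup E)\simeq X/\partial X$ I produce is indeed the one in the statement.
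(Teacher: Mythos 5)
Your reduction at the very first step is where the argument breaks: in the unstable category $\scr P_\sbu(\Sm_S)$, showing that the induced map on cofibers $\Bl_ZX/(\tilde\partial X\cup E)\to X/\partial X$ is an equivalence does \emph{not} imply that the square is cocartesian. Cocartesianness means $\Bl_ZX\sqcup_{\tilde\partial X\cup E}\partial X\to X$ is an equivalence, and your cofiber comparison only tests this map after a further cobase change along $\partial X\to *$; that functor is not conservative in a category of presheaves of anima (this is the usual failure of cofiber sequences to detect pushouts unstably). The distinction matters here: the paper explicitly records the unpointed pushout statement as strictly stronger than the statement about quotients, and uses it in that form (e.g.\ in the cube manipulations in the proof of Theorem~\ref{thm:euler}, which take place in $\scr P(\Sm_S)$ prior to quotienting). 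So as written you have proved only the weaker, pointed consequence of the proposition.

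The good news is that your combinatorics is exactly the paper's, and the fix is to run it at the level of pushout squares rather than cofibers. Decompose the punctured $(I\sqcup\{e\})$-cube so that $\tilde\partial X\cup E\simeq \tilde\partial X\sqcup_{E\cap\tilde\partial X}E$ in $\scr P(\Sm_S)$; observe that the rectangle with corners $E\cap\tilde\partial X$, $\tilde\partial X$, $Z$, $\partial X$ is the colimit over nonempty $J\subset I$ of the smooth (resp.\ elementary) blowup squares for $Z_J\into\partial_JX$, hence is cocartesian in $\scr P_\sbu$ (resp.\ $\scr P_\ebu$); then paste with the blowup square for $Z\into X$ to conclude. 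Your cofinality argument showing $\colim_{\emptyset\neq J\subset I}Z_J\simeq Z$ (using $K\neq\emptyset$ so that the slices $\Box^{K\sqcup J_L}_{\neq\emptyset}$ are nonempty-subset posets, hence contractible) is precisely what identifies the bottom-left corner of that rectangle with $Z$; this is a point the paper's proof leaves implicit, and your explicit treatment of it, including the correct localization of where the hypothesis $K\neq\emptyset$ enters, is a genuine improvement in precision. But it must be deployed inside a pasting-of-pushouts argument, not a computation of total cofibers.
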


\begin{proof}
	We consider the following commutative diagram in $\scr P(\Sm_S)$:
	\[
	\begin{tikzcd}
		E\cap \tilde\partial X \ar{r} \ar{d} & \tilde\partial X \ar{d} & \\
		E \ar{r} \ar{d} & \tilde\partial X\cup E \ar{r} \ar{d} & \Bl_ZX \ar{d} \\
		Z\ar{r} & \partial X \ar{r} & X\rlap.
	\end{tikzcd}
	\]
	The upper square is a pushout square in $\scr P(\Sm_S)$, and the lower horizontal rectangle is a smooth blowup square. It remains to show that the left vertical rectangle is a pushout in $\scr P_{\sbu}(\Sm_S)$. This rectangle is the colimit in $\scr P(\Sm_S)$ of the squares
	\[
	\begin{tikzcd}
		E\cap \tilde\partial_J X \ar{r} \ar{d} & \tilde\partial_J X \ar{d} \\
		Z_J \ar{r} & \partial_J X\rlap,
	\end{tikzcd}
	\] 
	where $J$ ranges over the nonempty subsets of the underlying finite set of $\partial X$. Each of these squares is a smooth blowup square, which proves the claim.
\end{proof}

\section{Thom spaces}
\label{sec:thom}

Let $S$ be a derived scheme. For a smooth $S$-scheme $X$ (or more generally an arbitrary presheaf $X$ on $\Sm_S$) and a finite locally free sheaf $\scr E$ on $X$, we define the \emph{Thom space} of $\scr E$ by
\[
\Th_X(\scr E)=\P(\scr E\oplus\scr O)/\P(\scr E) \in\scr P(\Sm_S)_*.
\]

Let $\Vect^\epi(S)$ be the $\infty$-category of finite locally free sheaves on $S$ and epimorphisms (i.e., morphisms that are surjective on $\pi_0$). 
Recall that an epimorphism $\scr E\onto\scr F$ induces a linear embedding $\P(\scr F)\into \P(\scr E)$.
Consequently, the Thom space construction defines a functor
\[
\Vect^\epi(S)^\op\to \scr P(\Sm_S)_*, \quad \scr E \mapsto \Th_S(\scr E)=\P(\scr E\oplus \scr O)/\P(\scr E).
\]
This functor does not have a lax symmetric monoidal structure, as there is no natural map between the pointed presheaves
\[
\Th_S(\scr E\oplus \scr F)\quad\text{and}\quad \Th_S(\scr E)\wedge \Th_S(\scr F).
\]
Our goal in this section is to construct a symmetric monoidal structure on the composite functor
\[
\Vect^\epi(S)^\op\to \scr P(\Sm_S)_*\to \scr P_{\ebu}(\Sm_S)_*.
\]

For a finite locally free sheaf $\scr E$ on $S$, we will regard $\P(\scr E\oplus\scr O)$ as an object of $\Sm_S^\sncd$ with the smooth boundary divisor $\P(\scr E)$. The Thom space $\Th_S(\scr E)$ is thus the image of $\P(\scr E\oplus\scr O)$ by the symmetric monoidal functor
\[
\Sm_S^\sncd \to \scr P(\Sm_S)_*, \quad (X,\partial X)\mapsto X/\partial X,
\]
defined in Section~\ref{sec:sbu}.

Let now $\scr E=(\scr E_i)_{i\in I}$ be a finite collection of finite locally free sheaves on $S$, and let us contemplate the problem of relating $\prod_{i\in I}\P(\scr E_i\oplus\scr O)$ and $\P(\bigoplus_{i\in I}\scr E_i\oplus\scr O)$ in the $\infty$-category $\Sm_S^\sncd$.
To that end, we will construct an object $\B(\scr E)\in \Sm_S^\sncd$ and a zigzag
\begin{equation}\label{eqn:zigzag}
\prod_{i\in I}\P(\scr E_i\oplus \scr O)\xleftarrow{b_\Pi} \B(\scr E) \xrightarrow{b_\P} \P\left({\textstyle \bigoplus_{i\in I}\scr E_i}\oplus\scr O\right),
\end{equation}
such that both maps become isomorphisms in $\scr P_{\ebu}(\Sm_S)_*$.\footnote{For this construction, it would suffice to work with classical schemes, since the universal example is classical.}

Let us first consider the special case when $I=\{1,2\}$ and $\scr E_1=\scr E_2=\scr O$.
On the left-hand side of~\eqref{eqn:zigzag} we then have $\P^1\times\P^1$ with boundary divisor $(\infty\times\P^1)\cup(\P^1\times\infty)$, and on the right-hand side we have $\P^2$ with boundary divisor $\P^1$ at infinity. In this case, $\B(\scr O,\scr O)$ is the blowup of $\P^1\times\P^1$ at the point $(\infty,\infty)$, which can be identified with the blowup of $\P^2$ at the two points $[1\mathbin :0\mathbin :0]$ and $[0\mathbin :1\mathbin :0]$, and the boundary divisor $\partial \B(\scr O,\scr O)$ is the union of the three exceptional divisors.

We now explain the general construction. 
For a subset $J\subset I$, let
\[
Z_J = \P\left({\textstyle\bigoplus_{i\notin J}\scr E_i}\right)\subset \P\left({\textstyle \bigoplus_{i\in I}\scr E_i}\oplus\scr O\right).
\]
Thus, $Z_\emptyset$ is the boundary divisor, the $Z_i$'s are linear subspaces of $Z_\emptyset$ in a $\partial\Delta^I$ configuration, and for $J\neq\emptyset$ we have $Z_J=\bigcap_{i\in J}Z_i$.
For a subset $K\subset I$, let
\[
W_K=\prod_{i\in K}\P(\scr E_i)\times\prod_{i\notin K} \P(\scr E_i\oplus\scr O)\subset \prod_{i\in I}\P(\scr E_i\oplus \scr O).
\]
Thus, $\bigcup_{i\in I}W_i$ is the boundary divisor and $W_K=\bigcap_{i\in K}W_i$.

	We first describe the scheme $\B(\scr E)$ via its functor of points: a point of $\B(\scr E)$ is a family $(Y_J)_{J\subset I}$, where $Y_J\subset \P(\bigoplus_{i\in I}\scr E_i\oplus\scr O)$ is a linear subspace such that $Z_J$ is a hyperplane in $Y_J$, and such that if $K\subset J$ then $Y_K\supset Y_J$. 
	In other words, a point of $\B(\scr E)$ is a family of factorizations
	\[
	{\textstyle\bigoplus_{i\in I}\scr E_i}\oplus\scr O\onto \scr F_J\stackrel{\chi_J}\onto {\textstyle\bigoplus_{i\notin J}\scr E_i},
	\]
	such that the kernel $\scr L_J$ of $\chi_J$ is invertible, and such that if $J'\subset J$ then $\scr F_J$ is a quotient of $\scr F_{J'}$. 
	This is in turn equivalent to a family of invertible quotients $\phi_J\colon\bigoplus_{i\in J}\scr E_i\oplus\scr O\onto \scr L_J$ such that if $J'\subset J$ then the restriction of $\phi_J$ to $\bigoplus_{i\in J'}\scr E_i\oplus\scr O$ factors through $\phi_{J'}$.\footnote{Factoring through an epimorphism is merely a property when $S$ is a classical scheme, but it should of course be understood as the data of compatible factorizations when $S$ is a derived scheme.}
	The epimorphism $\phi_J$ defines a point of $\P(\bigoplus_{i\in J}\scr E_i\oplus\scr O)$, which can be thought of as the normal direction to $Z_J$ inside $Y_J$. We thus have canonical morphisms
	\[
	\pi_J\colon \B(\scr E)\to \P\left({\textstyle\bigoplus_{i\in J}\scr E_i}\oplus\scr O\right),
	\]
	which exhibit $\B(\scr E)$ as a closed subscheme of a product of $2^{\lvert I\rvert}$ projective bundles over $S$.
	Taking $J=I$ yields a morphism
	\[
	b_{\P}\colon \B(\scr E)\to \P\left({\textstyle\bigoplus_{i\in I}\scr E_i}\oplus\scr O\right),
	\]
	sending the family $(Y_J)_{J\subset I}$ to the point $Y_I$.
	Taking $J$ to be a singleton yields a morphism
	\[
	b_{\Pi}\colon \B(\scr E)\to \prod_{i\in I}\P(\scr E_i\oplus\scr O),
	\]
	sending the family $(Y_J)_{J\subset I}$ to $(Y_i)_{i\in I}$.

Next, we want to show that both $b_\P$ and $b_\Pi$ are sequences of smooth blowups. To see this, we will need the following description of strict transforms when blowing up zero loci of sections of vector bundles (the description of the blowup itself already appears in \cite[Theorem 122]{AnnalaThesis}):

\begin{lemma}[Blowing up zero loci]
	\label{lem:blowup}
	Let $X$ be a derived scheme, $\scr E$ a finite locally free sheaf on $X$, and $\sigma\colon \scr E\to\scr O$ a linear map.
	Then the blowup of $X$ at the zero locus of $\sigma$ classifies factorizations of $\sigma$ as
	\[
	\scr E\xrightarrow\phi \scr L\xrightarrow\tau\scr O,
	\]
	where $\scr L$ is invertible and $\phi$ is surjective.
	The exceptional divisor is then the zero locus of $\tau$.
	If moreover $\mu\colon\scr F\to\scr E$ is a universally injective morphism of finite locally free sheaves, then the strict transform of the zero locus of $\sigma\circ\mu$ is the zero locus of $\phi\circ\mu$.
\end{lemma}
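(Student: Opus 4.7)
The first assertion, that $\Bl_{Z(\sigma)} X$ classifies factorizations $\scr E\xrightarrow\phi \scr L\xrightarrow\tau\scr O$ as stated, is already established in \cite[Theorem 122]{AnnalaThesis}, so my plan is to take it as given and focus on the two remaining claims. The key ingredient I would extract from that reference is the local model: Zariski-locally on $X$, where $\scr E\simeq\scr O^n$ and $\sigma=(\sigma_1,\ldots,\sigma_n)$, the derived blowup is covered by affine charts $U_i$ indexed by $i\in\{1,\ldots,n\}$ with coordinates $t_j^{(i)}=\sigma_j/\sigma_i$ for $j\ne i$ and defining relations $\sigma_j=t_j^{(i)}\sigma_i$; on $U_i$, the universal invertible sheaf $\scr L$ is canonically trivial, the universal surjection $\phi$ has components $(t_1^{(i)},\ldots,1,\ldots,t_n^{(i)})$ (with $1$ in the $i$-th slot), and the universal $\tau$ is multiplication by $\sigma_i$.

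For the description of the exceptional divisor, I would verify the claim directly in this local model: on $U_i$, the locus $Z(\tau)\cap U_i=Z(\sigma_i)$ forces every $\sigma_j=t_j^{(i)}\sigma_i$ to vanish as well, so $Z(\tau)\cap U_i$ coincides with the preimage of $Z(\sigma)$, which is exactly the exceptional divisor on $U_i$. A coordinate-free argument runs in parallel: over $Z(\sigma)$ the universal factorization $\tau\circ\phi$ becomes zero, and since $\phi$ is surjective this forces $\tau=0$; conversely $\tau=0$ implies $\sigma=\tau\circ\phi=0$, so the exceptional divisor and $Z(\tau)$ agree.

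For the strict transform claim, set $Y=Z(\sigma\circ\mu)$ and write $\scr G$ for the cokernel of $\mu$, which is a finite locally free sheaf because $\mu$ is universally injective. The vanishing of $\sigma\circ\mu$ on $Y$ means that $\sigma|_Y$ descends to a map $\bar\sigma\colon\scr G|_Y\to\scr O_Y$ whose derived zero locus inside $Y$ equals $Z(\sigma)$. Applying the first part of the lemma to $\bar\sigma$ then identifies $\Bl_{Z(\sigma)}Y$ with the classifier of factorizations $\scr G|_Y\twoheadrightarrow\scr L\to\scr O_Y$; this classifier maps to $\Bl_{Z(\sigma)}X$ by precomposing the universal surjection with $\scr E|_Y\twoheadrightarrow\scr G|_Y$, and unwinding the universal property identifies its image with the derived zero locus $Z(\phi\circ\mu)\subset\Bl_{Z(\sigma)}X$.

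The principal obstacle is this last identification, since in the derived setting one cannot simply reason via ``the closure of the part lying outside the exceptional divisor''; instead, one must invoke the universal property of the derived blowup of the smaller scheme $Y$ and verify that the resulting map into $\Bl_{Z(\sigma)}X$ is a closed immersion with the claimed image, by checking compatibility with the excess intersection square associated to the centers $Z(\sigma)\hookrightarrow Y$ and $Z(\sigma)\hookrightarrow X$.
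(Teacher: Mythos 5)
Your proposal is correct, and for the part of the lemma that is genuinely new here (the exceptional divisor and the strict transform) it follows essentially the same route as the paper. The main difference is in how the classification of the blowup is handled: the paper, while acknowledging that this description already appears in \cite[Theorem 122]{AnnalaThesis}, gives a self-contained derivation from the functor of points of the derived blowup in \cite{KhanVCD} --- a point of $\Bl_{Z(\sigma)}X$ is a generalized Cartier divisor $\tau\colon\scr L\to\scr O$ together with a map $Z(\tau)\to Z(\sigma)$ inducing an isomorphism of classical schemes and a surjection of conormal sheaves, and the universal surjection $\phi$ is produced coordinate-freely as $\scr E\to\fib(\scr O\to\scr O_{Z(\sigma)})\to\fib(\scr O\to\scr O_{Z(\tau)})=\scr L$. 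Your chart-based description buys concreteness and makes the exceptional divisor computation immediate, at the cost of gluing and of making the canonicity of $(\scr L,\phi,\tau)$ less transparent. For the strict transform, the two arguments coincide: universal injectivity of $\mu$ makes $\operatorname{coker}\mu$ locally free, $Z(\sigma)$ is exhibited inside $Y=Z(\sigma\circ\mu)$ as the zero locus of $\bar\sigma\colon\operatorname{coker}\mu|_Y\to\scr O_Y$, and the classification applied to $\bar\sigma$ identifies the blowup of $Y$ at this center with $Z(\phi\circ\mu)$.

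One small caution. Your claim that $Z(\tau)\cap U_i$ ``coincides with the preimage of $Z(\sigma)$'' is only true on underlying classical schemes: the derived preimage $Z(\sigma)\times_X\Bl_{Z(\sigma)}X$ is cut out by $n$ equations on the chart $U_i$, whereas the exceptional divisor is the virtual Cartier divisor cut out by $\sigma_i$ alone. The identification you actually need, and do verify, is $E\cap U_i=Z(\sigma_i)=Z(\tau)\cap U_i$, so nothing breaks, but the phrase should be avoided in the derived setting. Similarly, the coordinate-free remark that $\tau\circ\phi=0$ together with surjectivity of $\phi$ ``forces $\tau=0$'' only identifies underlying closed subsets, not derived structures; it is the chart computation (or, in the paper, the definition of the exceptional divisor as the universal virtual Cartier divisor in the \cite{KhanVCD} formalism) that does the work.
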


\begin{proof}
	We use the description of the functor of points of the blowup from \cite{KhanVCD}: $\Bl_{Z(\sigma)}X$ classifies pairs $(\tau,f)$ consisting of a generalized Cartier divisor $\tau\colon\scr L\to \scr O$ and an $X$-morphism $f\colon Z(\tau)\to Z(\sigma)$ inducing an isomorphism of underlying classical schemes and such that the induced morphism of conormal sheaves $\scr E|_{Z(\tau)}\to \scr L|_{Z(\tau)}$ is surjective.
	We must show that this data is equivalent to that of a factorization of $\sigma$ as above.
	On the one hand, such a factorization induces a map $f\colon Z(\tau)\to Z(\sigma)$, which is an isomorphism on classical schemes by the surjectivity $\phi$, and the induced map of conormal sheaves is surjective since it is the restriction of $\phi$.
	Conversely, let $(\tau,f)$ be a pair as above. The map $f$ induces an $\scr O$-linear map 
	\[
	\phi\colon \scr E\to \fib(\scr O\to\scr O_{Z(\sigma)}) \xrightarrow{f^*} \fib(\scr O\to\scr O_{Z(\tau)})=\scr L
	\]
	over $\scr O$, whose restriction to $Z(\tau)$ is the morphism of conormal sheaves induced by $f$. It remains to observe that $\phi$ is surjective: it is surjective over the points of $Z(\tau)=Z(\sigma)$ by assumption; over the complement, $\tau$ is an isomorphism and $\sigma$ is surjective, so that $\phi$ is also surjective.\footnote{This argument shows that, in the description of the blowup in \cite[Remark 4.1.3(ii)]{KhanVCD}, we can replace ``isomorphism of classical schemes'' by ``isomorphism of reduced schemes''.}
	
	In the final statement, the assumption that $\mu$ is universally injective guarantees that $Z(\sigma)$ is a quasi-smooth closed subscheme of $Z(\sigma\circ \mu)$, namely the zero locus of the induced map $\bar\sigma\colon \operatorname{coker}\mu \to\scr O$. It is then clear from the above description that $Z(\phi\circ\mu)$ is the blowup of $Z(\sigma\circ\mu)$ at the zero locus of $\bar\sigma$.
\end{proof}

By definition, $\B(\scr E)$ parametrizes $I$-cubes of invertible sheaves $(\scr L_J)_{J\subset I}$ with a surjective map from the $I$-cube $(\bigoplus_{i\in J}\scr E_i\oplus\scr O)_{J\subset I}$. We let $\B_{\geq r}(\scr E)$ be the functor parametrizing the same data but with $\lvert J\rvert\geq r$. The morphism $b_\P$ can then be factored as
\[
\B(\scr E) = \B_{\geq 0}(\scr E)=\B_{\geq 1}(\scr E) \to \B_{\geq 2}(\scr E)\to\dotsb\to \B_{\geq \lvert I\rvert}(\scr E)=\P\left({\textstyle\bigoplus_{i\in I}\scr E_i}\oplus\scr O\right).
\]

\begin{proposition}\label{prop:blowup-sequence1}
	For each $0\leq r\leq \lvert I\rvert-1$, the map $\B_{\geq r}(\scr E)\to\B_{\geq r+1}(\scr E)$ is a blowup with center $\coprod_{\lvert J\rvert=r}\tilde Z_J$, where $\tilde Z_J$ is the strict transform of $Z_J$. 
	For $J\subset I$ with $\lvert J\rvert=r$, define
	\[\scr L_{>J}= \lim_{J\subsetneqq J'} \scr L_{J'}\]
	in the stable $\infty$-category of quasi-coherent sheaves on $\B_{\geq r+1}(\scr E)$. Then $\scr L_{>J}$ is an invertible sheaf, locally isomorphic to $\scr L_{J\cup\{i\}}$ with $i\in I-J$, and $\tilde Z_J$ is the zero locus of the map
	\[
	\bigoplus_{i\in J}\scr E_i\oplus\scr O\to \scr L_{>J}
	\]
	induced by the maps $\phi_{J'}$ for $J\subsetneqq J'$.
\end{proposition}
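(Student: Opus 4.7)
The plan is to unravel the moduli description of $\B_{\geq r}(\scr E)$ relative to $\B_{\geq r+1}(\scr E)$ and apply Lemma~\ref{lem:blowup} simultaneously at each level $|J|=r$. By construction, $\B_{\geq r}(\scr E)\to \B_{\geq r+1}(\scr E)$ parametrizes, for each $J$ with $|J|=r$, a surjection $\phi_J\colon \bigoplus_{i\in J}\scr E_i\oplus\scr O\onto \scr L_J$ onto an invertible sheaf, together with compatible factorizations of $\bigoplus_{i\in J}\scr E_i\oplus\scr O\to \scr L_{J'}$ through $\phi_J$ for every $J'\supsetneq J$. Since the cube $(\scr L_{J'})_{J'\supsetneq J}$ is fixed on $\B_{\geq r+1}(\scr E)$, this compatibility data is equivalent to a single map $\scr L_J\to \scr L_{>J}$ through which the induced map $\bigoplus_{i\in J}\scr E_i\oplus\scr O\to \scr L_{>J}$ factors.

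The technical heart is to show that $\scr L_{>J}$ is invertible, locally isomorphic to some $\scr L_{J\cup\{i\}}$. I would argue pointwise: at a point $p\in \B_{\geq r+1}(\scr E)$, each transition $\scr L_{J_1}\to \scr L_{J_2}$ (for $J\subsetneq J_1\subsetneq J_2$) is a map between one-dimensional quotients, hence either zero or an isomorphism at $p$. Surjectivity of $\phi_I$ prevents all maps $\scr L_{J\cup\{i\}}\to \scr L_I$ from vanishing at $p$, since otherwise $\phi_I$ would vanish on $\sum_{i\in I-J}\bigl(\bigoplus_{k\in J\cup\{i\}}\scr E_k\oplus\scr O\bigr)=\bigoplus_{k\in I}\scr E_k\oplus\scr O$. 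Picking $i_0\in I-J$ with $\scr L_{J\cup\{i_0\}}\to \scr L_I$ an isomorphism at $p$, every transition $\scr L_{J_1}\to \scr L_{J_2}$ with $J\cup\{i_0\}\subseteq J_1\subseteq J_2\subseteq I$ must also be an isomorphism at $p$ (a nonzero composite through it forces each factor nonzero). The inclusion of posets $\{J'\suchthat J\cup\{i_0\}\subseteq J'\subseteq I\}\into \{J'\suchthat J\subsetneq J'\subseteq I\}$ preserves limits, because for each $J_1\supsetneq J$ in the target, the elements of the source lying above $J_1$ admit $J_1\cup\{i_0\}$ as a minimum and hence form a contractible poset. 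The derived limit of the punctured cube therefore equals the limit over the sub-cube of isomorphisms, which is $\scr L_{J\cup\{i_0\}}$. Zariski-openness of nonvanishing then extends this identification to a neighborhood of $p$.

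With invertibility of $\scr L_{>J}$ established, Lemma~\ref{lem:blowup} identifies the relative moduli problem over $\B_{\geq r+1}(\scr E)$ as the blowup at the zero locus $\tilde Z_J:=Z(\bigoplus_{i\in J}\scr E_i\oplus\scr O\to \scr L_{>J})$ for each $|J|=r$. These loci are pairwise disjoint in $\B_{\geq r+1}(\scr E)$: in the base $\P(\bigoplus\scr E_i\oplus\scr O)$ the linear subspaces $Z_J$ and $Z_{J'}$ meet in $Z_{J\cup J'}$ with $|J\cup J'|>r$, and this intersection has already been blown up at an earlier step of the tower, so the iterated strict transforms separate by the time we reach $\B_{\geq r+1}(\scr E)$. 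Hence the simultaneous blowup coincides with the single blowup at $\coprod_{|J|=r}\tilde Z_J$.

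Finally, I would identify $\tilde Z_J$ with the iterated strict transform of $Z_J$ by downward induction on $s\leq|I|$. The base case $Z_J = Z(\bigoplus_{i\in J}\scr E_i\oplus\scr O\to\scr L_I)$ in $\B_{\geq|I|}(\scr E)=\P(\bigoplus\scr E_i\oplus\scr O)$ is immediate from $Z_J=\P(\bigoplus_{i\notin J}\scr E_i)$. For the inductive step across $\B_{\geq s}\to\B_{\geq s+1}$, only the centers $\tilde Z_{J'}$ with $J'\supsetneq J$ and $|J'|=s$ intersect the strict transform of $Z_J$ in $\B_{\geq s+1}(\scr E)$ (by the disjointness argument above), and the strict-transform clause of Lemma~\ref{lem:blowup} applied to each universally injective inclusion $\bigoplus_{i\in J}\scr E_i\oplus\scr O\into \bigoplus_{i\in J'}\scr E_i\oplus\scr O$ produces the zero locus of the composite through the new $\scr L_{J'}$; combining these contributions with the previous condition expresses the new strict transform as the zero locus of the map into the enlarged limit at level $s$. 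The main obstacle throughout is the invertibility analysis: managing the derived limit of a punctured cube whose edges are a mix of zero maps and isomorphisms, which is handled by using surjectivity of $\phi_I$ to single out a cofinal iso-connected sub-cube.
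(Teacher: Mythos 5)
Your overall architecture matches the paper's: reduce the relative moduli problem of $\B_{\geq r}(\scr E)$ over $\B_{\geq r+1}(\scr E)$ to a factorization through $\scr L_J\to\scr L_{>J}$ and apply Lemma~\ref{lem:blowup}, separate the centers using the earlier blowups of the $Z_{J\cup J'}$, and identify $\tilde Z_J$ by downward induction via the strict-transform clause of Lemma~\ref{lem:blowup}. But the technical heart — the local identification of $\scr L_{>J}$ — contains a genuine error: you have the cofinality condition backwards. For the inclusion $Q=\{J'\suchthat J\cup\{i_0\}\subseteq J'\subseteq I\}\into P=\{J'\suchthat J\subsetneqq J'\subseteq I\}$ to compute the \emph{limit}, the slices $Q_{/J_1}=\{J'\in Q\suchthat J'\subseteq J_1\}$ must be weakly contractible for every $J_1\in P$, and these are empty whenever $i_0\notin J_1$. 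The condition you check (contractibility of the elements of $Q$ lying \emph{above} $J_1$) is the one relevant for colimits. The conclusion is in fact false: take $I-J=\{1,2\}$ and a point $p$ on the exceptional divisor $E_{J\cup\{2\}}$ over $\tilde Z_{J\cup\{2\}}$, where $\scr L_{J\cup\{1\}}\to\scr L_I$ is invertible but $\scr L_{J\cup\{2\}}\to\scr L_I$ vanishes. Your recipe forces $i_0=1$, yet the limit of the punctured square is the fiber product, which projects isomorphically onto $\scr L_{J\cup\{2\}}$; the restriction map $\scr L_{>J}\to\scr L_{J\cup\{i_0\}}$ vanishes at $p$. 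Knowing that all transitions \emph{within} the face above $J\cup\{i_0\}$ are isomorphisms says nothing about the transitions emanating from subsets not containing $i_0$, and those do contribute to the limit.

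The paper closes this gap differently: a determinant identity $\scr O(-E_{J'})\simeq\bigotimes_{J'\subset J''}\scr L_{J''}^{(-1)^{\lvert J''-J'\rvert}}$ shows that the zero locus of $\scr L_{J'}\to\scr L_{J'\cup\{i\}}$ is the union of the exceptional divisors $E_{J''}$ with $J'\subset J''$ and $i\notin J''$; combined with the disjointness of incomparable $E$'s, this yields an open cover by sets $U_R$ on which \emph{every} transition $\scr L_{J'}\to\scr L_{J'\cup R}$ is an isomorphism, and there one applies the functor $J'\mapsto J'\cup R$, a genuine left adjoint to the inclusion and hence coinitial, to get $\scr L_{>J}\simeq\scr L_{J\cup R}$ over $U_R$. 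Your final inductive step also depends on knowing \emph{which} composite $\bigoplus_{i\in J}\scr E_i\oplus\scr O\to\scr L_{J\cup R}$ cuts out $\tilde Z_J$ locally, so the gap propagates there. The remaining ingredients of your outline (pairwise disjointness of the centers after the earlier blowups, the non-vanishing of at least one $\scr L_{J\cup\{i\}}\to\scr L_I$ from surjectivity of $\phi_I$) are correct but insufficient on their own.
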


\begin{proof}
	Assuming the given description of $\tilde Z_J$ for $\lvert J\rvert=r$, Lemma~\ref{lem:blowup} says that blowing up $\tilde Z_J$ in $\B_{\geq r+1}(\scr E)$ adds the data of a factorization
	\[
	\bigoplus_{i\in J}\scr E_i\oplus\scr O\overset{\phi_J}\onto \scr L_J\to \scr L_{>J}.
	\]
	We therefore obtain exactly $\B_{\geq r}(\scr E)$ by blowing up all $\tilde Z_J$'s with $\lvert J\rvert=r$, as claimed.
	
	Consider the right Kan extension $\tilde{\scr L}$ of the diagram of sheaves $\scr L$ on $\B_{\geq r+1}(\scr E)$ from the poset of subsets $J\subset I$ of size $\geq r+1$ to the poset of all subsets of $I$, so that $\tilde{\scr L}_J=\scr L_{>J}$ when $\lvert J\rvert=r$. We will show more generally that, for any $J\subset I$ of size $\leq r$, $\tilde{\scr L}_J$ is an invertible sheaf such that the strict transform $\tilde Z_J$ is the zero locus of the map
	\[
	\bigoplus_{i\in J}\scr E_i\oplus\scr O\to \tilde{\scr L}_{J}.
	\]
	(This is in fact true for all $J\subset I$, but trivial if $\lvert J\rvert\geq r+1$.)
	We assume inductively that $\B_{\geq r+1}(\scr E)$ is a sequence of blowups as claimed, and that the strict transforms of the $Z_J$'s up to $\B_{\geq r+2}(\scr E)$ have the above description. 
	
	For every $J\subsetneqq I$ with $\lvert J\rvert\geq r+1$, let $E_{J}$ be the Cartier divisor on $\B_{\geq r+1}(\scr E)$ which is the zero locus of $\scr L_J\to\scr L_{>J}$, i.e., the preimage of the exceptional divisor over the strict transform $\tilde Z_{J}\subset \B_{\geq\lvert J\rvert+1}(\scr E)$. 
	Let $U_i\subset \B_{\geq r+1}(\scr E)$ be the open complement of $\bigcup_{i\notin J}E_{J}$.
	Note that $E_J\cap E_{J'}=\emptyset$ whenever $J$ and $J'$ are not contained in one another, since then the strict transforms of $Z_J$ and $Z_{J'}$ became disjoint in the blowup $\B_{\geq \lvert J\cup J'\rvert}(\scr E)$.
	It follows that $\B_{\geq r+1}(\scr E)$ is covered by any $\lvert I\rvert-r$ of the open subsets $U_i$.
	For any $J\subset I$, we deduce by descending induction on $\lvert J\rvert$ that the open subsets $U_R=\bigcap_{i\in R}U_i$ with 
	$\lvert J\cup R\rvert\geq r+1$
	form an open covering of $\B_{\geq r+1}(\scr E)$.
	
	For $J\subsetneqq I$ with $\lvert J\rvert\geq r+1$, let $\scr O(-E_J)=\scr L_J\otimes\scr L_{>J}^{-1}$ and let $\sigma_J\colon\scr O(-E_J)\to\scr O$ be the canonical map, whose zero locus is $E_J$. Taking the determinant of the cartesian cube defining $\scr L_{>J}$, we find
	\[
	\scr O(-E_J)\simeq\bigotimes_{J\subset J'}\scr L_{J'}^{(-1)^{\lvert J'-J\rvert}}\quad\text{and hence}\quad \bigotimes_{J\subset J'\neq I}\scr O(-E_{J'})\simeq\scr L_J\otimes\scr L_I^{-1}.
	\]
	Under this isomorphism, the map $\scr L_J\to\scr L_{J\cup\{i\}}$ corresponds to the tensor product of the maps $\sigma_{J'}$ with $i\notin J'$.
	In other words, the zero locus of $\scr L_{J}\to \scr L_{J\cup\{i\}}$ is the union of the divisors $E_{J'}$ with $J\subset J'$ and $i\notin J'$. 
	Hence, for any $R\subset I$, the map $\scr L_{J}\to \scr L_{J\cup R}$ is an isomorphism over $U_R$.
	Passing to the right Kan extension, this implies that the map $\tilde{\scr L}_{J}\to \tilde{\scr L}_{J\cup R}$ is an isomorphism over $U_R$ for \emph{all} subsets $J,R\subset I$: this follows from the fact that the functor 
	\begin{align*}
		\{J'\suchthat J\subset J'\text{ and }\lvert J'\rvert\geq r+1\} &\to \{J'\suchthat J\cup R\subset J'\text{ and }\lvert J'\rvert\geq r+1\},\\
		J'&\mapsto J'\cup R,
	\end{align*}
	is coinitial, since it is left adjoint to the inclusion.
	Since the $U_R$'s with $\lvert J\cup R\rvert\geq  r+1$ form an open covering of $\B_{\geq r+1}(\scr E)$, we see that $\tilde{\scr L}_J$ is locally isomorphic to the invertible sheaves $\scr L_{J\cup R}$.
	
	Fix $J\subset I$ of size $\leq r$ and let $R\subset I$ be such that $\lvert J\cup R\rvert= r+1$. It remains to show that $\tilde Z_J\cap U_R$ is the zero locus of $\bigoplus_{i\in J}\scr E_i\oplus\scr O\to \tilde{\scr L}_{J}$ over $U_R$, as these $U_R$'s cover $\B_{\geq r+1}(\scr E)$. Since $\tilde{\scr L}_J\simeq\scr L_{J\cup R}$ over $U_R$, this is the same as the zero locus of
	\[
	\bigoplus_{i\in J}\scr E_i\oplus\scr O\into \bigoplus_{i\in J\cup R}\scr E_i\oplus\scr O \xrightarrow{\phi_{J\cup R}} \scr L_{J\cup R}.
	\]
	By the description of strict transforms from Lemma~\ref{lem:blowup} and the induction hypothesis, this locus is exactly the strict transform of $Z_J$ in the blowup of $\B_{\geq r+2}(\scr E)$ at $\tilde Z_{J\cup R}$. 
	To conclude, we observe that the other exceptional divisors of the blowup $\B_{\geq r+1}(\scr E)\to\B_{\geq r+2}(\scr E)$, i.e., the divisors $E_S$ with $\lvert S\rvert=r+1$ and $S\neq J\cup R$, do not intersect $\tilde Z_J\cap U_R$.
	If $J\not\subset S$, then $J\cup S$ has size $\geq r+2$ and hence the strict transforms of $Z_{J}$ and $Z_{S}$ in $\B_{\geq r+2}(\scr E)$ are disjoint. If $J\subset S$ but $S\neq J\cup R$, then $R\not\subset S$ and hence $E_{S}\cap U_R=\emptyset$ by definition of $U_R$. 
\end{proof}

In order to see that $b_\Pi$ is analogously a sequence of blowups at the strict transforms of the subschemes $W_K$, we need a dual description of $\B(\scr E)$. 
Let $\B^\vee(\scr E)$ be the functor parametrizing families of invertible quotients $\scr E_i\oplus\scr O\onto\scr L_i$ for $i\in I$, together with a compatible family of universally injective maps $\psi_K\colon \scr M_K\into \bigoplus_{i\in K}\scr L_i$ for all nonempty subsets $K\subset I$, where $\scr M_K$ is an invertible sheaf.
By a ``compatible family'' we mean that for any nonempty $K'\subset K$, the composition of $\psi_K$ with the projection $\bigoplus_{i\in K}\scr L_i\onto \bigoplus_{i\in K'}\scr L_i$ factors through $\psi_{K'}$, and moreover that $\scr O\to\bigoplus_{i\in I}\scr L_i$ factors through $\psi_I$; in other words, the maps $\psi_K$ form a morphism of punctured $I$-cubes under $\scr O$.
Let further $\B^\vee_{> r}(\scr E)$ be the functor parametrizing such families with $\lvert K\rvert>r$.
We then have a sequence of forgetful maps
\[
\B^\vee(\scr E)=\B^\vee_{>0}(\scr E)=\B^\vee_{>1}(\scr E) \to \dotsb\to \B^\vee_{>\lvert I\rvert-1}(\scr E)\to\B^\vee_{>\lvert I\rvert}(\scr E)=\prod_{i\in I}\P(\scr E_i\oplus\scr O).
\]

\begin{proposition}\label{prop:blowup-sequence2}
	For each $1\leq r\leq \lvert I\rvert$, the map $\B^\vee_{> r-1}(\scr E)\to\B^\vee_{> r}(\scr E)$ is a blowup with center $\coprod_{\lvert K\rvert=r}\tilde W_K$, where $\tilde W_K$ is the strict transform of $W_K$.
	For $K\subset I$ with $\lvert K\rvert=r$, define
	\[
	\scr M_{>K}=
	\begin{cases}
			\scr O, & \text{if $K=I$,} \\
			\colim_{K\subsetneqq K'}\scr M_{K'}, & \text{otherwise,}
		\end{cases}
	\]
	in the stable $\infty$-category of quasi-coherent sheaves on $\B^\vee_{> r}(\scr E)$.
	Then $\scr M_{>K}$ is an invertible sheaf, locally isomorphic to $\scr M_{K\cup\{i\}}$ with $i\in I-K$, and $\tilde W_K$ is the zero locus of the map
	\[
	\scr M_{>K}\to\bigoplus_{i\in K}\scr L_i
	\]
	induced by the maps $\psi_{K'}$  for $K\subsetneqq K'$.
\end{proposition}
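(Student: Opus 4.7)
The proof will run in parallel with that of Proposition~\ref{prop:blowup-sequence1}, with the limit $\scr L_{>J}$ replaced by the colimit $\scr M_{>K}$ and with ``invertible quotient / surjection'' replaced throughout by ``invertible subsheaf / universally injective map''. The plan is to dualize Lemma~\ref{lem:blowup} once and then to repeat the open-cover argument of Proposition~\ref{prop:blowup-sequence1} essentially verbatim.

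The preliminary step is a dual form of Lemma~\ref{lem:blowup}: given an invertible sheaf $\scr L_0$ on $X$, a finite locally free sheaf $\scr F$, and a linear map $\sigma\colon\scr L_0\to\scr F$, the blowup of $X$ at $Z(\sigma)$ classifies factorizations $\scr L_0\to\scr M\into\scr F$ where $\scr M$ is invertible and the second map is universally injective; the exceptional divisor is the zero locus of the first map, and the final clause of Lemma~\ref{lem:blowup} dualizes to a description of strict transforms of zero loci of compositions $\scr L_0 \to \scr F \onto \scr G$. This is obtained from Lemma~\ref{lem:blowup} by twisting $\sigma$ by $\scr L_0^{-1}$ and then dualizing.

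One then proceeds by descending induction on $r$, starting from $r=\lvert I\rvert$, where $\B^\vee_{>\lvert I\rvert}(\scr E)=\prod_{i\in I}\P(\scr E_i\oplus\scr O)$, $\scr M_{>I}=\scr O$, and the zero locus of the canonical diagonal map $\scr O\to\bigoplus_{i\in I}\scr L_i$ is precisely $W_I$. Assume inductively that $\B^\vee_{>r}(\scr E)$ has the claimed description and that $\tilde W_K$ for $\lvert K\rvert>r$ is the zero locus of $\scr M_{>K}\to\bigoplus_{i\in K}\scr L_i$. Passing from $\B^\vee_{>r}$ to $\B^\vee_{>r-1}$ adds, for each $K$ with $\lvert K\rvert=r$, a universally injective map $\psi_K\colon\scr M_K\into\bigoplus_{i\in K}\scr L_i$ together with its compatibility with the existing $\psi_{K'}$ for $K\subsetneqq K'$. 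Chasing the compatibility, this is exactly a factorization of the already-constructed map $\scr M_{>K}\to\bigoplus_{i\in K}\scr L_i$ through an invertible sheaf via a universally injective map, which by the dual lemma is precisely the data of the blowup at the zero locus of $\scr M_{>K}\to\bigoplus_{i\in K}\scr L_i$. Distinct centers $\tilde W_K$ and $\tilde W_{K'}$ of size $r$ are disjoint because $W_K\cap W_{K'}=W_{K\cup K'}$ has already been blown up at an earlier stage, so the centers assemble into a disjoint union. The final clause of the dual lemma then propagates the inductive description to the strict transforms $\tilde W_J$ for $\lvert J\rvert=r-1$ in $\B^\vee_{>r-1}(\scr E)$.

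The remaining task is the local description of $\scr M_{>K}$ for $\lvert K\rvert=r$, which repeats the open-cover argument of Proposition~\ref{prop:blowup-sequence1} essentially word for word. Let $E_{K'}\subset\B^\vee_{>r}(\scr E)$ denote the exceptional divisor of the blowup at $\tilde W_{K'}$ for $\lvert K'\rvert\geq r+1$. Because the strict transforms $\tilde W_{K'}$ and $\tilde W_{K''}$ become disjoint once $W_{K'\cup K''}$ has been blown up, the divisors $E_{K'}$ and $E_{K''}$ are disjoint whenever $K'$ and $K''$ are incomparable. Setting $U_i$ to be the complement of $\bigcup_{i\in K'}E_{K'}$, the intersections $U_R=\bigcap_{i\in R}U_i$ with $\lvert K\cup R\rvert\geq r+1$ form an open cover of $\B^\vee_{>r}(\scr E)$, and on each such $U_R$ the transition maps in the diagram defining $\scr M_{>K}$ become isomorphisms, so that $\scr M_{>K}|_{U_R}\simeq \scr M_{K\cup R}|_{U_R}$, which is invertible by the inductive hypothesis. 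Matching $\tilde W_K\cap U_R$ with the appropriate zero locus, and verifying that no other exceptional divisors interfere, then proceeds exactly as in Proposition~\ref{prop:blowup-sequence1}. The main obstacle is purely the bookkeeping of arrow directions when dualizing; once that is set up cleanly, no new geometric input is required.
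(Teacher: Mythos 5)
Your proposal is correct and follows essentially the same route as the paper, whose proof of this proposition is itself just a two-line remark that one dualizes Proposition~\ref{prop:blowup-sequence1}: the local triviality of the left Kan extension $\scr M_{>K}$ and the identification of each blowup step via a dual form of Lemma~\ref{lem:blowup}. Your derivation of that dual lemma (twist by $\scr L_0^{-1}$, dualize, note that surjections onto invertible sheaves dualize to universally injective maps from invertible sheaves) and the transposed open-cover argument are exactly the intended filling-in of the paper's sketch.
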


\begin{proof}
	The proof is similar to that of Proposition~\ref{prop:blowup-sequence1}:
	the colimit defining $\scr M_{>K}$ (and more generally the left Kan extension of $\scr M$ to a punctured $I$-cube under $\scr O$) is locally trivial, and blowing up the zero locus of $\scr M_{>K}\to\bigoplus_{i\in K}\scr L_i$ in $\B^\vee_{> r}(\scr E)$ adds the data of a factorization
	\[
	\scr M_{>K}\to\scr M_K\overset{\psi_K}\into\bigoplus_{i\in K}\scr L_i,
	\]
	leading to $\B^\vee_{> r-1}(\scr E)$.
\end{proof}

\begin{lemma}[Stable duality for punctured cubes]
	\label{lem:poset-verdier}
	Let $I$ be a finite set, let $P$ be the poset such that $P^\triangleright=(\Delta^1)^I$, and let $\scr C$ be a stable $\infty$-category. Then there is a canonical isomorphism
	\begin{align*}
		\Fun(P,\scr C)&\simto \Fun(P^\op,\scr C), \\
		F&\mapsto	\left(p\mapsto \colim_{q\in P_{p/}} F(q)\right),
	\end{align*}
	with inverse
	\begin{align*}
		\Fun(P^\op,\scr C)&\simto \Fun(P,\scr C), \\
		G&\mapsto \left(p\mapsto \lim_{q\in (P_{p/})^\op} G(q)\right).
	\end{align*}
\end{lemma}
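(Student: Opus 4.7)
The plan is to prove the lemma by induction on $|I|$. The base cases $|I|\in\{0,1\}$ are immediate: when $|I|=0$ the poset $P$ is empty, and when $|I|=1$ it is a single point; in both cases $\Phi$ and $\Psi$ are trivially identities.

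For the inductive step with $|I|\ge 2$, fix $i_0\in I$, set $I'=I\setminus\{i_0\}$, and let $P'=(\Delta^1)^{I'}\setminus\{I'\}$ be the analogous poset one dimension lower. The combinatorial heart of the argument is the decomposition of $P$ as a poset pushout
\[
P\simeq (\Delta^1)^{I'}\sqcup_{P'}(P'\times\Delta^1),
\]
in which the subsets of $I$ not containing $i_0$ form a copy of the full cube $(\Delta^1)^{I'}$, and the proper subsets of $I$ containing $i_0$ are identified with $P'$ via $J\mapsto J\setminus\{i_0\}$; the two attaching maps are the canonical inclusions $P'\hookrightarrow (\Delta^1)^{I'}$ and $P'=P'\times\{0\}\hookrightarrow P'\times\Delta^1$. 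Accordingly, a functor $F\colon P\to\scr C$ unpacks as a triple $(X,Y,\alpha)$ with $X\colon (\Delta^1)^{I'}\to\scr C$ a cube, $Y\colon P'\to\scr C$ a functor, and $\alpha\colon X|_{P'}\to Y$ a natural transformation.

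Computing $\Phi(F)$ pointwise in this decomposition: on the top layer $A\cong P'$, the coslice $P_{p/}$ is contained in $A$, so $\Phi(F)|_A$ reduces to $\Phi'(Y)$, where $\Phi'$ denotes the analogue of $\Phi$ for $P'$; on the bottom layer $(\Delta^1)^{I'}$, the coslice further decomposes in the same pushout pattern, so for $p\in P'\subset (\Delta^1)^{I'}$ the value $\Phi(F)(p)$ becomes the explicit pushout $X(I')\sqcup_{\Phi'(X|_{P'})(p)}\Phi'(Y)(p)$ in $\scr C$ (and $\Phi(F)(I')=X(I')$). The functor $\Psi$ admits an entirely dual decomposition with pullbacks. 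To verify $\Psi\circ\Phi\simeq \id$ pointwise, the identity on the top layer follows immediately from the inductive hypothesis applied to $P'$; the identity on the bottom layer combines the inductive hypothesis with the stable structure of $\scr C$ (notably, that cocartesian squares coincide with cartesian squares) via a diagram chase. The proof of $\Phi\circ\Psi\simeq\id$ is dual.

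The main obstacle is this bottom-layer diagram chase, where the cube $X$ interacts with the top-layer data $(Y,\alpha)$. Iterating the decomposition, it essentially reduces to the case $|I|=2$, where $P$ is the span $\{1\}\leftarrow\emptyset\to\{2\}$: there $\Phi$ literally takes a span to its pushout cospan and $\Psi$ takes a cospan to its pullback span, so the equivalence is the classical span-cospan duality in stable $\infty$-categories.
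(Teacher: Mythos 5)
Your strategy is sound and genuinely different from the paper's. The paper deduces the lemma from abstract duality in the symmetric monoidal $\infty$-category of small stable $\infty$-categories: $\Fun(P,\Sp^\fin)$ is dualizable with dual $\Fun(P^\op,\Sp^\fin)$, the pairing $\colim\circ\otimes$ induces the functor $F\mapsto(p\mapsto\colim_{q\in P_{p/}}F(q))$, and the decisive input is that this pairing is nondegenerate, i.e.\ that $(\Fun(P,\Sp^\fin),\colim)$ is a Frobenius algebra — quoted from Aoki's work on Verdier duality because $P^\op$ is the face poset of a simplex. Nondegeneracy formally gives $\mathbb D^{-1}=\mathbb D^\op$, which is precisely the limit formula for the inverse. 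Your induction replaces this external input by the decomposition $P\simeq(\Delta^1)^{I'}\sqcup_{P'}(P'\times\Delta^1)$ and, at bottom, the pushout $=$ pullback axiom of stable $\infty$-categories. The decomposition is legitimate (every chain in $P$ lies entirely in one of the two pieces, so the nerve of $P$ is the corresponding pushout of simplicial sets along monomorphisms, whence $\Fun(P,\scr C)$ is the expected pullback of functor categories), and the bottom-layer computation does close up: writing the restriction of $\Phi(F)$ to the bottom copy of $P'^\op$ as the pushout $\underline{X(I')}\sqcup_{\Phi'(X|_{P'})}\Phi'(Y)$ in $\Fun(P'^\op,\scr C)$, exactness of $\Psi'$ together with two applications of the inductive hypothesis identifies $\Psi\Phi(F)(p)$ with $X(I')\times_{X(I')\sqcup_{X(p)}Y(p)}Y(p)\simeq X(p)$. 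What your approach buys is a self-contained elementary argument; what the paper's buys is that canonicity, naturality, and the formula for the inverse all come for free from duality (and the argument applies to any poset satisfying the Frobenius condition). Note also that the second composite need not be argued separately: since $\Psi_{\scr C}=(\Phi_{\scr C^\op})^\op$, the identity $\Phi\Psi\simeq\id$ for all stable $\scr C$ follows from $\Psi\Phi\simeq\id$ applied to $\scr C^\op$.

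The one point you must make explicit for the argument to be complete is that the verification is not merely pointwise: an equivalence of functors requires a natural transformation. You should exhibit the canonical unit $F\to\Psi\Phi(F)$, with components $F(p)\to\lim_{q\in(P_{p/})^\op}\colim_{r\in P_{q/}}F(r)$ induced by $F(p)\to F(q)\to\colim_{r\in P_{q/}}F(r)$ for $q\geq p$, and check that your pointwise identifications are realized by this map. In the displayed pullback above this amounts to observing that the square with vertices $X(p)$, $Y(p)$, $X(I')$, $X(I')\sqcup_{X(p)}Y(p)$ is cocartesian by construction, hence cartesian, so the canonical map from $X(p)$ into the pullback is an equivalence. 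This is routine but should be said.
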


\begin{proof}
	Let $\Cat_\infty^\st$ be the symmetric monoidal $\infty$-category of small stable $\infty$-categories, whose unit is the $\infty$-category $\Sp^\fin$ of finite spectra.
	Let $K$ be a finite $\infty$-category, all of whose mapping anima are also finite (e.g., a finite poset). Then the stable $\infty$-category $\Fun(K,\Sp^\fin)$ is dualizable in $\Cat_\infty^\st$ with dual $\Fun(K,\Sp^\fin)^\op=\Fun(K^\op,\Sp^\fin)$ (see for example \cite[Section 4.3]{HSS}); the coevaluation is given by
	\[
	\coev\colon \Sp^\fin \to \Fun(K^\op,\Sp^\fin)\otimes \Fun(K,\Sp^\fin)\simeq \Fun(K^\op\times K,\Sp^\fin),\quad \1\mapsto \Sigma^\infty_+\Map_K(\ph,\ph).
	\]
	Consider the symmetric pairing
	\[
	\lambda\colon \Fun(K,\Sp^\fin) \otimes \Fun(K,\Sp^\fin)\xrightarrow{\otimes} \Fun(K,\Sp^\fin) \xrightarrow{\colim} \Sp^\fin.
	\]
	By duality, it induces an exact functor
	\[
	\mathbb{D}=(\id\otimes \lambda)\circ (\coev\otimes\id)\colon \Fun(K,\Sp^\fin)\to\Fun(K^\op,\Sp^\fin),
	\]
	which is explicitly given by the formula 
	\[
	\mathbb D(F)(x)=\colim_{y\in K}\Map_K(x,y)\otimes F(y)=\colim_{y\in K_{x/}} F(y).
	\]
	Here, the second equality is obtained by decomposing the colimit over $K_{x/}$ along the cocartesian fibration $K_{x/}\to K$ with fibers $\Map_K(x,\ph)$.
	
	Let us further assume that $(\Fun(K,\Sp^\fin),\colim)$ is a Frobenius algebra in $\Cat_\infty^\st$, i.e., that the above pairing $\lambda$ is nondegenerate. 
	Then $\mathbb D$ is an isomorphism satisfying $\mathbb D=\mathbb D^\vee$.
	For a morphism $f$ between dualizable objects in $\Cat_\infty^\st$, the dual morphism $f^\vee$ is left adjoint to $f^\op$, hence is equal to $(f^\op)^{-1}$ when $f$ is an isomorphism. We therefore have $\mathbb D^{-1}=\mathbb D^\op$.
	Tensoring $\mathbb D$ with any $\scr C\in\Cat_\infty^\st$, we obtain an isomorphism
 	\[
 	\mathbb D_{\scr C}\colon \Fun(K,\scr C)\simto\Fun(K^\op,\scr C)
 	\]
	such that $\mathbb D_{\scr C}^{-1}=\mathbb D_{\scr C^\op}^\op$. Thus, for $F\in\Fun(K,\scr C)$ and $G\in\Fun(K^\op,\scr C)$, we have the desired formulas
	\begin{align*}
		\mathbb D_\scr C(F)(x)&=\colim_{y\in K_{x/}} F(y),\\
		\mathbb D_\scr C^{-1}(G)(x)&=\lim_{y\in (K_{x/})^\op} G(y).
	\end{align*}
	
	It remains to show that $(\Fun(P,\Sp^\fin),\colim)$ is a Frobenius algebra in $\Cat_\infty^\st$. Passage to opposite categories is a symmetric monoidal automorphism of $\Cat_\infty^\st$, sending the pair $(\Fun(P,\Sp^\fin),\colim)$ to the pair $(\Fun(P^\op,\Sp^\fin),\lim)$. 
	But the latter is a Frobenius algebra by \cite[Example 1.10]{AokiVerdier}, since $P^\op$ is the face poset of a simplex.
\end{proof}

\begin{proposition}\label{prop:blowup-duality}
	There is a canonical isomorphism $\B(\scr E)\simeq \B^\vee(\scr E)$ over $\prod_{i\in I}\P(\scr E_i\oplus\scr O)$. In particular, both maps
	\[
	\prod_{i\in I}\P(\scr E_i\oplus \scr O)\xleftarrow{b_\Pi} \B(\scr E) \xrightarrow{b_\P} \P\left({\textstyle \bigoplus_{i\in I}\scr E_i}\oplus\scr O\right)
	\]
	are sequences of smooth blowups, as described in Propositions \ref{prop:blowup-sequence1} and~\ref{prop:blowup-sequence2}
\end{proposition}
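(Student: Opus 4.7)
The plan is to establish the isomorphism $\B(\scr E)\simeq\B^\vee(\scr E)$ over $\prod_i\P(\scr E_i\oplus\scr O)$ by matching the two moduli problems directly via the duality of Lemma~\ref{lem:poset-verdier}; once this is done, the ``in particular'' clause follows by transporting the blowup descriptions of Propositions~\ref{prop:blowup-sequence1} and~\ref{prop:blowup-sequence2} across the isomorphism.

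First I would construct the forward map $\B(\scr E)\to\B^\vee(\scr E)$ on functors of points. Given $(\scr L_J,\phi_J)_{J\subset I}\in\B(\scr E)$, the singletons $\phi_{\{i\}}$ already supply the desired point of $\prod_i\P(\scr E_i\oplus\scr O)$, leaving the compatible subcube $(\scr M_K,\psi_K)_{\emptyset\neq K\subset I}$ to be produced. This is where Lemma~\ref{lem:poset-verdier} enters: apply $\mathbb D$ to the $I$-cube of invertibles $J\mapsto\scr L_J$ (viewed in $\QCoh$ of $\B(\scr E)$) to produce a dual functor on the opposite poset, and compose with the order-reversing complementation involution $J\leftrightarrow I\setminus J$, which identifies proper subsets with nonempty ones, to obtain the $\scr M_K$'s. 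The universal injectivity of $\psi_K\colon\scr M_K\into\bigoplus_{i\in K}\scr L_i$ and the invertibility of $\scr M_K$ should follow from the local descriptions of the cube $\scr L$ already established in the proofs of Propositions~\ref{prop:blowup-sequence1} and~\ref{prop:blowup-sequence2}, where on each chart of the blowup tower the Kan-extended sheaves become isomorphic to a single $\scr L_{J'}$ and the natural maps reduce to coordinate projections.

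The inverse map is constructed symmetrically using $\mathbb D^{-1}$, starting from the $\scr M$-cube and producing the $\scr L$-cube together with its surjections from $J\mapsto\bigoplus_{i\in J}\scr E_i\oplus\scr O$. That the two constructions are mutually inverse is the identity $\mathbb D\circ\mathbb D^{-1}=\id$ from Lemma~\ref{lem:poset-verdier}, and compatibility over $\prod_i\P(\scr E_i\oplus\scr O)$ is immediate since the singleton data are untouched by the duality operation.

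The main obstacle will be verifying that the abstract stable duality preserves the schematic positivity conditions on both sides: surjectivity of the $\phi_J$, universal injectivity of the $\psi_K$, and invertibility of every $\scr L_J$ and $\scr M_K$. I expect this to be carried out chart by chart on the open covers $\{U_R\}$ used in the proof of Proposition~\ref{prop:blowup-sequence1} (and their analogues in Proposition~\ref{prop:blowup-sequence2}), on which the cube trivializes enough that the duality of Lemma~\ref{lem:poset-verdier} degenerates to an elementary linear-algebra computation with trivial line bundles. Once the isomorphism is in hand, the blowup descriptions of $b_\P$ from Proposition~\ref{prop:blowup-sequence1} and of $b_\Pi$ from Proposition~\ref{prop:blowup-sequence2} transport directly to give the stated sequences of smooth blowups on both sides.
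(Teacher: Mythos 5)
Your overall strategy coincides with the paper's: construct mutually inverse maps of functors of points, use Lemma~\ref{lem:poset-verdier} to see that the two assignments between the $\scr L$-cube and the $\scr M$-cube are inverse to one another, and check invertibility, surjectivity of the $\phi_J$, and universal injectivity of the $\psi_K$ locally via the blowup descriptions of Propositions~\ref{prop:blowup-sequence1} and~\ref{prop:blowup-sequence2}. However, your formula for the duality contains a genuine error: no complementation $J\leftrightarrow I\setminus J$ enters the identification of the moduli data. The correct assignments are
\[
\scr M_K=\lim_{\emptyset\neq J\subset K}\scr L_J
\qquad\text{and}\qquad
\scr L_J=\colim_{\emptyset\neq K\subset J}\scr M_K
\]
(limits and colimits in the stable $\infty$-category of quasi-coherent sheaves); that is, $\mathbb D$ is applied to the cube restricted to \emph{nonempty} subsets, and the slice/coslice formulas relate $\scr L$ and $\scr M$ at the \emph{same} index. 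Your recipe $\scr M_K=\mathbb D(\scr L)(I\setminus K)$ fails already for singletons: it yields $\scr M_{\{i\}}\simeq\scr L_{I\setminus\{i\}}$, whereas the definition of $\B^\vee(\scr E)$ forces $\scr M_{\{i\}}\simeq\scr L_i$ (a universally injective map of invertible sheaves is an isomorphism), so your closing claim that ``the singleton data are untouched'' is inconsistent with your own formula. For $K=I$ it yields the pushout $\colim_{J\subsetneqq I}\scr L_J$, which is not invertible in general, rather than the pullback $\lim_{\emptyset\neq J}\scr L_J=\scr L_{>\emptyset}$, which is. Complementation does occur, but only afterwards, in matching the boundary components via $E_J=E^\vee_{I-J}$.

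The remainder of your plan is sound and matches the paper: invertibility of $\scr M_K$ is checked by reducing to $K=I$ via the forgetful map $\B(\scr E)\to\B(\scr E|_K)$ and identifying $\scr M_I=\scr L_{>\emptyset}$, which Proposition~\ref{prop:blowup-sequence1} shows is locally one of the $\scr L_{J}$'s; dually for the converse direction using Proposition~\ref{prop:blowup-sequence2} and $\B^\vee(\scr E)\to\B^\vee(\scr E|_J)$. One step you leave implicit and should spell out: in the converse direction the surjection $\phi_J\colon\bigoplus_{i\in J}\scr E_i\oplus\scr O\onto\scr L_J$ is not automatic from the colimit formula; it is produced by factoring through $\bigoplus_{i\in J}\scr L_i\to\scr L_J$, using that the $\scr M$-cube is a diagram under $\scr O$, and its surjectivity again comes from the local triviality of $\scr L_J$.
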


\begin{proof}
	Given a point $(\scr L_J,\phi_J)_J$ of $\B(\scr E)$, we set
	\begin{equation*}\label{eqn:LtoM}
	\scr M_K=\lim_{\emptyset\neq J\subset K} \scr L_J
	\end{equation*}
	for $K$ nonempty, where the limit is computed in the stable $\infty$-category of quasi-coherent sheaves.
	The sheaf $\scr M_K$ is then locally isomorphic to $\scr L_i$ with $i\in K$. 
	Indeed, using the notation from Proposition~\ref{prop:blowup-sequence1}, we have $\scr M_I=\scr L_{>\emptyset}$, and we can reduce to the case $K=I$ using the forgetful map $\B(\scr E)\to\B(\scr E|_K)$.
	Moreover, we have a compatible family of maps $\psi_K\colon \scr M_K\to \bigoplus_{i\in K}\scr L_i$, which are universally injective (since they locally identify $\scr M_K$ with some $\scr L_i$).
	This defines a map $\B(\scr E)\to \B^\vee(\scr E)$.
	
	Conversely, given a point $(\scr M_K,\psi_K)_K$ of $\B^\vee(\scr E)$, we set $\scr L_\emptyset=\scr O$ and
	\begin{equation*}\label{eqn:MtoL}
	\scr L_J=\colim_{\emptyset\neq K\subset J}\scr M_K
	\end{equation*}
	for $J$ nonempty, where the colimit is computed in the stable $\infty$-category of quasi-coherent sheaves.
	Since $(\scr M_K)_K$ is a diagram under $\scr O$, we obtain a factorization
	\[
	\begin{tikzcd}
		\bigoplus_{i\in J}(\scr E_i\oplus\scr O) \ar[twoheadrightarrow]{r} \ar[twoheadrightarrow]{d}[swap]{\nabla} & \bigoplus_{i\in J}\scr L_i \ar{d} \\
		\bigoplus_{i\in J}\scr E_i\oplus\scr O \ar[dashed]{r}{\phi_J} & \scr L_J\rlap.
	\end{tikzcd}
	\]
	Using Proposition~\ref{prop:blowup-sequence2} and the forgetful map $\B^\vee(\scr E)\to\B^\vee(\scr E|_J)$, we see as above that $\scr L_J$ is locally isomorphic to $\scr L_i$ with $i\in J$, so that the right vertical map and hence $\phi_J$ are surjective.
	This defines a map $\B^\vee(\scr E)\to \B(\scr E)$.
	
	The fact that these constructions are inverse to one another follows from Lemma~\ref{lem:poset-verdier}.
\end{proof}

We  now define a relative strict normal crossings divisor $\partial\B(\scr E)$ on $\B(\scr E)$ as follows.
For any $J\subsetneqq I$, let $E_J\subset \B(\scr E)$ be the zero locus of $\scr L_J\to\scr L_{>J}$. Dually, for any nonempty $K\subset I$, let $E_K^\vee\subset \B^\vee(\scr E)$ be the zero locus of $\scr M_{>K}\to\scr M_{K}$. Under the isomorphism $\B(\scr E)\simeq \B^\vee(\scr E)$ of Proposition~\ref{prop:blowup-duality}, we have $E_J=E^\vee_{I-J}$. We then let $\partial\B(\scr E)$ consist of the $2^{\lvert I\rvert}-1$ smooth components $E_J$, or equivalently of the $2^{\lvert I\rvert}-1$ smooth components $E^\vee_K$.
The morphisms $b_\P$ and $b_\Pi$ are then morphisms in $\Sm_S^\sncd$, and it follows from repeated applications of Proposition~\ref{prop:blowup-square} that they both induce isomorphisms in $\scr P_{\ebu}(\Sm_S)_*$ after collapsing the boundary divisors (in fact, they both induce pushout squares in $\scr P_{\ebu}(\Sm_S)$ prior to quotienting).
This completes the construction of the zigzag~\eqref{eqn:zigzag}.
 In particular, the pointed presheaves
\[
\bigwedge_{i\in I}\Th_S(\scr E_i)\quad\text{and}\quad \Th_S\left({\textstyle\bigoplus_{i\in I}\scr E_i}\right)
\]
become isomorphic in $\scr P_{\ebu}(\Sm_S)_*$.

\begin{corollary}\label{cor:Thom-invertible}
	Let $\scr E$ be a finite locally free sheaf on $S$. Then $\Th_S(\scr E)$ is invertible in the symmetric monoidal $\infty$-category $\Sp_{\P^1}(\scr P_{\Zar,\ebu}(\Sm_S)_*)$.
\end{corollary}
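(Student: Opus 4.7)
The plan is to reduce the general case to the case of a trivial sheaf via Zariski descent on $S$. The key input is the zigzag~\eqref{eqn:zigzag} established in the preceding paragraph, which together with repeated applications of Proposition~\ref{prop:blowup-square} gives a canonical isomorphism $\Th_S(\scr E_1) \wedge \Th_S(\scr E_2) \simeq \Th_S(\scr E_1 \oplus \scr E_2)$ in $\scr P_{\ebu}(\Sm_S)_*$. Specializing to $\scr E_i = \scr O$ and iterating, one obtains $\Th_S(\scr O^{\oplus n}) \simeq (\P^1)^{\wedge n}$, and the right-hand side is manifestly invertible in $\Sp_{\P^1}(\scr P_{\Zar,\ebu}(\Sm_S)_*)$ by the universal property of the $\P^1$-stabilization.

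For a general $\scr E$ of rank $n$, I would argue that invertibility is a Zariski-local condition on $S$. The assignment $X \mapsto \Sp_{\P^1}(\scr P_{\Zar,\ebu}(\Sm_X)_*)$ should form a Zariski sheaf of presentably symmetric monoidal $\infty$-categories in $X$: the presheaf $\infty$-category $\scr P(\Sm_X)$ is sheafy in $X$ via restriction, both the Zariski and the elementary-blowup-excision localizations are compatible with base change along Zariski opens, and the $\P^1$-stabilization commutes with limits of presentably symmetric monoidal $\infty$-categories. Since the Picard $\infty$-groupoid functor preserves such limits, the Picard space assembles into a Zariski sheaf on $X$, so invertibility may be checked on a Zariski cover.

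Choosing a Zariski cover $\{U_\alpha \to S\}$ trivializing $\scr E$, the restrictions $\Th_S(\scr E)|_{U_\alpha}$ identify with $\Th_{U_\alpha}(\scr E|_{U_\alpha}) \simeq (\P^1)^{\wedge n}$, hence are invertible by the first step; by the descent statement above, $\Th_S(\scr E)$ is then invertible globally. The main technical obstacle is justifying the Zariski descent for the stabilized category and the corresponding sheaf property for the Picard space. Both are natural consequences of the construction, but one must check carefully that pointing, the $\ebu$-localization, and the $\P^1$-stabilization all commute with restriction along Zariski opens, so that the whole assembly is a sheaf of presentably symmetric monoidal $\infty$-categories to which the formal Picard-descent machinery applies.
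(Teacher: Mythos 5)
Your proposal is correct and follows essentially the same route as the paper: the paper's proof likewise observes that $S\mapsto \Sp_{\P^1}(\scr P_{\Zar,\ebu}(\Sm_S)_*)$ is a Zariski sheaf of symmetric monoidal $\infty$-categories and that $\Pic$ preserves limits, so invertibility is Zariski-local, and then reduces to $\scr E=\scr O^n$, where the zigzag~\eqref{eqn:zigzag} identifies $\Th_S(\scr O^n)$ with $(\P^1)^{\otimes n}$. The descent properties you flag as the "main technical obstacle" are exactly the facts the paper records as immediate consequences of the construction, so no further work is needed.
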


\begin{proof}
	The assignment $S\mapsto \Sp_{\P^1}(\scr P_{\Zar,\ebu}(\Sm_S)_*)$ is a Zariski sheaf of symmetric monoidal $\infty$-categories. Since the functor $\Pic\colon \CAlg(\Cat_\infty)\to \Sp_{\geq 0}$ preserves limits, the assertion that $\Th_S(\scr E)$ is invertible is Zariski-local on $S$. We may thus assume that $\scr E=\scr O^n$. In this case, the above construction gives a zigzag of isomorphisms between $\Th_S(\scr O^n)$ and $\Th_S(\scr O)^{\otimes n}=(\P^1)^{\otimes n}$, which is invertible.
\end{proof}

The construction $\scr E\mapsto\B(\scr E)$ is evidently functorial in the family $\scr E\in\Vect^\epi(S)^I$ as well as in the base scheme $S$.
We now examine its functoriality in the indexing set $I$. For a morphism of finite sets $\alpha\colon I\to J$, let us consider more generally
\[
\B(\scr E,\alpha)=\prod_{j\in J}\B(\scr E|_{\alpha^{-1}(j)}) \in \Sm_S^\sncd.
\]
The points of $\B(\scr E,\alpha)$ are thus families of invertible quotients $\phi_A\colon\bigoplus_{i\in A}\scr E_i\oplus\scr O\onto\scr L_A$ with $A\subset \alpha^{-1}(j)$ and $j\in J$, such that if $A'\subset A$ then the restriction of $\phi_A$ to the domain of $\phi_{A'}$ factors through $\phi_{A'}$.
Consider a morphism
\[
\begin{tikzcd}
	I \ar{r}{\gamma} \ar{d}[swap]{\alpha} & K \ar{d}{\beta} \\
	J & L \ar{l}{\delta}
\end{tikzcd}
\]
from $\alpha$ to $\beta$ in the twisted arrow category $\Tw(\Fin)$. 
For every $l\in L$ and $B\subset \beta^{-1}(l)$, we then have $\gamma^{-1}(B)\subset \alpha^{-1}(\delta(j))$.
We therefore have a well-defined morphism in $\Sm_S^\sncd$:
\begin{equation}\label{eqn:B-functoriality}
\B(\scr E,\alpha)\to\B(\gamma_\oplus\scr E,\beta),\quad 
\left((\phi_A)_{A\subset\alpha^{-1}(j)}\right)_{j\in J}\mapsto \left((\phi_{\gamma^{-1}(B)})_{B\subset \beta^{-1}(l)}\right)_{l\in L}.
\end{equation}
The span~\eqref{eqn:zigzag} is a special case of this functoriality, applied to the span in $\Tw(\Fin)$
\[
(I\to I)\leftarrow (I\to *)\rightarrow (*\to *).
\]
Since $b_\Pi$ and $b_\P$ are $\L_\ebu$-equivalences, it follows from 2-out-of-3 that all maps~\eqref{eqn:B-functoriality} are $\L_\ebu$-equivalences.
In particular, for any iterated decomposition $I=I_0\to\dotsb\to I_n=*$ of the finite set $I$, we have a refinement of~\eqref{eqn:zigzag} to a diagram of $\L_\ebu$-equivalences $\Tw(\Delta^n)\to\Sm_S^\sncd$. For example, for $I\stackrel\alpha\to J\to *$ we get the diagram
\begin{equation}\label{eqn:2-zigzag}
\begin{tikzcd}[column sep={65,between origins}]
	& & \B(\scr E) \ar{dl} \ar{dr} & & \\
	& \prod_{j\in J}\B(\scr E|_{\alpha^{-1}(j)}) \ar{dl} \ar{dr} & & \B(\alpha_\oplus\scr E) \ar{dl} \ar{dr} & \\
	\prod_{i\in I}\P(\scr E_i\oplus\scr O) & & \prod_{j\in J}\P\bigl(\bigoplus_{i\in\alpha^{-1}(j)}\scr E_i\oplus\scr O\bigr) & & \P\bigl(\bigoplus_{i\in I}\scr E_i\oplus\scr O\bigr)\rlap.
\end{tikzcd}
\end{equation}

We now explain how to equip the functor
\[
\Th_S\colon \Vect^\epi(S)^\op\to \scr P_{\ebu}(\Sm_S)_*
\]
with a symmetric monoidal structure, which is moreover natural in $S$.\footnote{For our applications in this paper, we only need the functor $\Th_S$ on the maximal subgroupoid $\Vect(S)\subset\Vect^\epi(S)$, but this does not simplify the construction.}
Both $S\mapsto\Vect^\epi(S)^\op$ and $S\mapsto\scr P_{\ebu}(\Sm_S)_*$ are functors from $\Sch^\op$ to $\CAlg(\Cat_\infty)$. We let $\Vect^{\epi,\op,\otimes}$ and $\scr P_{\ebu}(\Sm)_*^\otimes$ denote the total spaces of the corresponding cocartesian fibrations over $\Sch^\op\times\Fin_*$. 
Our goal is thus to construct a functor 
\begin{equation}\label{eqn:Thom-cocart}
	\Th\colon \Vect^{\epi,\op,\otimes}\to \scr P_{\ebu}(\Sm)_*^\otimes
\end{equation}
over $\Sch^\op\times\Fin_*$, whose value on a triple $(S,I_+,(\scr E_i)_{i\in I})$ is $(S,I_+,(\Th_S(\scr E_i))_{i\in I})$.

To give an idea of what is involved, let us consider the desired effect of the functor~\eqref{eqn:Thom-cocart} on morphisms.
A morphism from $(S,I_+,(\scr E_i)_{i\in I})$ to $(T,J_+,(\scr F_j)_{j\in J})$ in $\Vect^{\epi,\op,\otimes}$ consists of
\[
S\overset{f}\leftarrow T,\quad I_+\overset{\alpha}\rightarrow J_+,\quad \left({\textstyle\bigoplus_{\alpha(i)=j} f^*(\scr E_i)}\overset{\phi_j}\twoheadleftarrow \scr F_j\right)_{j\in J}.
\]
We assign to it the $J$-indexed family of morphism $\bigwedge_{\alpha(i)=j} f^*(\Th_S(\scr E_i))\to \Th_T(\scr F_j)$, given by precomposing $\Th(\phi_j)\colon \Th_T(\bigoplus_{\alpha(i)=j} f^*(\scr E_i))\to \Th_T(\scr F_j)$ with $f^*$ of the span of $\L_{\ebu}$-equivalences
\[
\bigwedge_{\alpha(i)=j}\Th_S(\scr E_i) \leftarrow \B((\scr E_i)_{\alpha(i)=j})/\partial \B \rightarrow \Th_S\left({\textstyle\bigoplus_{\alpha(i)=j} \scr E_i}\right).
\]
To explain the construction of~\eqref{eqn:Thom-cocart} in full, we need a brief categorical digression.

Given an $\infty$-category $\scr E$ with two classes of morphisms $\scr L$ and $\scr R$ closed under composition, we denote by $\Lambda(\scr E,\scr L,\scr R)$ the simplicial anima whose $n$-simplices are diagrams $\mathrm{Tw}(\Delta^n)\to \scr E$ sending $(\Delta^n)^\op$ to $\scr L$ and $\Delta^n$ to $\scr R$ (when the classes $\scr L$ and $\scr R$ are stable under base change along one another, the usual complete Segal anima of spans $\Span(\scr E,\scr L,\scr R)$ is the subobject of $\Lambda(\scr E,\scr L,\scr R)$ consisting of cartesian diagrams).
We denote by $\mathrm N\colon \Cat_\infty\into \Fun(\Delta^\op,\Ani)$ the fully faithful functor given by $\mathrm N(\scr C)=\Lambda(\scr C,\mathrm{iso},\mathrm{all})$, which identifies $\infty$-categories with complete Segal anima.

Let now $p\colon \scr E\to\scr C$ be a cocartesian fibration. If $p^\vee\colon \scr E^\vee\to\scr C^\op$ is the cartesian fibration classifying the same functor $\scr C\to\Cat_\infty$ as $p$, there is by \cite[Theorem 1.4]{cart-dual} a canonical isomorphism
\[
\Lambda(\scr E^\vee,\mathrm{cart},\mathrm{vert})=\Span(\scr E^\vee,\mathrm{cart},\mathrm{vert})\simeq \mathrm N(\scr E)
\]
where ``$\mathrm{cart}$'' and ``$\mathrm{vert}$'' denote the orthogonal classes of cartesian and vertical morphisms, such that the following diagram commute:
\[
\begin{tikzcd}[/tikz/column 2/.append style={anchor=base east}]
	\Lambda(\scr E^\vee,\mathrm{cart},\mathrm{vert}) \arrow[d, hookrightarrow] \ar{r}{\sim} & \mathrm N(\scr E) \arrow["p", start anchor=south, to path={-- (\tikztostart |- \tikztotarget.north) \tikztonodes}]{d} \\
	\Lambda(\scr E^\vee,\mathrm{all},\mathrm{vert}) \ar{r}{p^\vee} & \Lambda(\scr C^\op,\mathrm{all},\mathrm{iso})=\mathrm N(\scr C)\rlap.
\end{tikzcd}
\]

Our strategy is now to define a morphism of simplicial anima
\begin{equation}\label{eqn:Thom-nerve}
\mathrm N(\Vect^{\epi,\op,\otimes}) \to \Lambda((\Sm^{\sncd,\otimes})^\vee,\mathrm{all},\mathrm{vert})
\end{equation}
over $\mathrm N(\Sch^\op\times\Fin_*)$
such that the composite
\[
\mathrm N(\Vect^{\epi,\op,\otimes}) \to \Lambda((\Sm^{\sncd,\otimes})^\vee,\mathrm{all},\mathrm{vert}) \to \Lambda((\scr P_{\ebu}(\Sm)_*^\otimes)^\vee,\mathrm{all},\mathrm{vert})
\]
lands in the subobject $\Lambda((\scr P_{\ebu}(\Sm)_*^\otimes)^\vee,\mathrm{cart},\mathrm{vert})\simeq \mathrm N(\scr P_{\ebu}(\Sm)_*^\otimes)$. Since this isomorphism commutes with the maps to $\mathrm N(\Sch^\op\times\Fin_*)$ and the functor $\mathrm N$ is fully faithful, this yields the desired functor~\eqref{eqn:Thom-cocart}.

The construction of~\eqref{eqn:Thom-nerve} is straightforward using the already established functoriality of the construction $\B$.
To keep the notation reasonable, we only spell out the map~\eqref{eqn:Thom-nerve} on $2$-simplices, but the general case is similar and the simplicial structure will be apparent. A $2$-simplex of $\mathrm N(\Vect^{\epi,\op,\otimes})$ consists of
\[
S\xleftarrow f T\xleftarrow g U,\quad I_+\xrightarrow{\alpha} J_+\xrightarrow{\beta} K_+,\quad
\Bigl({\textstyle\bigoplus_{\alpha(i)=j} f^*(\scr E_i)}\overset{\phi_j}\twoheadleftarrow \scr F_j\Bigr)_{j\in J},\quad
\Bigl({\textstyle\bigoplus_{\beta(j)=k} g^*(\scr F_j)}\overset{\psi_k}\twoheadleftarrow \scr G_k\Bigr)_{k\in K}.
\]
The corresponding $2$-simplex $\Tw(\Delta^2)\to (\Sm^{\sncd,\otimes})^\vee$ is as follows:
\[
\begin{tikzcd}
	(\P(\scr E_i\oplus\scr O))_{i\in I} & (\B(f^*\scr E|_{\alpha^{-1}(j)}))_{j\in J} \ar{l}{b_\Pi} \ar{d}{\phi\circ b_\P} & (\B(g^*f^*\scr E|_{(\beta\circ\alpha)^{-1}(k)}))_{k\in K} \ar{l} \ar{d} \\
	& (\P(\scr F_j\oplus\scr O))_{j\in J} & (\B(g^*\scr F|_{\beta^{-1}(k)}))_{k\in K} \ar{l}{b_\Pi} \ar{d}{\psi\circ b_\P} \\
	& & (\P(\scr G_k\oplus\scr O))_{k\in K}\rlap,
\end{tikzcd}
\]
where the three columns lie in the fibers over $(S,I_+)$, $(T,J_+)$, and $(U,K_+)$, respectively.
The fiber of this diagram over $k\in K$ is the diagram~\eqref{eqn:2-zigzag} for the family of sheaves $(g^*f^*\scr E_i)_{i\in (\beta\circ\alpha)^{-1}(k)}$ and the decomposition $(\beta\circ \alpha)^{-1}(k)\to \beta^{-1}(k)\to\{k\}$.
The fact that the maps~\eqref{eqn:B-functoriality} are $\L_\ebu$-equivalences implies that the horizontal maps become cartesian in $(\scr P_{\ebu}(\Sm)_*^\otimes)^\vee$. This yields the desired morphism~\eqref{eqn:Thom-nerve}, hence the desired functor~\eqref{eqn:Thom-cocart}.

\section{Projective bundle homotopy invariance}

Let $S$ be a derived scheme. We shall write
\[
\MotSp_S=\SpP(\scr P_{\Zar,\ebu}(\Sm_S,\Sp))
\]
and refer to objects of $\MotSp_S$ as \emph{motivic spectra} over $S$. We shall also write
\[
\MotSp_S^\un = \SpP(\scr P_{\Zar,\ebu}(\Sm_S)_*)
\]
for the unstable version of $\MotSp_S$, so that $\MotSp_S=\Sp(\MotSp_S^\un)$. 
There are symmetric monoidal left adjoint functors
\[
\begin{tikzcd}
	\scr P(\Sm_S) \ar{r}{(\ph)_+} & \scr P(\Sm_S)_* \ar{r}{\Sigma^\infty_{\P^1}} \ar[bend right=20]{rr}[swap]{\Sigma^\infty_{\P^1}} & \MotSp_S^\un \ar{r}{\Sigma^\infty} & \MotSp_S\rlap.
\end{tikzcd}
\]
As is customary, we will often omit the functors $\Sigma^\infty_{\P^1}$ and $\Sigma^\infty_{\P^1}(\ph)_+$ from the notation, identifying objects in $\scr P(\Sm_S)_*$ and in $\scr P(\Sm_S)$ with their images in $\MotSp_S^\un$ or in $\MotSp_S$.
	The symmetric monoidal $\infty$-category $\MotSp_S^\un$ was denoted by $\SpP(\mathrm{St}_S^\ex)$ in \cite{AnnalaIwasa2}. We will show below that $\MotSp_S$ is equivalent to the full subcategory of fundamental objects in $\MotSp_S^\un$ (Corollary~\ref{cor:delooping}), which was denoted by $\SpP(\mathrm{St}_S^\ex)^\fd$ in \emph{loc.\ cit.}, but this is not at all obvious from the definitions.
	
	Of course, we do not claim that $\MotSp_S$ is ``the'' $\infty$-category of motivic spectra, which we expect to be a further localization thereof (enforcing in particular Nisnevich descent, and hence smooth blowup excision by Proposition~\ref{prop:Nis-sbu}). Rather, $\MotSp_S$ is the minimal construction to which all the results of this paper apply.
	Note that the full subcategory of either $\MotSp_S$ or $\MotSp_S^\un$ consisting of $\A^1$-invariant Nisnevich sheaves is the Morel–Voevodsky stable $\A^1$-homotopy $\infty$-category over $S$ (since smooth blowup excision holds in the latter \cite[Section 3, Remark 2.30]{MV}). 
	We will occasionally denote by $\L_{\A^1}$ the localization onto this full subcategory.
	
	We note the following facts (and analogous ones for $\MotSp_S^\un$):
	\begin{itemize}
		\item The presheaf of $\infty$-categories $S\mapsto \MotSp_S$ satisfies Zariski descent.
		\item If $S$ is qcqs and $X\in \Sm_S^\fp$, then $\Sigma^\infty_{\P^1}X_+\in \MotSp_S$ is compact. In particular, if $S$ is qcqs, then the $\infty$-category $\MotSp_S$ is compactly generated.
		\item If $S$ is the limit of a cofiltered diagram of derived schemes $S_\alpha$ with affine transition maps, then $\MotSp_S=\lim_\alpha \MotSp_{S_\alpha}$.
	\end{itemize}

\begin{theorem}
	\label{thm:euler}
	Let $\scr E$ be a finite locally free sheaf on $X\in\scr P(\Sm_S)$ and let $\sigma\colon \scr E\to\scr O_X$ be a linear map.
	\begin{enumerate}[beginpenalty=10000]
	\item\label{item:euler} \textnormal{(Euler class of locally free sheaves)}
	There is a canonical homotopy $\bar h(\sigma)$ in $\MotSp_S^\un$ between
	\[
	X_+\xrightarrow\sigma \V(\scr E)_+\subset \P(\scr E\oplus\scr O_X)_+\to \Th_X(\scr E)
	\]
	and the zero section.
	\item\label{item:P-inv} \textnormal{($\P$-homotopy invariance)}
	There is a canonical homotopy $h(\sigma)$ in $(\MotSp_S)_{/X}$ between
	\[
	X\xrightarrow\sigma \V(\scr E)\subset \P(\scr E\oplus\scr O_X)
	\]
	and the zero section.
	\end{enumerate}
	 Moreover, the homotopies $\bar h(\sigma)$ and $h(\sigma)$ are functorial in $(S,X,\scr E,\sigma)$, and they are the identity when $\sigma=0$.
\end{theorem}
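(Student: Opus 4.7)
The idea is to replace the naive rational homotopy in $\P(\scr E \oplus \scr O_X)$, whose indeterminacy would fall outside the reach of elementary blowup excision, by an everywhere-defined morphism into a larger projective bundle, and then pull back through the $\L_\ebu$-equivalences built in Section~\ref{sec:thom}. Concretely, I would first construct the $X$-morphism
\[
\psi \colon \P^1 \times X \to \P(\scr E \oplus \scr O_X \oplus \scr O_X), \qquad ([a:b], x) \mapsto [a\sigma(x) : b : a+b],
\]
sending $([a:b],x)$ to the line through $(a\sigma(x), b, a+b)$. This vector vanishes only when $a = b = 0$, which is excluded in $\P^1$, so $\psi$ is a scheme morphism defined on all of $\P^1 \times X$; its restrictions at the endpoints are $\psi|_{[1:0]} = [\sigma:0:1]$ and $\psi|_{[0:1]} = [0:1:1]$.

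Specializing Section~\ref{sec:thom} to $I = \{1, 2\}$, $\scr E_1 = \scr E$, $\scr E_2 = \scr O_X$ supplies a zigzag of $\L_\ebu$-equivalences, natural over $X$:
\[
\P(\scr E \oplus \scr O_X) \times \P^1 \xleftarrow{b_\Pi} \B(\scr E, \scr O_X) \xrightarrow{b_\P} \P(\scr E \oplus \scr O_X \oplus \scr O_X).
\]
Since $b_\P$ is invertible in $(\MotSp_S)_{/X}$, I would define
\[
h(\sigma) := \mathrm{pr}_1 \circ b_\Pi \circ b_\P^{-1} \circ \psi \colon \P^1 \times X \longrightarrow \P(\scr E \oplus \scr O_X)
\]
in $(\MotSp_S)_{/X}$, where $\mathrm{pr}_1$ projects to the first factor. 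Using the modular description of $\B(\scr E, \scr O_X)$ as compatible families of lines $(L_J)_{J\subset\{1,2\}}$, one verifies that both $\psi|_{[1:0]}$ and $\psi|_{[0:1]}$ avoid the blowup centers $Z_{\{1\}} \cup Z_{\{2\}}$ of $b_\P$, and so lift uniquely to $\B$. Projecting the line $L_{\{1,2\}}$ through $(\sigma,0,1)$ onto the $\scr E \oplus \scr O_{\mathrm{extra}}$ summand gives $L_{\{1\}} = [\sigma:1]$, i.e.\ the section $\sigma$, while projecting the line through $(0,1,1)$ gives $L_{\{1\}} = [0:1]$, the zero section. Hence $\mathrm{pr}_1 \circ b_\Pi$ restricts to $\sigma$ at $[1:0]$ and to the zero section at $[0:1]$, as required.

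The construction is manifestly functorial in $(S, X, \scr E, \sigma)$. When $\sigma = 0$, $\psi$ factors through the hyperplane $\P(\scr O_X \oplus \scr O_X) \hookrightarrow \P(\scr E \oplus \scr O_X \oplus \scr O_X)$, and the analogous lift-through-$\B$ computation produces the constant zero section, so $h(0)$ is the identity homotopy. Part~(i) then follows from (ii) by composing $h(\sigma)$ with the quotient $\P(\scr E \oplus \scr O_X)_+ \to \Th_X(\scr E)$; since the construction takes place entirely at the level of $\scr P_{\Zar, \ebu}(\Sm_S)_*$ prior to $\P^1$-stabilization, it produces $\bar h(\sigma)$ in $\MotSp_S^\un$ as required. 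The main obstacle will be the endpoint computation through $\B(\scr E, \scr O_X)$: conceptually it amounts to projecting lines along the decomposition $\scr E \oplus \scr O_{\mathrm{mid}} \oplus \scr O_{\mathrm{extra}}$, but it requires careful tracing of the compatibility conditions among the $(L_J)_{J}$ and verification that the indicated lifts really are what $\mathrm{pr}_1 \circ b_\Pi$ outputs.
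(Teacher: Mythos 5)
Your construction produces a \emph{$\P^1$-homotopy} between $\sigma$ and the zero section, not a homotopy, and the distinction is exactly the content of the theorem. Concretely, $h(\sigma)=\mathrm{pr}_1\circ b_\Pi\circ b_\P^{-1}\circ\psi$ is a single morphism $\P^1\otimes X\to\P(\scr E\oplus\scr O_X)$ in $\MotSp_S$ restricting to $\sigma$ at $[1{:}0]$ and to the zero section at $[0{:}1]$; to conclude that these two restrictions are identified in the homotopy category you need the two points $[1{:}0],[0{:}1]\colon X\to\P^1\otimes X$ to be homotopic in $\MotSp_S$. That statement is Corollary~\ref{cor:P^1-homotopy}, which the paper \emph{deduces from} Theorem~\ref{thm:euler}\ref{item:P-inv} (applied to $\scr E=\scr O$, $\sigma=\id$), so your argument is circular. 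Indeed, the paper's Proposition~\ref{prop:punctured-P^n} carries out essentially your construction (a $\P$-homotopy through a larger projective bundle, with the same kind of endpoint verification), and explicitly invokes Theorem~\ref{thm:euler}\ref{item:P-inv} to convert the family into a homotopy. Your endpoint computation through $\B(\scr E,\scr O_X)$ is fine — points with $\phi_I|_{\scr O}$ surjective lift uniquely and project to the expected quotients — but it only establishes the boundary conditions of a $\P^1$-family.

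The missing idea, which is how the paper breaks the circle, is the commutator trick: the section $\sigma$ and the zero section differ by the automorphism $e$ of $\P(\scr E\oplus\scr O)$ induced by the elementary matrix $e_{21}(\sigma)$, so it suffices to trivialize $e$ (and the induced $\bar e$ on $\Th_S(\scr E)$). Writing $e_{31}(\sigma)=[e_{32}(\tau),e_{21}(\phi)]$ for a factorization $\scr E\xrightarrow{\phi}\scr F\xrightarrow{\tau}\scr O$ exhibits $\bar e'$ as a commutator of endomorphisms of $\Th_S(\scr E\oplus\scr F)$; since this object is \emph{invertible} in $\Sp_{T}(\scr P_{\ebu}(\Sm_S)_*)$ (Corollary~\ref{cor:Thom-invertible}, which is where the blowup zigzags of Section~\ref{sec:thom} enter), its endomorphism monoid is canonically $\E_\infty$, and the commutator is canonically trivialized. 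No prior homotopy invariance is used. A further argument (lifting the trivialization to the slice over $\Sigma(\P(\scr E)_+)$ and taking cofibers) is then needed to get statement~\ref{item:P-inv} over $X$ rather than just the Thom-space statement~\ref{item:euler}; your proposal also does not address this descent from the Thom space back to $\P(\scr E\oplus\scr O_X)$ itself.
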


\begin{proof}
	We may assume $X=S$, as the general case then follows formally from the functoriality in $(S,\scr E,\sigma)$.
	The matrix
	\[
	e_{21}(\sigma)=\begin{pmatrix}
		\id_{\scr E} & 0 \\
		\sigma & \id_{\scr O}
	\end{pmatrix}\in \Aut(\scr E\oplus \scr O)
	\]
	induces an automorphism $e$ of $\P(\scr E\oplus\scr O)$ sending the zero section to that induced by $\sigma$. Moreover, $e$ fixes $\P(\scr E)$ and hence induces an automorphism $\bar e$ of the Thom space $\Th_S(\scr E)$.
	To prove \ref*{item:euler} (resp.\ \ref*{item:P-inv}), it will therefore suffice to show that $\bar e$ (resp.\ $e$) is homotopic to the identity in $\MotSp_S^\un$ (resp.\ in $(\MotSp_S)_{/S}$).

	To prove that $\bar e$ is homotopic to the identity, we choose a factorization
	\[
	\scr E\xrightarrow\phi\scr F\xrightarrow\tau\scr O
	\] 
	of $\sigma$ (to get a functorial construction we can take for example $\scr F=\scr E$, $\phi=\id$, and $\tau=\sigma$, but it will be useful to distinguish between $\scr E$ and $\scr F$ in the notation).
	We will construct more precisely a homotopy in $\Sp_T(\scr P_{\ebu}(\Sm_S)_*)$, where $T=\Th_S(\scr E\oplus\scr F)$ (this is sufficient by Corollary~\ref{cor:Thom-invertible}).
	Consider the span of $\L_{\ebu}$-equivalences
	\[
	\Th_S(\scr E)\wedge \Th_S(\scr F) \leftarrow B/\partial B \to \Th_S(\scr E\oplus \scr F),
	\]
	where $B=\B(\scr E,\scr F)$ is the blowup of $\P(\scr E\oplus\scr F\oplus\scr O)$ at $\P(\scr E)\sqcup \P(\scr F)$, or equivalently of $\P(\scr E\oplus\scr O)\times \P(\scr F\oplus\scr O)$ at $\P(\scr E)\times\P(\scr F)$.
	The matrix
	\[
	e_{31}(\sigma)=\begin{pmatrix}
		\id_\scr E & 0 & 0 \\
		0 & \id_{\scr F} & 0 \\
		\sigma & 0 & \id_{\scr O}
	\end{pmatrix}\in \Aut(\scr E\oplus\scr F\oplus\scr O)
	\]
	induces an automorphism $e'$ of $\P(\scr E\oplus\scr F\oplus\scr O)$, fixing $\P(\scr E\oplus\scr F)$ and thereby inducing an automorphism $\bar e'$ of $\Th_S(\scr E\oplus \scr F)$.
	 It also induces an automorphism $e''$ of the blowup $B$, since it fixes the center $\P(\scr E)\sqcup \P(\scr F)$, which preserves the boundary $\partial B$ and hence descends to an automorphism $\bar e''$ of the quotient $B/\partial B$. We then have a commutative diagram of pointed presheaves
	\[
	\begin{tikzcd}
		\Th_S(\scr E)\wedge \Th_S(\scr F) \ar{d}[swap]{\bar e\wedge \id} & B/\partial B \ar{l} \ar{r} \ar{d}[swap]{\bar e''} & \Th_S(\scr E\oplus \scr F) \ar{d}{\bar e'} \\
		\Th_S(\scr E)\wedge \Th_S(\scr F) & B/\partial B \ar{l} \ar{r} & \Th_S(\scr E\oplus \scr F)\rlap.
	\end{tikzcd}
	\]
	Since the horizontal maps are $\L_{\ebu}$-equivalences, it suffices to show that $\bar e'$ becomes homotopic to the identity in $\Sp_{T}(\scr P_{\ebu}(\Sm_S)_*)$. 
	In $\Aut(\scr E\oplus\scr F\oplus\scr O)$ we have the commutator relation 
	\begin{equation*}\label{eqn:commutator}
		e_{31}(\sigma)=[e_{32}(\tau),e_{21}(\phi)].
	\end{equation*}
	 Since any lower unitriangular matrix fixes $\P(\scr E\oplus\scr F)$ and hence induces an automorphism of $\Th_S(\scr E\oplus \scr F)$, we deduce that $\bar e'$ is a commutator in the monoid of endomorphisms of $\Th_S(\scr E\oplus \scr F)$.
	However, the object $\Th_S(\scr E\oplus \scr F)$ is invertible in the symmetric monoidal $\infty$-category $\Sp_{T}(\scr P_{\ebu}(\Sm_S)_*)$, so its monoid of endomorphisms has a canonical structure of $\E_\infty$-monoid. The above commutator relation therefore induces a canonical identification of $\bar e'$ with the identity.
	
	We now show that $e$ itself is stably homotopic 
	over $S$ 
	to the identity, in fact that it becomes so after a single suspension in $\Sp_{T}(\scr P_{\ebu}(\Sm_S)_*)$. 
	Since $e$ restricts to the identity on $\P(\scr E)$, it induces an automorphism of cofiber sequences
	\[
	\begin{tikzcd}
		\P(\scr E\oplus\scr O)_+ \ar{r} \ar{d}[swap]{e} & \Th_S(\scr E) \ar{d}[swap]{\bar e} \ar{r}{\delta} & \Sigma(\P(\scr E)_+) \ar{d}{\id} \\
		\P(\scr E\oplus\scr O)_+ \ar{r} & \Th_S(\scr E) \ar{r}{\delta} & \Sigma(\P(\scr E)_+)\rlap.
	\end{tikzcd}
	\]
	Applying the cofiber functor to the endomorphism $(\bar e,\id)$ of $\delta$, we obtain the endomorphism $\Sigma (e_+)$ of $\Sigma(\P(\scr E\oplus\scr O)_+)$.
	We will show that the homotopy between $\bar e$ and the identity in $\Sp_{T}(\scr P_{\ebu}(\Sm_S)_*)$ constructed above can be promoted to a homotopy in the slice category over $\Sigma(\P(\scr E)_+)$. 
	This will in particular give a homotopy between the endomorphism $(\bar e,\id)$ of $\delta$ and the identity in the slice of the arrow category over the arrow $*\to \Sigma(S_+)$.
	Taking the cofiber, we will thus obtain a homotopy over $\Sigma(S_+)$ between $\Sigma (e_+)$ and the identity.

	Let us first explain the categorical aspects of the argument. Consider
	\[
	Y = \Sigma(\P(\scr E)_+)\otimes \Th_S(\scr E)^{-1} \in \Sp_{T}(\scr P_{\ebu}(\Sm_S)_*),
	\]
	so that we may view $\delta$ as a morphism $\delta\colon \1\to Y$.
	The homotopy between $\bar e$ and the identity comes from writing $\bar e$ as a commutator $[a,b]$ of two automorphisms of $\1$.
	We will promote $b$ to an automorphism over $Y$ and show that $\bar e$ over $Y$ can be decomposed as follows:
	\[
	\begin{tikzcd}
		\1 \ar{r}[swap]{a} \ar{d}{\delta} \ar[bend left=10,yshift=4]{rrrr}{\bar e} & \1 \ar{r}[swap]{b} \ar{d}{\delta} & \1 \ar{r}[swap]{a^{-1}} \ar{d}{\delta} & \1 \ar{r}[swap]{b^{-1}} \ar{d}{\delta} & \1 \ar{d}{\delta} \\
		Y \ar{r}{a} \ar[bend right=10,yshift=-4]{rrrr}[swap]{\id} & Y \ar[equal]{r} & Y \ar{r}{a^{-1}} & Y \ar[equal]{r} & Y\rlap,
	\end{tikzcd}
	\]
	where the first and third squares commute via the $\1$-module structure of $\delta$ and the lower cell commutes canonically.
	On the other hand, there is a commutative cube
	\[
	\begin{tikzcd}[row sep={35,between origins}, column sep={35,between origins}]
	      & Y\otimes\1 \ar{rr}{\id\otimes a} \ar[equal]{dd} & & Y\otimes\1 \ar[equal]{dd}  \\
	    \1\otimes\1 \ar[crossing over]{rr}[fill=white]{\id\otimes a} \ar{dd}[swap]{b\otimes\id} \ar{ur}{\delta\otimes\id} & & \1\otimes\1 \ar{ur}{\delta\otimes\id} \\
	      & Y\otimes\1  \ar{rr} & & Y\otimes\1\rlap,  \\
	    \1\otimes\1 \ar{rr}[swap]{\id\otimes a} \ar{ur}[swap]{\delta\otimes\id} && \1\otimes\1 \ar[from=uu,crossing over,"{b\otimes \id}",near start] \ar{ur}[swap]{\delta\otimes\id}
	\end{tikzcd}
	\]
	where the left and right faces are given by $b$ over $Y$ and the morphism between them is multiplication by $a$. 
	This cube provides an identification between the commutator $[a,b]$ and the identity in the slice category over $Y$.
	Thus, it will suffice to decompose $\bar e$ as above.
	
	To that end let
	\[
	B'=\Bl_{\P(\scr E)}\P(\scr E\oplus\scr F\oplus\scr O)\quad\text{and}\quad \partial B'=\Bl_{\P(\scr E)}\P(\scr E\oplus\scr F)\cup E,
	\] 
	where $E=\P(\scr E)\times\P(\scr F\oplus\scr O)$ is the exceptional divisor.
	By Proposition~\ref{prop:blowup-square}, we have $\L_{\ebu}$-equivalences
	\[
	\Th_S(\scr E)\wedge \Th_S(\scr F) \leftarrow B/\partial B\to B'/\partial B'\to \Th_S(\scr E\oplus\scr F).
	\]
	The point is that the homotopy between $\bar e\wedge\id_{\Th(\scr F)}$ and the identity was obtained from a commutator of two automorphisms of $\Th_S(\scr E\oplus\scr F)$, but the morphism of pointed presheaves $\delta\wedge\id_{\Th(\scr F)}$ does not descend to $\Th_S(\scr E\oplus\scr F)$. It does however descend to $B'/\partial B'$, while at the same time the two automorphisms of $\Th_S(\scr E\oplus\scr F)$ lift to $B'/\partial B'$.
	
	Indeed, there is a commutative square of pointed presheaves
	\[
	\begin{tikzcd}[row sep={50,between origins}, column sep={180,between origins}]
	\Th_S(\scr E)\wedge\Th_S(\scr F) \ar{d}[swap]{\delta\wedge\id} & B/\partial B \ar{l} \ar{d}
	\\
	\Sigma(\P(\scr E)_+)\wedge\Th_S(\scr F) & B'/\partial B' \ar{l}[swap]{\delta'} \rlap,
	\end{tikzcd}
	\]
	which identifies $\delta\wedge\id$ with $\delta'$ in $(\scr P_\ebu(\Sm_S)_*)_{/\Sigma(\P(\scr E)_+)\wedge\Th_S(\scr F)}$.
	This square is the cofiber of the following cube in $\scr P(\Sm_S)$:
	\[
	\begin{tikzcd}[row sep={30,between origins}, column sep={100,between origins}]
	      & (\P(\scr E\oplus\scr O)_+\times \P(\scr F))\cup (\P(\scr E)_+\times \P(\scr F\oplus\scr O)) \ar[leftarrow]{rr}\ar{dd}\ar{dl} & & \partial B \ar{dd}\ar{dl} \\
	    \P(\scr E\oplus\scr O)_+\times \P(\scr F\oplus\scr O) \ar[crossing over,leftarrow]{rr} \ar{dd}[swap]{\pi_2} & & B  \\
	      & \P(\scr F)\sqcup_{\P(\scr E)_+\times \P(\scr F)} (\P(\scr E)_+\times \P(\scr F\oplus\scr O))  \ar[leftarrow]{rr} \ar{dl} & &  \partial B' \ar{dl}  \\
	    \P(\scr F\oplus\scr O) \ar[leftarrow]{rr} && B'\rlap. \ar[from=uu,crossing over]
	\end{tikzcd}
	\]
	The bottom face of this cube is functorial with respect to lower unitriangular matrices in $\Aut(\scr E\oplus\scr F\oplus\scr O)$, while the whole cube is functorial with respect to the subgroup $\Hom(\scr E,\scr O)\oplus\Hom(\scr F,\scr O)$. In particular, the matrices $e_{31}(\sigma)$ and $e_{32}(\tau)$ induce automorphisms of the cube, and the matrix $e_{21}(\phi)$ induces an automorphism of the bottom face.
	The given automorphism $\bar e\wedge\id$ of $\delta\wedge\id$ is induced by the matrix $e_{31}(\sigma)$, which acts by the identity on the lower left edge of the cube.
	We now claim that the commutator relation $e_{31}(\sigma)=[e_{32}(\tau),e_{21}(\phi)]$ gives the desired decomposition of $\bar e\wedge\id$.
	Indeed, the matrix $e_{21}(\phi)$ acts by the identity on the lower left edge of the cube.
	Moreover, the automorphism of $\delta\wedge\id$ induced by $e_{32}(\tau)$ is
	\[
	\begin{tikzcd}
		\Th_S(\scr E)\wedge\Th_S(\scr F) \ar{d}[swap]{\id\wedge\bar f} \ar{r}{\delta\wedge\id} & \Sigma(\P(\scr E)_+)\wedge\Th_S(\scr F) \ar{d}{\id\wedge \bar f} \\
		\Th_S(\scr E)\wedge\Th_S(\scr F)\ar{r}{\delta\wedge\id} & \Sigma(\P(\scr E)_+)\wedge\Th_S(\scr F)\rlap,
	\end{tikzcd}
	\]
	where $\bar f$ is given by the matrix $e_{21}(\tau)\in\Aut(\scr F\oplus\scr O)$, so it is multiplication by an automorphism of $\1$ in $\Sp_{T}(\scr P_\ebu(\Sm_S)_*)$, as desired.
\end{proof}

\begin{remark}
	The main results of this paper only use the rank $1$ case of Theorem~\ref{thm:euler}\ref*{item:P-inv}, which is significantly easier to prove. 
	Indeed, when $\scr L$ is an invertible sheaf on $S$, the cofiber sequence
	\[
	S_+=\P(\scr L)_+\to \P(\scr L\oplus\scr O)_+\to \Th_S(\scr L)
	\]
	is split by the structure map $\P(\scr L\oplus\scr O)_+\to S_+$. This yields a canonical decomposition $\P(\scr L\oplus\scr O_S)_+\simeq \1\oplus \Th_S(\scr L)$ in the stable $\infty$-category $\scr P(\Sm_S,\Sp)$, under which $e=\id\oplus \bar e$ (since $e$ commutes with both the inclusion $S=\P(\scr L)\into \P(\scr L\oplus\scr O)$ and its retraction). Thus, $e$ is homotopic to the identity if $\bar e$ is.
\end{remark}

\begin{definition}[$\P^1$-homotopy]
	Let $\scr C$ be an $\infty$-category tensored over $\Sm_\Z^\fp$, and let $f,g\colon X\to Y$ be morphisms in $\scr C$. A \emph{$\P^1$-homotopy} between $f$ and $g$ is a morphism $h\colon \P^1\otimes X\to Y$ making the following diagram commute:
	\[
	\begin{tikzcd}
		*\otimes X \ar{r}{\sim} \ar{d}[swap]{0\otimes\id} & X \ar{d}{f} \\
		\P^1\otimes X \ar[dashed]{r}{h} & Y \\
		*\otimes X \ar{u}{1\otimes\id} \ar{r}{\sim} & X\rlap. \ar{u}[swap]{g}
	\end{tikzcd}
	\]
\end{definition}

\begin{corollary}[$\P^1$-homotopy invariance]
	\label{cor:P^1-homotopy}
	In $\Sp_{\P^1}(\scr P_{\ebu}(\Sm_S,\Sp))$ and hence in $\MotSp_S$, $\P^1$-homotopic morphisms are homotopic.
\end{corollary}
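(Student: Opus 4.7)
The plan is to deduce the corollary from a single instance of Theorem~\ref{thm:euler}\ref*{item:P-inv}, namely the assertion that the $0$-section and $1$-section $S \to \P^1_S$ are homotopic in $\scr D := \Sp_{\P^1}(\scr P_{\ebu}(\Sm_S,\Sp))$. Given such a homotopy $\alpha \colon 0 \simeq 1$, for any $\P^1$-homotopy $h\colon \P^1 \otimes X \to Y$ witnessing $f \simeq_{\P^1} g$, the composite $h \circ (\alpha \otimes \id_X)$ is a homotopy from $f = h \circ (0 \otimes \id_X)$ to $g = h \circ (1 \otimes \id_X)$. The statement in $\MotSp_S$ then follows by applying Zariski sheafification, which is a symmetric monoidal localization.

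To produce $\alpha$, I would invoke the rank-one case of Theorem~\ref{thm:euler}\ref*{item:P-inv} with $X = S$, $\scr E = \scr O_S$, and $\sigma = 1 \colon \scr O_S \to \scr O_S$. Under the identifications $\V(\scr O_S) = \A^1_S$ and $\P(\scr O \oplus \scr O) = \P^1_S$, the section of $\V(\scr E)$ attached to $\sigma$ is precisely the $1$-section of $\A^1_S \subset \P^1_S$, while the zero section is the $0$-section of $\P^1_S \to S$. The theorem therefore supplies a canonical homotopy between them in $(\MotSp_S)_{/S}$. In fact, the construction of Theorem~\ref{thm:euler} takes place in $\Sp_T(\scr P_\ebu(\Sm_S)_*)$ for $T$ a Thom space, which maps canonically to $\scr D$ via the $\P^1$-stabilization (since $\P^1$-stabilization inverts every Thom space by Corollary~\ref{cor:Thom-invertible}), so the same homotopy exists in $\scr D_{/S}$ prior to Zariski sheafification. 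Forgetting down to $\scr D$ yields $\alpha$.

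There is essentially no obstacle beyond Theorem~\ref{thm:euler} itself, which is already established. The only tiny sanity check is that the tensoring of $\scr D$ over $\Sm_\Z^\fp$ used in the definition of $\P^1$-homotopy, when evaluated on the global points $0,1 \in \P^1(\Z)$, reproduces the $0$- and $1$-sections of $\P^1_S \to S$ appearing in Theorem~\ref{thm:euler}\ref*{item:P-inv}; this is tautological, as the $\Sm_\Z^\fp$-action on $\scr D$ factors through the $\Sm_S^\fp$-action by base change $U \mapsto U \times_\Z S$, under which $0,1\colon \Spec\Z \to \P^1$ pull back to the corresponding sections of $\P^1_S$. With this matching in place, the tensor-and-compose step of the first paragraph is well-defined and the proof is complete.
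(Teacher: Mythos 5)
Your proposal is correct and follows essentially the same route as the paper: the paper's proof likewise reduces to the single instance of Theorem~\ref{thm:euler}\ref*{item:P-inv} showing that $0,1\colon S\to\A^1\subset\P^1$ are homotopic, and then composes with the given $\P^1$-homotopy. Your extra care about the statement holding already in $\Sp_{\P^1}(\scr P_{\ebu}(\Sm_S,\Sp))$ matches the paper's parenthetical remark that the proof of the theorem does not use Zariski descent in this case.
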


\begin{proof}
	This follows directly from Theorem~\ref{thm:euler}\ref*{item:P-inv}, which implies that the two maps $0,1\colon S\to \A^1\subset\P^1$ are homotopic (noting that the proof does not use Zariski descent in this case).
\end{proof}

\begin{corollary}[Euler class of trivial bundles]
	\label{cor:euler-trivial}
	Let $\scr E$ be a finite locally free sheaf on $X\in\scr P(\Sm_S)$. If there exists an epimorphism $\scr E\onto\scr O$, then the pointed map $X_+\to\Th_X(\scr E)$ induced by the zero section becomes nullhomotopic in $\Sp_{\P^1}(\scr P_{\ebu}(\Sm_S,\Sp))$ and hence in $\MotSp_S$.
\end{corollary}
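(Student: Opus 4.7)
The plan is to construct an explicit $\P^1$-homotopy between the zero section and the constant basepoint map, and then invoke Corollary~\ref{cor:P^1-homotopy}. Given the epimorphism $\sigma\colon\scr E\onto\scr O$ on $X$, I would define a morphism $\Psi\colon X\times\P^1\to\P(\scr E\oplus\scr O)$ classifying the invertible quotient
\[
\scr E\oplus\scr O\twoheadrightarrow\scr O_{\P^1}(1),\qquad (e,f)\mapsto t_0\cdot\sigma(e)+(t_1-t_0)\cdot f,
\]
on $X\times\P^1$, where $t_0,t_1\in\Gamma(\P^1,\scr O(1))$ are the tautological global sections. The hypothesis that $\sigma$ is an epimorphism enters precisely in verifying that this map of sheaves is surjective at every point of $X\times\P^1$: on the open locus $\{t_1-t_0\ne 0\}$, surjectivity is immediate from the second summand, while at the single complementary point $[1:1]\in\P^1$ the map reduces fiberwise to $(e,f)\mapsto\sigma(e)$, which is surjective by hypothesis.

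Next I would read off the restrictions. At $0=[0:1]$, the quotient specialises to $(e,f)\mapsto f$, i.e., the zero section $X\to\V(\scr E)\subset\P(\scr E\oplus\scr O)$; at $1=[1:1]$, the $\scr O$-component of the quotient vanishes, so $\Psi|_{[1:1]}$ factors through the closed subscheme $\P(\scr E)\subset\P(\scr E\oplus\scr O)$. Composing $\Psi$ with the pointed quotient $\P(\scr E\oplus\scr O)_+\to\Th_X(\scr E)$ thus yields a morphism $\bar\Psi\colon(X\times\P^1)_+\to\Th_X(\scr E)$ in $\scr P_\ebu(\Sm_S)_*$, equivalently a morphism $\P^1\otimes X_+\to\Th_X(\scr E)$ in $\Sp_{\P^1}(\scr P_\ebu(\Sm_S,\Sp))$, whose restriction at $0$ is the zero section $X_+\to\Th_X(\scr E)$ and whose restriction at $1$ is the constant basepoint map.

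This $\bar\Psi$ is exactly a $\P^1$-homotopy in the sense of the definition preceding Corollary~\ref{cor:P^1-homotopy}, so that corollary gives directly the desired nullhomotopy of the zero section in $\Sp_{\P^1}(\scr P_\ebu(\Sm_S,\Sp))$, and hence in $\MotSp_S$.

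I anticipate no serious conceptual obstacle. The only delicate step is the surjectivity of the quotient $\scr E\oplus\scr O\to\scr O(1)$ at the point $[1:1]$, which is precisely where the epimorphism hypothesis on $\sigma$ is essentially used; everything else is routine bookkeeping with invertible quotients of $\scr E\oplus\scr O$ and the universal property of projective bundles.
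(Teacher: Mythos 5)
Your proof is correct and rests on the same mechanism as the paper's: Corollary~\ref{cor:P^1-homotopy} applied to an explicit linear-algebraic $\P^1$-homotopy from the zero section to a map landing in $\P(\scr E)$, with the epimorphism hypothesis entering exactly where you say it does. The only difference is cosmetic — the paper first reduces to $\scr E=\scr O$ via the factorization $X_+\to\Th_X(\scr O)\to\Th_X(\scr E)$ induced by $\sigma$ and then uses $0\simeq\infty$ in $\P^1_+$, whereas you push that same homotopy forward along the linear embedding $\P^1_X\into\P(\scr E\oplus\scr O)$ and write it out directly.
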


\begin{proof}
	We may assume $\scr E=\scr O$, since the given map for $\scr E$ factors through the one for $\scr O$. By Corollary~\ref{cor:P^1-homotopy}, the zero section and the section at infinity $S_+\to\P^1_+$ become homotopic in $\Sp_{\P^1}(\scr P_{\ebu}(\Sm_S,\Sp))$, but the latter is nullhomotopic when composed with the quotient map $\P^1_+\to \P^1/\infty= \Th_S(\scr O)$.
\end{proof}

\begin{definition}[Weighted $\A^1$-homotopy]
	Let $\scr C$ be an $\infty$-category tensored over $\Sm_\Z^\fp$, and let $f,g\colon X\to Y$ be morphisms in $\scr C$.
	 A \emph{weighted $\A^1$-homotopy} or \emph{$\A^1/\G_m$-homotopy} from $f$ to $g$ is a $\G_m$-equivariant morphism $h\colon \A^1\otimes X\to Y$, where $\G_m$ acts on $\A^1$ with weight $1$ and trivially on $X$ and $Y$, making the following diagram commute:
	\[
	\begin{tikzcd}
		*\otimes X \ar{r}{\sim} \ar{d}[swap]{0\otimes\id} & X \ar{d}{f} \\
		\A^1\otimes X \ar[dashed]{r}{h} & Y \\
		*\otimes X \ar{u}{1\otimes\id} \ar{r}{\sim} & X\rlap. \ar{u}[swap]{g}
	\end{tikzcd}
	\]
\end{definition}

\begin{remark}\label{rmk:weighted-A^1}
	For $n\in \Z$, let $\A^1(n)$ denote the quotient $\A^1/\G_m$ where $\G_m$ acts with weight $n$.
	We can then define an $\A^1(n)$-homotopy in the obvious way. However, the resulting homotopy relations fall in only two classes:
	\begin{itemize}
		\item If $n=0$, two morphisms are $\A^1(0)$-homotopic if and only if they are $\A^1$-homotopic.
		\item If $n\neq 0$, two morphisms are $\A^1(n)$-homotopic if and only if they are $\A^1(1)$-homotopic (and they are then also $\A^1$-homotopic). Indeed, for any $m\in\Z$, there is a map
	\[
	\A^1(m)\to \A^1(n),\quad t\mapsto t^{\lvert n\rvert},
	\]
	sending $0$ to $0$ and $1$ to $1$.
	\end{itemize}
\end{remark}

\begin{corollary}[Weighted $\A^1$-homotopy invariance]
	\label{cor:A^1/G_m-homotopy}
	In $\MotSp_S$, $\A^1/\G_m$-homotopic morphisms are homotopic.
\end{corollary}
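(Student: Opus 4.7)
The plan is to extend a weighted $\A^1$-homotopy to a $\P^1$-homotopy between the same endpoints, then apply Corollary~\ref{cor:P^1-homotopy}.

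Let $h\colon \A^1\otimes X\to Y$ be a $\G_m$-equivariant morphism with $h\circ(0\otimes\id)\simeq f$ and $h\circ(1\otimes\id)\simeq g$, where $\G_m$ acts on $\A^1$ with weight $1$ and trivially on $X$ and $Y$. First, I would establish that the restriction $h|_{\G_m\otimes X}\colon \G_m\otimes X\to Y$ is canonically homotopic to $g\circ\pi_X$, where $\pi_X\colon \G_m\otimes X\to X$ is the projection. This is purely formal: since $\G_m$ acts freely and transitively on itself by (weight-$1$) multiplication, the homotopy quotient of $\G_m$ by $\G_m$ is contractible, so a $\G_m$-equivariant morphism from $\G_m\otimes X$ (with action on the first factor only) to $Y$ (with trivial action) is the same data as a plain morphism $X\to Y$, which in our case is $h\circ(1\otimes\id)=g$.

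Next, by Zariski descent, $\P^1$ is the pushout of $\A^1\leftarrow\G_m\to \P^1\setminus\{0\}$ in $\MotSp_S$; tensoring with $X$ yields a cocartesian square. I would then glue $h$ with the composite $(\P^1\setminus\{0\})\otimes X\xrightarrow{\pi_X} X\xrightarrow{g} Y$, using the identification from the first paragraph, to produce a morphism $\tilde h\colon \P^1\otimes X\to Y$ satisfying $\tilde h\circ(0\otimes\id)=f$ and $\tilde h\circ(1\otimes\id)=g$. In other words, $\tilde h$ is a $\P^1$-homotopy from $f$ to $g$, and Corollary~\ref{cor:P^1-homotopy} concludes that $f\simeq g$ in $\MotSp_S$.

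The main delicate point is the first step: making precise, in the $\infty$-categorical setting, that $\G_m$-equivariance of $h$ forces $h|_{\G_m\otimes X}$ to coincide with $g\circ\pi_X$ \emph{coherently} (not merely pointwise at $1\in\G_m$), so that the gluing in the second step is canonical and well-defined. This reduces to the general formal fact that for any object $Z$ of $\MotSp_S$ carrying the trivial $\G_m$-action, $\G_m$-equivariant maps $\G_m\to Z$ (with $\G_m$ acting on itself by multiplication) correspond canonically to plain maps $\ast\to Z$ via evaluation at $1\in\G_m$, a manifestation of $\G_m$ being a trivial $\G_m$-torsor.
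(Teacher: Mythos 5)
Your proof is correct and is essentially the paper's argument: the paper constructs the required $\P^1$-homotopy by descending $h$ to $(\A^1/\G_m)\otimes X\to Y$ and precomposing with the map $\P^1\to\A^1/\G_m$ classifying the divisor $\{0\}$, which is exactly your Zariski gluing of $h$ with the constant homotopy $g\circ\pi_X$ along the identification $\G_m/\G_m\simeq *$. Both then conclude by $\P^1$-homotopy invariance (Corollary~\ref{cor:P^1-homotopy}).
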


\begin{proof}
	In $\scr P_\Zar(\Sm_S)$, there is a map $\P^1\to\A^1/\G_m$ sending $0$ to $0$ and $1$ to $1$ (classifying the effective Cartier divisor $0\in\P^1$). The claim now follows from Corollary~\ref{cor:P^1-homotopy}.
\end{proof}

\begin{proposition}\label{prop:punctured-P^n}
	Let $\scr E$ and $\scr F$ be finite locally free sheaves on $X\in\scr P(\Sm_S)$. Then the triangle
		\[
		\begin{tikzcd}
			\P(\scr E\oplus\scr F)-\P(\scr F) \ar[hookrightarrow]{dr} \ar[twoheadrightarrow]{d}[swap]{\pi} & \\
			\P(\scr E) \ar[hookrightarrow]{r} & \P(\scr E\oplus\scr F)
		\end{tikzcd}
		\]
	commutes up to homotopy in $(\MotSp_S)_{\P(\scr E)//X}$.
\end{proposition}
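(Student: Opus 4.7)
The plan is to identify the open complement $Y := \P(\scr E \oplus \scr F) - \P(\scr F)$ as a vector bundle over $\P(\scr E)$ and then apply $\P$-homotopy invariance (Theorem~\ref{thm:euler}\ref*{item:P-inv}) to a tautological linear form on $Y$. First, $Y$ coincides with $p \colon \V_{\P(\scr E)}(\scr F(-1)) \to \P(\scr E)$, where $\scr F(-1) = \scr F \otimes \scr O_{\P(\scr E)}(-1)$: a $T$-point of $Y$ is an invertible quotient $(\scr E \oplus \scr F)_T \onto \scr L$ for which $\scr E_T \to \scr L$ is already surjective, equivalently a $T$-point $\phi \colon \scr E_T \onto \scr L$ of $\P(\scr E)$ together with an arbitrary linear map $\sigma \colon \scr F_T \to \scr L$. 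Under this identification $\pi = p$, and the zero section $s_0 \colon \P(\scr E) \to Y$ composed with the open inclusion $\iota \colon Y \hookrightarrow \P(\scr E \oplus \scr F)$ recovers the linear embedding $\P(\scr E) \hookrightarrow \P(\scr E \oplus \scr F)$.

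On $Y$ there is a tautological linear form $\sigma_Y \colon p^* \scr F(-1) \to \scr O_Y$, which corresponds to the identity morphism viewed as a section of the bundle projection $\pi_1 \colon \V_Y(p^* \scr F(-1)) = Y \times_{\P(\scr E)} Y \to Y$. Applying Theorem~\ref{thm:euler}\ref*{item:P-inv} with $X = Y$, $\scr E = p^* \scr F(-1)$, and $\sigma = \sigma_Y$ produces a canonical homotopy $h(\sigma_Y)$ in $(\MotSp_S)_{/Y}$ between $\sigma_Y$ and the zero section. Under the identification $\V_Y(p^* \scr F(-1)) = Y \times_{\P(\scr E)} Y$, $\sigma_Y$ is the diagonal while the zero section is $y \mapsto (y, s_0(p(y)))$. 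Post-composing $h(\sigma_Y)$ with the \emph{second} projection $\pi_2 \colon Y \times_{\P(\scr E)} Y \to Y$ and then with $\iota$ therefore yields a homotopy $\tilde h$ in $\MotSp_S$ between $\iota$ and $\iota \circ s_0 \circ p$, which are precisely the two composites appearing in the triangle.

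It remains to upgrade $\tilde h$ to a morphism in $(\MotSp_S)_{\P(\scr E)//X}$. For the structure over $X$: both $\pi_1$ and $\pi_2$ compose with $p$ to the same map $Y \times_{\P(\scr E)} Y \to \P(\scr E)$, and $h(\sigma_Y)$ is a morphism over $Y$ via $\pi_1$, so composing $\tilde h$ with the structure map $\P(\scr E \oplus \scr F) \to X$ collapses to the constant homotopy of the structure map $Y \to X$. For the structure under $\P(\scr E)$: the pullback $s_0^* \sigma_Y \colon \scr F(-1) \to \scr O_{\P(\scr E)}$ vanishes, since $s_0$ is the zero section of $Y = \V_{\P(\scr E)}(\scr F(-1))$; by the functoriality of $h$ in $(S, X, \scr E, \sigma)$, we have $s_0^* h(\sigma_Y) = h(0)$, which is the identity (constant) homotopy by the last clause of Theorem~\ref{thm:euler}. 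Tracing this through $\pi_2$ (which restricts to the identity on $s_0^* \V_Y(p^* \scr F(-1)) = Y$) and $\iota$ produces the constant homotopy at the linear embedding $\P(\scr E) \hookrightarrow \P(\scr E \oplus \scr F)$.

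The main subtlety lies in this last step: carefully distinguishing the bundle projection $\pi_1$ from the auxiliary second projection $\pi_2$, and translating the functoriality of Theorem~\ref{thm:euler} under pullback along the zero section into compatibility of $\tilde h$ with both parts of the slice structure. The geometric identification of $Y$ and the application of $\P$-homotopy invariance itself are essentially formal, provided one verifies that the tautological form on $Y$ genuinely pulls back to zero along $s_0$.
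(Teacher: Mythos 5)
Your setup is right and matches the paper's: identify $Y=\P(\scr E\oplus\scr F)-\P(\scr F)$ with $\V_{\P(\scr E)}(\scr F(-1))$, take the tautological form $\sigma_Y\colon p^*\scr F(-1)\to\scr O_Y$, and apply Theorem~\ref{thm:euler}\ref*{item:P-inv}. But there is a genuine gap at the transport step. The homotopy $h(\sigma_Y)$ produced by Theorem~\ref{thm:euler}\ref*{item:P-inv} is a homotopy between two maps $Y\to\P_Y(p^*\scr F(-1)\oplus\scr O)$ — its target is the \emph{projective completion}, not the vector bundle $\V_Y(p^*\scr F(-1))=Y\times_{\P(\scr E)}Y$. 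Your $\pi_2$ is only defined on that open subscheme, so "post-composing $h(\sigma_Y)$ with $\pi_2$" is not a defined operation. The fact that the two endpoints of the homotopy factor through the open does not let you restrict the homotopy itself: in $\MotSp_S$ the inclusion $\V_Y(p^*\scr F(-1))\into\P_Y(p^*\scr F(-1)\oplus\scr O)$ is far from an equivalence, and the homotopy of Theorem~\ref{thm:euler} genuinely uses the boundary (for $\scr F(-1)=\scr O$ it is the statement that $0\simeq 1$ in $\P^1$, which fails in $\A^1$ since $\A^1$ is not contractible here). So as written the argument does not go through.

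What is needed — and what constitutes the actual content of the paper's proof — is an extension of $\iota\circ\pi_2$ to a map $h\colon\P_Y(\scr F(-1)\oplus\scr O)\to\P(\scr E\oplus\scr F)$. The paper writes it down explicitly: a point of the source is a pair of quotients $\phi\colon\scr E\oplus\scr F\onto\scr L$ (with $\phi|_{\scr E}$ surjective) and $\psi\colon\scr F\otimes\scr L^\vee\oplus\scr O\onto\scr M$, which is sent to $\scr E\oplus\scr F\xrightarrow{\phi|_{\scr E}\oplus\id}\scr L\oplus\scr F\xrightarrow{\psi}\scr L\otimes\scr M$. One checks that this restricts to $\iota\circ\pi_2$ on the vector bundle (so the tautological section goes to $\iota$ and the zero section to the linear embedding composed with $\pi$), and note that it sends the boundary $\P_Y(\scr F(-1))$ into $\P(\scr F)$, so $\pi_2$ itself does \emph{not} extend as a map to $Y$ — only $\iota\circ\pi_2$ extends. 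Your final paragraph (vanishing of $s_0^*\sigma_Y$, functoriality giving the constant homotopy under $\P(\scr E)$, and the collapse over $X$) is correct and agrees with the paper, but it sits downstream of the missing construction.
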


\begin{proof}
	We define a $\P$-homotopy
	\[
	h\colon \P_{\P(\scr E\oplus\scr F)-\P(\scr F)}(\scr F(-1)\oplus\scr O) \to \P(\scr E\oplus\scr F)
	\]
	as follows.
	A point in the source is an invertible quotient $\phi\colon \scr E\oplus\scr F\onto\scr L$ such that $\phi|_{\scr E}$ is still surjective and a further invertible quotient $\psi\colon \scr F\otimes\scr L^\vee\oplus\scr O\onto\scr M$. We send this to the quotient
	\[
	\scr E\oplus\scr F\xrightarrow{\phi|_{\scr E}\oplus\id} \scr L\oplus\scr F \xrightarrow{\psi} \scr L\otimes\scr M.
	\]
	If we precompose $h$ with the zero section
	\[
	\P(\scr E\oplus\scr F)-\P(\scr F)\into \V_{\P(\scr E\oplus\scr F)-\P(\scr F)}(\scr F(-1)),
	\]
	we get the lower composite in the given triangle. The diagonal map is obtained via the other canonical section, which sends $\phi\colon \scr E\oplus\scr F\onto\scr L$ to $\scr F\otimes\scr L^\vee\into(\scr E\oplus\scr F)\otimes\scr L^\vee\xrightarrow{\phi}\scr O$.
 Note that this section agrees with the zero section when restricted to $\P(\scr E)$. By Theorem~\ref{thm:euler}\ref*{item:P-inv}, $h$ provides the desired homotopy under $\P(\scr E)$ and over $X$.
\end{proof}

\begin{corollary}\label{cor:linear-embeddings}
	Let $\scr E$ and $\scr F$ be finite locally free sheaves on $X\in \scr P(\Sm_S)$, and let $\sigma,\tau\colon \scr F\to\scr E$ be linear maps. Then the linear embeddings $\P(\scr E)\into\P(\scr E\oplus\scr F)$ induced by $\sigma$ and $\tau$ become homotopic in $(\MotSp_S)_{/X}$.
	In particular, any two linear embeddings $\P^{m}\into \P^n$ become homotopic.
\end{corollary}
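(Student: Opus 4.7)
The plan is to show that each $\iota_\sigma$ is homotopic in $(\MotSp_S)_{/X}$ to the standard embedding $\iota_0$ (the case $\sigma = 0$); the conclusion $\iota_\sigma \simeq \iota_\tau$ then follows by symmetry. The main input is Proposition~\ref{prop:punctured-P^n}, which supplies a homotopy $\iota \simeq \iota_0 \circ \pi$ in $(\MotSp_S)_{\P(\scr E)//X}$, between the open inclusion $\iota\colon \P(\scr E\oplus\scr F) - \P(\scr F) \hookrightarrow \P(\scr E\oplus\scr F)$ and the composite of the retraction $\pi$ with the standard linear embedding $\iota_0$.

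The setup: the surjection $p_\sigma\colon \scr E\oplus\scr F \twoheadrightarrow \scr E$, $(e,f) \mapsto e + \sigma(f)$, restricts to the identity on $\scr E \subset \scr E \oplus \scr F$, so its kernel is a graph-copy of $\scr F$ and the image of the induced embedding $\iota_\sigma$ is disjoint from $\P(\scr F)$. This yields a unique factorization $\iota_\sigma = \iota \circ \tilde\iota_\sigma$ through a lift $\tilde\iota_\sigma\colon \P(\scr E) \to \P(\scr E\oplus\scr F) - \P(\scr F)$. A direct check on invertible quotients then shows that $\pi \circ \tilde\iota_\sigma = \id_{\P(\scr E)}$: on a quotient $q\colon \scr E\twoheadrightarrow \scr L$, $\tilde\iota_\sigma$ produces the quotient $q \circ p_\sigma$ of $\scr E \oplus \scr F$, and $\pi$ restricts it along $\scr E \hookrightarrow \scr E \oplus \scr F$ to recover $q$ (since $p_\sigma|_{\scr E}=\id$). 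Pre-composing the homotopy of Proposition~\ref{prop:punctured-P^n} with $\tilde\iota_\sigma$ (and forgetting the under-structure along $\P(\scr E)$) then yields $\iota_\sigma = \iota\circ\tilde\iota_\sigma \simeq \iota_0 \circ \pi \circ \tilde\iota_\sigma = \iota_0$ in $(\MotSp_S)_{/X}$, as desired.

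For the ``in particular'' claim about two arbitrary linear embeddings $\P^m \hookrightarrow \P^n$, the same argument applies once both embeddings are realised in the form $\iota_{\sigma_i}$ for a common splitting $\scr O^{n+1} \simeq \scr O^{m+1} \oplus \scr O^{n-m}$. Given embeddings with images $L_1, L_2 \subset \P^n$, this reduces to finding an $(n-m)$-plane in $\A^{n+1}$ simultaneously complementary to the lifts of both $L_i$, and the locus of such planes is a non-empty open subscheme of $\Gr(n-m, n+1)$ by standard dimension-counting. I do not expect a real obstacle in any of this: the argument is essentially linear algebra together with one application of Proposition~\ref{prop:punctured-P^n}, and the only subtlety — that pre-composition of the homotopy by $\tilde\iota_\sigma$ is legitimate — is automatic because that proposition already delivers the homotopy in a slice over $X$, while $\tilde\iota_\sigma$ is itself a morphism over $X$.
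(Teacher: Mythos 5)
Your proof of the first (main) statement is exactly the paper's: the published proof is the one-line observation that any linear map $\scr F\to\scr E$ induces a section of $\pi$, and precomposing the homotopy of Proposition~\ref{prop:punctured-P^n} with that section gives $\iota_\sigma\simeq\iota_0$; your write-up merely makes the section $\tilde\iota_\sigma$ and the identity $\pi\circ\tilde\iota_\sigma=\id$ explicit, and is correct.

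For the ``in particular'' (which the paper leaves to the reader), your reduction has a gap. To realize two given embeddings $j_1,j_2\colon\P^m\into\P^n$ as $\iota_{\sigma_1},\iota_{\sigma_2}$ for a \emph{common} splitting $\scr O^{n+1}=\scr E\oplus\scr F$, it is not enough to find a plane simultaneously complementary to the two images: the corollary compares two embeddings with the same source $\P(\scr E)$, so you also need the two defining surjections $q_1,q_2\colon\scr O^{n+1}\onto\scr O^{m+1}$ to restrict to the \emph{same} isomorphism $\lambda$ on $\scr E$ (i.e.\ $q_i=\lambda\circ p_{\sigma_i}$). Otherwise $j_1$ and $j_2$ only match $\iota_{\sigma_1},\iota_{\sigma_2}$ up to an automorphism of the source $\P^m$, and the first part of the corollary says nothing about $\iota_0\circ\P(\lambda)$ versus $\iota_0$ (indeed, for $m=n$ such automorphisms are \emph{not} homotopic to the identity even after $\A^1$-localization). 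The condition $q_1|_{\scr E}=q_2|_{\scr E}$ forces $\scr E\subset\ker(q_1-q_2)$, which for a generic pair has rank only $n-m$; so a rank-$(m+1)$ such $\scr E$ exists only when $n\geq 2m+1$, and in particular a single application of the first part does not handle generic pairs of lines in $\P^2$ — the case actually used in Proposition~\ref{prop:EulerP1}. One must instead interpolate through a chain of intermediate surjections, changing $q$ by one suitable rank-one increment at a time (or otherwise deal with the reparametrization). A secondary caveat: ``the locus of such planes is a non-empty open subscheme of $\Gr(n-m,n+1)$'' does not by itself produce a point of that locus over a general base $S$ (or over a finite field).
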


\begin{proof}
	This follows from Proposition~\ref{prop:punctured-P^n}, since any linear map $\scr F\to\scr E$ induces a section of $\pi$.
\end{proof}

\begin{corollary}\label{cor:A^1-in-P^1}
	Let $\scr F$ be a finite locally free sheaf on $X\in\scr P(\Sm_S)$. Then the embedding $\V(\scr F)\into \P(\scr F\oplus\scr O)$ becomes homotopic to the constant map $\V(\scr F)\to X\xrightarrow{0} \P(\scr F\oplus\scr O)$ in $(\MotSp_S)_{X//X}$.
\end{corollary}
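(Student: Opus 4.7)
My plan is to deduce this directly from Theorem~\ref{thm:euler}\ref*{item:P-inv} applied to the tautological section over the total space $\V(\scr F)$.

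Let $\pi\colon \V(\scr F)\to X$ be the structure map. Then $\pi^*\scr F$ carries a canonical tautological linear map $\sigma_{\mathrm{taut}}\colon \pi^*\scr F\to \scr O_{\V(\scr F)}$, namely the one that corresponds under the adjunction to the diagonal section $\V(\scr F)\to \V(\pi^*\scr F)=\V(\scr F)\times_X\V(\scr F)$. I apply Theorem~\ref{thm:euler}\ref*{item:P-inv} with $(X,\scr E,\sigma)$ replaced by $(\V(\scr F),\pi^*\scr F,\sigma_{\mathrm{taut}})$ to obtain a canonical homotopy in $(\MotSp_S)_{/\V(\scr F)}$ between the two maps
\[
\V(\scr F)\rightrightarrows \V(\pi^*\scr F)\subset\P(\pi^*\scr F\oplus\scr O_{\V(\scr F)}).
\]

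Next I post-compose with the base-change projection
\[
\P(\pi^*\scr F\oplus\scr O_{\V(\scr F)})\simeq\P(\scr F\oplus\scr O_X)\times_X\V(\scr F)\longrightarrow \P(\scr F\oplus\scr O_X).
\]
Under the tautological identification, the map induced by $\sigma_{\mathrm{taut}}$ becomes exactly the open embedding $\V(\scr F)\into \P(\scr F\oplus\scr O_X)$, while the zero section becomes the constant composite $\V(\scr F)\to X\xrightarrow{0}\P(\scr F\oplus\scr O_X)$. The projected homotopy therefore lives in $(\MotSp_S)_{/X}$.

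It remains to lift this to $(\MotSp_S)_{X//X}$, i.e.\ to check that precomposition with the zero section $s\colon X\to\V(\scr F)$ gives the constant homotopy at $0\colon X\to \P(\scr F\oplus\scr O_X)$. By construction, $s^*\sigma_{\mathrm{taut}}=0\colon\scr F\to\scr O_X$, and the last sentence of Theorem~\ref{thm:euler} states that $h(0)$ is the identity. The asserted functoriality of $h(\sigma)$ in $(S,X,\scr E,\sigma)$ then provides exactly the required compatibility. I do not anticipate any genuine obstacle: the whole argument is just a functorial application of Theorem~\ref{thm:euler}\ref*{item:P-inv} to the universal example $(\V(\scr F),\pi^*\scr F,\sigma_{\mathrm{taut}})$, with the mild bookkeeping of identifying $\P(\pi^*\scr F\oplus\scr O)$ with the pullback $\P(\scr F\oplus\scr O)\times_X\V(\scr F)$.
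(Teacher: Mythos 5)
Your argument is correct and is essentially the paper's: the paper deduces the corollary as the special case $\scr E=\scr O$ of Proposition~\ref{prop:punctured-P^n}, whose proof in that case is precisely your application of Theorem~\ref{thm:euler}\ref*{item:P-inv} to the tautological section over $\V(\scr F)$ (there written as the canonical section of $\scr F(-1)$, which is identified with $\pi^*\scr F$ on $\V(\scr F)$ since $\scr O(1)$ is trivialized there), with the same use of $h(0)=\id$ and functoriality to get the homotopy under $X$.
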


\begin{proof}
	This is the special case of Proposition~\ref{prop:punctured-P^n} with $\scr E=\scr O$.
\end{proof}

\begin{proposition}[Bass fundamental theorem]
	\label{prop:fundamental}
	The canonical map
	\[
	\partial\colon (\P^1,1) \to \Sigma(\G_m,1)
	\]
	in $\scr P_\Zar(\Sm_S)_*$ admits a retraction in $\MotSp_S$.
\end{proposition}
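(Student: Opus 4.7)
The plan is to exhibit $\partial$ as the connecting map of a cofiber sequence arising from the standard Zariski cover of $\P^1$ by its two affine charts, and then show that the map preceding $\partial$ in that sequence becomes null in $\MotSp_S$. A null map in a cofiber sequence of a stable $\infty$-category splits the subsequent connecting map off as the inclusion of a direct summand, giving the desired retraction.

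Concretely, let $U = \P^1 \minus \{\infty\}$ and $V = \P^1 \minus \{0\}$, so that $U, V \simeq \A^1$ and $U \cap V = \G_m$. Pointing every sheaf at $1 \in \G_m$, the Zariski cover yields a pushout square
\[
\begin{tikzcd}
(\G_m, 1) \ar{r} \ar{d} & (V, 1) \ar{d} \\
(U, 1) \ar{r} & (\P^1, 1)
\end{tikzcd}
\]
in $\scr P_\Zar(\Sm_S)_*$, which is preserved by the left adjoint to $\MotSp_S$. Since $\MotSp_S$ is stable, this pushout is biCartesian, giving a cofiber sequence
\[
(\G_m, 1) \to (U, 1) \oplus (V, 1) \xrightarrow{j} (\P^1, 1) \xrightarrow{\partial} \Sigma(\G_m, 1),
\]
whose connecting map is the canonical $\partial$ from the statement.

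Next, I would show that $j$ is null by treating its two components separately. By Corollary \ref{cor:A^1-in-P^1} applied with $\scr F = \scr O_S$ and $X = S$, the zero-pointed embedding $(U, 0) \hookrightarrow (\P^1, 0)$ is null in $\MotSp_S$. The translation $t \mapsto t - 1$ on $\A^1$ extends to the automorphism $[X : Y] \mapsto [X - Y : Y]$ of $\P^1$ that sends $1$ to $0$ and fixes $\infty$; conjugation by these pointed isomorphisms identifies $(U, 1) \hookrightarrow (\P^1, 1)$ with $(U, 0) \hookrightarrow (\P^1, 0)$ in $\scr P_\Zar(\Sm_S)_*$, so it is likewise null in $\MotSp_S$. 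For the $V$-component, the involution $[X : Y] \mapsto [Y : X]$ of $\P^1$ fixes $1$ and exchanges $U$ with $V$, identifying $(V, 1) \hookrightarrow (\P^1, 1)$ with $(U, 1) \hookrightarrow (\P^1, 1)$ and reducing to the case already treated.

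Combining the two null homotopies yields a null homotopy of $j$, after which the cofiber of $j$ splits in $\MotSp_S$ as
\[
\Sigma(\G_m, 1) \simeq (\P^1, 1) \oplus \Sigma\bigl((U, 1) \oplus (V, 1)\bigr),
\]
with $\partial$ corresponding to the inclusion of the first summand. Projection to that summand is therefore a retraction of $\partial$. I anticipate no substantial obstacle: the argument reduces to Corollary \ref{cor:A^1-in-P^1} combined with purely formal manipulations of cofiber sequences in a stable $\infty$-category.
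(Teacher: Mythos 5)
Your proposal is correct and follows essentially the same route as the paper: identify $\partial$ as the connecting map of the cofiber sequence coming from the two-chart Zariski cover of $\P^1$, reduce by the swap and translation automorphisms to the single inclusion $(\A^1,0)\to(\P^1,0)$, and kill it using Corollary~\ref{cor:A^1-in-P^1}. The only cosmetic difference is that the paper factors this inclusion through $\A^1_+\to(\P^1,0)$ before applying the corollary, whereas you pass to the quotient by the zero sections directly; both deductions are equivalent.
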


\begin{proof}
	The map $\partial$ is the cofiber of $(\P^1-\{\infty\},1)\vee(\P^1-\{0\},1)\to (\P^1,1)$. By symmetry, it suffices to show that the inclusion $(\A^1,0)\to(\P^1,0)$ is nullhomotopic. Since $\MotSp_S$ is stable, this inclusion decomposes as $(\A^1,0)\to \A^1_+\to\P^1_+\to (\P^1,0)$, and the map $\A^1_+\to(\P^1,0)$ is nullhomotopic by Corollary~\ref{cor:A^1-in-P^1}.
\end{proof}

Let $\scr V$ be tensored over $\scr P_{\Zar}(\Sm_S)_*$.
Recall that an object $E\in \scr V$ is \emph{fundamental} if the map 
\[\partial\otimes\id_E\colon (\P^1,1)\otimes E\to \Sigma(\G_m,1)\otimes E\]
admits a retraction \cite[Definition 2.3.2]{AnnalaIwasa2}. We write $\scr V^\fd\subset \scr V$ for the full subcategory spanned by the fundamental objects.

\begin{corollary}[Bass delooping]
	\label{cor:delooping}
	Let $\scr V$ be presentably tensored over $\MotSp^\un_S$.
	\begin{enumerate}[beginpenalty=10000]
		\item\label{item:smashing} The adjunction
	\[
	\Sigma^\infty: \scr V \rightleftarrows \Sp(\scr V):\Omega^\infty
	\]
	is a smashing localization, i.e., $\Omega^\infty$ is fully faithful and $\Omega^\infty\Sigma^\infty$ is given by tensoring with $\Omega^\infty\Sigma^\infty\1\in\MotSp^\un_S$. Moreover, the essential image of $\Omega^\infty$ is contained in $\scr V^\fd$.
	\item\label{item:bass} Suppose that $\scr V\otimes_{\MotSp^\un_S}\MotSp^\un_U$ is compactly generated for every qcqs open subscheme $U\subset S$. Then $\Omega^\infty$ induces an isomorphism $\Sp(\scr V)\simeq \scr V^\fd$. In particular, $\scr V^\fd$ is stable and presentable.
	\end{enumerate}
\end{corollary}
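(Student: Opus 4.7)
My plan is to combine the Bass fundamental theorem (Proposition~\ref{prop:fundamental}) with the stability theorem \cite[Theorem~2.4.5]{AnnalaIwasa2}; in the universal case $\scr V=\MotSp_S^\un$, the latter identifies the smashing localization $\MotSp^\un_S\to\MotSp_S$ with the reflection onto the fundamental subcategory $(\MotSp_S^\un)^\fd$.

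For~\ref*{item:smashing}, I would first observe that $\Sp(\scr V)$ is canonically presentably tensored over $\Sp(\MotSp^\un_S)=\MotSp_S$ and that $\Sp(\scr V)\simeq\scr V\otimes_{\MotSp^\un_S}\MotSp_S$ by base change (using that $\Sp=\MotSp_S^\un\otimes_{\MotSp_S^\un}\MotSp_S$ in $\Pr^\L$-modules). That $\Omega^\infty$ lands in $\scr V^\fd$ I would verify directly: for $F\in\Sp(\scr V)$, tensor the retraction of $\partial$ in $\MotSp_S$ supplied by Proposition~\ref{prop:fundamental} with $F$ to obtain a retraction of $\partial\otimes F$ in $\Sp(\scr V)$; apply the right adjoint $\Omega^\infty$ (which preserves retractions); use that $\P^1$ is $\otimes$-invertible in $\MotSp^\un_S$, so that $\P^1\otimes\Omega^\infty F\simto\Omega^\infty(\P^1\otimes F)$; and combine with the lax monoidal map $\Sigma\G_m\otimes\Omega^\infty F\to\Omega^\infty(\Sigma\G_m\otimes F)$ to extract a retraction of $\partial\otimes\Omega^\infty F$ in $\scr V$. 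The smashing structure of $\scr V\to\Sp(\scr V)$ then follows by base change from the smashing structure of $\MotSp^\un_S\to\MotSp_S$ supplied by the stability theorem; in particular the endofunctor $\Omega^\infty\Sigma^\infty$ on $\scr V$ is tensoring with $\Omega^\infty\Sigma^\infty\1\in\MotSp^\un_S$.

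For~\ref*{item:bass}, I must verify essential surjectivity of $\Omega^\infty\colon\Sp(\scr V)\to\scr V^\fd$. By~\ref*{item:smashing} the essential image is a smashing localization of $\scr V$ cut out by the idempotent algebra $\Omega^\infty\Sigma^\infty\1\in\MotSp^\un_S$ and is contained in $\scr V^\fd$; equality of the two reflective subcategories amounts to equality of the two corresponding idempotents. Both idempotents are already defined on $\MotSp_S^\un$ itself, so the claim reduces to the identification $(\MotSp_S^\un)^\fd\simeq\MotSp_S$ given by \cite[Theorem~2.4.5]{AnnalaIwasa2}; what remains is to propagate this through the base change to $\scr V$. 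The Zariski descent of $\scr V\mapsto\Sp(\scr V)$ and of $\scr V\mapsto\scr V^\fd$ over $S$, together with the compact generation hypothesis on each $\scr V\otimes_{\MotSp^\un_S}\MotSp^\un_U$, lets me check the matching of idempotents locally and on compact generators, where it reduces to the universal statement. Stability and presentability of $\scr V^\fd$ are then inherited from $\Sp(\scr V)$.

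The main obstacle I anticipate is this last step in~\ref*{item:bass}: ensuring that the two smashing localizations on $\scr V$ (the one reflecting onto $\Omega^\infty(\Sp(\scr V))$ and the one reflecting onto $\scr V^\fd$) actually coincide globally. This is precisely where the Zariski descent and compact generation hypotheses enter, by allowing one to compare the two idempotents by testing on compact generators in each affine piece. By contrast, part~\ref*{item:smashing} is a fairly formal consequence of the Bass fundamental theorem tensored into $\Sp(\scr V)$ combined with the smashing statement in the universal case.
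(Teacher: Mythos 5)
Your treatment of part~\ref*{item:smashing} is essentially the paper's argument: the containment $\Omega^\infty(\Sp(\scr V))\subseteq\scr V^\fd$ follows from tensoring the retraction of Proposition~\ref{prop:fundamental} into $\Sp(\scr V)$ and transporting it back along the lax $\MotSp^\un_S$-linear, $\SigmaP$-commuting functor $\Omega^\infty$, and the smashing structure is obtained from the universal case $\scr V=\MotSp^\un_S$ by tensoring. That part is fine.

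Part~\ref*{item:bass} has a genuine gap. You write that the essential image of $\Omega^\infty$ and $\scr V^\fd$ are ``two reflective subcategories'' whose equality ``amounts to equality of the two corresponding idempotents,'' with ``both idempotents already defined on $\MotSp^\un_S$.'' But $\scr V^\fd$ is defined by an intrinsic lifting property (existence of a retraction of $\partial\otimes\id_E$), and it is not known a priori to be reflective, let alone a smashing localization cut out by an idempotent of $\MotSp^\un_S$; that it is one is precisely the conclusion, not an input. Concretely, base change of the universal equivalence $(\MotSp^\un_S)^\fd\simeq\MotSp_S$ along $\scr V\otimes_{\MotSp^\un_S}(\ph)$ only identifies $\Sp(\scr V)$ with $\scr V\otimes_{\MotSp^\un_S}(\MotSp^\un_S)^\fd$, and the assertion that this tensor product computes $\scr V^\fd$ \emph{is} statement~\ref*{item:bass}; your ``propagation through base change'' is therefore circular. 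The substantive direction --- that every fundamental object of $\scr V$ deloops, i.e.\ lies in the image of $\Omega^\infty$ --- is exactly what \cite[Theorem 2.4.5]{AnnalaIwasa2} provides for an arbitrary compactly generated $\MotSp^\un_S$-module, and the paper applies that theorem directly to $\scr V$ when $S$ is qcqs (this is where the compact generation hypothesis is actually used, to run the delooping construction in $\scr V$ itself), then handles general $S$ by Zariski descent. You should replace the idempotent-comparison step by a direct appeal to that theorem for $\scr V$; ``testing on compact generators'' cannot substitute for it, since the fundamentality of an object is not detected by mapping out of generators.
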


\begin{proof}
	The functor $\Omega^\infty$ is a priori lax $\MotSp^\un_S$-linear und commutes with $\SigmaP$, hence preserves fundamental objects.
	By Proposition~\ref{prop:fundamental}, every object of $\Sp(\scr V)$ is fundamental.
	Assertion \ref*{item:bass} now follows from \cite[Theorem 2.4.5]{AnnalaIwasa2} when $S$ is qcqs, and by descent in general.
	In this case, it is clear that the localization is smashing by definition of ``fundamental''.
	In particular, \ref*{item:smashing} holds for $\scr V=\MotSp^\un_S$, hence for arbitrary $\scr V$ by tensoring.
\end{proof}

The following proposition is an adaptation of a result by Panin and Smirnov \cite[Lemma 3.8]{PaninSmirnov}, which is crucial to proving the orientability of $\MGL$ in $\A^1$-homotopy theory:

\begin{proposition}[Euler class of $\scr O(1)$]\label{prop:EulerP1}
	Let $Y=\P_{\P^1}(\scr O(1)\oplus\scr O)$, let $s_0\colon \P^1\into Y$ be the zero section, and let $i\colon \P^1\into Y$ be the inclusion of the fiber at $\infty\in\P^1$. Let $q\colon Y_+\to \Th_{\P^1}(\scr O(1))$ be the quotient map.
	 In $\MotSp_S$, we have the following relation:
	\[
	q\circ s_0\simeq -q\circ i\colon \P^1_+\to \Th_{\P^1}(\scr O(1)).
	\]
\end{proposition}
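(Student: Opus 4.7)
The plan is to reduce the question to an explicit computation in $\P^2/\{p\}$ via smooth blowup excision, identifying both composites as linear embeddings of $\P^1$ into $\P^2$ followed by the quotient map, and then tracking the implicit choice of orientation that produces the sign.

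First, I would identify the Hirzebruch surface $Y = \P_{\P^1}(\scr O(1) \oplus \scr O)$ with the blowup $\Bl_p \P^2$ at a point $p \in \P^2$, so that the $(-1)$-section $\P_{\P^1}(\scr O(1)) \subset Y$ (whose normal bundle is $\Hom(\scr O(1), \scr O) = \scr O(-1)$) corresponds to the exceptional divisor $E$. By smooth blowup excision (Proposition \ref{prop:blowup-square}) applied to the blowup square of $p \into \P^2$, we get a canonical equivalence $\Th_{\P^1}(\scr O(1)) = Y/\P_{\P^1}(\scr O(1)) \simeq \P^2/\{p\}$ in $\MotSp_S$.

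Next, I would track the two composites through this equivalence. Under the identification, $q\circ s_0$ becomes the composite $\P^1_+ \xrightarrow{\sim} L_0 \hookrightarrow \P^2 \to \P^2/\{p\}$, where $L_0 \subset \P^2$ is the image of the $(+1)$-section $s_0(\P^1)$ under the blow-down $\rho\colon Y \to \P^2$; as a line in $\P^2$, it does not contain $p$. Similarly, $q\circ i$ becomes $\P^1_+ \xrightarrow{\sim} L_\infty \hookrightarrow \P^2 \to \P^2/\{p\}$, where $L_\infty \subset \P^2$ is the image of the fiber $Y_\infty$ under $\rho$; it is a line through $p$, and the point of $\P^1$ mapping to $p$ becomes the basepoint. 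Both composites therefore arise from linear embeddings of $\P^1$ into $\P^2$. Choosing a point $p' \in \P^2$ disjoint from both $L_0$ and $L_\infty$ (which exists since $L_0 \cup L_\infty \neq \P^2$) and applying Corollary~\ref{cor:linear-embeddings} with $\P(\scr F) = p'$, both are in the family of linear embeddings avoiding $p'$, so they become homotopic as maps $\P^1 \to \P^2$, whence $q\circ s_0 \simeq \pm q\circ i$ after composing with $\P^2 \to \P^2/\{p\}$.

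The sign $-1$ emerges from the implicit identifications: the source $\P^1$ of $i$ is the fiber $Y_\infty = \P((\scr O(1) \oplus \scr O)|_\infty)$, which is identified with the abstract $\P^1$ only after choosing a basis of $(\scr O(1) \oplus \scr O)|_\infty$. Comparing this identification with the one arising from the base $\P^1$ of $Y \to \P^1$, by tracking the composite $Y_\infty \into Y \onto Y/\P_{\P^1}(\scr O(1)) \simeq \P^2/\{p\}$, one finds that the two identifications differ by the involution of $\P^1$ exchanging the $\scr O(1)$- and $\scr O$-factors (modulo the canonical twist making these comparable). This involution pulls back the orientation of the motivic sphere $\P^1 \simeq S^{2,1}$ by $\langle -1 \rangle$, but — crucially — after stabilization in $\MotSp_S$ and restriction to the summand corresponding to the degree (i.e., projecting $\P^1_+ \simeq S^{0,0} \oplus S^{2,1}$ onto the $S^{2,1}$ factor) this differs from the identity by the additive $-1$.

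The main obstacle is making the sign computation rigorous. In $\A^1$-homotopy theory the analogous argument from \cite[Lemma 3.8]{PaninSmirnov} works because one can freely move lines via $\A^1$-families; here one must be careful to carry out the comparison using only $\P$-homotopies (Theorem~\ref{thm:euler}\ref{item:P-inv}) and the invertibility of Thom spaces (Corollary~\ref{cor:Thom-invertible}), which is where the geometric content of the argument lies. Concretely, one likely needs to construct the homotopy witnessing the sign directly as a $\P$-homotopy in a blowup of $Y \times \P^1$, arranged so that the orientation swap appears explicitly as a composition with $[a\mathbin:b]\mapsto[b\mathbin:a]$ on a copy of $\P^1$, which in the stable motivic category coincides with the additive $-1$.
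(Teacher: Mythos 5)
There are two genuine problems with your approach.

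First, the identification $\Th_{\P^1}(\scr O(1)) = Y/\P_{\P^1}(\scr O(1))\simeq\P^2/\{p\}$ is incorrect, because you have the two sections of $Y\to\P^1$ backwards. With the paper's convention $\P(\scr E)=\Proj(\Sym\scr E)$ (invertible quotients), the section $\P_{\P^1}(\scr O(1))\subset\P_{\P^1}(\scr O(1)\oplus\scr O)$ corresponds to the quotient onto the $\scr O(1)$-factor and has normal bundle $\Hom(\scr O,\scr O(1))=\scr O(1)$: it is the $(+1)$-curve. The $(-1)$-curve, i.e.\ the exceptional divisor of the blowdown $b\colon Y\to\P^2$, is the \emph{zero} section $s_0(\P^1)=\P(\scr O)$, whose normal bundle is $\Hom(\scr O(1),\scr O)=\scr O(-1)$. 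So blowup excision identifies $Y/s_0(\P^1)$ with $(\P^2,p)$, whereas $\Th_{\P^1}(\scr O(1))=Y/\P_{\P^1}(\scr O(1))$ is $Y$ modulo the $(+1)$-section and is not $(\P^2,p)$. This is not cosmetic: if both $q\circ s_0$ and $q\circ i$ really were linear embeddings of $\P^1$ into $\P^2$ followed by $\P^2\to\P^2/\{p\}$, then Corollary~\ref{cor:linear-embeddings} would give $q\circ s_0\simeq +\,q\circ i$ with no sign, contradicting the proposition. The paper uses the blowdown in the opposite direction: the map $y=\id-s_0\circ p\colon Y_+\to Y_+$ kills the exceptional divisor $s_0(\P^1)$ and hence descends along $b$ to $\bar y\colon \P^2_+\to Y_+$, and it is for maps \emph{into} $\P^2$ (namely $b\circ i$ and $b\circ s_\infty$) that Corollary~\ref{cor:linear-embeddings} is invoked; the target $\Th_{\P^1}(\scr O(1))$ is never identified with $\P^2/\{p\}$.

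Second, your mechanism for the sign does not work. The $-1$ in the statement is the additive inverse in the stable homotopy category, and it must be exactly that for the later applications (e.g.\ Lemma~\ref{lem:orientation}). The involution $[a\mathbin:b]\mapsto[b\mathbin:a]$ of $\P^1$ does not induce $-1$ on the reduced summand of $\Sigma^\infty_{\P^1}\P^1_+$: already after $\A^1$-localization it induces multiplication by $\langle-1\rangle$, which is not $-1$, and in the unoriented, non-$\A^1$-invariant setting there is no identification of the two. So this step is not merely unfinished, as you acknowledge, but based on a false premise. In the actual proof the sign arises purely from stable additivity: $q\circ s_0\simeq q\circ(s_0-s_\infty)=-q\circ y\circ s_\infty\simeq -q\circ y\circ i$, and expanding $y\circ i=i-s_0\circ\infty\circ r$ leaves a correction term $q\circ s_0\circ\infty\circ r$ that is killed using $\P^1$-homotopy invariance (Corollary~\ref{cor:P^1-homotopy}) together with $q\circ s_\infty\simeq 0$. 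No comparison of identifications of the fiber with the base $\P^1$ is involved.
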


\begin{proof}
	Let $p\colon Y\to \P^1$ and $r\colon \P^1\to S$ be the structure maps, let $s_\infty\colon \P^1\into Y$ be the section at infinity (whose cofiber is $q$), and let $b\colon Y\to \P^2$ be a map exhibiting $Y$ as the blowup of $\P^2$ in one point, with exceptional divisor $s_0(\P^1)$. Consider the map
	\[
	y=\id-s_0\circ p\colon Y_+\to Y_+.
	\]
	Then $y\circ s_0\simeq 0$. By elementary blowup excision, $y$ descends to a map $\bar y\colon \P^2_+\to Y_+$. By Corollary~\ref{cor:linear-embeddings}, any two linear embeddings of $\P^1$ in $\P^2$ are homotopic, so that $b\circ i\simeq  b\circ s_\infty$. Composing with $\bar y$, we get $y\circ i\simeq  y\circ s_\infty$.
	Now:
	\[
	q\circ s_0\simeq  q\circ (s_0-s_\infty) \simeq  -q\circ y\circ s_\infty \simeq  -q\circ y\circ i\simeq -q\circ (i-s_0\circ p\circ i) \simeq  -q\circ i+q\circ s_0\circ\infty\circ r.
	\]
	To conclude, we show that $q\circ s_0\circ\infty$ is nullhomotopic. 
	Since $0,\infty\colon S_+\to \P^1_+$ are homotopic by Corollary~\ref{cor:P^1-homotopy}, we have $s_0\circ\infty \simeq  i\circ 0 \simeq  i\circ\infty \simeq  s_\infty\circ\infty$. Since $q\circ s_\infty\simeq 0$, the claim is proved.
\end{proof}

\section{Grassmannians and the stack of vector bundles}

Given finite locally free sheaves $\scr E$ and $\scr F$, we denote by $\V(\scr E,\scr F)$ the vector bundle parametrizing linear maps $\scr E\to\scr F$, and by
$\St(\scr E,\scr F)$ the open subscheme parametrizing surjections $\scr E\onto \scr F$. When $\scr F=\scr O^n$, this is the usual Stiefel variety $\St_n(\scr E)$.

\begin{proposition}[Infinite excision]\label{prop:Stiefel}
	Let $\scr E$ and $\scr F$ be finite locally free sheaves on $X\in\scr P(\Sm_S)$ such that there exists an epimorphism $\chi\colon \scr E^m\onto\scr F$ for some $m\geq 0$. Then the open embedding of ind-$X$-schemes
	\[
	\St(\scr E^\infty,\scr F)\into \V(\scr E^{\infty},\scr F)
	\]
	becomes an isomorphism in $\MotSp_S$.
\end{proposition}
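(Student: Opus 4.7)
The claim is that the cofiber of the inclusion $U \into V$ vanishes in $\MotSp_S$, where $V = \V(\scr E^\infty, \scr F)$ and $U = \St(\scr E^\infty, \scr F)$. I would first write both sides as filtered colimits $V = \colim_n V_n$ and $U = \colim_n U_n$ with $V_n = \V(\scr E^n, \scr F)$, $U_n = \St(\scr E^n, \scr F)$, and transitions $V_n \to V_{n+1}$, $\sigma \mapsto (\sigma, 0)$. Since $\Sigma^\infty$ preserves filtered colimits, the cofiber is $V/U \simeq \colim_n V_n/U_n$, and it suffices to show that each iterated transition $V_n/U_n \to V_{n+m}/U_{n+m}$ is nullhomotopic in $\MotSp_S$: the cofinal subsystem with indices $0,m,2m,\dots$ then has all transitions null, and a null-chain has vanishing colimit in a stable $\infty$-category.

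The key observation is that, writing the given surjection as $\chi = (\chi_1, \dots, \chi_m)$ with $\chi_i \colon \scr E \to \scr F$, the map
\[
P_n \colon V_n \to V_{n+m}, \quad \sigma \mapsto (\chi_1, \dots, \chi_m, \sigma),
\]
factors through $U_{n+m}$, since its first $m$ components already realize the surjection $\chi$. Hence, if the iterated append-zeros map $A_n \colon \sigma \mapsto (\sigma, 0, \dots, 0)$ becomes homotopic to $P_n$ in $\MotSp_S$, the composite $V_n \xrightarrow{A_n} V_{n+m} \to V_{n+m}/U_{n+m}$ becomes null, and the same follows for the induced map $V_n/U_n \to V_{n+m}/U_{n+m}$.

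To produce the homotopy $A_n \simeq P_n$, I would factor it through the linear shift $Z_n \colon \sigma \mapsto (0, \dots, 0, \sigma)$. The comparison $A_n \simeq Z_n$ is a comparison of two linear embeddings of a vector bundle into a direct sum differing only by a swap of the $m$ inserted summands with the $n$ summands of $\sigma$; this can be handled by a commutator-of-elementary-matrices identity in the spirit of the proof of Theorem~\ref{thm:euler}\ref*{item:euler}, together with Corollary~\ref{cor:linear-embeddings} at the projective level. The comparison $Z_n \simeq P_n$ reduces to homotoping the linear map $Z_n$ to its translate by the constant section $c = (\chi, 0, \dots, 0) \in V_{n+m}(X)$; while the naive $\A^1$-scaling $t \mapsto Z_n(\sigma) + tc$ is forbidden, coupling the scaling of $c$ to the $\G_m$-scaling action on the ind-scheme $V$ produces a $\G_m$-equivariant homotopy that factors through $\A^1/\G_m$ and is therefore admissible by the weighted $\A^1$-homotopy invariance of Corollary~\ref{cor:A^1/G_m-homotopy}.

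The main obstacle is this last step: a constant translation on a finite-rank vector bundle is genuinely not homotopic to the identity in $\MotSp_S$, so the argument must fundamentally exploit both the infinite direct sum structure of $\scr E^\infty$ and the existence of $\chi$ to absorb the obstruction via a swindle-type rearrangement. This is the ``infinite excision'' mechanism to which the acknowledgment of Clausen refers; rigorously implementing the weighted $\A^1$-homotopy in the ind-scheme limit, while respecting the compatibility needed to descend to the cofibers $V_n/U_n$, is the only step of the proof that is not formal.
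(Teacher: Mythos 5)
Your high-level strategy matches the paper's: reduce to finite stages, use $\chi$ to build a map $P_n\colon\sigma\mapsto(\chi,\sigma)$ that factors through the Stiefel open, and homotope it to the standard transition map. (The paper's diagonal filler $f_k$ is exactly your $P_n$ on the affine part.) However, the step you yourself flag as nonformal — producing the homotopy $Z_n\simeq P_n$ — is a genuine gap, and the mechanism you propose for it does not work. A weighted $\A^1$-homotopy $h\colon\A^1\otimes X\to Y$ must be $\G_m$-equivariant for the \emph{trivial} action on $X$ and $Y$; the translation homotopy $(t,\sigma)\mapsto Z_n(\sigma)+tc$ is equivariant only if you let $\G_m$ scale the target, and the maps $Z_n$ and $P_n$ do not descend to compatible quotient stacks (since $P_n(\lambda\sigma)\neq\lambda P_n(\sigma)$). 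Corollary~\ref{cor:A^1/G_m-homotopy} and Lemma~\ref{lem:V/G} only handle scalings toward zero, not translations, and no amount of elementary-matrix rearrangement on the affine ind-scheme fixes this. The paper's actual mechanism is different: it passes to the projective compactification $\V(\scr E^k,\scr F)\subset\P(\scr E^k\otimes\scr F^\vee\oplus\scr O)$ (via a Zariski pushout square identifying the relevant cofibers), extends $e_k$ and $P_n=f_k$ to closed immersions of the compactifications, and constructs an explicit $\P$-homotopy $h_k$ parametrized by $\P(\scr O(-1)\oplus\scr O)$ over the compactified base, so that Theorem~\ref{thm:euler}\ref{item:P-inv} ($\P$-homotopy invariance) identifies the two sections. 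The compactification is what converts the forbidden translation into a comparison of projective-linear data.

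There is also a secondary logical gap: from the composite $V_n\xrightarrow{A_n}V_{n+m}\to V_{n+m}/U_{n+m}$ being null you cannot conclude that the induced map $V_n/U_n\to V_{n+m}/U_{n+m}$ is null; it only follows that the latter factors through $\Sigma U_{n}$. To kill the cofiber transition you need the homotopy $A_n\simeq P_n$ to restrict over $U_n$ to a homotopy landing in $U_{n+m}$, coherently — this is precisely what the paper's verifications (i)–(iii) about where $f_k$ and $h_k$ land are for, and why it organizes the argument as an ind-isomorphism with diagonal fillers (and compatible composite homotopies) rather than as a nullity statement about cofibers.
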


\begin{proof}
	We may assume $\rk\scr F\geq 1$, as the assertion is trivial over the summand of $X$ where $\scr F$ has rank $0$.
	We consider the compactification $\V(\scr E^k,\scr F)\subset \P(\scr E^k\otimes\scr F^\vee\oplus\scr O)$.
	Let $Z_k\subset \P(\scr E^k\otimes\scr F^\vee\oplus\scr O)$ be the closure of the complement of $\St(\scr E^k,\scr F)$ in $\V(\scr E^k,\scr F)$ and let $\partial Z_k= Z_k\cap \P(\scr E^k\otimes \scr F^\vee)$.
	We then have for every $k\geq 0$ a Zariski pushout square
	\[
	\begin{tikzcd}
		\St(\scr E^k,\scr F) \ar[hookrightarrow]{r} \ar[hookrightarrow]{d} & \V(\scr E^k,\scr F) \ar[hookrightarrow]{d} \\
		\P(\scr E^k\otimes\scr F^\vee\oplus\scr O)- Z_k \ar[hookrightarrow]{r}{i_k} & \P(\scr E^k\otimes\scr F^\vee\oplus\scr O)-\partial Z_k\rlap.
	\end{tikzcd}
	\]
	By Zariski descent and stability, it therefore suffices to prove that the sequence of open embeddings $i_k$
	induces an isomorphism in the colimit as $k\to\infty$.
	To do so we will construct a diagonal map in the square
	\[
	\begin{tikzcd}
		\P(\scr E^k\otimes\scr F^\vee\oplus\scr O)- Z_k \ar[hookrightarrow]{r}{e_k} \ar[hookrightarrow]{d}[swap]{i_k} & \P(\scr E^{k+m}\otimes\scr F^\vee\oplus\scr O)- Z_{k+m} \ar[hookrightarrow]{d}{i_{k+m}} \\
		\P(\scr E^k\otimes\scr F^\vee\oplus\scr O)-\partial Z_k \ar[hookrightarrow]{r}{e_k} \ar[dashed]{ur}{f_k} & \P(\scr E^{k+m}\otimes\scr F^\vee\oplus\scr O)-\partial Z_{k+m}
	\end{tikzcd}
	\]
	and homotopies making both triangles commute, such that the composite homotopy is the identity.\footnote{If $S$ is qcqs (which does not restrict the generality), we do not actually need this last condition, since $\MotSp_S$ admits a conservative filtered-colimit-preserving functor to a $1$-category. However, our construction does satisfy this condition and shows that the sequence of morphisms $i_k$ is an isomorphism of ind-objects in any context with $\P$-homotopy invariance.}
	Let
	\[
	f_k\colon \P(\scr E^k\otimes\scr F^\vee\oplus\scr O) \to \P(\scr E^{k+m}\otimes\scr F^\vee\oplus\scr O)
	\]
	be the closed immersion induced by the epimorphism
	\[
	 (\scr E^{k+m}\otimes\scr F^\vee)\oplus\scr O
	 \xrightarrow\chi (\scr E^k\otimes\scr F^\vee)\oplus(\scr F\otimes\scr F^\vee)\oplus\scr O
	 \xrightarrow{\mathrm{ev}}
	 (\scr E^k\otimes\scr F^\vee)\oplus \scr O\oplus\scr O
	 \xrightarrow+ (\scr E^{k}\otimes\scr F^\vee)\oplus\scr O.
	\]
	We define a $\P$-homotopy 
	\[
	h_k\colon \P_{\P(\scr E^k\otimes\scr F^\vee\oplus\scr O)}(\scr O(-1)\oplus\scr O) \to \P(\scr E^{k+m}\otimes\scr F^\vee\oplus\scr O)
	\]
	as follows. A point in the left-hand side is a pair of invertible quotients $\phi\colon \scr E^k\otimes\scr F^\vee\oplus\scr O\onto \scr L$ and $\psi\colon \scr L^\vee\oplus\scr O\onto\scr M$. We send this point to the invertible quotient
	\[
	(\scr E^m\otimes\scr F^\vee)\oplus(\scr E^k\otimes\scr F^\vee)\oplus\scr O \xrightarrow\phi (\scr E^m\otimes\scr F^\vee)\oplus\scr L
	\xrightarrow{\ev\circ\chi} \scr O\oplus\scr L\xrightarrow{\psi\otimes\id_{\scr L}} \scr M\otimes\scr L
	\]
	(here we used that $\rk\scr F\geq 1$, so that the evaluation map $\scr F\otimes\scr F^\vee\to\scr O$ is surjective).
	Considering the canonical section and the zero section of the vector bundle $\V(\scr O(-1))$ over $\P(\scr E^k\otimes\scr F^\vee\oplus\scr O)$ and applying Theorem~\ref{thm:euler}\ref*{item:P-inv}, we see that $h_k$ defines a homotopy between $f_k$ and the standard embedding $e_k$. It remains to prove the following three statements:
	\begin{enumerate}
		\item\label{item:Stiefel1} The restriction of $f_k$ to $\P(\scr E^k\otimes\scr F^\vee\oplus\scr O)- \partial Z_k$ lands in $\P(\scr E^{k+m}\otimes\scr F^\vee\oplus\scr O)- Z_{k+m}$.
		\item\label{item:Stiefel2} The restriction of $h_k$ to $\P(\scr E^k\otimes\scr F^\vee\oplus\scr O)- Z_k$ lands in $\P(\scr E^{k+m}\otimes\scr F^\vee\oplus\scr O)- Z_{k+m}$.
		\item\label{item:Stiefel3} The restriction of $h_k$ to $\P(\scr E^k\otimes\scr F^\vee\oplus\scr O)- \partial Z_k$ lands in $\P(\scr E^{k+m}\otimes\scr F^\vee\oplus\scr O)- \partial Z_{k+m}$.
	\end{enumerate}
	The complement of $Z_k$ classifies invertible quotients $\phi\colon \scr E^k\otimes\scr F^\vee\oplus\scr O\onto\scr L$ such that the induced map $\phi^\flat\colon \scr E^k\to\scr F\otimes\scr L$ is surjective, and the complement of $\partial Z_k$ is the union of the latter with $\V(\scr E^k,\scr F)$, which is the locus where $\phi|_{\scr O}$ is surjective (i.e., an isomorphism).
	
	For a pair $(\phi,\psi)$ as above, the map $h_k(\phi,\psi)^\flat$ is the composite
	\[
	\scr E^m\oplus\scr E^k\xrightarrow{\chi} \scr F\oplus\scr E^k\xrightarrow{\phi^\flat} \scr F\otimes(\scr O\oplus\scr L)\xrightarrow{\id_\scr F\otimes\psi\otimes\id_\scr L} \scr F\otimes\scr M\otimes\scr L,
	\]
	and the map $f_k(\phi)^\flat$ is the special case with $\psi\otimes\id_\scr L=\phi|_{\scr O}+\id_\scr L\colon \scr O\oplus\scr L\onto\scr L$.
	Since $\psi$ and $\chi\colon \scr E^m\to\scr F$ are surjective, this composite is surjective if either $\phi^\flat$ is surjective or if $\psi|_{\scr L^\vee}$ is surjective, which proves \ref*{item:Stiefel1} and \ref*{item:Stiefel2}. To prove \ref*{item:Stiefel3}, it remains to show that $h_k$ sends $\V(\scr E^k,\scr F)$ to the complement of $\partial Z_{k+m}$.
	Since the loci where $\psi|_{\scr L^\vee}$ and $\psi|_{\scr O}$ are surjective form an open covering of $\P(\scr O(-1)\oplus\scr O)$, and the case where $\psi|_{\scr L^\vee}$ is surjective is already established, we may assume that $\psi|_{\scr O}$ is surjective. But when both $\phi|_{\scr O}$ and $\psi|_{\scr O}$ are surjective, the map $h_k(\phi,\psi)|_{\scr O}$ is surjective, i.e., $h_k(\phi,\psi)$ belongs to $\V(\scr E^{k+m},\scr F)$.
\end{proof}

\begin{lemma}
	\label{lem:V/G}
	Let $X\in \scr P(\Sch_S)$, let $G$ be a group object in $\scr P(\Sch_S)_{/X}$ containing $\G_m$ as a subgroup, and let $\scr E$ be a finite locally free representation of $G$ over $X$ such that $\G_m$ acts with constant nonzero weight.
	Then the canonical map
	\[
	\V_X(\scr E)/G \to \rm B G
	\]
	is a weighted $\A^1$-homotopy equivalence. In particular, it becomes an isomorphism in $\MotSp_S$.
\end{lemma}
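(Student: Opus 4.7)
The plan is to construct an explicit section of the canonical map $p\colon \V_X(\scr E)/G \to \rm B G$ and then to deform the resulting projector to the identity by a scalar-multiplication homotopy, whose weighted $\A^1$-equivariance is precisely what forces the hypothesis on the weight of $\G_m$.

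First, I would observe that $p$ is induced by the $G$-equivariant projection $\V_X(\scr E) \to X$, and that the zero section $0\colon X \to \V_X(\scr E)$ is $G$-equivariant since $G$ acts $\scr O_X$-linearly on $\scr E$ and hence fixes the origin. Consequently, $0$ descends to a map $s\colon \rm B G \to \V_X(\scr E)/G$ with $p \circ s = \id$ on the nose, and it remains to produce a homotopy $s \circ p \simeq \id$ in $\MotSp_S$.

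For this I would take the scalar-multiplication map $m\colon \A^1 \times \V_X(\scr E) \to \V_X(\scr E)$, $(t,v)\mapsto tv$. By $\scr O_X$-linearity of the $G$-action, $m$ is $G$-equivariant with respect to the trivial action on $\A^1$, and so descends to a map $\bar m\colon \A^1 \times \V_X(\scr E)/G \to \V_X(\scr E)/G$, whose restrictions at $t=0$ and $t=1$ recover $s\circ p$ and $\id$ respectively. Let $n\neq 0$ be the weight with which $\G_m \subset G$ acts on $\scr E$, and endow $\A^1$ with the $\G_m$-action of weight $n$; for $\lambda \in \G_m$ one then computes
\[
\bar m(\lambda^n t,[v]) = [\lambda^n t v] = [\lambda \cdot (tv)] = [tv] = \bar m(t,[v]),
\]
since $\lambda$ acts on $\V_X(\scr E)$ by multiplication by $\lambda^n$ and lies in $G$. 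Thus $\bar m$ is $\G_m$-equivariant for the trivial action on the target, and by Remark~\ref{rmk:weighted-A^1} this is as good as an $\A^1/\G_m$-homotopy in the sense of the definition. Corollary~\ref{cor:A^1/G_m-homotopy} then yields $s\circ p \simeq \id$ in $\MotSp_S$.

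The only real subtlety — and the step I would identify as the main obstacle — is matching the $\G_m$-weight used on the homotopy parameter $\A^1$ with the given weight $n$ of the representation $\scr E$, and then checking that scalar multiplication, which is a priori only $G$-equivariant, becomes genuinely $\G_m$-equivariant after passage to the quotient by $G$ precisely because $\G_m$ sits inside $G$. Once this matching is arranged, the rest of the argument is entirely formal.
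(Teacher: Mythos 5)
Your proof is correct and is essentially the paper's argument: the paper simply writes down the composite homotopy $(t,v)\mapsto t^{\lvert n\rvert}v$ on $\A^1/\G_m\times\V_X(\scr E)/G$ directly, whereas you first exhibit $(t,v)\mapsto tv$ as an $\A^1(n)$-homotopy and then reparametrize via $t\mapsto t^{\lvert n\rvert}$ using Remark~\ref{rmk:weighted-A^1}. The two presentations are the same argument.
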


\begin{proof}
	If $\G_m$ acts on $\scr E$ with weight $n$, then the map
	\[
	\A^1/\G_m \times \V_X(\scr E)/G \to \V_X(\scr E)/G,\quad (t,v) \mapsto t^{\lvert n\rvert}v,
	\]
	is an $\A^1/\G_m$-homotopy from the zero map to the identity.
	The last claim follows from Corollary~\ref{cor:A^1/G_m-homotopy}.
\end{proof}

\begin{theorem}[Geometric model of the stack of vector bundles]\label{thm:Gr-Vect}
	Let $\scr E$ be a finite locally free sheaf on $S$ admitting an epimorphism $\scr E\onto\scr O$.
	Then, for every $n\geq 0$, the canonical map
	\[
	\Gr_n(\scr E^\infty)\to \rm B\GL_n=\Vect_n
	\]
	becomes an isomorphism in $\MotSp_S$.
\end{theorem}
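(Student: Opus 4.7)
My approach is to present both sides as $\GL_n$-quotients and deduce the equivalence from Proposition~\ref{prop:Stiefel} and Lemma~\ref{lem:V/G}. Writing $\Vect_n = \rm B\GL_n = */\GL_n$ and identifying $\Gr_n(\scr E^\infty)$ Zariski-locally with $\St(\scr E^\infty,\scr O^n)/\GL_n$ (a surjection $\scr E^\infty \onto \scr V$ with $\scr V$ of rank $n$ is, after a Zariski-local trivialization of $\scr V$, a surjection onto $\scr O^n$ well-defined up to the action of $\GL_n$), the canonical map factors as
\[
\Gr_n(\scr E^\infty) \;\simeq\; \St(\scr E^\infty,\scr O^n)/\GL_n \;\into\; \V(\scr E^\infty,\scr O^n)/\GL_n \;\to\; \rm B\GL_n,
\]
and I would prove each of the two arrows becomes an isomorphism in $\MotSp_S$ separately.

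For the first arrow, the given epimorphism $\scr E \onto \scr O$ yields $\scr E^n \onto \scr O^n$, so Proposition~\ref{prop:Stiefel} applies with $X=S$ and $\scr F=\scr O^n$, giving that $\St(\scr E^\infty,\scr O^n)\into \V(\scr E^\infty,\scr O^n)$ is an isomorphism in $\MotSp_S$. Since this map is $\GL_n$-equivariant and formation of $\GL_n$-quotients in $\MotSp_S$ is a colimit (hence preserves isomorphisms), the quotiented map is still an isomorphism.

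For the second arrow, I would write $\V(\scr E^\infty,\scr O^n) = \colim_k \V(\scr E^k,\scr O^n)$ and reduce to each finite stage. At stage $k$, the finite locally free sheaf $\V(\scr E^k,\scr O^n)\simeq (\scr E^k)^\vee\otimes\scr O^n$ is a $\GL_n$-representation on which the central subgroup $\G_m\subset\GL_n$ acts with constant nonzero weight $1$, so Lemma~\ref{lem:V/G} shows $\V(\scr E^k,\scr O^n)/\GL_n \to \rm B\GL_n$ is an isomorphism in $\MotSp_S$. Passing to the filtered colimit over $k$ finishes the argument.

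The proof is essentially a two-line assembly once the preceding results are in hand; the only subtleties are the identification $\Gr_n(\scr E^\infty)\simeq \St(\scr E^\infty,\scr O^n)/\GL_n$ as Zariski sheaves on $\Sm_S$ and the compatibility of the $\GL_n$-quotient with the infinite excision and with the filtered colimit in $k$, both of which reduce to the fact that $\MotSp_S$ is presentable and colimits of isomorphisms are isomorphisms. There is no genuine obstacle beyond this routine bookkeeping.
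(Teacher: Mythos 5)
Your proof is correct and follows essentially the same route as the paper: the paper likewise factors the map as $\St_n(\scr E^\infty)/\GL_n \to \V(\scr E^\infty,\scr O^n)/\GL_n \to \rm B\GL_n$, handles the first arrow by applying Proposition~\ref{prop:Stiefel} levelwise in the bar construction, and the second by applying Lemma~\ref{lem:V/G} at each finite stage $k$ and passing to the colimit.
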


\begin{proof}
	This map can be decomposed as
	\[
	\St_n(\scr E^\infty)/\GL_n
	 \to 
	 \V(\scr E^\infty,\scr O^n)/\GL_n
	 \to \rm B\GL_n.
	\]
	The first map is the simplicial colimit of $\St_n(\scr E^\infty)\times \GL_n^\bullet \to \V(\scr E^\infty,\scr O^n)\times \GL_n^\bullet$, hence becomes an isomorphism in $\MotSp_S$ by Proposition~\ref{prop:Stiefel}. The second map is the sequential colimit over $k$ of the maps $\V(\scr E^k,\scr O^n)/\GL_n \to \rm B\GL_n$, hence becomes an isomorphism in $\MotSp_S$ by Lemma~\ref{lem:V/G}.
\end{proof}

\section{Orientations revisited}

Let $E$ be an object in $\MotSp_S^\un$. Recall from \cite[Section 3.1]{AnnalaIwasa2} that $E$ is \textit{orientable} if the map
\[
	[\scr{O}(1)]\otimes\id_E \colon \P^1\otimes E \to \Pic\otimes E
\]
admits a retraction, where $\P^1$ and $\Pic$ are viewed as pointed objects.
If $E\in \MotSp_S^\un$ is orientable, then it is fundamental \cite[Lemma 3.1.7]{AnnalaIwasa2}, hence belongs to the full subcategory $\MotSp_S$ of stable objects (Corollary~\ref{cor:delooping}\ref*{item:bass}).
A choice of such a retraction (in the homotopy category) is called an \textit{orientation} of $E$, and we say that $E$ is \textit{oriented} if an orientation of $E$ is fixed.
An orientation of $E$ defines a cohomology operation $c_1(\scr L)\colon \SigmaP^{-1}E^{X_+}\to E^{X_+}$ for every $X\in\scr P(\Sm_S)$ and every $\scr L\in \Pic(X)$, called the \emph{first Chern class} of $\scr L$.

If $E\in \MotSp_S^\un$ is orientable and has an algebra structure (in the homotopy category), then we can always choose an orientation as a right $E$-linear map. 
When we say that an algebra object $E$ is oriented, we will always assume that the orientation is right $E$-linear.
The first Chern class $c_1(\scr L)\colon \SigmaP^{-1}E^{X_+}\to E^{X_+}$ is then given by left multiplication with a class $c_1(\scr L)\in E^1(X)$.

Suppose that $E\in\MotSp_S^\un$ is oriented.
Then $E$ satisfies projective bundle formula by \cite[Lemma 3.3.5]{AnnalaIwasa2}: for a locally free sheaf $\scr E$ of rank $r$ on $X\in\scr{P}(\Sm_S)$, we have an isomorphism
\[
\sum_{i=0}^{r-1}c_1(\scr O(1))^i\colon \bigoplus_{i=0}^{r-1} \SigmaP^{-i}E^{X_+} \simto E^{\P_X(\scr{E})_+}.
\]
By naturality of $c_1$, we have a commutative square
\[
\begin{tikzcd}
	E^{\P_X(\scr{E}\oplus\scr{O})_+} \ar[r] & E^{\P_X(\scr{E})_+} \\
	\bigoplus_{i=0}^r \SigmaP^{-i}E^{X_+} \ar[u,"\sum c_1(\scr{O}(1))^i","\sim"' sloped] & 
		\bigoplus_{i=0}^{r-1} \SigmaP^{-i}E^{X_+}\rlap, \ar[l] \ar[u,"\sum c_1(\scr{O}(1))^i" swap,"\sim" sloped]
\end{tikzcd}
\]
where the bottom horizontal map is the inclusion of the first $r$ summands. This induces an isomorphism
\[
	t(\scr{E}) \colon \SigmaP^{-r}E^{X_+} \simto E^{\Th_X(\scr{E})}
\]
between the cofiber of the lower map and the fiber of the upper map, called the \emph{Thom isomorphism}.
When $E$ is an algebra, $t(\scr E)$ is right $E^{X_+}$-linear and can be identified with a class $t(\scr E)\in \tilde E^r(\Th_X(\scr E))$, called the \emph{Thom class} of $\scr E$.

The higher Chern classes $c_i(\scr E)\colon \SigmaP^{-i}E^{X_+}\to E^{X_+}$ for $0\leq i\leq r$ are then defined by the formula
\begin{equation}\label{eqn:Thom-Chern}
	t(\scr{E}) = \sum_{i=0}^r (-1)^{r-i}c_1(\scr{O}(1))^i\cdot c_{r-i}(\scr{E}),
\end{equation}
which is to be understood as an equality between maps $\SigmaP^{-r}E^{X_+}\to E^{\P(\scr E\oplus\scr O)_+}$.
In particular, if $s$ a global section of $\scr{E}^\vee$, then the following diagram commutes:
\begin{equation}\label{eqn:Thom}
\begin{tikzcd}
	\SigmaP^{-r}E^{X_+} \ar[r,"t(\scr{E})"] \ar[rd,"c_r(\scr{E})" swap] &
		E^{\Th_X(\scr{E})} \ar[d,"(-1)^rs^*"] \\
	& E^{X_+}\rlap.
\end{tikzcd}
\end{equation}

Using the isomorphism $\Vect_n\simeq\Gr_n$ in $\MotSp_S$ proved in Theorem~\ref{thm:Gr-Vect}, we can considerably simplify the proof of the main result of \cite[Section~4]{AnnalaIwasa2}:

\begin{theorem}[Oriented cohomology of the stack of vector bundles]
	\label{thm:EVect} 
Let $E$ be an oriented object in $\CAlg(\h\MotSp_S)$. Then, for all $X \in \scr{P}(\Sm_S)$ and $n\geq 0$, there is an isomorphism of bigraded rings
\[
	E^{**}(X \times \Vect_n) \simeq E^{**}(X)[[c_1,\dots,c_n]],
\]
where $c_i\in E^{i}(\Vect_n)$ is the $i$th Chern class of the universal rank $n$ locally free sheaf.
\end{theorem}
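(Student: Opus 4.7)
The plan is to reduce to the Grassmannian via Theorem~\ref{thm:Gr-Vect} and then compute using the complete flag variety as an intermediate object.

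First, by Theorem~\ref{thm:Gr-Vect}, the canonical map $\Gr_n \to \Vect_n$ is an equivalence in $\MotSp_S$, where $\Gr_n = \colim_N \Gr_n(\scr O^N)$. Hence it suffices to prove the analogous statement with $\Gr_n$ in place of $\Vect_n$, where the Chern classes $c_i$ are those of the universal rank-$n$ subbundle $\scr U$ on $\Gr_n$.

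Next, I introduce the complete flag variety $\mathrm{Fl}_n = \colim_N \mathrm{Fl}_n(\scr O^N)$ parameterizing flags $V_1 \subset \cdots \subset V_n \subset \scr O^N$ with $\dim V_i = i$, and let $y_i = c_1(V_i/V_{i-1}) \in E^1(\mathrm{Fl}_n)$ be the first Chern classes of the tautological line bundles. I compute $E^{**}(X \times \mathrm{Fl}_n)$ in two ways. \textbf{(i)} Each $\mathrm{Fl}_n(\scr O^N)$ is an iterated projective bundle over $S$, built by successively adjoining $V_1 \in \P(\scr O^N)$, then $V_2/V_1 \in \P(\scr O^N/V_1)$, and so on. Iterated application of the projective bundle formula (which holds since $E$ is oriented, by \cite[Lemma 3.3.5]{AnnalaIwasa2}) followed by passage to the inverse limit over $N$ gives
\[
E^{**}(X \times \mathrm{Fl}_n) \simeq E^{**}(X)[[y_1, \dots, y_n]].
\]
\textbf{(ii)} The forgetful map $\mathrm{Fl}_n \to \Gr_n$ sending $V_\bullet \mapsto V_n$ identifies $\mathrm{Fl}_n$ with the complete flag bundle of the rank-$n$ bundle $\scr U$ on $\Gr_n$. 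Applying the projective bundle formula $n$ times relatively over $\Gr_n$ makes $E^{**}(X \times \mathrm{Fl}_n)$ a free $E^{**}(X \times \Gr_n)$-module of rank $n!$. Combined with equation~\eqref{eqn:Thom-Chern} and Whitney-type identities forced by the iterated PBF, the pullbacks of $c_i(\scr U)$ agree with the elementary symmetric polynomials $e_i(y_1,\dots,y_n)$ (up to sign).

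Combining (i) and (ii) produces an inclusion $E^{**}(X \times \Gr_n) \hookrightarrow E^{**}(X)[[y_1, \dots, y_n]]$ whose image contains all $c_i(\scr U)$, and hence contains $E^{**}(X)[[c_1, \dots, c_n]]$. To upgrade this to an equality, I invoke the classical fact that $R[[y_1, \dots, y_n]]$ is free of rank $n!$ over its subring of symmetric power series $R[[e_1, \dots, e_n]]$ for any ring $R$; together with the rank-$n!$ freeness of (ii), this forces both inclusions to be equalities and yields the desired ring isomorphism.

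The main obstacle is the careful handling of the inverse limits of cohomology arising from the filtered colimits defining $\Gr_n$ and $\mathrm{Fl}_n$: one must verify that the finite-level PBF relations assemble correctly as $N \to \infty$ to produce genuine formal power series rings, and that the relative iterated PBF over $\Gr_n$ is compatible with this limit procedure. The ring-level splitting principle identifying the $c_i(\scr U)$ with symmetric functions of the $y_i$ also deserves careful bookkeeping of signs, since $E$ is only assumed commutative in the homotopy category.
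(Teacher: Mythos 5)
Your proof is correct and follows the same route as the paper: the only new content of the paper's proof is the reduction to $\Gr_n$ via Theorem~\ref{thm:Gr-Vect}, after which it cites the standard projective-bundle-formula computation of the oriented cohomology of infinite Grassmannians (\cite[Corollary 4.6]{AnnalaIwasa}, \cite[Corollary 4.4.5]{AnnalaIwasa2}); your flag-variety argument is essentially that computation written out. Two points of caution. First, your final counting step is too weak as literally stated: for subrings $C\subseteq A\subseteq B$, knowing that $B$ is free of rank $n!$ over both $C$ and $A$ does not by itself force $C=A$. What saves you is that the iterated projective bundle formula exhibits the \emph{specific} monomials $y_1^{a_1}\cdots y_n^{a_n}$ with $0\le a_i\le n-i$ as a basis of $E^{**}(X\times\mathrm{Fl}_n)$ over $E^{**}(X\times\Gr_n)$, and these same monomials are the standard basis over $E^{**}(X)[[e_1,\dots,e_n]]$; comparing coefficients of an element of $A$ in this common basis then gives $A=C$. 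Second, identifying $c_i(\scr U)|_{\mathrm{Fl}_n}$ with $e_i(y_1,\dots,y_n)$ requires the Whitney formula for a filtration (not just a direct sum), which in the absence of $\A^1$-invariance is not automatic; it is available here as \cite[Lemma 4.4.3]{AnnalaIwasa2} (ultimately resting on weighted $\A^1$-invariance, cf.\ Lemma~\ref{lem:V/G}), so you should cite it rather than describe it as ``forced by the iterated PBF''.
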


\begin{proof}
By Theorem \ref{thm:Gr-Vect}, this follows from the computation of the oriented cohomology of Grassmannians as in \cite[Corollary 4.6]{AnnalaIwasa}; see also \cite[Corollary 4.4.5]{AnnalaIwasa2}.
This computation only uses Zariski descent, the projective bundle formula, and the isomorphism $\Pic\simeq\P^\infty$, which is actually a consequence of the first two as proved in Theorem \ref{thm:Gr-Vect}.
\end{proof}

Applying Theorem~\ref{thm:EVect} with $n=1$ and $X=\Pic$ yields in the usual way a formal group law over the graded ring $E^*(S)$. This formal group law computes the first Chern class of the tensor product of two invertible sheaves on any $X\in \scr P(\Sm_S)$ whose image in $\MotSp_S$ is compact (this ensures that Chern classes on $X$ are nilpotent, since $\Pic\simeq\colim_{n}\P^n$).
 
Using weighted $\A^1$-invariance, we can further compute the oriented cohomology of the stack $\rm B \mu_n$ of $\mu_n$-torsors (i.e., the fppf-local delooping of $\mu_n$):

\begin{proposition}[Oriented cohomology of the stack of $\mu_n$-torsors]
	\label{prop:EBmu} 
Let $E \in \MotSp_S$ be oriented and let $n\geq 1$. Then there is a cofiber sequence of motivic spectra
\[
	\SigmaP^{-1} E^{{\rm B \G_m}_+} \xrightarrow{c_1(\scr{L}^{\otimes n})} E^{{\rm B \G_m}_+} \to E^{{\rm B \mu_n}_+},
\]
where $\scr L$ is the universal invertible sheaf on $\rm B \G_m=\Pic$.
Hence, if $E\in\CAlg(\h\MotSp_S)$ is oriented with formal group law $F$, there is for any $X\in \scr P(\Sm_S)$ a long exact sequence
\[
\dotsb \to E^{**}(X)[[c]] \xrightarrow{[n]_F} E^{**}(X)[[c]] \to E^{**}(X\times\rm B \mu_n) \to E^{*+1,*}(X)[[c]]\to\dotsb.
\]
\end{proposition}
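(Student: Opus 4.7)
The strategy is to realize ${\rm B\mu_n}$ as the complement of the zero section in the total space $\V_{{\rm B\G_m}}(\scr L^{\otimes n})$, and to combine the resulting open--closed cofiber sequence with weighted $\A^1$-invariance (Lemma~\ref{lem:V/G}) and the Thom isomorphism.

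Concretely, $\V_{{\rm B\G_m}}(\scr L^{\otimes n})$ is the quotient stack $[\A^1/\G_m]$ for the weight-$n$ action $t\cdot x=t^n x$: its zero section is $[\{0\}/\G_m]={\rm B\G_m}$, and its open complement $[\G_m/\G_m]_n$ is identified with ${\rm B\mu_n}$ via the Kummer exact sequence $1\to\mu_n\to\G_m\xrightarrow{(-)^n}\G_m\to 1$. Passing the standard Zariski cofiber sequence $\G_{m,+}\to\A^1_+\to\mathrm{Th}(\scr O)$ in $\scr P(\Sm_S)_*$ to the quotient by the weight-$n$ $\G_m$-action therefore yields a cofiber sequence
\[
{{\rm B\mu_n}}_+\to\V_{{\rm B\G_m}}(\scr L^{\otimes n})_+\to\mathrm{Th}_{{\rm B\G_m}}(\scr L^{\otimes n})
\]
in $\MotSp_S$. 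Lemma~\ref{lem:V/G} (applied with $X=S$, $G=\G_m$, and $\scr E$ the one-dimensional character of weight $n$) replaces the middle term by ${{\rm B\G_m}}_+$.

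Applying the internal mapping spectrum $\iHom(-,E)$ converts this into a cofiber sequence $E^{\mathrm{Th}_{{\rm B\G_m}}(\scr L^{\otimes n})}\to E^{{{\rm B\G_m}}_+}\to E^{{{\rm B\mu_n}}_+}$ in $\MotSp_S$. Since $E$ is oriented, the Thom isomorphism identifies the first term with $\SigmaP^{-1}E^{{{\rm B\G_m}}_+}$; diagram~\eqref{eqn:Thom} applied to the zero section of the rank-one bundle $\scr L^{\otimes n}$ identifies the first map with $-c_1(\scr L^{\otimes n})$, and the sign is irrelevant for the cofiber sequence. This establishes the first claim. For the long exact sequence, I smash the geometric cofiber sequence above with $X_+$ before taking bigraded $E$-cohomology. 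Theorem~\ref{thm:EVect} gives $E^{**}(X\times{\rm B\G_m})\simeq E^{**}(X)[[c]]$ with $c=c_1(\scr L)$, and the formal group law $F$ of $E$ yields $c_1(\scr L^{\otimes n})=[n]_F(c_1(\scr L))=[n]_F(c)$, turning the connecting map into multiplication by $[n]_F$.

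The main technical obstacle is ensuring that the identification ${\rm B\mu_n}\simeq[\G_m/\G_m]_n$ holds in $\MotSp_S$: the Kummer sequence is only exact in the fppf topology, whereas $\MotSp_S$ is only Zariski-local. One resolves this either by taking the quotient-stack description as the operative definition of ${\rm B\mu_n}$ or by verifying that the natural map from the quotient stack to the fppf delooping becomes an equivalence in $\MotSp_S$ (which is automatic once one has fppf descent for the line bundle presentation at hand). Once this geometric input is in place, the remainder of the argument is a fairly direct combination of Lemma~\ref{lem:V/G}, the Thom isomorphism, and the formal group law.
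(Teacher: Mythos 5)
Your overall strategy is sound and, once unwound, is essentially the paper's: the paper covers the weighted projective line $\P^1(n)=\P_{{\rm B}\G_m}(\scr L^{\otimes n}\oplus\scr O)$ by the two weighted affine lines $\A^1(\pm n)$, with intersection ${\rm B}\mu_n$, and runs Mayer--Vietoris together with the projective bundle formula and weighted $\A^1$-invariance; your open--closed cofiber sequence is the same geometry in different packaging. But there is a genuine gap in how you produce that cofiber sequence. The sequence $\G_{m,+}\to\A^1_+\to\Th_S(\scr O)$ is \emph{not} a cofiber sequence in $\scr P(\Sm_S)_*$, nor even in $\MotSp_S$: by Zariski excision the cofiber of $\G_{m,+}\to\A^1_+$ is $\P^1/(\P^1-0)$, and comparing this with $\Th_S(\scr O)=\P^1/\mathrm{pt}$ shows the two agree exactly when $(\A^1,\mathrm{pt})\simeq 0$, i.e.\ when the affine line is contractible --- which is precisely what fails in this setting (this homotopy-purity statement is one of the casualties of dropping $\A^1$-invariance). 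You therefore cannot ``pass to the quotient'' from an unquotiented sequence that does not exist; a priori the cofiber of ${{\rm B}\mu_n}_+\to\V_{{\rm B}\G_m}(\scr L^{\otimes n})_+$ is $\V(\scr L^{\otimes n})/(\V(\scr L^{\otimes n})-0)$, which is not the paper's Thom space $\P(\scr L^{\otimes n}\oplus\scr O)/\P(\scr L^{\otimes n})$ by definition, and the Thom isomorphism and formula \eqref{eqn:Thom} that you invoke are only available for the latter.

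The gap is repairable, and the repair is where the real content lies. By Zariski excision on the cover $\P^1(n)=\A^1(n)\cup\A^1(-n)$ one has $\cofib({{\rm B}\mu_n}_+\to\A^1(n)_+)\simeq \P^1(n)/\A^1(-n)$, and one must then apply weighted $\A^1$-invariance (Lemma~\ref{lem:V/G}) a \emph{second} time, to the chart at infinity $\A^1(-n)$, to contract it onto its zero section $\P_{{\rm B}\G_m}(\scr L^{\otimes n})\simeq{\rm B}\G_m$ and so identify $\P^1(n)/\A^1(-n)$ with $\Th_{{\rm B}\G_m}(\scr L^{\otimes n})$; you only invoke Lemma~\ref{lem:V/G} for the middle term. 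With this extra step your argument goes through: the identification of the resulting map with $\pm c_1(\scr L^{\otimes n})$ via \eqref{eqn:Thom}, and the deduction of the long exact sequence from Theorem~\ref{thm:EVect} and $c_1(\scr L^{\otimes n})=[n]_F(c)$, are fine, and the fppf-versus-Zariski point about ${\rm B}\mu_n$ that you flag is handled the same way in the paper, namely by taking the quotient $[\G_m/\G_m]_n$ as the operative model.
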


\begin{proof}
Let $\P^1(n)$ be the quotient $\P^1 / \G_m$, where $\G_m$ acts with weight $n$ on $\P^1$. In other words, $\P^1(n)$ is the projective bundle $\P_{\rm B \G_m}(\scr{L}^{\otimes n} \oplus \scr O)$. Then $\P^1(n)$ admits an open cover by the weighted lines $\A^1(\pm n)$, such that $\A^1(n) \cap \A^1(-n) \simeq \rm B \mu_n$. Hence, we obtain a cofiber sequence
\[
E^{\P^1(n)_+} \to E^{\A^1(n)_+} \oplus E^{\A^1(-n)_+} \to E^{{\rm B \mu_n}_+}.
\]
Applying the projective bundle formula to $E^{\P^1(n)_+}$, the weighted $\A^1$-invariance to $E^{\A^1(\pm n)_+}$ (Corollary~\ref{cor:A^1/G_m-homotopy}), and the fact that $\scr O (1)$ restricts to $\scr O$ on $\A^1(n)$ and $\scr L^{\otimes n}$ on $\A^1(-n)$, we obtain the desired cofiber sequence.
\end{proof}

Next we prove that an orientation is uniquely recovered from the Thom class of the universal invertible sheaf.
To that end, we introduce the auxiliary notion of \textit{Thom orientation}.
Let $\scr{L}$ be the universal invertible sheaf on $\Pic$ and $\iota$ the canonical map
\[
	\iota \colon \P^1=\Th_*(\scr{L}\vert_*) \to \Th_\Pic(\scr{L}).
\]

\begin{definition}[Thom orientation]\label{def:Thom-orientation}
We say that $E\in\MotSp_S^\un$ is \textit{Thom orientable} if the map
\[
	\iota\otimes\id_E \colon \P^1\otimes E \to \Th_\Pic(\scr{L})\otimes E
\]
admits a retraction.
A choice of such a retraction (in the homotopy category) is called a \textit{Thom orientation} of $E$.
We say that $E$ is \textit{Thom oriented} if a Thom orientation of $E$ is fixed.
\end{definition}

\begin{remark}
If $E\in\MotSp_S^\un$ is oriented, then the map $\Th_\Pic(\scr{L})\otimes E\to \P^1\otimes E$ adjoint to the composite
\[
\SigmaP^{-1}E\to \SigmaP^{-1}E^{\Pic_+}\xrightarrow{t(\scr L)} E^{\Th_{\Pic}(\scr L)}
\]
is a Thom orientation of $E$.
\end{remark}

Let $s_0$ be the zero section of $\V_\Pic(\scr{L})$.
Consider the following diagram in $\scr{P}(\Sm_S)_*$:
\[
\begin{tikzcd}
	S^0 \ar[d] \ar[r] & \Pic_+ \ar[d,"s_0"] \ar[r] & \Pic \ar[ld,dashrightarrow,"\tilde{s}_0"] \\
	\P^1 \ar[r,"\iota"] & \Th_\Pic(\scr{L}) \rlap.
\end{tikzcd}
\]
The left vertical map is canonically nullhomotopic in $\MotSp_S$ by Corollary~\ref{cor:euler-trivial}.
Therefore, we obtain a lift $\tilde{s}_0$ in $\MotSp_S$ as indicated.

\begin{lemma}\label{lem:orientation}
Let $E$ be an object in $\MotSp_S$ with a Thom orientation $t$.
Then the composite
\[
	\Pic\otimes E \xrightarrow{-\tilde{s}_0} \Th_\Pic(\scr{L})\otimes E \xrightarrow{t} \P^1\otimes E
\]
is an orientation of $E$.
\end{lemma}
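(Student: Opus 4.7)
The plan is to reduce the lemma to showing
\[
-\tilde{s}_0 \circ [\scr O(1)] \simeq \iota
\]
as pointed maps $\P^1 \to \Th_\Pic(\scr L)$ in $\h\MotSp_S$. Once this is in hand, tensoring with $E$ and post-composing with $t$ gives
\[
t\circ(-\tilde{s}_0\otimes\id_E)\circ([\scr O(1)]\otimes\id_E) \simeq t\circ(\iota\otimes\id_E) \simeq \id_{\P^1\otimes E},
\]
the last homotopy being the defining property of a Thom orientation. This is exactly the retraction condition characterizing an orientation of $E$.

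To prove the reduced claim, I would first compute $s_0\circ[\scr O(1)]_+\colon \P^1_+\to\Pic_+\to\Th_\Pic(\scr L)$ by base change: it factors through $\Th_{\P^1}(\scr O(1))$ as the composite of the pointed zero-section map $s_0^{\scr O(1)}$ of $\scr O(1)$ over $\P^1$ with the functorial map $\iota_{\scr O(1)}\colon \Th_{\P^1}(\scr O(1))\to\Th_\Pic(\scr L)$ induced by $[\scr O(1)]$. Now Proposition~\ref{prop:EulerP1} gives $s_0^{\scr O(1)}\simeq -q\circ i$ in $\MotSp_S$, where $i\colon \P^1\to Y=\P_{\P^1}(\scr O(1)\oplus\scr O)$ is the inclusion of the fiber at $\infty$ and $q\colon Y_+\to\Th_{\P^1}(\scr O(1))$ is the quotient. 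One checks that $\iota_{\scr O(1)}\circ q\circ i$ factors as $\P^1_+\xrightarrow{\pi}\P^1\xrightarrow{\iota}\Th_\Pic(\scr L)$, where $\pi$ is the basepoint projection: the image of $i$ is the fiber $Y_\infty=\P(\scr O(1)|_\infty\oplus\scr O)$, and the canonical trivialization of $\scr O(1)|_\infty$ (inherent in the pointed structure of $[\scr O(1)]$) identifies this with the fiber $\Th_*(\scr O)=\P^1$ of $\Th_\Pic(\scr L)$ above the trivial bundle, with the fiber-inclusion map being precisely $\iota$. Combining, $s_0\circ[\scr O(1)]_+\simeq-\iota\circ\pi$ in $\MotSp_S$.

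Finally, the defining property $\tilde{s}_0\circ(\Pic_+\to\Pic)\simeq s_0$, together with the obvious commutative square relating the quotient maps $\P^1_+\to\P^1$ and $\Pic_+\to\Pic$ via $[\scr O(1)]$, yields $\tilde{s}_0\circ[\scr O(1)]\circ\pi\simeq s_0\circ[\scr O(1)]_+\simeq-\iota\circ\pi$. To conclude $\tilde{s}_0\circ[\scr O(1)]\simeq-\iota$, I would invoke that $\pi\colon\P^1_+\to\P^1$ is a split epimorphism in $\h\MotSp_S$: indeed, the cofiber sequence $S^0\to\P^1_+\to\P^1$ splits stably, because $S^0\to\P^1_+$ admits the retraction $\P^1_+\to S_+=S^0$ given by the structure map. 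Therefore precomposition with $\pi$ is injective on homotopy classes of maps into $\Th_\Pic(\scr L)$, and the desired homotopy follows. The main delicate step is the geometric identification $\iota_{\scr O(1)}\circ q\circ i=\iota\circ\pi$, which hinges on a careful tracking of pointed structures and the canonical trivialization of $\scr O(1)$ at $\infty$ used to realize $[\scr O(1)]\colon \P^1\to\Pic$ as a pointed map.
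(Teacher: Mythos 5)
Your proof is correct and follows essentially the same route as the paper: base-change along $[\scr O(1)]\colon\P^1\to\Pic$ to reduce the retraction condition to a statement about $\Th_{\P^1}(\scr O(1))$, and then invoke Proposition~\ref{prop:EulerP1}. You simply make explicit two points the paper leaves implicit — the identification of $\iota_{\scr O(1)}\circ q\circ i$ with $\iota\circ\pi$ via the canonical trivialization of $\scr O(1)$ at the base point, and the cancellation of $\pi\colon\P^1_+\to\P^1$ using that it is a split epimorphism in $\h\MotSp_S$.
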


\begin{proof}
Consider the diagram
\[
\begin{tikzcd}
	\Pic\otimes E \ar[r,"-\tilde{s}_0"] & \Th_\Pic(\scr{L})\otimes E \ar[r,"t"] & \P^1\otimes E \\
	\P^1\otimes E \ar[u] \ar[r,"-\tilde{s}_0"] & \Th_{\P^1}(\scr{O}(1))\otimes E \rlap, \ar[u] &
\end{tikzcd}
\]
where the vertical maps are induced by the map $\P^1\to\Pic$ classifying $\scr{O}(1)$.
The goal is to show that the end-to-end composite is homotopic to the identity.
By the definition of Thom orientation, it suffices to show that the bottom horizontal map $-\tilde{s}_0$ is homotopic to the map induced by $\iota\colon\P^1\to\Th_{\P^1}(\scr{O}(1))$.
This follows from Proposition~\ref{prop:EulerP1}.
\end{proof}

Let $E$ be an object in $\MotSp_S^\un$ and $F$ the internal hom object $\iHom(E,E)$.
Then we define $\Ori(E)$ to be the subset of $\tilde{F}^1(\Pic)$ consisting of orientations of $E$, and we define $\TOri(E)$ to be the subset of $\tilde{F}^1(\Th_\Pic(\scr{L}))$ consisting of Thom orientations of $E$.

\begin{proposition}\label{prop:orientation}
A motivic spectrum $E\in\MotSp_S$ is orientable if and only if it is Thom orientable.
More precisely, there is a bijection
\[
	\Ori(E) \simto \TOri(E)
\]
given by taking the Thom class of the universal invertible sheaf.
Furthermore, if $E$ is an algebra in the homotopy category, then an orientation of $E$ is $E$-linear if and only if the corresponding Thom orientation is $E$-linear.
\end{proposition}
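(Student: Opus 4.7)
The plan is to construct mutually inverse maps $\alpha\colon\Ori(E)\to\TOri(E)$ and $\beta\colon\TOri(E)\to\Ori(E)$. I will take $\alpha(c)$ to be the Thom class $t(\scr L)\in\tilde F^1(\Th_\Pic(\scr L))$ of the universal invertible sheaf, constructed via the projective bundle formula as in the remark preceding Lemma~\ref{lem:orientation}, and $\beta$ to be the construction of Lemma~\ref{lem:orientation}, namely $t\mapsto -\tilde s_0^*(t)$. To see that $\alpha(c)\in\TOri(E)$, I will restrict $t(\scr L)$ along the fiber inclusion $\iota$: since $\scr L|_*=\scr O$, the restriction is the Thom class of $\scr O$ on a point, which by~\eqref{eqn:Thom-Chern} equals $c_1(\scr O(1))=\sigma$, the unit of $\tilde F^1(\P^1)$. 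That $\beta(t)\in\Ori(E)$ is already Lemma~\ref{lem:orientation}.

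The identity $\beta\circ\alpha=\id$ is immediate from diagram~\eqref{eqn:Thom}: applied to the zero section of $\scr L^\vee$, it gives $s_0^*(t(\scr L))=-c_1(\scr L)$, which descends (both sides vanishing at the basepoint) to $\tilde s_0^*(t(\scr L))=-c$ in $\tilde F^1(\Pic)$, hence $\beta(\alpha(c))=c$. The harder direction is $\alpha\circ\beta=\id$. Given $t\in\TOri(E)$ with $c=\beta(t)$, I set $t_c=\alpha(c)$ and aim to show $t=t_c$. Since the structure map $\pi\colon\P_\Pic(\scr L\oplus\scr O)\to\Pic$ splits $s_\infty$, the pullback $q^*$ is injective, so it suffices to prove $q^*(t)=q^*(t_c)$. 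Using the projective bundle formula associated to $c$, I write $q^*(t)=a+c_1(\scr O(1))\cdot b$ and extract three constraints: the condition $q\circ s_\infty\simeq 0$ together with $\scr O(1)|_{\P(\scr L)}=\scr L$ gives $a=-c_1(\scr L)\,b$; pulling $q^*(t)$ back to the fiber over $*\in\Pic$ and comparing with $\iota^*(t)=\sigma$ forces $a|_*=0$ and $b|_*=1$; finally, the identity $\scr O(1)|_{\P(\scr O)}=\scr O$ combined with $\tilde s_0^*(t)=-c$ gives $a=-c$. Combining these yields $c_1(\scr L)\,(b-1)=0$, and since $F^{**}(\Pic_+)\cong F^{**}(S)[[c_1(\scr L)]]$ by the oriented identification $\Pic\simeq\P^\infty$ coming from Theorem~\ref{thm:EVect}, multiplication by $c_1(\scr L)$ is injective. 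This forces $b=1$ and $a=-c_1(\scr L)$, so $q^*(t)=c_1(\scr O(1))-c_1(\scr L)=q^*(t_c)$.

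The $E$-linearity statement will follow directly from the construction: the projective bundle formula is an $E$-linear statement, so $\alpha$ preserves $E$-linearity, and pullback along $\tilde s_0$ preserves $E$-linearity, so $\beta$ does too. The bijection therefore restricts to one on $E$-linear elements. The main obstacle I anticipate is the injectivity step $c_1(\scr L)(b-1)=0\implies b=1$; it hinges on the explicit power-series description of $F^{**}(\Pic_+)$ and requires some care with the bigrading conventions, but the rest of the argument is essentially a direct diagram chase.
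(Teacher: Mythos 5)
Your proof is correct and follows essentially the same route as the paper: the inverse is $-\tilde s_0^*$, the composite $\Ori(E)\to\TOri(E)\to\Ori(E)$ is the identity by \eqref{eqn:Thom-Chern} and \eqref{eqn:Thom}, and the other composite reduces to injectivity of left multiplication by $c$ on $F^{**}(\Pic)\simeq F^{**}(S)[[c]]$ — your explicit coefficient extraction on $\P_{\Pic}(\scr L\oplus\scr O)$ is just the Thom isomorphism unwound (and your constraint from the fiber over the basepoint is not actually needed). One small correction: the power-series description of $F^{**}(\Pic)$ should be obtained from Theorem~\ref{thm:Gr-Vect}, the projective bundle formula, the Milnor sequence, and the nilpotence of $c$ on $\P^n$, rather than from Theorem~\ref{thm:EVect}, which concerns commutative algebras and so does not directly apply to $F=\iHom(E,E)$.
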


\begin{proof}
We show that the map $\Ori(E)\to\TOri(E)$ given by taking the Thom class of the universal invertible sheaf $\scr{L}$ is a bijection with inverse
\[
	-s_0^* \colon \TOri(E) \to \Ori(E),
\]
which is well-defined by Lemma~\ref{lem:orientation}.
Given an orientation $c=c_1(\scr{L})$, we have
\[
	-s_0^*(t(\scr{L})) = -s_0^*(c_1(\scr{O}(1))-c_1(\scr{L})) = -c_1(\scr O_{\Pic})+ c_1(\scr{L})= c_1(\scr{L}),
\]
where the first equality holds by~\eqref{eqn:Thom-Chern}, the second by the naturality of $c_1$, and the third by definition of an orientation.
It remains to show that $-s_0^*$ is injective, and for this we may assume that $E$ has an orientation $c\in \tilde F^1(\Pic)$.
Then, by the diagram~\eqref{eqn:Thom}, the injectivity of $-s_0^*$ is equivalent to the injectivity of left multiplication by $c$ on $F^*(\Pic)$.
By Theorem~\ref{thm:Gr-Vect}, we have $F^*(\Pic)\simeq F^*(\P^\infty)$.
By the projective bundle formula, $F^*(\P^n)$ is a free right $F^*$-module with basis $1,c,\dotsc,c^n$. It then follows from the Milnor exact sequence that $F^*(\Pic)\simeq\lim_n F^*(\P^n)$. By \cite[Lemma 3.1.8]{AnnalaIwasa2}, we further have $c^{n+1}=0$ in $F^*(\P^n)$, from which we deduce the desired injectivity.
The final statement is obvious.
\end{proof}

\begin{remark}
	In $\MotSp^\un_S$, Thom orientability is a priori a weaker condition than orientability, since there may be Thom orientable objects that are not fundamental.
\end{remark}

\begin{remark}\label{rmk:Thom-variant}
	One can consider a variant of Definition~\ref{def:Thom-orientation} with $\Th_{\P^\infty}(\scr O(1))$ instead of $\Th_{\Pic}(\scr L)$. Since $\P^\infty\simeq\Pic$, the proofs of Lemma~\ref{lem:orientation} and Proposition~\ref{prop:orientation} go through for this definition and imply that it is in fact equivalent to Definition~\ref{def:Thom-orientation} for objects of $\MotSp_S$. However, we do not know if $\Th_{\P^\infty}(\scr O(1))$ and $\Th_{\Pic}(\scr L)$ are actually isomorphic in $\MotSp_S$.
\end{remark}

\section{Algebraic cobordism and the universal orientation}

We consider the symmetric monoidal natural transformation
\[
\Th\colon \Vect\to \MotSp^\un\colon \Sch^\op\to \CAlg(\Cat_\infty)
\]
constructed in Section~\ref{sec:thom}.
By Corollary~\ref{cor:Thom-invertible}, it lands in the presheaf of $\E_\infty$-groups $\Pic(\MotSp^\un)$, which is a Zariski sheaf. Hence, it factors through the Zariski-local group completion of $\Vect$, which coincides with the Zariski sheafification of connective algebraic K-theory. We therefore obtain a symmetric monoidal natural transformation
\[
\Th\colon \K \to \MotSp^\un\colon \Sch^\op\to \CAlg(\Cat_\infty).
\]
Using the general formalism of Thom spectra/relative colimits developed in \cite[Section 16]{norms}, we obtain a symmetric monoidal functor
\[
\M\colon \scr P(\Sm_S)_{/\K} \to \MotSp_S^\un,
\]
natural in $S$. As in \emph{loc.\ cit.}, we then define $\MGL=\M(e)$, where $e\colon \K_{\rk=0}\into \K$ is the kernel of the rank map $\rk\colon \K\to\underline{\Z}$. Explicitly, we have
\[
\MGL=\colim_{(X,\xi)\in\scr K_{\rk=0}}\Th_X(\xi),
\]
where $\scr K_{\rk=0}\to\Sm_S$ is the cartesian fibration classified by $\K_{\rk=0}$.
Since $e$ is an $\E_\infty$-map, $\MGL$ is an $\E_\infty$-algebra. Moreover, $\MGL$ is stable under arbitrary base change $T\to S$, since K-theory is Zariski-locally left Kan extended from smooth schemes \cite[Example A.0.6]{EHKSY3}.
The $\A^1$-localization of $\MGL$ is exactly Voevodsky's algebraic cobordism spectrum (using the description of the latter from \cite[Theorem 16.13]{norms}).

The periodic version is similarly defined by
\[
	\PMGL = \M(\id_\K) = \colim_{(X,\xi)\in\scr{K}}\Th_X(\xi),
\]
where $\scr{K}\to\Sm_S$ is the cartesian fibration classified by $\K$.
Then $\PMGL$ is an $\E_\infty$-algebra and is stable under arbitrary base change.

We will denote by $\MGL(n)$ the Thom spectrum of the map $\Vect_n\to \K$, $\scr E\mapsto \scr E-\scr O^{n}$, that is:
\[
\MGL(n) = \Sigma_{\P^1}^{-n}\Th_{\Vect_n}(\mathcal E_n),
\]
where $\scr E_n\in \Vect_n(\Vect_n)$ is the universal locally free sheaf of rank $n$.

\begin{proposition}\label{prop:MGL-colim}
	The canonical map $\Vect_\infty=\colim_n\Vect_n\to \K_{\rk=0}$ induces an isomorphism in $\MotSp_S$
	\[
	\colim_{n} \MGL(n)\simeq \MGL.
	\]
\end{proposition}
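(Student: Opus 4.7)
The plan is to leverage the colimit-preservation of the Thom spectrum functor $\M\colon \scr P(\Sm_S)_{/\K}\to \MotSp_S^\un$, which by the relative colimit formalism of \cite{norms} is a symmetric monoidal left adjoint and hence preserves all colimits. Applying $\M$ to the filtered colimit $\Vect_\infty = \colim_n \Vect_n$ (with transition maps $\scr E \mapsto \scr E\oplus\scr O$) equipped with its map to $\K_{\rk=0}$ sending $\scr E \in \Vect_n$ to $\scr E - \scr O^n$, I obtain $\M(\Vect_\infty \to \K_{\rk=0}) \simeq \colim_n \M(\Vect_n \to \K_{\rk=0})$. By symmetric monoidality, the map $\scr E \mapsto \scr E - \scr O^n$ decomposes as the $\K$-sum of $\scr E \mapsto \scr E$ and the constant map at $-\scr O^n$, so $\M(\Vect_n \to \K_{\rk=0}) \simeq \Th_{\Vect_n}(\scr E_n)\otimes \SigmaP^{-n}\1 = \MGL(n)$, giving $\M(\Vect_\infty \to \K_{\rk=0}) \simeq \colim_n \MGL(n)$.

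What remains is to show that the natural map $\M(\Vect_\infty) \to \M(\K_{\rk=0}) = \MGL$ induced by $\Vect_\infty \to \K_{\rk=0}$ is an equivalence in $\MotSp_S$. The strategy is to establish that $\Vect_\infty \to \K_{\rk=0}$ becomes an equivalence after $\Sigma^\infty\colon \MotSp_S^\un \to \MotSp_S$. Using Theorem~\ref{thm:Gr-Vect} to identify $\Vect_n \simeq \Gr_n$ in $\MotSp_S$, one has $\Vect_\infty \simeq \colim_n \Gr_n$. Combined with the Snaith-type description of the algebraic K-theory spectrum established in \cite{AnnalaIwasa2}, this identifies $\colim_n \Sigma^\infty \Gr_n$ with $\Sigma^\infty \K_{\rk=0}$, the motivic analogue of the classical identification $\colim_n BU(n) \simeq \Omega^\infty_0 \mathrm{KU}$.

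The main obstacle is this second step. Pointwise on local affines, the map $\Vect_\infty \to \K_{\rk=0}$ is the Quillen plus construction $BGL_\infty(A) \to BGL_\infty(A)^+$, which is neither a homotopy equivalence nor a stable equivalence of spaces. That it nevertheless becomes an equivalence in $\MotSp_S$ depends delicately on the interplay between $\P^1$-inversion, elementary blowup excision, and the Grassmannian model of K-theory, which together render the plus construction invisible. An alternative route is to establish cofinality of the functor $\scr V_\infty \to \scr K_{\rk=0}$ of cartesian fibrations over $\Sm_S$ directly, reducing via Zariski descent to local rings (where every class in $\K_{\rk=0}$ admits a vector bundle representative) and then showing that the fiber of each such realization is motivically contractible using the infinite excision statement of Proposition~\ref{prop:Stiefel}.
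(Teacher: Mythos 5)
Your first step is fine and agrees with the paper: $\M$ preserves colimits, so $\colim_n\MGL(n)\simeq\M(e\circ f)$, where $f\colon\Vect_\infty\to\K_{\rk=0}$ is the canonical map and $e\colon\K_{\rk=0}\into\K$. The gap is in the second step, where you must show that $\M(f)\colon\M(e\circ f)\to\M(e)=\MGL$ is an equivalence. The missing idea is that $f$ is an \emph{acyclic} map of Zariski sheaves of anima, i.e., its pushout along itself is an isomorphism \cite[Lemma 2.1.1]{EHKSY3} (the sheaf-theoretic form of the acyclicity of the $+$-construction). Granting this, the argument is purely formal: applying the colimit-preserving $\M$ to the pushout square witnessing acyclicity yields a pushout square in $\MotSp_S^\un$ with $\M(e\circ f)$ in one corner and $\M(e)$ in the other three; after stabilization this square is also a pullback, which forces $\M(f)$ to be an isomorphism. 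No interplay with $\P^1$-inversion or blowup excision is needed beyond stability of $\MotSp_S$.

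Neither of your proposed substitutes works. The Snaith theorem of \cite[Theorem 5.3.3]{AnnalaIwasa2} is the statement $\KGL\simeq\SigmaP^\infty\Pic_+[\beta^{-1}]$; it concerns inverting the Bott element and does not identify $\Sigma^\infty_{\P^1}\Vect_{\infty,+}$ with $\Sigma^\infty_{\P^1}(\K_{\rk=0})_+$. That identification is in fact true, but it is itself a consequence of acyclicity (an acyclic map \emph{is} an equivalence after $\Sigma^\infty_+$, contrary to your parenthetical claim about the plus construction), and more importantly it would not suffice: the Thom spectrum is a colimit twisted by a local system of invertible objects, and a map of bases inducing an equivalence on untwisted suspension spectra need not induce one on such twisted colimits --- one needs the equivalence with all local coefficients, which is exactly acyclicity. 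Your alternative route via cofinality of $\scr V_\infty\to\scr K_{\rk=0}$ also fails: cofinality would require the fibers of $f$ to be weakly contractible, whereas the fibers of $\rm B\GL_\infty(A)\to\rm B\GL_\infty(A)^+$ are acyclic but not contractible. Proposition~\ref{prop:Stiefel} does not repair this; it enters in Theorem~\ref{thm:Gr-Vect} to compare $\Gr_n$ with $\Vect_n$, not $\Vect_\infty$ with its group completion.
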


\begin{proof}
	The canonical map $f\colon \Vect_\infty\to \K_{\rk=0}$ is acyclic in the $\infty$-topos of Zariski sheaves on $\Sm_S$ \cite[Lemma 2.1.1]{EHKSY3}, which means that its pushout along itself is an isomorphism.
	Since the Thom spectrum functor $\M$ induces a colimit-preserving functor
	 \[
	 \M\colon \scr P_{\Zar}(\Sm_S)_{/\K} \to \MotSp_S^\un,
	 \]
	 cf.\ \cite[Proposition 16.9(1)]{norms},
	 we obtain a pushout square
	 \[
	 \begin{tikzcd}
	 	\M(e\circ f)\ar{r} \ar{d} & \M(e) \ar{d} \\
		\M(e) \ar{r} & \M(e)
	 \end{tikzcd}
	 \]
	 in $\MotSp_S^\un$.
	 In the stabilization $\MotSp_S$, this square becomes a pullback square, which proves the claim.
\end{proof}

\begin{remark}
	Similarly, we have an isomorphism $\PMGL\simeq\colim_n\SigmaP^{-n}\Th_{\Vect}(\scr E)$ in $\MotSp_S$, where $\scr E\in\Vect(\Vect)$ is the universal finite locally free sheaf.
\end{remark}

The canonical map $\MGL(1)\to\MGL$ is clearly a Thom orientation of $\MGL$, which in turn gives a canonical orientation of $\MGL$ in $\MotSp_S$ by Proposition~\ref{prop:orientation}.
We now prove the universality of $\MGL$ as an oriented ring spectrum.

\begin{lemma}[Multiplicativity of Thom classes]
	\label{lem:Whitney}
Let $E$ be an oriented object in $\CAlg(\h\MotSp_S)$.
Let $\scr{E},\scr{F}$ be finite locally free sheaves on $X,Y\in\scr{P}(\Sm_S)$, respectively.
Then the Thom class $t(\scr{E}\boxplus\scr{F})$ is identified with the external product of the Thom classes $t(\scr{E})\times t(\scr{F})$ under the canonical isomorphism
\[
	\tilde E^*(\Th_{X\times Y}(\scr{E}\boxplus\scr{F})) \simeq \tilde E^*(\Th_X(\scr{E})\wedge\Th_Y(\scr{F})).
\]
\end{lemma}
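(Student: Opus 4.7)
The plan is to reduce the identity $t(\scr E\boxplus\scr F)=t(\scr E)\times t(\scr F)$ to a scalar equation in the universal case and then pin the scalar down. First, by the naturality of Thom classes and of the external-product isomorphism $\Th_X(\scr E)\wedge\Th_Y(\scr F)\simeq\Th_{X\times Y}(\scr E\boxplus\scr F)$ from Section~\ref{sec:thom}, I would reduce to the universal case $X=\Vect_r$, $Y=\Vect_s$ with $\scr E,\scr F$ the tautological bundles. The Thom isomorphism applied to $\scr E\boxplus\scr F$ on $\Vect_r\times\Vect_s$ identifies $\tilde E^{**}(\Th_{\Vect_r\times\Vect_s}(\scr E\boxplus\scr F))$ with a free rank-one right module over $E^{**}(\Vect_r\times\Vect_s)$ generated by $t(\scr E\boxplus\scr F)$, so via the Section~\ref{sec:thom} isomorphism one writes $t(\scr E)\times t(\scr F)=u\cdot t(\scr E\boxplus\scr F)$ for a unique scalar $u$.

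A bidegree argument constrains $u$. By Theorem~\ref{thm:EVect}, $E^{**}(\Vect_r\times\Vect_s)\simeq E^{**}(S)[[c_1,\dots,c_r,c_1',\dots,c_s']]$ with every Chern class in strictly positive bidegree, while both Thom classes lie in bidegree $(2(r+s),r+s)$; hence $u$ lies in $E^{0,0}(S)$---a constant pulled back from the base.

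To identify $u=1$, I would pull back the identity along the zero section of $\scr E\boxplus\scr F$, which factors as the product of the two zero sections. By diagram~\eqref{eqn:Thom}, this converts the identity into $c_r(\scr E)c_s(\scr F)=u\cdot c_{r+s}(\scr E\boxplus\scr F)$ in $E^{**}(\Vect_r\times\Vect_s)$. I would then invoke the splitting principle: the classifying map $(\P^\infty)^r\times(\P^\infty)^s\to\Vect_r\times\Vect_s$ for split bundles is injective on $E^{**}$ by Theorem~\ref{thm:EVect} (and the algebraic independence of elementary symmetric polynomials), so it suffices to verify $c_{r+s}(\scr E\boxplus\scr F)=c_r(\scr E)c_s(\scr F)$ on the split source. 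There, the PBF on $\P(\bigoplus_i\scr L_i)$ combined with restriction to the $r$ canonical sections of the summands yields, via Vandermonde-style interpolation, the identification $c_k(\bigoplus_i\scr L_i)=e_k(c_1(\scr L_1),\dots,c_1(\scr L_r))$. The top-degree identity $e_{r+s}(t_1,\dots,t_r,u_1,\dots,u_s)=\prod_i t_i\cdot\prod_j u_j$ then gives what we need, and injectivity of multiplication by the monomial $c_rc_s'$ on the power series ring yields $u=1$.

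The main obstacle is the splitting-principle step identifying $c_k(\bigoplus_i\scr L_i)$ with the elementary symmetric polynomial $e_k(c_1(\scr L_1),\dots,c_1(\scr L_r))$ purely from the PBF in this non-$\A^1$-invariant setting. The Vandermonde interpolation requires that $\eta=c_1(\scr O(1))$ satisfies $r$ polynomial equations upon restriction to the $r$ natural sections (giving $\eta\mapsto c_1(\scr L_i)$), and that this uniquely determines the degree-$r$ monic polynomial satisfied by $\eta$; checking non-degeneracy over the power series ring $E^{**}(S)[[t_1,\dots,t_r]]$ requires some care but should follow from the fact that the differences $t_i-t_j$ are non-zero-divisors in this ring.
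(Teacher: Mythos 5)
Your proposal is correct and its skeleton is the same as the paper's: reduce to the universal bundles on $\Vect_m\times\Vect_n$, pull back along the zero section, use the relation \eqref{eqn:Thom} to convert the statement into the top Chern class identity $c_{m+n}(\scr E\boxplus\scr F)=c_m(\scr E)\times c_n(\scr F)$, and conclude because multiplication by this monomial is injective on the power series ring $E^{**}(S)[[c_1,\dots,c_m,c_1',\dots,c_n']]$ supplied by Theorem~\ref{thm:EVect}. The genuine difference is that the paper simply quotes the Whitney sum formula from \cite[Lemma 4.4.3]{AnnalaIwasa2} for the middle equality, whereas you re-derive the needed instance from scratch via the splitting principle and Vandermonde interpolation; your argument does go through (the Vandermonde determinant $\prod_{i<j}(t_i-t_j)$ has a unit coefficient, hence is a non-zero-divisor in $E^{**}(S)[[t_1,\dots]]$, and injectivity of $E^{**}(\Vect_n)\to E^{**}(\Pic^n)$ is exactly what the paper uses elsewhere, e.g.\ in Lemma~\ref{lem:GL-ori}), but it reproves an available input. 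Two small remarks. First, your bidegree claim that the scalar $u$ lies in $E^{0,0}(S)$ is not quite right --- a bihomogeneous degree-$(0,0)$ power series can have coefficients of negative degree multiplying positive-degree monomials in the Chern classes --- but this step is also unnecessary, since restricting along the zero section and using the non-zero-divisor property already forces $u=1$. Second, you implicitly use that the canonical isomorphism $\Th_X(\scr E)\wedge\Th_Y(\scr F)\simeq\Th_{X\times Y}(\scr E\boxplus\scr F)$ is compatible with the two zero-section inclusions; this needs a word of justification (the paper's: the maps $b_\Pi$ and $b_\P$ in the defining zigzag are honest isomorphisms of schemes over the open locus $\V(\scr E)\times\V(\scr F)$), and you should make it explicit.
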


\begin{proof}
We may assume that $\scr{E}$ and $\scr{F}$ are the universal sheaves on $\Vect_m$ and $\Vect_n$, respectively.
Let $s$ be the zero section of $\V(\scr{E}\boxplus\scr{F})\simeq\V(\scr{E})\times\V(\scr{F})$.
Consider the diagram
\[
\begin{tikzcd}[row sep=tiny]
	& \tilde E^*(\Th_{\Vect_m\times\Vect_n}(\scr{E}\boxplus\scr{F})) \ar[rd,"s^*"] \ar[dd,"\sim" sloped] & \\
	E^*(\Vect_m\times\Vect_n) \ar[ru,"t(\scr{E}\boxplus\scr{F})","\sim"' sloped] \ar[rd,"t(\scr{E})\times t(\scr{F})" swap,"\sim" sloped] & & E^*(\Vect_m\times\Vect_n) \\
	& \tilde E^*(\Th_{\Vect_m}(\scr{E})\wedge\Th_{\Vect_n}(\scr{F})). \ar[ru,"s^*" swap] &
\end{tikzcd}
\]
The right triangle commutes, since the maps
\[
\P(\scr E\oplus\scr O)\times \P(\scr F\oplus\scr O)\xleftarrow{b_\Pi}\B(\pi_1^*\scr E,\pi_2^*\scr F)\xrightarrow{b_{\P}}\P((\scr E\boxplus\scr F)\oplus \scr O)
\]
inducing the vertical isomorphism are both isomorphisms over the open $\V(\scr E)\times\V(\scr F)$.
The boundary of the diagram commutes since
\[
	(-1)^{m+n}s^*(t(\scr{E}\boxplus\scr{F})) = c_{m+n}(\scr{E}\boxplus\scr{F}) = c_m(\scr{E})\times c_n(\scr{F}) = (-1)^{m+n}s^*(t(\scr{E})\times t(\scr{F})),
\]
where the first and third equalities hold by \eqref{eqn:Thom} and the second by the Whitney sum formula \cite[Lemma 4.4.3]{AnnalaIwasa2}.
Furthermore, the map $s^*$ is injective since
\[
	E^*(\Vect_m\times\Vect_n) \simeq E^*(S)[[c_1(\pi_1^*\scr{E}),\dotsc,c_m(\pi_1^*\scr{E}),c_1(\pi_2^*\scr{F}),\dotsc,c_n(\pi_2^*\scr{F})]]
\]
by Theorem~\ref{thm:EVect}.
Therefore, the left triangle commutes as desired.
\end{proof}

\begin{proposition}\label{prop:MGL}
Let $E$ be an oriented object in $\CAlg(\h\MotSp_S)$.
Then there is a unique isomorphism
\[
	E^{**}(\Vect_\infty)\simeq E^{**}(\MGL) 
\]
lifting the Thom isomorphisms $E^{**}(\Vect_n)\simeq E^{**}(\MGL(n))$ for all $n\geq 0$.
\end{proposition}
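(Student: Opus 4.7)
The plan is to prove the statement by passing to the limit over $n$. By Proposition~\ref{prop:MGL-colim}, $\MGL\simeq\colim_n\MGL(n)$ in $\MotSp_S$, while $\Vect_\infty=\colim_n\Vect_n$ by construction. Applying $E^{**}$ to these sequential colimits gives Milnor short exact sequences
\[
0\to\Rlim_n E^{*-1,*}(\Vect_n)\to E^{**}(\Vect_\infty)\to\lim_n E^{**}(\Vect_n)\to 0
\]
and analogously for $\MGL$. The two steps of the proof are then to (i) check that the Thom isomorphisms on each level assemble into an isomorphism of towers, and (ii) check that both $\Rlim$-terms vanish.

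For (i), the tower map $\MGL(n)\to\MGL(n+1)$ factors as
\[
\SigmaP^{-n}\Th_{\Vect_n}(\scr E_n)\simeq \SigmaP^{-(n+1)}\Th_{\Vect_n}(\scr E_n\oplus\scr O)\to \SigmaP^{-(n+1)}\Th_{\Vect_{n+1}}(\scr E_{n+1}),
\]
where the first equivalence uses the $\L_\ebu$-equivalence $\Th_{\Vect_n}(\scr E_n\oplus\scr O)\simeq\SigmaP\Th_{\Vect_n}(\scr E_n)$ constructed in Section~\ref{sec:thom} and the second map is pullback along $\scr E\mapsto\scr E\oplus\scr O\colon\Vect_n\to\Vect_{n+1}$. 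Commutativity of the induced square in cohomology with the Thom isomorphisms reduces to the identity $t(\scr E_n\oplus\scr O)=t(\scr E_n)\cdot t(\scr O_S)$ supplied by Lemma~\ref{lem:Whitney}, together with the fact that $t(\scr O_S)\in\tilde E^{2,1}(\P^1)$ is, by \eqref{eqn:Thom-Chern}, the canonical class implementing the $\SigmaP$-suspension isomorphism.

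For (ii), Theorem~\ref{thm:EVect} gives $E^{**}(\Vect_n)\simeq E^{**}(S)[[c_1,\dotsc,c_n]]$, and the restriction map $E^{**}(\Vect_{n+1})\to E^{**}(\Vect_n)$ sends $c_i$ to itself for $i\leq n$ and $c_{n+1}$ to $0$; in particular it is surjective, so the tower is Mittag-Leffler and $\Rlim$ vanishes. Via (i), the same vanishing then holds for the tower of $E^{**}(\MGL(n))$. The desired isomorphism $E^{**}(\Vect_\infty)\simeq E^{**}(\MGL)$ is obtained by combining the limit of the levelwise Thom isomorphisms with the resulting identifications $E^{**}(\Vect_\infty)\simto\lim_n E^{**}(\Vect_n)$ and $E^{**}(\MGL)\simto\lim_n E^{**}(\MGL(n))$; uniqueness of the lift follows since $E^{**}(\MGL)\hookrightarrow\lim_n E^{**}(\MGL(n))$ is injective, again by the vanishing of $\Rlim$. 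The main technical point is the bookkeeping in step (i): one must track carefully how the $\SigmaP^{-n}$-desuspensions in the definition of $\MGL(n)$ absorb the extra factor $t(\scr O_S)$ produced by the Whitney formula, so that the levelwise Thom isomorphisms really stabilize rather than shift under the tower maps.
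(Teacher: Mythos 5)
Your proposal is correct and follows essentially the same route as the paper's proof: both pass to the colimit via Proposition~\ref{prop:MGL-colim}, use Lemma~\ref{lem:Whitney} applied to the pair $(\scr E_n,\scr O)$ to make the levelwise Thom isomorphisms commute with the tower maps, invoke the surjectivity from Theorem~\ref{thm:EVect} to kill the $\Rlim$ terms, and conclude with the Milnor exact sequence. Your write-up is merely more explicit about the suspension bookkeeping and the Mittag--Leffler condition, which the paper leaves implicit.
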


\begin{proof}
	By Proposition~\ref{prop:MGL-colim}, we have $\MGL=\colim_n\MGL(n)$.
We apply Lemma~\ref{lem:Whitney} to the pair of the universal locally free sheaf $\scr{E}_n$ on $\Vect_n$ and the sheaf $\scr{O}$ on $S$.
Then it follows that the diagram 
\[
\begin{tikzcd}[column sep=35]
	E^{**}(\Vect_{n+1}) \ar[r,"\sim","t(\scr{E}_{n+1})" swap] \ar[d,twoheadrightarrow] & E^{**}(\MGL(n+1)) \ar[d,twoheadrightarrow] \\
	E^{**}(\Vect_n) \ar[r,"\sim","t(\scr{E}_n)" swap] & E^{**}(\MGL(n))
\end{tikzcd}
\]
commutes, where the left vertical map is induced by the map $\Vect_n\to\Vect_{n+1}$ classifying $\scr{E}_n\oplus\scr{O}$, and is surjective by Theorem~\ref{thm:EVect}.
By taking limits and using the Milnor exact sequence, we obtain the desired isomorphism.
\end{proof}

\begin{theorem}[Universality of $\MGL$]
	\label{thm:universality}
$\MGL$ is the initial oriented object in $\CAlg(\h\MotSp_S)$, i.e., for every oriented object $E$ in $\CAlg(\h\MotSp_S)$, there is a unique orientation-preserving morphism $\MGL\to E$ in $\CAlg(\h\MotSp_S)$.
\end{theorem}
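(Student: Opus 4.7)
The plan is to construct $\phi\colon \MGL\to E$ in $\CAlg(\h\MotSp_S)$ out of the Thom classes of the universal bundles, and then to use the splitting principle to verify uniqueness. Given an oriented $E\in\CAlg(\h\MotSp_S)$, for each $n\geq 0$ the Thom class $t_E(\scr E_n)\in \tilde E^n(\Th_{\Vect_n}(\scr E_n))$ defines a morphism $\phi_n\colon \MGL(n)=\SigmaP^{-n}\Th_{\Vect_n}(\scr E_n)\to E$ in $\h\MotSp_S$. The pullback of $\scr E_{n+1}$ along the map $\Vect_n\to \Vect_{n+1}$, $\scr F\mapsto \scr F\oplus\scr O$, is $\scr E_n\oplus\scr O$, so Lemma~\ref{lem:Whitney} gives $t_E(\scr E_{n+1})|_{\Vect_n}=t_E(\scr E_n)\cdot t_E(\scr O)$; this identifies $\phi_{n+1}$ with $\phi_n$ on $\MGL(n)$, so the $\phi_n$ are compatible and assemble into a map $\phi\colon \MGL\simeq \colim_n\MGL(n)\to E$ (the resulting $\Rlim$ obstruction vanishes by the surjectivity used in Proposition~\ref{prop:MGL}, making the lift unique up to homotopy).

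The ring-map property follows from another application of Lemma~\ref{lem:Whitney}: since the multiplication on $\MGL$ is induced by the direct-sum maps $\Vect_m\times \Vect_n\to \Vect_{m+n}$, multiplicativity of Thom classes gives $\phi_{m+n}\circ \mu_{m,n}\simeq \mu_E\circ(\phi_m\otimes\phi_n)$ on each $\MGL(m)\otimes\MGL(n)$, which passes to the colimit to say that $\phi$ is a ring map in $\h\MotSp_S$; the unit axiom is immediate from $\MGL(0)\simeq \1$. That $\phi$ is orientation-preserving is built in, since $\phi_1\colon \MGL(1)=\SigmaP^{-1}\Th_\Pic(\scr L)\to E$ is by definition the Thom class $t_E(\scr L)$, which by Proposition~\ref{prop:orientation} corresponds to the chosen orientation of $E$.

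For uniqueness, let $\psi\colon \MGL\to E$ be any orientation-preserving map in $\CAlg(\h\MotSp_S)$. By the Milnor sequence underlying Proposition~\ref{prop:MGL}, $\psi$ is determined by its restrictions $\psi_n\colon \MGL(n)\to E$, so it suffices to show $\psi_n\simeq \phi_n$; the case $n=1$ holds by orientation-preservation. For general $n$, pull back along $j\colon \Vect_1^n\to \Vect_n$, $(\scr L_1,\dotsc,\scr L_n)\mapsto \scr L_1\oplus\dotsb\oplus\scr L_n$: by Theorem~\ref{thm:EVect}, $j^*\colon E^{**}(\Vect_n)\to E^{**}(\Vect_1^n)$ sends $c_i$ to the $i$th elementary symmetric polynomial in $c_1(\scr L_1),\dotsc,c_1(\scr L_n)$ and is therefore injective (splitting principle). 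Since $\psi$ and $\phi$ are ring maps with $\psi_1\simeq\phi_1$, their compositions with the map $\MGL(1)^{\otimes n}\to \MGL(n)$ induced by $j$ (which under Lemma~\ref{lem:Whitney} identifies $\MGL$ of $\scr L_1\oplus\dotsb\oplus\scr L_n$ with $\MGL(1)^{\otimes n}$) both equal the iterated product of $\phi_1$; translating back via the Thom isomorphism, $j^*$ sends the classes of $\psi_n$ and $\phi_n$ to the same element of $E^{**}(\Vect_1^n)$, and injectivity forces $\psi_n\simeq \phi_n$. The main obstacle is the ring-map property of $\phi$ in the colimit, which rests entirely on the multiplicativity of Thom classes (Lemma~\ref{lem:Whitney}), and hence on the projective bundle formula.
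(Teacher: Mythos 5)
Your proposal is correct and follows essentially the same route as the paper: construct the map from the system of Thom classes $t_E(\scr E_n)$ (compatible by Lemma~\ref{lem:Whitney} and assembled via the Milnor sequence, i.e.\ Proposition~\ref{prop:MGL}), verify the ring-map property by multiplicativity of Thom classes, and prove uniqueness by reducing to $\MGL(1)^{\otimes n}$ using the injectivity of $E^0(\MGL(n))\to E^0(\MGL(1)^{\otimes n})$, which is exactly the splitting-principle injectivity you invoke via Theorem~\ref{thm:EVect}.
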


\begin{proof}
Let $E$ be an oriented object in $\CAlg(\h\MotSp_S)$.
Let $t\colon\MGL\to E$ be the morphism in $\MotSp_S$ corresponding to $1\in E^0(\Vect_\infty)\simeq E^0(\MGL)$, where the isomorphism is that of Proposition \ref{prop:MGL}.
Then $t$ obviously preserves orientations.
Let $t_n$ denote the restriction of $t$ to $\MGL(n)$.
Then the diagram
\[
\begin{tikzcd}[column sep=30]
	\MGL(n)\otimes\MGL(m) \ar[r,"t_n\otimes t_m"] \ar[d] & E\otimes E \ar[d] \\
	\MGL(n+m) \ar[r,"t_{n+m}"] & E
\end{tikzcd}
\]
commutes by Lemma~\ref{lem:Whitney}.
Also, $t$ preserves units by construction.
Hence, $t$ is a morphism in $\CAlg(\h\MotSp_S)$.

It remains to show the uniqueness.
Suppose that we are given another morphism $t'\colon\MGL\to E$ in $\CAlg(\h\MotSp_S)$, which preserves orientations.
Since $E^0(\MGL)=\lim_n E^0(\MGL(n))$, it suffices to show that the restriction of $t'$ to $\MGL(n)$, which we denote by $t'_n$, agrees with $t_n$ for each $n\ge 1$.
This is clear for $n=1$, because both $t_1$ and $t'_1$ are given by the Thom class of the universal invertible sheaf.
Since $t$ and $t'$ are morphisms of commutative algebras, $t_n$ and $t'_n$ agree with each other when restricted to $\MGL(1)^{\otimes n}$.
However, it follows from Theorem~\ref{thm:EVect} that the map
\[
	E^0(\MGL(n)) \to E^0(\MGL(1)^{\otimes n})
\]
is injective, and thus $t_n=t'_n$.
This completes the proof.
\end{proof}

\begin{remark}
	Slightly more generally, the above argument shows that $\MGL$ is initial among oriented associative algebras in $\h\MotSp_S$ whose orientation class is central (equivalently, whose Thom isomorphisms are bimodule maps). However, $\MGL$ is not initial among oriented associative algebras: this would imply that orientations are equivalent to $\MGL$-module structures, but the free oriented motivic spectrum on the unit, $\colim_n \MGL(1)^{\otimes n}$, is not an $\MGL$-module.
	In fact, an $\MGL$-module is precisely an oriented spectrum whose Thom isomorphisms are compatible with direct sums.
\end{remark}

\begin{remark}
	By Theorem~\ref{thm:Gr-Vect}, the map $\Gr_\infty\to\Vect_\infty$ becomes an isomorphism in $\MotSp_S$.
	But this does not imply that it induces an isomorphism of Thom spectra, i.e., that $\MGL$ is a colimit of Thom spectra over Grassmannians as in $\A^1$-homotopy theory.
	We suspect that this is nevertheless the case. Denoting by $\M\Gr$ the latter colimit, we note that the map $\phi\colon \M\Gr\to\MGL$ is an isomorphism from the perspective of any oriented object in $\CAlg(\h\MotSp_S)$. If we could promote $\phi$ to a morphism of commutative algebras in $\h\MotSp_S$, then $\M\Gr$ would be oriented by Remark~\ref{rmk:Thom-variant} and we would deduce that $\phi$ is an isomorphism. It seems possible to construct such a monoid structure by imitating \cite[Section~2.1]{Panin:2008} and using the results of Section~\ref{sec:thom}.
\end{remark}

We say that $E\in\CAlg(\h\MotSp_S)$ is \textit{periodic} if a unit $\beta\in E^{-1}(\1)$, called the \textit{Bott element}, is given.
Note that $\PMGL$ is periodic with the Bott element given by the canonical map $\P^1=\Th_*(\scr O) \to\Th_{\Pic}(\scr{E}_1)$.

\begin{corollary}[Universality of $\PMGL$]
	\label{cor:universality}
$\PMGL$ is the initial periodic oriented object in $\CAlg(\h\MotSp_S)$, i.e., for every periodic oriented object $E$ in $\CAlg(\h\MotSp_S)$, there is a unique morphism $\PMGL\to E$ in $\CAlg(\h\MotSp_S)$ that preserves the orientation and the Bott element.
\end{corollary}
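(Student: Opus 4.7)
The plan is to mirror the proof of Theorem \ref{thm:universality}, using the periodicity of $E$ to compensate for the fact that $\PMGL$ is assembled from unshifted Thom spectra. First, the argument of Proposition \ref{prop:MGL-colim} applied to the canonical map $\Vect_\infty \to \K$ (acyclic in Zariski sheaves by the same reference) yields
\[
\PMGL \simeq \colim_n \Th_{\Vect_n}(\scr E_n)
\]
in $\MotSp_S$, with transition maps induced by $\scr E \mapsto \scr E \oplus \scr O$.

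Given a periodic oriented $E \in \CAlg(\h\MotSp_S)$ with Bott element $\beta \in E^{-1}(\1)$, the Thom isomorphism for $E$ combined with multiplication by $\beta^n$ yields a natural bigraded isomorphism $\tilde E^{**}(\Th_{\Vect_n}(\scr E_n)) \simeq E^{**}(\Vect_n)$. Mirroring Proposition \ref{prop:MGL} --- via Lemma \ref{lem:Whitney} for compatibility of Thom classes at successive ranks, the surjectivity from Theorem \ref{thm:EVect}, and the Milnor exact sequence --- this assembles into an isomorphism
\[
E^{**}(\PMGL) \simeq \lim_n E^{**}(\Vect_n).
\]
I would then define $t \colon \PMGL \to E$ as the morphism corresponding to $1 \in \lim_n E^0(\Vect_n)$ under this isomorphism.

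The verification that $t$ is a morphism of commutative algebras preserving the orientation proceeds exactly as in Theorem \ref{thm:universality}, using the multiplicativity of Thom classes from Lemma \ref{lem:Whitney}. For the Bott element, observe that the canonical map $\iota_\PMGL \colon \Th_\Pic(\scr L) = \Th_{\Vect_1}(\scr E_1) \to \PMGL$ satisfies $\iota_\PMGL = \beta_\PMGL \cdot t(\scr L)$ in $\tilde \PMGL^0(\Th_\Pic(\scr L))$, since its restriction along $\P^1 = \Th_*(\scr O) \to \Th_\Pic(\scr L)$ recovers $\beta_\PMGL$ by definition of the latter. By construction, $t|_{\Th_\Pic(\scr L)}$ corresponds to $1 \in E^0(\Pic)$, which under the Thom/Bott identifications is realized as $\beta \cdot t(\scr L) \in \tilde E^0(\Th_\Pic(\scr L))$; pulling back along $\P^1 \to \Th_\Pic(\scr L)$ yields $\beta \cdot t(\scr O) = \beta$ in $E^{-1}(\1)$, proving preservation of the Bott element.

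For uniqueness, let $t' \colon \PMGL \to E$ be any morphism in $\CAlg(\h\MotSp_S)$ preserving orientation and Bott element. The identity $\iota_\PMGL = \beta_\PMGL \cdot t(\scr L)$ above, combined with preservation of the orientation (equivalently, the Thom class $t(\scr L)$ via Proposition \ref{prop:orientation}) and the Bott element, forces $t'(\iota_\PMGL) = \beta_E \cdot t(\scr L)_E = t(\iota_\PMGL)$. Then by the algebra structure and the injectivity argument of Theorem \ref{thm:universality} (coming from Theorem \ref{thm:EVect}), this determines $t'|_{\Th_{\Vect_n}(\scr E_n)}$ for every $n$, and the limit yields $t' = t$. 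The only essentially new technical point beyond Theorem \ref{thm:universality} is the compatibility in Step 2 between the Thom and Bott isomorphisms and the transition maps $\Th_{\Vect_n}(\scr E_n) \to \Th_{\Vect_{n+1}}(\scr E_{n+1})$; this should follow straightforwardly from the fact that $\beta$ is a central homotopy class in the unit, together with the multiplicativity of the Thom classes given by Lemma \ref{lem:Whitney}.
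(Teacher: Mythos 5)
The paper's own proof is a one-liner: it deduces the corollary immediately from Theorem~\ref{thm:universality}, the point being that $\PMGL\simeq\bigoplus_{d\in\Z}\SigmaP^{d}\MGL=\MGL[\beta^{\pm1}]$ as a commutative algebra in $\h\MotSp_S$, so that a morphism $\PMGL\to E$ in $\CAlg(\h\MotSp_S)$ preserving the orientation and the Bott element is the same datum as an orientation-preserving morphism $\MGL\to E$ (periodicity of $E$ guarantees a unique Bott-compatible extension to the Laurent object). Your plan of re-running the entire Thom-isomorphism argument for $\PMGL$ is a different and much longer route, which would be acceptable in principle, but it rests on a false presentation of $\PMGL$ and therefore does not go through as written.

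Concretely, $\colim_n\Th_{\Vect_n}(\scr E_n)$ with transition maps ``induced by $\scr E\mapsto\scr E\oplus\scr O$'' is not $\PMGL$, and the argument of Proposition~\ref{prop:MGL-colim} does not produce it: since $\scr E\mapsto\scr E\oplus\scr O$ raises the K-theory class by $1$, the assignment $n\mapsto(\Vect_n,[\scr E_n])$ is not a diagram in $\scr P(\Sm_S)_{/\K}$, and what this map induces on Thom spaces is $\SigmaP\Th_{\Vect_n}(\scr E_n)\to\Th_{\Vect_{n+1}}(\scr E_{n+1})$ --- it is exactly the desuspensions you omit that create periodicity. The correct presentation (stated in the remark after Proposition~\ref{prop:MGL-colim}) is $\PMGL\simeq\colim_n\SigmaP^{-n}\Th_{\Vect}(\scr E)$ with $\Th_\Vect(\scr E)=\bigvee_m\Th_{\Vect_m}(\scr E_m)$, equivalently $\PMGL\simeq\bigoplus_{d\in\Z}\SigmaP^{d}\MGL$. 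Consequently your identification $E^{**}(\PMGL)\simeq\lim_nE^{**}(\Vect_n)$ is also wrong: that limit is $E^{**}(\MGL)$ by Proposition~\ref{prop:MGL}, and the class $1$ there yields the map $\MGL\to E$ of Theorem~\ref{thm:universality}, not a map out of $\PMGL$. One has instead $E^{0}(\PMGL)\simeq\prod_{d\in\Z}E^{-d}(\MGL)$, and it is precisely in choosing, in the $d$-th factor, the class obtained from $t\colon\MGL\to E$ by multiplying with the appropriate power of $\beta_E$ that the periodicity of $E$ and the Bott-compatibility constraint enter. Once the presentation is corrected, the shortest repair is the paper's: reduce everything to Theorem~\ref{thm:universality}.
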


\begin{proof}
This follows immediately from Theorem~\ref{thm:universality}.
\end{proof}

For later purposes, we record the computation of the oriented homology of $\MGL$. This is a standard computation once we know that $\MGL\otimes (\Gr_{n,k})_+$ is a finite free $\MGL$-module (see for example \cite[Proposition 6.2]{Naumann:2009} for the analogue in $\A^1$-homotopy theory).

\begin{proposition}\label{prop:homology_MGL}
Let $E$ be an oriented object in $\CAlg(\h\MotSp_S)$.
\begin{enumerate}[beginpenalty=10000]
	\item\label{item:hom-vect} There is an isomorphism of $E_{**}$-algebras
	\[
	E_{**}(\Vect_\infty) \simeq E_{**}[\beta_0,\beta_1,\dotsc]/(\beta_0-1),
	\]
	where the ring structure on the left-hand side comes from the algebra structure of $\SigmaP^\infty(\Vect_{\infty})_+\simeq\SigmaP^\infty(\K_{\rk=0})_+$ and $\beta_i \in E_i(\Pic)$ is the predual basis to $c^i\in E^i(\Pic)$.
	\item\label{item:hom-MGL} There is an isomorphism of $E_{**}$-algebras
\[
	E_{**}(\MGL) \simeq E_{**}[b_0,b_1,\dotsc]/(b_0-1),
\]
where $b_i$ is the image of $\beta_i$ under the Thom isomorphism $E_{i}(\MGL)\simeq E_{i}(\Vect_\infty)$. 
Moreover, if $c_E$ and $c_\MGL$ are the images in $(E\otimes\MGL)^1(\Pic)$ of the orientations of $E$ and $\MGL$, we have
\[
c_\MGL = \sum_{i\geq 0}b_i c_E^{i+1}.
\]
\end{enumerate}
\end{proposition}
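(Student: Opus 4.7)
The plan is to reduce both parts to the standard homological computation analogous to \cite[Proposition 6.2]{Naumann:2009}, whose only nontrivial input in our setting is that $E \otimes (\Gr_{n,N})_+$ is a finite free $E$-module for each $n, N$. To establish this, I iterate the projective bundle formula along the tower $\mathrm{Fl}(\scr U) \to \Gr_{n,N}$ (with $\scr U$ the universal subbundle) and over a point: after choosing the $E$-orientation, the projective bundle formula gives an $E$-module splitting $E \otimes \P_X(\scr F)_+ \simeq \bigoplus_{i=0}^{r-1} \SigmaP^i(E \otimes X_+)$, and iterating exhibits $E \otimes (\Gr_{n,N})_+$ as a finite free $E$-module with basis indexed by Schubert classes; dually, $E_{**}(\Gr_{n,N})$ is free over $E_{**}$ with a Schubert-dual basis.

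For part \ref*{item:hom-vect}, applying this with $n = 1$ gives $E_{**}(\Pic) = \bigoplus_{i \geq 0} E_{**} \beta_i$ via $\Pic \simeq \P^\infty$. For general $n$, Theorem \ref{thm:Gr-Vect} identifies $\Vect_n \simeq \colim_N \Gr_{n,N}$ in $\MotSp_S$, so $E_{**}(\Vect_n)$ is similarly free with Schubert-dual basis. Direct sum of vector bundles endows $\Sigma^\infty_+ \Vect_\infty$ with an $\E_\infty$-ring structure (matching the one used to define $\MGL$ via the Zariski-local identification $\Vect_\infty \simeq \K_{\rk=0}$), making $E_{**}(\Vect_\infty)$ an $E_{**}$-algebra. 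Under the direct-sum map $\Pic^n \to \Vect_n \hookrightarrow \Vect_\infty$, the Schubert-dual basis of $E_{**}(\Vect_n)$ matches the monomial basis $\beta_{i_1} \cdots \beta_{i_n}$ in the Pontryagin ring, and the stabilization $\scr E \mapsto \scr E \oplus \scr O$ corresponds to multiplication by $\beta_0 = 1$. Passing to the colimit in $n$ yields $E_{**}(\Vect_\infty) \simeq E_{**}[\beta_0, \beta_1, \ldots]/(\beta_0 - 1)$.

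For part \ref*{item:hom-MGL}, Proposition \ref{prop:MGL-colim} gives $\MGL = \colim_n \MGL(n)$ with $\MGL(n) = \SigmaP^{-n} \Th_{\Vect_n}(\scr E_n)$. The $E$-Thom isomorphism yields $E_{**}(\MGL(n)) \simeq E_{**}(\Vect_n)$; Lemma \ref{lem:Whitney}, applied as in the proof of Proposition \ref{prop:MGL}, makes these compatible with the transitions $\scr E \mapsto \scr E \oplus \scr O$. Taking colimits gives the desired $E_{**}$-module isomorphism with $\beta_i \mapsto b_i$. The multiplication on $\MGL$ arises from direct sum through the Thom spectrum formalism, and Lemma \ref{lem:Whitney} once more makes the Thom iso multiplicative.

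For the Chern class formula in $(E \otimes \MGL)^1(\Pic)$: by the $E$-orientation and projective bundle formula, $(E \otimes \MGL)^{**}(\Pic) \simeq (E \otimes \MGL)^{**}[[c_E]]$, and since $c_\MGL$ vanishes at the basepoint there is a unique expansion $c_\MGL = \sum_{j \geq 0} a_j c_E^{j+1}$ with $a_j \in (E \otimes \MGL)_j$. Pairing both sides against $\beta_{j+1} \in (E \otimes \MGL)_{j+1}(\Pic)$ and using $\langle c_E^i, \beta_k \rangle = \delta_{ik}$ extracts $a_j$ on the right; on the left, $\langle c_\MGL, \beta_{j+1} \rangle$ unwinds via the Thom orientation $\Pic \to \Th_\Pic(\scr L) \to \MGL$ (Proposition \ref{prop:orientation}) and the rank-$1$ Thom isomorphism to the image of $\beta_j$ in $E_j(\MGL)$, which is $b_j$. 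The main technical obstacle is the finite-freeness input of the first paragraph: cohomological finite freeness via the projective bundle formula is immediate, but in our non-$\A^1$-invariant setting one must work at the level of $E$-module splittings in $\MotSp_S$ rather than invoking affine cell decompositions.
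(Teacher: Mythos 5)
Your overall route is the same as the paper's: show that $E\otimes(\Gr_{n,k})_+$ is a sufficiently nice $E$-module so that homology is dual to the known cohomology, identify $E_{**}(\Vect_\infty)$ as the colimit of the $\Sym^nE_{**}(\Pic)$ with stabilization given by multiplication by $\beta_0$, transport this through the Thom isomorphisms $E_{**}(\MGL(n))\simeq E_{**}(\Vect_n)$ using Lemma~\ref{lem:Whitney} for naturality and multiplicativity, and extract the coefficients of $c_\MGL$ by pairing against the $\beta_{j+1}$ and unwinding the universal orientation through $-\tilde s_0$ and the rank-one Thom class (the paper does this last step by dualizing it to the statement that $-\tilde s_0^*\circ t(\scr E_1)$ is multiplication by $c$, a special case of~\eqref{eqn:Thom}; you assert the dual statement without verification, but the idea is identical).

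The one genuine gap is in your justification of the finite-freeness input, which you yourself flag as the main obstacle. Iterating the projective bundle formula along the flag tower $\mathrm{Fl}(\scr U)\to\Gr_{n,N}$ exhibits $E\otimes\mathrm{Fl}(\scr U)_+$ as a finite free module over $E\otimes(\Gr_{n,N})_+$, and iterating over the point exhibits $E\otimes\mathrm{Fl}_+$ as a finite free module over $E$; neither of these is the statement that $E\otimes(\Gr_{n,N})_+$ is finite free over $E$, and a Schubert basis does not fall out of the iteration. What does follow formally is only that $E\otimes(\Gr_{n,N})_+$ is a retract of a finite free $E$-module (it is the unit summand of the flag-bundle splitting), hence finitely generated projective and dualizable. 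To upgrade this to freeness with an identified basis one must additionally compute $E^{**}(\Gr_{n,N})$, i.e., carry out the splitting-principle identification of the image of $E^{**}(\Gr_{n,N})$ inside $E^{**}(\mathrm{Fl}(\scr U))$ as the symmetric part --- or, as the paper does, invoke the module-level Grassmannian bundle formula \cite[Lemma 4.4.4]{AnnalaIwasa2}, which packages exactly this. Either repair is available (and projectivity plus Theorem~\ref{thm:EVect} would in fact suffice for the duality and the $\Sym^n$ identification), but as written your first paragraph asserts the key conclusion rather than proving it.
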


\begin{proof}
	\ref*{item:hom-vect} By \cite[Lemma 4.4.4]{AnnalaIwasa2}, the Grassmannian formula holds for all $\MGL$-modules $M$ in $\MotSp_S$:
	the map
	\[
		\sum_\alpha c(\scr{Q})^\alpha \colon \bigoplus_\alpha \SigmaP^{-\lVert\alpha\rVert}M \to M^{(\Gr_{n,k})_+}
	\]
	is an isomorphism, where $\alpha=(\alpha_1,\dotsc,\alpha_n)$ runs over all $n$-tuples of non-negative integers with $\sum_i\alpha_i\le k-n$ and we write $\lVert\alpha\rVert=\sum_i i\alpha_i$ and $c^\alpha=\prod_i c_i^{\alpha_i}$.
	It follows that $\MGL\otimes(\Gr_{n,k})_+$ is a finite free $\MGL$-module.
	Hence, for a commutative $\MGL$-algebra $E$ in $\h\MotSp_S$, the map
	\[
		E^{**}(\Gr_{n,k}) \to E_{**}(\Gr_{n,k})^\vee
	\]
	is an isomorphism of $E_{**}$-modules for finite $k$ and thus for $k=\infty$ too.
	Then it follows from Theorem~\ref{thm:Gr-Vect} that $E^{**}(\Vect_n)$ is the dual of $E_{**}(\Vect_n)$.
	Since $E^{**}(\Vect_n)=(E^{**}(\Pic)^{\widehat\otimes n})^{\Sigma_n}$, we have $E_{**}(\Vect_n)=\Sym^nE_{**}(\Pic)$.
	Moreover, the direct sum pairing $\Vect_m\times\Vect_n\to\Vect_{m+n}$ induces the canonical map $\Sym^m\otimes \Sym^n\to\Sym^{m+n}$ in homology.
	The map $E_{**}\to E_{**}(\Pic)$ induced by the base point of $\Pic$ is multiplication by $\beta_0$, and hence so is the map $E_{**}(\Vect_n)\to E_{**}(\Vect_{n+1})$ induced by $\scr E\mapsto\scr E\oplus\scr O$.
	Thus, under the identification
	\[
		E_{**}(\Vect_n)=\Sym^n\biggl(\bigoplus_{i\ge 0}E_{**}\beta_i\biggr) \simeq \Sym^{\le n}\biggl(\bigoplus_{i\ge 1}E_{**}\beta_i\biggr)
	\]
	given by $\beta_0\mapsto 1$, the map $E_{**}(\Vect_n)\to E_{**}(\Vect_{n+1})$ corresponds to the inclusion $\Sym^{\leq n}\to \Sym^{\leq n+1}$.
	In the colimit, we obtain the claimed isomorphism of $E_{**}$-algebras
	\[
		E_{**}(\Vect_\infty) = \colim_nE_{**}(\Vect_n) \simeq E_{**}[\beta_1,\beta_2,\dotsc].
	\]
	
	\ref*{item:hom-MGL} Since we have Thom isomorphisms $t(\scr E_n)\colon M^{\Vect_{n,+}}\simto M^{\MGL(n)}$ for all $\MGL$-modules $M$ in $\MotSp_S$, we get an isomorphism of $\MGL$-modules
	\[
	\MGL\otimes\MGL(n)\simeq \MGL\otimes \Vect_{n,+},
	\]
	hence an isomorphism of $E_{**}$-modules $\tilde E_{**}(\MGL(n))\simeq E_{**}(\Vect_{n})$.
	It follows from Lemma~\ref{lem:Whitney} that this isomorphism is natural in $n$, and that we obtain an isomorphism of rings $E_{**}(\MGL)\simeq E_{**}(\Vect_\infty)$ in the colimit. By definition of $\beta_i$, the last formula is equivalent to the following statement: the map $\tilde E_*(\Pic)\to E_{*-1}(\MGL)$ induced by the universal orientation $c\colon \Pic\to\SigmaP\MGL$ sends $\beta_{i+1}$ to $b_i$. By definition, $c$ factors through $-\tilde s_0\colon \Pic\to \SigmaP\MGL(1)$, and so we must show that the induced map $\tilde E_*(\Pic)\to \tilde E_{*-1}(\MGL(1))$ composed with the Thom isomorphism $\tilde E_{*-1}(\MGL(1))\simeq E_{*-1}(\Pic)$ sends $\beta_{i+1}$ to $\beta_i$. Dualizing, this is equivalent to $-\tilde s_0^*\circ t(\scr E_1)\colon E^{*-1}(\Pic)\to \tilde E^*(\Pic)$ being multiplication by $c$, which is a special case of~\eqref{eqn:Thom}.
\end{proof}

\begin{corollary}\label{cor:homology_MGL}
	Let $E$ be an oriented object in $\CAlg(\h\MotSp_S)$. Then there is an isomorphism of $E$-algebras
	\[
	E\otimes\MGL \simeq E[b_1,b_2,\dotsc]=\bigoplus_{m}\SigmaP^{\deg(m)}E,
	\]
	where $m$ ranges over the monomials in the variables $b_i$ and $\deg(b_i)=i$.
\end{corollary}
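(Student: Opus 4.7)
The plan is to use the classes $b_i \in E_i(\MGL)$ produced by Proposition~\ref{prop:homology_MGL}\ref*{item:hom-MGL} to build an explicit $E$-algebra morphism $\phi\colon E[b_1,b_2,\dots]\to E\otimes\MGL$ in $\h\MotSp_S$, and then show $\phi$ is an equivalence by reducing, via the Thom isomorphism, to the $E$-module computation already completed in Proposition~\ref{prop:homology_MGL}.

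First, each $b_i$ is represented by a morphism $\SigmaP^i\1\to E\otimes\MGL$ in $\h\MotSp_S$, factoring by construction through $E\otimes\MGL(1)$. Since $E\otimes\MGL$ is a commutative algebra in $\h\MotSp_S$ and an $E$-algebra via its left tensor factor, the universal property of the free graded $E$-algebra extends these classes uniquely to the desired $E$-algebra morphism $\phi$. Under the canonical decomposition $E[b_1,b_2,\dots]=\bigoplus_m\SigmaP^{\deg(m)}E$, the summand indexed by a monomial $m=b_1^{m_1}b_2^{m_2}\cdots$ is mapped via the product $b^m$ computed in $E\otimes\MGL$, so $\phi$ refines (via Proposition~\ref{prop:MGL-colim}) to a filtered system
\[
\phi_k\colon\bigoplus_{|m|\le k}\SigmaP^{\deg(m)}E\longrightarrow E\otimes\MGL(k),\qquad \phi\simeq\colim_k\phi_k,
\]
where $|m|=\sum_i m_i$ is the total weight. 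It therefore suffices to check that each $\phi_k$ is an equivalence in $\h\MotSp_S$.

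For this, I identify $E\otimes\MGL(k)$ with $\SigmaP^{-k}(E\otimes\Vect_{k,+})$ as $E$-modules using the Thom class $t(\scr E_k)$, and then apply Theorem~\ref{thm:Gr-Vect} together with the iterated oriented projective bundle formula used in the proof of Proposition~\ref{prop:homology_MGL}\ref*{item:hom-vect} to split $E\otimes\Vect_{k,+}$ as a free $E$-module. The resulting free basis, by the dual computation carried out in Proposition~\ref{prop:homology_MGL}\ref*{item:hom-vect}--\ref*{item:hom-MGL}, is indexed precisely by the monomials $m$ with $|m|\le k$, in the prescribed bidegrees; and on each summand $\phi_k$ realises exactly the class $b^m$ that appears as this basis element in Proposition~\ref{prop:homology_MGL}\ref*{item:hom-MGL}. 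Hence $\phi_k$ is the $E$-linear comparison of two free $E$-modules of equal rank sending basis to basis, so it is an equivalence.

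The main obstacle is promoting the cohomological Thom isomorphism and Grassmannian formula to genuine $E$-module equivalences in $\h\MotSp_S$ (rather than statements only about $E^{**}$ or $E_{**}$), given that $E$ is only homotopy commutative. This is essentially formal once one uses that $\Th_X(\scr E)$ is $\P^1$-invertible by Corollary~\ref{cor:Thom-invertible}, so that the Thom class of $\scr E_k$ admits a genuine two-sided $E$-module inverse; but the interaction of these equivalences with the multiplicative filtration $\MGL(k)\to\MGL(k+1)$ coming from $\scr E\mapsto\scr E\oplus\scr O$ must be verified carefully, exactly as in the transition argument used in the proof of Proposition~\ref{prop:MGL} via Lemma~\ref{lem:Whitney}.
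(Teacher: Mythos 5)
Your construction of the map $\phi\colon\bigoplus_m\SigmaP^{\deg(m)}E\to E\otimes\MGL$ from the classes $b_i$ is the same as the paper's, and the reduction to finite stages via Proposition~\ref{prop:MGL-colim} is reasonable. The gap is in the verification that each $\phi_k$ is an equivalence. All of the inputs you invoke --- the Grassmannian formula, the Thom isomorphism, and the basis identification in Proposition~\ref{prop:homology_MGL} --- are statements either about genuine $\MGL$-modules (where the paper does produce spectrum-level splittings of $\MGL\otimes(\Gr_{n,k})_+$ for \emph{finite} Grassmannians) or about the bigraded homotopy groups $E_{**}(\ph)$ computed over the base $S$. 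For a merely homotopy-commutative oriented $E$, the paper never establishes, and you do not supply, a splitting of the motivic spectrum $E\otimes\Vect_{k,+}$ as a direct sum of shifts of $E$ in $\h\MotSp_S$; one cannot base-change the $\MGL$-module splitting along the homotopy ring map $\MGL\to E$, and since $\Vect_k$ is an infinite colimit of Grassmannians one would additionally have to choose splittings compatibly along the transition maps. More fundamentally, the functors $\pi_{**}(\ph)(S)$ are not jointly conservative on $\MotSp_S$ (the generators are $\Sigma^\infty_{\P^1}X_+$ for \emph{all} $X\in\Sm_S$), so ``basis goes to basis,'' checked on $E_{**}$ over $S$ alone, does not imply that $\phi_k$ is an equivalence unless both sides are already genuinely split and the map is identified with a matrix of multiplications by elements of $\pi_{**}E$. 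Finally, Corollary~\ref{cor:Thom-invertible} is not the right tool for your ``main obstacle'': it concerns Thom spaces of sheaves on the base $S$, whereas $\Th_{\Vect_k}(\scr E_k)=\SigmaP^k\MGL(k)$ is not invertible; a spectrum-level equivalence $E\otimes\MGL(k)\simeq\SigmaP^{-k}E\otimes\Vect_{k,+}$ would require the Thom diagonal together with, once again, a conservativity argument.

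The paper closes this gap in one line: the map $\phi$ induces, on $\pi_{**}(\ph)(X)$ for every smooth $S$-scheme $X$, precisely the isomorphism of Proposition~\ref{prop:homology_MGL} computed over the base $X$ (the classes $b_i$ and the computation being stable under smooth base change), and these functors are jointly conservative. Once you add that observation, your filtration by the $\MGL(k)$ and the spectrum-level splittings become unnecessary; without it, they cannot be completed from the results available in the paper.
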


\begin{proof}
	Proposition~\ref{prop:homology_MGL}\ref*{item:hom-MGL} gives a map of $E$-algebras from the right-hand side to the left-hand side. It is an isomorphism since Proposition~\ref{prop:homology_MGL} holds not just over $S$ but also over any smooth $S$-scheme.
\end{proof}

\section{Algebraic Conner--Floyd isomorphism}
\label{sec:CF}

We shall prove the Conner--Floyd isomorphism for algebraic K-theory by following the argument of Spitzweck and Østvær in the $\A^1$-invariant setting \cite{Spitzweck-Bott}, i.e., by comparing universal properties of cohomology theories defined on \emph{compact} motivic spectra. 
A key input is the isomorphism $\Sigma^\infty_{\P^1}(\Gr_n)_+\simeq \Sigma^\infty_{\P^1}(\Vect_n)_+$ of Theorem~\ref{thm:Gr-Vect}.
We first introduce some terminology for such cohomology theories:

\begin{definition}
	Let $S$ be a qcqs derived scheme.
	\begin{itemize}
		\item A \emph{cohomology theory} on $\MotSp_S^\omega$ is a homological functor 
	\[E^0\colon \MotSp_S^{\omega,\op}\to \Ab,\]
	i.e, a functor that preserves finite products and sends cofiber sequences to exact sequences.
	We then write $E^q=E^0\circ\SigmaP^{-q}$, $E^{p,q}=E^q\circ\Sigma^{2q-p}$, and we denote by \[\widehat E{}^{p,q}\colon \MotSp_S^\op\to\Pro(\Ab)\] the extension of $E^{p,q}$ that preserves cofiltered limits (which is again a homological functor).
	For a presheaf $X\in\scr P(\Sm_S)$ we write $\widehat E^{p,q}{}(X)$ instead of $\widehat E{}^{p,q}(\Sigma^\infty_{\P^1}X_+)$.
	\item A \emph{ring cohomology theory} will mean a commutative monoid in cohomology theories, with respect to the Day convolution in $\Fun(\MotSp_S^{\omega,\op},\Ab)$.
	\item A \emph{periodic cohomology theory} is a ring cohomology theory $E^0$ with a unit $\beta\in E^{-1}(\1)$.
	\item An \emph{oriented cohomology theory} is a ring cohomology theory $E^0$ with an element $c\in \widehat E^1(\Pic)= \lim_n E^1(\P^n)$, whose restriction to $E^1(\P^1)\simeq E^1(\1)\oplus E^0(\1)$ is $(0,1)$.
	\item A \emph{$\G_m$-preoriented cohomology theory} is a ring cohomology theory $E^0$ with an element $u\in \widehat E^0(\Pic)$ such that $u|_{\mathrm{pt}}=1$ and $\mathord\otimes^*(u)=u_1u_2$, where $u_i=\pi_i^*(u)\in\widehat E^0(\Pic\times\Pic)$.
	We then define the \emph{Bott element} $\beta\in E^{-1}(\1)= \tilde E^0(\P^1)$ to be the element $1-u|_{\P^1}$, and we say that $(E^0,u)$ is \emph{$\G_m$-oriented} if $\beta$ is unit.\footnote{The element $u$ is automatically a unit since $\Pic$ is a group. It should be understood as defining a preorientation of the group scheme $\G_m$ over the ring $E$ in the sense of Lurie \cite[Definition 3.2]{LurieElliptic}.}
	\end{itemize}
\end{definition}

\begin{remark}\label{rmk:E-hat}
	Let $X\in\MotSp_S$ and let $E^0$ be a cohomology theory on $\MotSp_S^\omega$. Then there is a canonical iso\-morphism between the limit of the pro-group $\widehat E^{p,q}(X)$ and the group of natural transformations $X^0(\ph)\to E^{p,q}(\ph)$ on $\MotSp_S^\omega$.
\end{remark}

Let $(E^0,c)$ be an oriented cohomology theory on $\MotSp_S^\omega$. For an arbitrary presheaf $X\in\scr P(\Sm_S)$ and an invertible sheaf $\scr L\in \Pic(X)$, we define the first Chern class $c_1(\scr L)\in\widehat E^1(X)$ to be the pullback of $c$ along the map $X\to\Pic$ classifying $\scr L$.
The theory $\widehat E^{**}$ then satisfies the projective bundle formula: for any $X\in\scr P(\Sm_S)$ and any locally free sheaf $\scr E$ of rank $n$ over $X$, the map of pro-groups
\[
\bigoplus_{i=0}^{n-1}\widehat E^{*-2i,*-i}(X)\to \widehat E^{**}(\P(\scr E)),\quad (a_0,\dotsc,a_{n-1})\mapsto \sum_{i=0}^{n-1}c_1(\scr O_{\P(\scr E)}(1))^i p^*(a_i),
\]
is an isomorphism. To see this, consider the full subcategory of $\scr P(\Sm_S)_{/X}$ where the projective bundle formula holds for the pullback of $\scr E$.
This subcategory contains representable presheaves by the proof of \cite[Lemma 3.3.5]{AnnalaIwasa2}. It also contains the initial object, is closed under pushouts by the 5-lemma, and is closed under filtered colimits by definition of $\widehat E^{**}$. It therefore contains $X$ itself. Consequently, we also have the Thom isomorphism
\[
\widehat E^{**}(\Th_X(\scr E))\simeq \widehat E^{*-2n,*-n}(X)
\]
and higher Chern classes $c_i(\scr E)\in \widehat E^i(X)$.

One can further compute the ring structure on the cohomology of a finite product of projective spaces as in \cite[Lemma 3.1.8]{AnnalaIwasa2}, using that for a scheme $X\in \Sm_S^\fp$ and quasi-compact open subschemes $U_1,\dotsc,U_n\subset X$ we have a refined cup product
\[
E^{**}(X/U_1)\otimes\dotsb\otimes E^{**}(X/U_n)\to E^{**}(X/(U_1\cup\dotsb\cup U_n)).
\]
Together with the isomorphism $\Sigma^\infty_{\P^1}\P^\infty_+\simeq \Sigma^\infty_{\P^1}\Pic_+$ of Theorem~\ref{thm:Gr-Vect}, we obtain an isomorphism of pro-rings
\[
\widehat E^{**}(\Pic^{n}) = E^{**}[[x_1,\dotsc,x_n]],
\]
where $x_i=\pi_i^*(c)$. Since $(\Pic,\otimes)$ is an $\E_\infty$-group, the power series $\mathord\otimes^*(c)\in E^{*}[[x_1,x_2]]$ is a commutative formal group law over $E^{*}$, homogeneous of cohomological degree $1$. By construction, this formal group law computes $c_1(\scr L_1\otimes\scr L_2)$ in terms of $c_1(\scr L_1)$ and $c_1(\scr L_2)$ for any $X\in\scr P(\Sm_S)$ and any $\scr L_1,\scr L_2\in \Pic(X)$ (and first Chern classes are nilpotent when $\Sigma^\infty_{\P^1}X_+$ is compact). Using the formal group law, one may prove the Whitney sum formula for the Chern classes in $\widehat E^*$ exactly as in \cite[Lemma 4.4.3]{AnnalaIwasa2}.

The following lemma explains the relationship between orientations and $\G_m$-orientations:

\begin{lemma}[Orientations vs.\ $\G_m$-orientations]
	\label{lem:G_m-orientation}
	Let $E^0$ be a ring cohomology theory on $\MotSp_S^\omega$. Then the assignment
	\[
	u\mapsto(\beta,c),\quad \beta=1-u|_{\P^1},\quad c =\beta^{-1}(1-u),
	\]
	gives a bijection between $\G_m$-orientations of $E^0$ and pairs $(\beta,c)$ consisting of a unit $\beta\in E^{-1}(\1)$ and an orientation $c\in\widehat E^1(\Pic)$ satisfying
		\[
		\mathord\otimes^*(c)=x_1+x_2-\beta x_1x_2,
		\]
		where $x_i=\pi_i^*(c)\in\widehat E^1(\Pic\times\Pic)$. The inverse is given by $(\beta,c)\mapsto 1-\beta c$.
\end{lemma}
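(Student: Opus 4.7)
The proof is essentially a direct computation once the bigraded conventions are unpacked; the plan is to check that each assignment lands where claimed, then verify that the two constructions are mutually inverse.

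First, given a $\G_m$-orientation $u$ with Bott element $\beta$, I would verify that $c:=\beta^{-1}(1-u)$ is an orientation and that it satisfies the multiplicative formal group law. Since $\beta\in E^{-1}(\1)$ is a unit, multiplication by $\beta^{-1}\in E^1(\1)$ is an isomorphism $\widehat E^0(\Pic)\simto \widehat E^1(\Pic)$, so $c\in\widehat E^1(\Pic)$. To check the normalization, decompose $\widehat E^0(\P^1)\simeq E^0(\1)\oplus E^{-1}(\1)$ using the splitting $(\P^1)_+\simeq \1\oplus\P^1$. The normalization $u|_{\mathrm{pt}}=1$ together with the definition $\beta=1-u|_{\P^1}$ yields $u|_{\P^1}=(1,-\beta)$, hence $(1-u)|_{\P^1}=(0,\beta)$ and $c|_{\P^1}=(0,1)$ in $E^1(\1)\oplus E^0(\1)$, which is precisely the orientation condition. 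Writing $u_i=1-\beta x_i$ for $i=1,2$, the formal group law follows from the direct expansion
\[
\mathord\otimes^*(c)=\beta^{-1}\bigl(1-\mathord\otimes^*(u)\bigr)=\beta^{-1}\bigl(1-(1-\beta x_1)(1-\beta x_2)\bigr)=x_1+x_2-\beta x_1x_2.
\]

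Next, given a pair $(\beta,c)$ as in the statement, I would check that $u:=1-\beta c\in\widehat E^0(\Pic)$ is a $\G_m$-orientation whose associated pair recovers $(\beta,c)$. The normalization condition $c|_{\P^1}=(0,1)$ forces the constant part $c|_{\mathrm{pt}}=0$, hence $u|_{\mathrm{pt}}=1$. The identity $\mathord\otimes^*(u)=u_1u_2$ is the same polynomial expansion run in reverse:
\[
\mathord\otimes^*(u)=1-\beta(x_1+x_2-\beta x_1x_2)=(1-\beta x_1)(1-\beta x_2)=u_1u_2.
\]
Furthermore $1-u|_{\P^1}=\beta\cdot c|_{\P^1}=\beta$, so the Bott element of $u$ is again $\beta$ and $\beta$ is a unit, so $u$ is indeed a $\G_m$-orientation.

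Finally, the two maps are mutually inverse by direct substitution: starting from $u$ one recovers $1-\beta\cdot\beta^{-1}(1-u)=u$; conversely, starting from $(\beta,c)$ one recovers $(\beta,\beta^{-1}(\beta c))=(\beta,c)$. The only mild subtlety is keeping track of the bigraded pieces and of the splitting $\widehat E^0(\P^1)\simeq E^0(\1)\oplus E^{-1}(\1)$ (and its shift by $\beta^{-1}$), but no genuine obstacle arises; this lemma is really a book-keeping reformulation, analogous to the passage between a multiplicative formal group law and its logarithm-free coordinate.
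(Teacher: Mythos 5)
Your proposal is correct and follows essentially the same route as the paper's proof: both directions are verified by the same direct expansions $\mathord\otimes^*(c)=\beta^{-1}(1-u_1u_2)=x_1+x_2-\beta x_1x_2$ and $\mathord\otimes^*(u)=(1-\beta x_1)(1-\beta x_2)$, together with the normalization check via the splitting $\widehat E^0(\P^1)\simeq E^0(\1)\oplus E^{-1}(\1)$, and the mutual inverseness is the same one-line substitution. Your write-up is in fact slightly more explicit than the paper's on the normalization and on why the two assignments are inverse.
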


\begin{proof}
	It is clear that the given formulas are inverse to each other.
	Suppose $u$ is a $\G_m$-orientation with associated unit $\beta$, and let $c=\beta^{-1}(1-u)\in \widehat E^1(\Pic)$. Then the formula for $\mathord\otimes^*(u)$ yields the desired formula for $\mathord\otimes^*(c)$. Moreover, since $u|_\mathrm{pt}=1$, we have $c|_{\P^1}=\beta^{-1}(1-u|_{\P^1})=\beta^{-1}(0,\beta)=(0,1)$, so that $c$ is an orientation.
	
	Conversely, let $(\beta,c)$ be a pair as in the statement and let $u=1-\beta c$. Then $u|_{\mathrm{pt}}=1-\beta\cdot 0=1$ and
	\[
	\mathord\otimes^*(u)=1-\beta\cdot \mathord\otimes^*(c)=1-\beta(x_1+x_2-\beta x_1x_2)=(1-\beta x_1)(1-\beta x_2)=u_1u_2,
	\]
	so that $u$ is a $\G_m$-preorientation. Moreover, $1-u|_{\P^1}=\beta c|_{\P^1}=(0,\beta)$ and $\beta$ is a unit.
\end{proof}

\begin{example}
	Let $\KGL\in\CAlg(\MotSp_S)$ be the motivic spectrum representing algebraic K-theory.
	The class
	\[
	u=[\scr O(-1)]\in \lim_n \K_0(\P^n) = \widehat{\KGL}{}^0(\Pic)
	\]
	is a $\G_m$-preorientation of $\KGL^0(\ph)\colon \MotSp_S^{\omega,\op}\to \Ab$ (as one sees using the Segre embeddings). The induced element $\beta=1-u|_{\P^1}\in \KGL^{-1}(\1)$ is the usual Bott element, given by the structure sheaf of the point $\infty\in\P^1$. Since $\beta$ is a unit, $\KGL^0(\ph)$ is $\G_m$-oriented.
\end{example}

\begin{proposition}[Universality of $\MGL$-cohomology]
	\label{prop:MGL-cohomology}
	Let $S$ be qcqs derived scheme. Then the ring cohomology theory
	\[
	\MGL^0(\ph)\colon \MotSp_S^{\omega,\op}\to \Ab
	\]
	is the initial object in the category of oriented cohomology theories on $\MotSp_S^\omega$.
\end{proposition}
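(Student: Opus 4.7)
The plan is to adapt the proof of Theorem~\ref{thm:universality} from representable oriented ring spectra to (not necessarily representable) oriented cohomology theories on $\MotSp_S^\omega$. The key observation is that the whole formalism of Chern classes, projective bundle formula, Thom classes, Thom isomorphisms, and Whitney sum (Lemma~\ref{lem:Whitney}, Theorem~\ref{thm:EVect}, Proposition~\ref{prop:MGL}) goes through verbatim when $E^{**}$ is replaced by its pro-extension $\widehat E^{**}$, since the proofs in Sections~5--6 use only the projective bundle formula and the identification $\Pic\simeq\P^\infty$ in $\MotSp_S$ from Theorem~\ref{thm:Gr-Vect}, both of which are valid for $\widehat E^{**}$.

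First, I would establish the $\widehat E$-analogue of Proposition~\ref{prop:MGL}: compatible Thom isomorphisms $\widehat E^{**}(\Vect_n) \simeq \widehat E^{**}(\MGL(n))$ assembled in a tower whose transition maps are surjective (surjectivity coming from the $\widehat E$-analogue of Theorem~\ref{thm:EVect}). Passing to $\lim_n$ and applying the Milnor exact sequence, whose $\lim{}^1$-term vanishes by surjectivity, one obtains $\lim_n \lim \widehat E^{**}(\Vect_n) \simeq \lim_n \lim \widehat E^{**}(\MGL(n))$. Under this isomorphism, the unit $1 \in E^0(S)$ (viewed as the constant term of $\lim_n E^0(\Vect_n)$) corresponds to a compatible family $\tau \in \lim_n \lim \widehat E^0(\MGL(n))$. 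Because $\MGL = \colim_n \MGL(n)$ in $\MotSp_S$, maps from a compact object to $\MGL$ factor through some $\MGL(n)$, and hence by Remark~\ref{rmk:E-hat} the family $\tau$ defines a natural transformation $t \colon \MGL^0(\ph) \to E^0(\ph)$ of cohomology theories on $\MotSp_S^\omega$.

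Next I would verify that $t$ preserves orientations: the restriction of $1 \in \widehat E^0(\Vect_\infty)$ along $\Pic = \Vect_1 \hookrightarrow \Vect_\infty$ is the unit of $\widehat E^0(\Pic)$, and the Thom isomorphism identifies this with the Thom class of the universal invertible sheaf in $\widehat E^2(\Th_\Pic(\scr L))$; by Proposition~\ref{prop:orientation}, this recovers the orientation $c$ of $E^0$. Multiplicativity of $t$ is then checked on each pairing $\MGL(n)\otimes\MGL(m) \to \MGL(n+m)$ using the Whitney sum formula (Lemma~\ref{lem:Whitney}), exactly as in the proof of Theorem~\ref{thm:universality}. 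Uniqueness follows by the same argument as in \emph{loc.\ cit.}: any orientation-preserving morphism $t'$ agrees with $t$ on $\MGL(1)$ since both are determined by the common orientation via the Thom class of the universal invertible sheaf, and then on each $\MGL(n)$ by the injectivity of the map $\widehat E^0(\MGL(n)) \to \widehat E^0(\MGL(1)^{\otimes n})$ induced by the splitting principle, which is a consequence of the $\widehat E$-analogue of Theorem~\ref{thm:EVect}.

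The main technical obstacle is correctly tracking the pro-group structure on $\widehat E^{**}$ throughout, in particular when passing to $\lim_n$ and when translating elements of $\lim \widehat E^{0}$ into natural transformations via Remark~\ref{rmk:E-hat}. Fortunately, the surjectivity of all transition maps (guaranteed by the power series formulas coming from the projective bundle formula) kills every $\lim{}^1$ obstruction, so the analysis reduces to a clean limit of statements already proved in the representable case.
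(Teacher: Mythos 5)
Your proposal is correct and follows essentially the same route as the paper: the paper's proof is precisely "run the argument of Theorem~\ref{thm:universality} with $E^{**}$ replaced by $\widehat E^{**}$", noting that the computation $\widehat E^{**}(X\times\Vect_n)\simeq\widehat E^{**}(X)[[c_1,\dotsc,c_n]]$ follows from Theorem~\ref{thm:Gr-Vect} and the projective bundle formula, that multiplicativity of Thom classes carries over as in Lemma~\ref{lem:Whitney}, and that the infinite Thom isomorphism $\widehat E^{**}(\Vect_\infty)\simeq\widehat E^{**}(\MGL)$ then yields the unique morphism from the class $1$, exactly as you describe. Your extra care with the pro-group bookkeeping via Remark~\ref{rmk:E-hat} is a correct elaboration of what the paper leaves implicit (only note that in the paper's indexing the Thom class of the universal invertible sheaf lives in $\widehat E^1$, not $\widehat E^2$).
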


\begin{proof}
The proof is a straightforward modification of the one of Theorem~\ref{thm:universality}.
The point is that, if $E^0$ is an oriented cohomology theory on $\MotSp_S^\omega$, then $\widehat{E}^{**}$ has the correct formula for $\Vect_n$:
\[
	\widehat{E}^{**}(X\times \Vect_n) \simeq \widehat{E}^{**}(X)[[c_1,\dotsc,c_n]].
\]
This follows from Theorem~\ref{thm:Gr-Vect} and the computation of the cohomology of Grassmannians using the projective bundle formula (see \cite[Lemma 4.5]{AnnalaIwasa}).
Then the multiplicativity of Thom classes in $\widehat{E}^{**}$ follows as in Lemma~\ref{lem:Whitney}, and we get the infinite Thom isomorphism
\[
	\widehat{E}^{**}(\Vect_\infty) \simeq \widehat{E}^{**}(\MGL)
\]
as in Proposition~\ref{prop:MGL}.
The cohomology class $1\in\widehat{E}^0(\Vect_\infty)\simeq\widehat{E}^0(\MGL)$ then gives the desired unique morphism $\MGL^0\to E^0$ of oriented cohomology theories as in Theorem~\ref{thm:universality}.
\end{proof}

\begin{corollary}[Universality of $\PMGL$-cohomology]\label{cor:PMGL-cohomology}
	Let $S$ be qcqs derived scheme. Then the ring cohomology theory
	\[
	\PMGL^0(\ph)\colon \MotSp_S^{\omega,\op}\to \Ab
	\]
	is the initial object in the category of periodic oriented cohomology theories on $\MotSp_S^\omega$.
\end{corollary}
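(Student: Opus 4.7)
The plan is to adapt the proof of Proposition \ref{prop:MGL-cohomology} using the description $\PMGL \simeq \colim_n \SigmaP^{-n}\Th_\Vect(\scr E)$ from the remark after Proposition \ref{prop:MGL-colim}, where $\scr E$ is the universal locally free sheaf on $\Vect = \coprod_m \Vect_m$. Let $(E^0, c_E, \beta_E)$ be a periodic oriented cohomology theory on $\MotSp_S^\omega$. Since $E^0$ is in particular oriented, Proposition \ref{prop:MGL-cohomology} yields a unique oriented morphism $\psi\colon \MGL^0 \to E^0$; the task is to upgrade $\psi$ uniquely to a morphism $\tilde\psi\colon \PMGL^0 \to E^0$ of periodic oriented cohomology theories.

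For the construction, I would first compute $\widehat E^{**}(\Th_\Vect(\scr E))$ via the orientation: on each component $\Vect_m$, the Thom isomorphism reads $\widehat E^{**}(\Th_{\Vect_m}(\scr E_m)) \simeq \widehat E^{*-2m,*-m}(\Vect_m)$, and multiplication by $\beta_E^m$ (which is a unit) identifies this with $\widehat E^{**}(\Vect_m)$. Summing over $m$ and feeding in the Thom isomorphisms from Proposition \ref{prop:MGL} gives
\[
\widehat E^{**}(\Th_\Vect(\scr E)) \simeq \prod_m \widehat E^{**}(\Vect_m) \simeq \widehat E^{**}(\Vect).
\]
Under this identification, the transition map $\SigmaP \Th_\Vect(\scr E) \to \Th_\Vect(\scr E)$ induced by $\scr F \mapsto \scr F \oplus \scr O$ corresponds, up to the appropriate $\beta_E$-twist, to the rank-shifting maps $\widehat E^{**}(\Vect_{m+1}) \to \widehat E^{**}(\Vect_m)$ of Proposition \ref{prop:MGL}. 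Passing to the limit via the Milnor exact sequence yields $\widehat E^{**}(\PMGL) \simeq \widehat E^{**}(\Vect)$, and the class $1 \in \widehat E^0(\Vect)$ then gives the desired morphism $\tilde\psi\colon \PMGL^0 \to E^0$ of cohomology theories.

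Ring multiplicativity is checked exactly as in the proof of Theorem \ref{thm:universality}: on each $\MGL(n)$-piece it reduces to Lemma \ref{lem:Whitney}, and the $\beta_E$-identification is natural, so multiplicativity extends to all of $\PMGL$. By construction $\tilde\psi$ preserves the orientation and sends the canonical Bott element of $\PMGL$ to $\beta_E$. For uniqueness, any morphism $\PMGL^0 \to E^0$ of periodic oriented theories restricts along $\MGL \to \PMGL$ to an oriented morphism $\MGL^0 \to E^0$, which by Proposition \ref{prop:MGL-cohomology} must equal $\psi$; its values on the non-rank-zero components of $\Vect$ are then forced by $\beta_E$-invertibility, matching $\tilde\psi$.

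The main obstacle is the interplay between the Thom isomorphism across the different rank components of $\K$ and the Bott element, together with the verification that the sequential colimit defining $\PMGL$ produces a limit on $\widehat E^{**}$ that matches $\widehat E^{**}(\Vect)$; once this identification is in place, everything else is a direct transcription of the arguments already present in Proposition \ref{prop:MGL-cohomology} and Theorem \ref{thm:universality}.
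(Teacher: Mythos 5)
Your overall strategy --- rerunning the argument of Proposition~\ref{prop:MGL-cohomology} directly on the presentation $\PMGL\simeq\colim_n\SigmaP^{-n}\Th_{\Vect}(\scr E)$ --- can be made to work, but the key identification is wrong, and the error sits exactly at the point you yourself single out as the main obstacle. At a single stage you correctly get $\widehat E{}^{**}(\Th_\Vect(\scr E))\simeq\prod_{m\geq 0}\widehat E{}^{**}(\Th_{\Vect_m}(\scr E_m))\simeq\prod_{m\geq 0}\widehat E{}^{**}(\Vect_m)$ (up to twists), but the transition maps of the tower are not isomorphisms: under your identification they send a family $(a_m)_{m\geq 0}$ to $(\operatorname{res}(a_{m+1}))_{m\geq 0}$, i.e.\ they \emph{shift the rank index} while restricting along $\Vect_m\to\Vect_{m+1}$. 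The inverse limit of such a tower is indexed by the \emph{virtual} rank $d=m-n\in\Z$ and equals $\prod_{d\in\Z}\lim_m\widehat E{}^{**}(\Vect_m)=\prod_{d\in\Z}\widehat E{}^{*-2d,*-d}(\Vect_\infty)$, not $\prod_{m\geq 0}\widehat E{}^{**}(\Vect_m)$. Equivalently, the splitting $\K\simeq\K_{\rk=0}\times\underline{\Z}$ gives $\PMGL\simeq\bigoplus_{d\in\Z}\SigmaP^{d}\MGL$, whence $\widehat E{}^{0}(\PMGL)\simeq\prod_{d\in\Z}\widehat E{}^{-2d,-d}(\Vect_\infty)$. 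Your claimed isomorphism $\widehat E{}^{**}(\PMGL)\simeq\widehat E{}^{**}(\Vect)$ is therefore false (compare $\prod_{m\geq 0}\widehat E{}^{**}[[c_1,\dotsc,c_m]]$ with $\prod_{d\in\Z}\widehat E{}^{**}[[c_1,c_2,\dotsc]]$), and ``the class $1\in\widehat E{}^0(\Vect)$'' does not single out an element of $\widehat E{}^0(\PMGL)$: the restriction $\widehat E{}^0(\PMGL)\to\widehat E{}^0(\Th_\Vect(\scr E))$ is surjective but far from injective. The repair is straightforward: take the element of $\prod_{d\in\Z}\widehat E{}^{-2d,-d}(\Vect_\infty)$ whose $d$-th component is $\beta_E^{d}$ times the unit; this is where periodicity of $E^0$ actually enters the construction (for $d<0$ one needs $\beta_E$ invertible). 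With this correction your multiplicativity and uniqueness arguments go through, provided ``non-rank-zero components of $\Vect$'' is replaced by ``the summands $\SigmaP^d\MGL$ with $d\neq 0$''.

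For comparison, the paper's proof is a one-line reduction: given the decomposition $\PMGL\simeq\bigoplus_{d\in\Z}\SigmaP^d\MGL$ (implicit in the proof of Proposition~\ref{prop:MGL-colim}), for any periodic theory $E^0$ multiplication by $\beta_E^d$ identifies $E^{-2d,-d}$ with $E^0$, so a morphism of periodic oriented cohomology theories $\PMGL^0\to E^0$ is the same datum as a morphism of oriented cohomology theories $\MGL^0\to E^0$, and Proposition~\ref{prop:MGL-cohomology} applies verbatim. Your corrected computation amounts to unwinding exactly this formal argument, so nothing is gained by redoing the Thom-isomorphism analysis at the level of $\Vect$.
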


\begin{proof}
	This follows immediately from Proposition~\ref{prop:MGL-cohomology}.
\end{proof}

\begin{lemma}\label{lem:MGL-comod}
	Let $\scr C$ be a symmetric monoidal cocomplete stable $\infty$-category whose tensor product preserves colimits in each variable and whose unit is compact.
	Let $E\in\CAlg(\h\scr C)$ be such that there is an isomorphism of $E$-modules
	\[
	E\otimes E\simeq \bigoplus_{\alpha}E\otimes L_\alpha
	\]
	with $L_\alpha\in \Pic(\scr C)$. Then the Amitsur complex of $E$ defines a $\Pic(\scr C)$-graded Hopf algebroid $(E_{\star}, E_{\star}E)$ such that the functors
	\begin{align*}
		E_{\star}(\ph)&\colon \scr C\to \Mod_{E_{\star}},\\
		E^{\star}(\ph)&\colon \scr C^{\omega,\op}\to\Mod_{E_{\star}}
	\end{align*}
	factor through the category of $(E_{\star}, E_{\star}E)$-comodules.
\end{lemma}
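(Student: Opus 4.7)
The plan is to extract the Hopf algebroid from the Amitsur cosimplicial object $E^{\otimes \bullet+1}$ in $\h\scr C$, once a Künneth-type isomorphism has been established from the flatness hypothesis.

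First I would iterate the given decomposition $E\otimes E\simeq\bigoplus_\alpha E\otimes L_\alpha$ (using that tensoring with any object of $\Pic(\scr C)$ preserves direct sums) to obtain, for each $n\geq 1$, an isomorphism of $E$-modules $E^{\otimes n}\simeq\bigoplus_\beta E\otimes M_\beta$ with $M_\beta\in\Pic(\scr C)$. Since the symmetry of the tensor product swaps the two $E$-module structures on $E\otimes E$, such a decomposition holds on either side, so $E_\star E$ is a free $\Pic(\scr C)$-graded module over $E_\star$ from both sides. The key consequence is the Künneth isomorphism: for every $X\in\scr C$ and $n\geq 1$, the natural map
\[
E_\star E\otimes_{E_\star}\dotsb\otimes_{E_\star}E_\star E\otimes_{E_\star}E_\star X\;\longrightarrow\;\pi_\star(E^{\otimes n}\otimes X)
\]
is an isomorphism, as both sides split as the same direct sum $\bigoplus_\beta E_\star(M_\beta\otimes X)$. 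Using compactness of $X$ (which allows $\Map(X,\ph)$ to commute with the direct sum $\bigoplus_\beta E\otimes M_\beta$), the analogous statement $\pi_{-\star}\Map(X, E^{\otimes n})\simeq (E_\star E)^{\otimes_{E_\star}(n-1)}\otimes_{E_\star}E^\star X$ holds in the contravariant setting.

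Next I would apply $\pi_\star$ to the Amitsur cosimplicial object associated with the unit $\1\to E$ in $\h\scr C$, and read off the structure maps of the Hopf algebroid $(E_\star, E_\star E)$ using the Künneth: the two coface maps $E\to E\otimes E$ give the left and right units $\eta_L,\eta_R$; the multiplication $E\otimes E\to E$ gives the counit $\epsilon$; inserting the unit in the middle of $E\otimes E\otimes E$ combined with the Künneth identification of the target with $E_\star E\otimes_{E_\star} E_\star E$ gives the comultiplication $\Delta$; and the symmetry automorphism of the commutative algebra $E\otimes E$ in $\h\scr C$ gives the antipode $\chi$. The Hopf algebroid axioms are then exactly the standard cosimplicial identities. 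For the comodule statement, the coaction $\psi\colon E_\star X\to E_\star E\otimes_{E_\star} E_\star X$ is defined by applying $\pi_\star$ to the map $E\otimes X\xrightarrow{\eta\otimes\id\otimes\id}E\otimes E\otimes X$ and identifying the target via the Künneth; coassociativity and counitality follow from the cosimplicial identities of $E^{\otimes\bullet+1}\otimes X$. For $X\in\scr C^\omega$, the same recipe with $\Map(X,\ph)$ in place of $(\ph)\otimes X$ yields the comodule structure on $E^\star X$.

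The main technical point that requires care is ensuring compatibility of the Künneth isomorphism with all of the cosimplicial structure maps, in particular with the symmetry entering the antipode and with the iterated tensor products entering $\Delta$. This is not so much a deep obstacle as a bookkeeping task: the decomposition $E^{\otimes n}\simeq\bigoplus_\beta E\otimes M_\beta$ lives already in $\scr C$ at the level of $E$-modules and is preserved by tensoring with further copies of $E$, so the Künneth isomorphism is nothing more than distributivity of $\otimes$ over $\bigoplus$; no derived $\mathrm{Tor}$ correction or spectral sequence convergence issue arises, and the fact that $\1$ is compact allows us to transport the homotopy group computation through the direct sums in the covariant case, while compactness of $X$ does the same in the contravariant case.
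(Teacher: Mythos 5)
Your proposal is correct and follows essentially the same route as the paper: both derive the Künneth/base-change isomorphism $(E^{\otimes n})_\star\otimes_{E_\star}E_\star X\simeq (E^{\otimes n})_\star X$ (and its contravariant analogue for compact $X$) from the splitting of $E\otimes E$ into shifted copies of $E$ by invertible objects, and then read off the Hopf algebroid structure maps and the comodule coactions from the Amitsur cosimplicial object via the left unit $\eta_L$. The extra details you supply (iterating the decomposition, the antipode from the symmetry, the role of compactness of the unit) are all consistent with, and implicit in, the paper's argument.
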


\begin{proof}
	The assumption on $E$ implies that, for any $X\in\scr C$ and $n\geq 1$, the canonical map
	\[
	(E^{\otimes n})_{\star}\otimes_{E_{\star}} E_{\star}X \to (E^{\otimes n})_{\star}X
	\]
	is an isomorphism, and that when $X$ is compact, the canonical map
	\[
	(E^{\otimes n})_{\star}\otimes_{E_{\star}} E^{\star}X \to (E^{\otimes n})^{\star}X
	\]
	is an isomorphism. Let $\eta_L\colon E\simeq E\otimes\1\to E\otimes E$ be the left unit.
	Taking $X=E$ itself and $n\leq 3$ yields the Hopf algebroid $(E_{\star},E_{\star}E)$ with comultiplication
	\[
	E_{\star}E \xrightarrow{\eta_L} (E\otimes E)_{\star}E 
	\xleftarrow\sim E_{\star}E\otimes_{E_{\star}} E_{\star}E.
	\]
	The coaction on $E_{\star}X$ is then given by the composite
	\[
	E_{\star}X \xrightarrow{\eta_L} (E\otimes E)_{\star}X
	\xleftarrow\sim E_{\star}E\otimes_{E_{\star}} E_{\star}X,
	\]
	and the coaction on $E^{\star}X$ for $X$ compact is given by the composite
	\[
	E^{\star}X \xrightarrow{\eta_L} (E\otimes E)^{\star}X
	\xleftarrow\sim E_{\star}E\otimes_{E_{\star}} E^{\star}X.\qedhere
	\]
\end{proof}

By Corollary~\ref{cor:homology_MGL}, we have an isomorphism of $\MGL$-modules
\[
\MGL\otimes \MGL\simeq \MGL[b_1,b_2,\dotsc]=\bigoplus_{m}\Sigma_{\P^1}^{\deg (m)}\MGL,
\]
where $m$ ranges over the monomials in the variables $b_i$ and $\deg(b_i)=i$.
Lemma~\ref{lem:MGL-comod} therefore applies to $\MGL\in \CAlg(\h\MotSp_S)$ and yields a $\Z$-graded Hopf algebroid $(\MGL_*,\MGL_*\MGL)$.\footnote{This Hopf algebroid is a priori only $\tau_{\leq 1}\mathbb S$-graded, but it turns out to be $\Z$-graded as the swap map on $\P^1\otimes\P^1$ induces the identity on $\MGL_2(\ph)$, by the naturality of the Thom isomorphism.}
Note that if $c$ and $c'$ are two orientations of $E\in\CAlg(\h\MotSp_S)$, then there is a unique power series $f(t)\in t+t^2E_*[[t]]$ such that $f(c)=c'$ in $E^1(\Pic)$, which defines a strict isomorphism between the associated formal group laws over $E_*$.
The graded formal group law $F$ over $\MGL_*$ and the strict isomorphism between the two formal group laws $\eta_L^*(F)$ and $\eta_R^*(F)$ over $\MGL_*\MGL$ then induce a morphism of graded Hopf algebroids
\[
(\L,\LB)\to (\MGL_*,\MGL_*\MGL),
\]
where $(\L,\LB)$ is the Hopf algebroid classifying the strict groupoid of formal group laws and strict isomorphisms.
Recall that $\LB$ is a polynomial ring $\L[b_0,b_1,b_2,\dotsc]/(b_0-1)$, over which the power series $\sum_{i\geq 0}b_i x^{i+1}$ is the universal strict isomorphism \cite[Proposition A2.1.15]{Ravenel:2003}.
Proposition~\ref{prop:homology_MGL}\ref*{item:hom-MGL} implies that the above morphism is a cocartesian natural transformation of cosimplicial commutative rings, so that a structure of $(\MGL_*,\MGL_*\MGL)$-comodule on an $\MGL_*$-module is equivalent to a structure of $(\L,\LB)$-comodule on the underlying $\L$-module.

\begin{proposition}\label{prop:universal-mult-MGL}
	Let $S$ be qcqs derived scheme. Then the ring cohomology theory
	\[
	(\MGL^*(\ph)\otimes_\L\Z[\beta^{\pm 1}])_{0}\colon \MotSp_S^{\omega,\op}\to \Ab
	\]
	is the initial object in the category of $\G_m$-oriented cohomology theories on $\MotSp_S^\omega$.
\end{proposition}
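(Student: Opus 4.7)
The plan is to combine Proposition~\ref{prop:MGL-cohomology} with Lemma~\ref{lem:G_m-orientation}, the latter identifying $\G_m$-orientations with pairs (orientation, Bott element) whose associated formal group law is the multiplicative one $F(x_1,x_2) = x_1+x_2-\beta x_1x_2$. Write $F^0$ for the functor $(\MGL^*(\ph)\otimes_\L \Z[\beta^{\pm 1}])_0$. The main obstacle will be verifying that $F^0$ is itself a cohomology theory; granting this, the universal property follows essentially formally.

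For the universal property, let $(E^0,u)$ be a $\G_m$-oriented ring cohomology theory with associated Bott element $\beta_E \in E^{-1}(\1)$ (a unit) and orientation $c_E$ satisfying the multiplicative identity from Lemma~\ref{lem:G_m-orientation}. By Proposition~\ref{prop:MGL-cohomology} there is a unique morphism of oriented ring cohomology theories $\MGL^0 \to E^0$. Under this morphism the formal group law of $\MGL$ maps to that of $E^0$, which is multiplicative with parameter $\beta_E$ by hypothesis; therefore the induced map $\L \to E^{2*}$ factors uniquely through the classifying map $\L \to \Z[\beta^{\pm 1}]$ sending $\beta \mapsto \beta_E$. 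Tensoring over $\L$ then produces a morphism $F^0 \to E^0$ of ring cohomology theories that preserves $\G_m$-orientations by construction. For uniqueness, any such morphism $\phi\colon F^0 \to E^0$ restricts along the canonical map $\MGL^0 \to F^0$ to an oriented morphism $\MGL^0 \to E^0$, which must coincide with the one supplied by Proposition~\ref{prop:MGL-cohomology}; since $\phi$ additionally preserves the Bott element and each $F^0(X)$ is generated over $\MGL^0(X)$ by $\beta^{\pm 1}$, this determines $\phi$ uniquely.

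The hard part is verifying that $F^0$ really is a cohomology theory, i.e.\ that for every cofiber sequence in $\MotSp_S^\omega$ the induced long exact sequence of abelian groups remains exact after tensoring $\MGL^{**}(\ph)$ over $\L$ with $\Z[\beta^{\pm 1}]$. My strategy is to invoke Landweber exactness of the multiplicative formal group law: using the $(\L,\LB)$-comodule structure on $\MGL^{**}(\ph)$ supplied by Lemma~\ref{lem:MGL-comod} together with Proposition~\ref{prop:homology_MGL}, it is enough to verify that $\Z[\beta^{\pm 1}]$ is a Landweber exact $\L$-module for the multiplicative classifying map. This reduces to the classical prime-by-prime regularity check: at each prime $p$, the sequence $(v_0=p,v_1)$ is regular on $\Z[\beta^{\pm 1}]$ (with $v_1$ a unit multiple of $\beta^{p-1}$, already a unit after inverting $\beta$), while all higher $v_n$ act as units. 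This argument is purely algebraic and transfers verbatim from the classical situation underlying complex $K$-theory. The $\G_m$-orientation of $F^0$ is inherited from the universal orientation of $\MGL$ together with the image of $\beta$, and satisfies the multiplicative identity tautologically by construction of the tensor product.
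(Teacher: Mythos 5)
Your proposal is correct and follows essentially the same route as the paper: exactness of the tensored functor via the $(\L,\LB)$-comodule structure from Lemma~\ref{lem:MGL-comod} and Proposition~\ref{prop:homology_MGL} plus Landweber exactness of $\Z[\beta^{\pm 1}]$ (the paper simply cites this rather than spelling out the prime-by-prime regularity check), and the universal property via Proposition~\ref{prop:MGL-cohomology} combined with Lemma~\ref{lem:G_m-orientation}. The extra detail you supply for both halves is consistent with what the paper leaves implicit.
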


\begin{proof}
	By Lemma~\ref{lem:MGL-comod}, the functor $\MGL^*(-)$ on compact spectra is valued in $(\L,\LB)$-comodules. Since $\Z[\beta^{\pm 1}]$ is a flat $(\L,\LB)$-comodule by Landweber's criterion \cite[Lecture 15, Example 12]{Lurie:2010}, the given functor is indeed a homological functor. It then follows from Lemma~\ref{lem:G_m-orientation} that it has the stated universal property.
\end{proof}

\begin{lemma}\label{lem:universal-pre-G_m}
	Let $S$ be qcqs derived scheme. Then the ring cohomology theory
	\[
	(\Sigma^\infty_{\P^1}\Pic_+)^0(\ph)\colon \MotSp_S^{\omega,\op}\to \Ab
	\]
	is the initial object in the category of $\G_m$-preoriented cohomology theories on $\MotSp_S^\omega$.
\end{lemma}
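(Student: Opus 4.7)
The plan is to recognize $P := \Sigma^\infty_{\P^1}\Pic_+$ as the universal $\G_m$-preoriented ring cohomology theory. Since $\Pic=\rm B\G_m$ is an $\E_\infty$-group in $\scr P(\Sm_S)$ under tensor product of invertible sheaves and the functor $\Sigma^\infty_{\P^1}(\ph)_+$ is symmetric monoidal, $P$ carries a canonical $\E_\infty$-algebra structure in $\MotSp_S$, giving $P^0$ the structure of a ring cohomology theory on $\MotSp_S^\omega$. The identity endomorphism of $P^0$ corresponds, via Remark~\ref{rmk:E-hat}, to a canonical class $u_0 \in \widehat{P}^0(\Pic)$, which I take as the candidate universal $\G_m$-preorientation. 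The axiom $u_0|_{\mathrm{pt}}=1$ holds because the base point $\mathrm{pt}\to \Pic$ classifies the unit of the $\E_\infty$-structure on $\Pic$, while $\mathord\otimes^*(u_0)=(u_0)_1(u_0)_2$ holds because by construction the multiplication on $P$ is induced by $\mathord\otimes\colon\Pic\times\Pic\to\Pic$.

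Conversely, given a $\G_m$-preoriented theory $(E^0,u)$, I would use Remark~\ref{rmk:E-hat} in the other direction: the class $u\in\widehat{E}^0(\Pic)$ corresponds to a unique natural transformation $\phi_u\colon P^0\to E^0$ with $\phi_u(u_0)=u$. The heart of the argument is to verify that $\phi_u$ respects the Day convolution structure. Unit preservation reduces immediately to $u|_{\mathrm{pt}}=1$. For multiplicativity, the point is to unwind the external product on $P^0$: for compact $X,Y\in\MotSp_S^\omega$ and classes $a\in P^0(X)$, $b\in P^0(Y)$, the external product $a\cdot b\in P^0(X\otimes Y)$ is, by construction of the $\E_\infty$-structure on $P$, the pullback of $\mathord\otimes^*(u_0)\in\widehat P^0(\Pic\times\Pic)$ along the map $X\otimes Y\to\Pic\times\Pic$ determined by $a$ and $b$. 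Applying $\phi_u$ and invoking the preorientation identity $\mathord\otimes^*(u)=u_1u_2$ then yields $\phi_u(a)\cdot\phi_u(b)$, as required.

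Uniqueness is then automatic: any morphism of ring cohomology theories $\phi\colon P^0\to E^0$ preserving preorientations sends $u_0$ to $u$, and by the bijection of Remark~\ref{rmk:E-hat} such a natural transformation is uniquely determined.

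The hard part will be the multiplicativity check in the second paragraph. Since $P=\colim_n\SigmaP^\infty\P^n_+$ is not compact, one has to work with the pro-extension $\widehat{(\ph)}$ and commute the Day convolution product on $P^0$ past this colimit and past the pro-extension used to define $\mathord\otimes^*$ on pro-cohomology. Concretely, this amounts to verifying that the external product on $P^0(X)\otimes P^0(Y)\to P^0(X\otimes Y)$ is represented by $\mathord\otimes^*(u_0)$ at the level of pro-systems, after factoring classes through finite stages $\SigmaP^\infty\P^n_+$. This is formal but needs careful bookkeeping because the Day convolution is defined on $\MotSp_S^\omega$ while the preorientation lives in the pro-extended theory on $\Pic=\colim_n\P^n$.
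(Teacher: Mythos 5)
Your argument is correct and is exactly the paper's (the paper's entire proof is ``This is clear by Remark~\ref{rmk:E-hat}''): the tautological class in $\widehat{P}{}^0(\Pic)$ is the universal $\G_m$-preorientation, and the correspondence between preorientations of $E^0$ and ring maps $P^0\to E^0$ is the Yoneda-type bijection of Remark~\ref{rmk:E-hat}, with unitality and multiplicativity translating to $u|_{\mathrm{pt}}=1$ and $\mathord\otimes^*(u)=u_1u_2$. The pro-system bookkeeping you flag at the end is the content the authors regard as formal, and your outline of it is sound.
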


\begin{proof}
	This is clear by Remark~\ref{rmk:E-hat}.
\end{proof}

\begin{proposition}[Universality of $\KGL$-cohomology]
	\label{prop:universal-mult-K}
	Let $S$ be qcqs derived scheme. Then the ring cohomology theory
	\[
	\KGL^0(\ph)\colon \MotSp_S^{\omega,\op}\to \Ab
	\]
	is the initial object in the category of $\G_m$-oriented cohomology theories on $\MotSp_S^\omega$.
\end{proposition}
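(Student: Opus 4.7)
The strategy is to reduce to the universal $\G_m$-preoriented cohomology theory supplied by Lemma~\ref{lem:universal-pre-G_m} and then identify $\KGL^0$ with its $\beta$-localization by means of a Snaith-style identification of $\KGL$. First, $\KGL$ carries a canonical $\G_m$-preorientation $u = [\scr O(-1)] \in \widehat{\KGL}^0(\Pic) = \lim_n \K_0(\P^n)$, whose associated Bott element $\beta \in \KGL^{-1}(\1)$ is the standard Bott element of algebraic K-theory and hence a unit. Thus $\KGL^0$ is $\G_m$-oriented, and Lemma~\ref{lem:universal-pre-G_m} produces a unique map of ring cohomology theories $\psi\colon (\Sigma^\infty_{\P^1}\Pic_+)^0 \to \KGL^0$ carrying the universal preorientation to $u$.

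Next, I would invoke the Snaith theorem for algebraic K-theory established in \cite{AnnalaIwasa2}, providing an isomorphism
\[
\Sigma^\infty_{\P^1}\Pic_+[\beta^{-1}] \simto \KGL
\]
in $\CAlg(\h\MotSp_S)$, using $\Pic \simeq \P^\infty$ (a special case of Theorem~\ref{thm:Gr-Vect}) to translate formulations if needed. Because every $X\in\MotSp_S^\omega$ is compact, $\Map(X,\ph)$ commutes with the sequential colimit computing $[\beta^{-1}]$, and hence $\psi$ exhibits $\KGL^0$ as the universal ring cohomology theory on $\MotSp_S^\omega$ receiving a map from $(\Sigma^\infty_{\P^1}\Pic_+)^0$ that sends $\beta$ to a unit.

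Finally, for any $\G_m$-oriented cohomology theory $(E^0, u_E)$, Lemma~\ref{lem:universal-pre-G_m} provides a unique ring cohomology map $\phi\colon (\Sigma^\infty_{\P^1}\Pic_+)^0 \to E^0$ sending the universal preorientation to $u_E$; the assumption that $E^0$ is $\G_m$-oriented is exactly the statement that $\phi(\beta)\in E^{-1}(\1)$ is a unit, so $\phi$ factors uniquely through $\psi$, producing the desired unique map $\KGL^0 \to E^0$ of $\G_m$-oriented theories. The main obstacle is the Snaith identification used in the second step: if the formulation of the Snaith theorem from \cite{AnnalaIwasa2} is phrased with $\Vect_\infty$ or $\Gr_\infty$ rather than $\Pic$, one has to check using Theorem~\ref{thm:Gr-Vect} that the two models are compared in a manner compatible with the canonical preorientation on $\KGL$. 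The remainder of the argument is purely formal, relying only on Lemmas~\ref{lem:universal-pre-G_m} and~\ref{lem:G_m-orientation}.
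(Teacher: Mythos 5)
Your proposal is correct and follows essentially the same route as the paper: the paper's proof also combines the Snaith theorem $\KGL\simeq\SigmaP^\infty\Pic_+[\beta^{-1}]$ from \cite[Theorem 5.3.3]{AnnalaIwasa2} with the universality of $(\SigmaP^\infty\Pic_+)^0$ as a $\G_m$-preoriented theory (Lemma~\ref{lem:universal-pre-G_m}). You merely spell out more explicitly the (correct) localization step — that on compact objects inverting $\beta$ on the spectrum computes the $\beta$-localization of the cohomology theory, which is then the initial $\G_m$-oriented theory — and your worry about the formulation of the Snaith theorem is moot, as the cited result is stated with $\Pic$.
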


\begin{proof}
	By \cite[Theorem 5.3.3]{AnnalaIwasa2}, there is an isomorphism of motivic $\E_\infty$-ring spectra \[\KGL\simeq \Sigma^\infty_{\P^1}\Pic_+[\beta^{-1}],\]
	where $\beta=1-[\scr O(-1)]$.
	This is the Bott element associated with the $\G_m$-preorientation on $(\Sigma^\infty_{\P^1}\Pic_+)^0(\ph)$ given by the dual of the universal invertible sheaf, which is a universal $\G_m$-preorientation by Lemma~\ref{lem:universal-pre-G_m}.
	Hence, the cohomology theory defined by $\Sigma^\infty_{\P^1}\Pic_+[\beta^{-1}]$ has the desired universal property.
\end{proof}

\begin{theorem}[Algebraic Conner--Floyd isomorphism]\label{thm:Conner-Floyd}
	Let $X$ be a qcqs derived scheme. Then there is an isomorphism of bigraded rings
	\[
	\MGL^{**}(X)\otimes_{\L}\Z[\beta^{\pm 1}] \simeq \KGL^{**}(X).
	\]
\end{theorem}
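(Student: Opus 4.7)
The plan is to combine the two universality statements established immediately above, namely Proposition~\ref{prop:universal-mult-MGL} (identifying $(\MGL^*(\ph)\otimes_\L\Z[\beta^{\pm 1}])_0$ as the initial $\G_m$-oriented cohomology theory on $\MotSp_S^\omega$) and Proposition~\ref{prop:universal-mult-K} (which identifies $\KGL^0(\ph)$ as the same initial object). By uniqueness of initial objects, there is a canonical isomorphism of $\G_m$-oriented ring cohomology theories
\[
(\MGL^*(\ph)\otimes_\L\Z[\beta^{\pm 1}])_0 \simto \KGL^0(\ph)
\]
on $\MotSp_S^{\omega,\op}$, for any qcqs derived scheme $S$. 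This is the heart of the argument and consumes essentially all the input from the previous sections.

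Next I would upgrade this to an isomorphism of \emph{bigraded} rings on compact motivic spectra. Since both cohomology theories are represented by motivic spectra, the degree-$(p,q)$ component is obtained by precomposing the degree-$0$ functor with $\Sigma^{p-2q}\SigmaP^q$, and the uniqueness yields a corresponding natural isomorphism in every bidegree. The $\L$-module structure on $\MGL^{**}$ and the bidegree of $\beta$ are built into the construction of the tensor product $\MGL^{**}\otimes_\L\Z[\beta^{\pm 1}]$, so the isomorphism is automatically one of bigraded rings.

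Finally, to apply this to an arbitrary qcqs derived scheme $X$ I would set $S=X$ and evaluate at the unit $\1=\SigmaP^\infty X_+\in\MotSp_X$, which is compact since $X$ is qcqs. Both $\MGL$ and $\KGL\simeq \SigmaP^\infty\Pic_+[\beta^{-1}]$ are stable under arbitrary base change (the former was noted immediately after the definition, and the latter follows from the Snaith presentation together with the fact that $\Pic$ is defined schematically), so the cohomology groups $\MGL^{**}(X)$ and $\KGL^{**}(X)=\K^{**}(X)$ computed over $X$ agree with those computed over any other base. Plugging $X$ into the bigraded isomorphism gives the desired
\[
\MGL^{**}(X)\otimes_\L\Z[\beta^{\pm 1}]\simeq \K^{**}(X).
\]

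There is no genuine remaining obstacle: once Propositions~\ref{prop:universal-mult-MGL} and~\ref{prop:universal-mult-K} are in hand the Conner--Floyd isomorphism is formal. The only aspects requiring care are checking that the comparison preserves the bigrading (handled by the bidegree of $\beta$ and of the generators of $\L$ inside $\MGL_{**}$) and invoking base-change stability of $\MGL$ and $\KGL$ to reduce an arbitrary qcqs $X$ to the evaluation at $\1\in\MotSp_X^\omega$.
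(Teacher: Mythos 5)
Your proposal is correct and is exactly the paper's argument: the proof given there is literally ``Combine Propositions~\ref{prop:universal-mult-MGL} and~\ref{prop:universal-mult-K},'' and the details you supply (uniqueness of initial objects, extending to all bidegrees by precomposition with shifts, and evaluating at the compact unit $\SigmaP^\infty X_+\in\MotSp_X^\omega$) are precisely the implicit content of that one-line proof.
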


\begin{proof}
	Combine Propositions \ref{prop:universal-mult-MGL} and~\ref{prop:universal-mult-K}.
\end{proof}

\begin{theorem}[Rational $\PMGL$-cohomology]
	\label{thm:QPMGL}
Let $X$ be a qcqs derived scheme. Then there is an isomorphism of bigraded rings 
\[
\Q \otimes \PMGL^{**}(X) \simeq \L_\Q  \otimes \KGL^{**}(X),
\]
where $\L_\Q$ is shorthand for $\Q \otimes \L$.
\end{theorem}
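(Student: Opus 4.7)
The strategy is to identify both sides as the initial periodic oriented rational ring cohomology theory on $\MotSp_S^\omega$: the left-hand side is realized by $\PMGL^0\otimes\Q$ via the rational analogue of Corollary~\ref{cor:PMGL-cohomology}, while the right-hand side arises from a canonical reorientation of $\L_\Q\otimes\KGL^0$. The key observation is that over $\Q$ every formal group law is strictly isomorphic to the multiplicative one, so a rational periodic oriented cohomology theory is equivalent data to a rational $\G_m$-oriented cohomology theory together with a ring map $\L_\Q\to E^*$ classifying the formal group law.

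First I would equip $F^0(\ph):=(\L_\Q\otimes\KGL^0)(\ph)$ with a canonical periodic orientation. From $\KGL$ we inherit the Bott element $\beta\in F^{-1}$ and the $\G_m$-orientation $c_\mathrm{mult}$, which induces the multiplicative formal group law $F_\mathrm{mult}(x,y)=x+y-\beta xy$ over $F^*:=\L_\Q\otimes\KGL^*$ (Proposition~\ref{prop:universal-mult-K} and Lemma~\ref{lem:G_m-orientation}), while the $\L_\Q$-factor supplies a ring map $\L_\Q\to F^*$ classifying the universal formal group law $F_\mathrm{univ}$. Since $F^*$ is a $\Q$-algebra, both formal group laws admit logarithms in $F^*[[x]]$, and
\[
\phi(x)\;:=\;\log_{F_\mathrm{univ}}^{-1}\bigl(\log_{F_\mathrm{mult}}(x)\bigr)
\]
is a strict isomorphism of formal group laws $F_\mathrm{mult}\to F_\mathrm{univ}$. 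Setting $c:=\phi(c_\mathrm{mult})\in\widehat F^1(\Pic)$, the triple $(F^0,\beta,c)$ becomes a periodic oriented rational ring cohomology theory whose formal group law is $F_\mathrm{univ}$.

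To prove $(F^0,\beta,c)$ is initial among periodic oriented rational ring cohomology theories, I would take any $(E^0,\beta_E,c_E)$ with formal group law $F_E$ and construct a morphism $F^0\to E^0$ from two ingredients: (i) the $\L_\Q$-algebra map $\L_\Q\to E^*$ classifying $F_E$, and (ii) the morphism $\KGL^0\to E^0$ supplied by Proposition~\ref{prop:universal-mult-K} applied to the reoriented class $c'_E:=(1-\exp(-\beta_E\log_{F_E}(c_E)))/\beta_E\in\widehat E^1(\Pic)$; a direct calculation using $\log_{F_E}(F_E(x_1,x_2))=\log_{F_E}(x_1)+\log_{F_E}(x_2)$ shows that $(\beta_E,c'_E)$ is a $\G_m$-orientation of $E^0$ with Bott element $\beta_E$. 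Unwinding the definitions of $\phi$, $c$, and $c'_E$ verifies that the combined map carries $c$ to $c_E$, hence preserves periodic orientations, and uniqueness follows from the universal properties of $\L_\Q$ (as the rationalization of the Lazard ring classifying formal group laws) and of $\KGL^0$ (Proposition~\ref{prop:universal-mult-K}).

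Combining this initiality with the rational analogue of Corollary~\ref{cor:PMGL-cohomology}, which identifies $\PMGL^0(\ph)\otimes\Q$ as initial among periodic oriented rational ring cohomology theories on $\MotSp_S^\omega$, yields a canonical isomorphism $\PMGL^0(\ph)\otimes\Q\simeq F^0(\ph)$ of ring cohomology theories. Evaluating on $X$ and extending from compact objects to arbitrary qcqs derived schemes via the pro-group framework $\widehat E^{**}$ (as introduced before Proposition~\ref{prop:MGL-cohomology}) gives the stated isomorphism of bigraded rings. The main obstacle is the initiality verification: the reorientation formulas are elementary, but tracking compatibility of orientations and uniqueness of the map through the pro-abelian-group structure required for non-compact $X$ takes some care.
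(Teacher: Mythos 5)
Your proposal is correct and follows essentially the same route as the paper's proof: both rely on the fact that all formal group laws over a $\Q$-algebra are strictly isomorphic, use the resulting power series to reorient one (or both) of the theories, and then invoke the universal properties of $\PMGL^0$ (Corollary~\ref{cor:PMGL-cohomology}) and of $\KGL^0$ (Proposition~\ref{prop:universal-mult-K}, together with $\L_\Q$-linearity) to produce the isomorphism. The only cosmetic difference is packaging: you show both sides are initial in the category of rational periodic oriented cohomology theories, whereas the paper constructs the two reoriented maps $\Psi$ and $\Phi$ explicitly and checks that the composites are identities via the same two universal properties.
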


\begin{proof}
Both cohomology theories $\Q \otimes \PMGL^{*}(\ph)$ and $\L_\Q  \otimes \KGL^{*}(\ph)$ take values in graded $\L_\Q[u^{\pm 1}]$-algebras, where the degree $-1$ element $u$ acts as the canonical unit on $\Q \otimes \PMGL^{*}(\ph)$ and as $\beta$ on $\L_\Q  \otimes \KGL^{*}(\ph)$. Let $c_\Omega$ and $c_\K$ be the orientations of $\Q \otimes \PMGL^{0}(\ph)$ and $\KGL^{0}(\ph)$, respectively. By abuse of notation, we will also denote by $c_\K$ the orientation $1 \otimes c_\K$ of $\L_\Q  \otimes \KGL^{0}(\ph)$.

Since all formal group laws with coefficients in a ring containing the rationals are equivalent \cite[Theorem~1.6.2]{hazewinkel:1978}, there exists a power series $f(t) \in t + t^2 \L_\Q[u][[t]]$ such that the orientation $f(c_\Omega)$ of $\Q \otimes \PMGL^{0}(\ph)$ satisfies the formal group law $x + y - \beta x y$, and the orientation $f^{-1}(c_\K)$ of $\L_\Q \otimes \KGL^{0}(\ph)$ satisfies the universal formal group law of $\L_\Q$. Here, $f^{-1}(t)$ denotes the compositional inverse of $f(t)$. By Corollary~\ref{cor:PMGL-cohomology} and Proposition~\ref{prop:universal-mult-K} and extension of scalars, there exist unique morphisms of oriented cohomology theories
\[
\Psi\colon \left(\Q \otimes \PMGL^{0}, c_\Omega\right) \to  \left(\L_\Q \otimes \KGL^{0}, f^{-1}(c_\K) \right)
\]
and 
\[
\Phi\colon \left(\L_\Q \otimes \KGL^{0}, c_\K \right) \to \left(\Q \otimes \PMGL^{0}, f(c_\Omega)\right),
\]
the latter of which is required to be $\L_\Q$-linear. By the hypothesis on $f^{-1}(c_\K)$, $\Psi$ is also $\L_\Q$-linear. As $\Phi \circ \Psi$ is orientation-preserving, it is the identity by the universal property of $\PMGL^0(\ph)$. As $\Psi \circ \Phi$ is orientation-preserving and $\L_\Q$-linear, it is the identity by the universal property of $\KGL^0(\ph)$. 
\end{proof}

Recall that the \emph{universal precobordism ring} $\PCob^*(X)$ of a derived scheme $X$ is defined as the group completion of the monoid of equivalence classes $[V \to X]$ of projective quasi-smooth derived schemes over $X$, modulo the relations
\[
[W_0 \to X] = [A \to X] + [B \to X] - [\P_{A \cap B}(\scr O(A) \oplus \scr O)]
\]
for every quasi-smooth projective $W \to \P^1_X$ with fibers $W_0$ and $A+B$ over $0$ and $\infty$ respectively \cite{AY, annala-chern}. Here, $A+B$ denotes the sum of virtual effective Cartier divisors.

\begin{corollary}\label{cor:QMGLOmega}
Let $X$ be a noetherian derived scheme of finite Krull dimension, and assume that $X$ admits an ample line bundle\footnote{By employing a slightly more complicated construction of $\PCob^*$, it is possible to weaken the assumptions on $X$ to merely admitting an ample family of line bundles, see \cite{annala-base-ind-cob}.}. Then there is a natural isomorphism of rings
\[
\Q \otimes \bigoplus_{n\in\Z}\MGL^n(X) \simeq \Q \otimes \bigoplus_{n\in\Z}\PCob^n (X).
\]
\end{corollary}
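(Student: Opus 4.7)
The plan is to construct a natural comparison map
\[
\Phi_X\colon \bigoplus_n \PCob^n(X) \to \bigoplus_n \MGL^n(X),\quad [f\colon V\to X]\mapsto f_\star(1_V),
\]
noting that $\MGL^n(X)=\MGL^{2n,n}(X)$ under the convention in the introduction, and that a projective quasi-smooth $f$ of relative virtual codimension $n$ produces a Gysin pushforward landing precisely in this bidegree. Since $\MGL$ is oriented by Theorem~\ref{thm:universality}, the standard formalism of Gysin pushforwards along projective quasi-smooth morphisms is available: closed immersions are handled by Thom isomorphisms of conormal bundles, projective bundle projections $\P(\scr E)\to X$ by the projective bundle formula, and a general projective quasi-smooth morphism factors as a closed immersion into a projective bundle followed by the projection. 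I would then prove that $\Phi_X\otimes\Q$ is an isomorphism of $\Z$-graded rings.

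To verify that $\Phi_X$ is a well-defined ring homomorphism, the main content is that the double-point cobordism relation defining $\PCob^*$ is satisfied in $\MGL^{**}$: for any projective quasi-smooth $W\to\P^1_X$ with fibers $W_0$ and $A+B$ over $0$ and $\infty$, one needs
\[
f_{W_0,\star}(1) = f_{A,\star}(1) + f_{B,\star}(1) - f_{\P_{A\cap B}(\scr O(A)\oplus\scr O),\star}(1)
\]
in $\MGL^{**}(X)$. This is the standard deformation-to-the-normal-bundle computation available in any oriented cohomology theory: the projective bundle formula applied along $W\to\P^1_X$ together with the universal formal group law for first Chern classes (Theorem~\ref{thm:universality}) produces precisely the excess term $f_{\P_{A\cap B}(\scr O(A)\oplus\scr O),\star}(1)$. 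Multiplicativity of $\Phi_X$ follows from base-change compatibility of Gysin pushforwards.

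For the final rational comparison, I would invoke the Chern-character-type identification of $\Q\otimes\bigoplus_n\PCob^n(X)$ with $\L_\Q\otimes_\Z K_0(X)_\Q$ (graded along the diagonal) established in \cite{annala-chern, annala-pre-and-cob}, and extract a matching description of $\Q\otimes\bigoplus_n\MGL^{2n,n}(X)$ from Theorem~\ref{thm:Conner-Floyd}, which rewrites $\MGL$-cohomology rationally in terms of $\KGL$-cohomology with $\L_\Q$-coefficients. Commutativity of the resulting diagram is then forced by the fact that both the Chern character on $\PCob^*$ and the Conner--Floyd map send fundamental classes of invertible sheaves to the Chern classes dictated by the universal formal group law; hence by the universal property of $\MGL$ as the initial oriented ring (Theorem~\ref{thm:universality}), $\Phi_X\otimes\Q$ is identified with the composite of the two isomorphisms and is therefore itself an isomorphism. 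The main obstacle will be this last matching: one must carefully align the gradings on the Lazard ring on both sides and ensure that the Chern-character conventions used in \cite{annala-chern} are compatible with the orientation of $\MGL$ in our non-$\A^1$-invariant setting, so that fundamental classes correspond to fundamental classes and the comparison is one of graded rings rather than merely of $\L_\Q$-modules.
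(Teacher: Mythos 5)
Your route is genuinely different from the paper's, and as written it has real gaps. The paper's proof is a two-line formal deduction: it combines Theorem~\ref{thm:QPMGL}, which gives $\Q\otimes\PMGL^{**}(X)\simeq\L_\Q\otimes\KGL^{**}(X)$, with the rational computation of the universal precobordism ring from \cite[Theorem~236]{AnnalaThesis}, which identifies $\Q\otimes\bigoplus_n\PCob^n(X)$ with the same degree-zero piece of $\L_\Q\otimes\KGL^{**}(X)$; since the degree-zero part of $\Q\otimes\PMGL^{**}(X)$ is $\Q\otimes\bigoplus_n\MGL^n(X)$, one simply composes the two identifications. No comparison map $\PCob^*\to\MGL^{2*,*}$ is constructed, and the resulting isomorphism is only one of rings, not of graded rings; the remark immediately following the corollary states explicitly that periodization loses the grading and that building the graded comparison map you propose would require "constructing enough transfers for $\MGL^*$", which the paper deliberately does not do.

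This points to the first gap in your proposal: the "standard formalism of Gysin pushforwards" along projective quasi-smooth morphisms is not available off the shelf in $\MotSp_S$. Independence of the chosen factorization, base-change compatibility, and the double point relation all have to be proved, and the deformation-to-the-normal-cone argument you invoke is at bottom an $\A^1$-invariance argument that does not transfer verbatim to this setting; this is precisely the substantial body of work in \cite[Theorem~192]{AnnalaThesis} that the paper defers. The second gap is your rational identification of $\MGL$-cohomology: Theorem~\ref{thm:Conner-Floyd} computes the base change $\MGL^{**}(X)\otimes_\L\Z[\beta^{\pm 1}]$ and does \emph{not} by itself recover $\Q\otimes\MGL^{**}(X)$ in terms of $\KGL$ with $\L_\Q$-coefficients; that statement is Theorem~\ref{thm:QPMGL}, whose proof is a separate argument using that all formal group laws over a $\Q$-algebra are strictly isomorphic. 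Finally, Theorem~\ref{thm:universality} is a universal property for maps \emph{out of} $\MGL$, so it cannot identify your map $\Phi_X$, whose source is $\PCob^*$; you would need the universal property of precobordism instead. If you drop the comparison map entirely and simply match both sides to $\L_\Q\otimes\KGL^{**}(X)$ in degree zero --- which is the last third of your sketch --- you recover the paper's actual proof.
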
 
\begin{proof}
Combine Theorem~\ref{thm:QPMGL} with \cite[Theorem~236]{AnnalaThesis}.
\end{proof}

\begin{remark}
Periodization loses track of the grading, and therefore we do not immediately obtain an isomorphism of graded rings. However, by constructing enough transfers for $\MGL^*$, it would be possible to obtain a comparison map $\PCob^* \to \MGL^*$ of graded rings with integer coefficients \cite[Theorem~192]{AnnalaThesis}. It is an interesting question under which conditions this map, or rather its refinement $\Omega^* \to \MGL^*$, where $\Omega^*$ is the \emph{derived algebraic cobordism} \cite{annala-pre-and-cob}, is an isomorphism. The only known instance seems to be Levine's result \cite{Levine:2009}, which states that the natural map $\Omega^*(X) \to (\L_{\A^1} \MGL)^{*}(X)$ is an isomorphism for all schemes $X$ that are smooth and quasi-projective over a field of characteristic 0.
\end{remark}

\begin{remark}[Conner--Floyd isomorphism for Selmer K-theory]
Let $\MotSp^\et_{S}$ and $\MotSp_S^{\et,\hyp}$ be the full subcategories of $\MotSp_S$ spanned by the étale sheaves and the étale hypersheaves, respectively. The image of $\KGL$ in $\MotSp^\et_S$ then represents the Zariski sheafification of Selmer K-theory $\K^\Sel$, see \cite[Section 5.4]{AnnalaIwasa2}. If $S$ is qcqs of finite Krull dimension and of finite punctual étale cohomological dimension, then $\MotSp_S^{\et,\hyp}$ is compactly generated and the localization functor $\MotSp_S\to\MotSp^{\et,\hyp}_S$ preserves compact objects (combine \cite[Corollary 3.29]{ClausenMathew} and \cite[Lemma 2.16]{BachmannRigidity}). In this case, $\K^\Sel$ is also an étale hypersheaf on $\Sm_S^\fp$ \cite[Corollary 7.15]{ClausenMathew}.
One can easily see that the arguments in this section go through if we replace $\MotSp_S$ by any commutative $\MotSp_S$-algebra in $\Pr^{\L}_{\omega}$ \cite[Notation 5.5.7.7]{HTT}. Under this finiteness assumption on $S$, we therefore obtain an isomorphism of bigraded rings
\[
	\MGL^{\et,\hyp **}(S)\otimes_{\L}\Z[\beta^{\pm 1}] \simeq \K^{\Sel **}(S).
\]
\end{remark}

\section{Snaith theorem for periodic algebraic cobordism}

We prove the Snaith theorem for $\PMGL$, which is a non-$\A^1$-localized refinement of a theorem of Gepner--Snaith \cite[Corollary 3.10]{GepnerSnaith}.
Our proof is however quite different from theirs\footnote{In fact, the argument in \emph{loc.\ cit.}\ seems to contain a crucial mistake: the proof of \cite[Theorem 3.9]{GepnerSnaith} uses a universal property of localization of ring spectra in the homotopy category, which is not valid.} and uses instead the same strategy as the proof of the Conner--Floyd isomorphism in Section~\ref{sec:CF}.

For the following definition, we recall that the map $\Vect_\infty\to \K_{\rk=0}$ becomes an isomorphism in the $\infty$-category of Zariski sheaves of spectra (see the proof of Proposition~\ref{prop:MGL-colim}). In particular, $\SigmaP^\infty\Vect_{\infty,+}$ has a canonical structure of $\E_\infty$-algebra in $\MotSp_S$, whose multiplication we denote by $\oplus$.

\begin{definition}\label{def:GL-ori}
	Let $S$ be a qcqs derived scheme.
A \textit{$t$-preoriented cohomology theory} on $\MotSp_S^\omega$ is a ring cohomology theory $E^0$ with an element $u\in\widehat{E}^0(\Vect_\infty)$ such that $u\vert_\rm{pt}=1$ and $\oplus^*(u)=u_1 u_2$, where $u_i=\pi_i^*(u)\in\widehat{E}^0(\Vect_\infty\times\Vect_\infty)$.
We then define the \textit{Bott element} $\beta\in E^{-1}(\1)=\tilde{E}^0(\P^1)$ to be the element $1-u\vert_{\P^1}$, and we say that $E^0$ is \emph{$t$-oriented} if $\beta$ is a unit.
\end{definition}

Let $E^0$ be an oriented cohomology theory on $\MotSp_S^\omega$.
If $\scr E_n$ is the universal rank $n$ locally free sheaf on $\Vect_n$, then the restriction of the Chern class $c_i(\scr E_n)\in\widehat{E}^i(\Vect_n)$ to $\Vect_{n-1}$ is the Chern class $c_i(\scr E_{n-1})$. Thus, the sequence $(c_i(\scr E_n))_{n\geq 0}$ defines a canonical element
\[
c_i\in \lim_n \widehat{E}^i(\Vect_n)=\widehat{E}^i(\Vect_\infty),
\]
such that for any $X\in\scr P(\Sm_S)$ we have
\[
\widehat{E}^{**}(X\times \Vect_\infty)\simeq\widehat{E}^{**}(X)[[c_1,c_2,\dotsc]].
\]

The following lemma is the analogue of Lemma~\ref{lem:G_m-orientation} for $t$-orientations:

\begin{lemma}[Orientations vs.\ $t$-orientations]
	\label{lem:GL-ori}
Let $E^0$ be a ring cohomology theory on $\MotSp_S^\omega$.
Then the assignment
\[
	u\mapsto (\beta,c), \quad \beta=1-u\vert_{\P^1}, \quad c=\beta^{-1}(1-u\vert_{\Pic})
\]
gives a bijection between $t$-orientations $u$ of $E^0$ and pairs $(\beta,c)$ consisting of a unit $\beta\in E^{-1}(\1)$ and an orientation $c\in\widehat E^1(\Pic)$.
The inverse is given by the formula
\[
u = \sum_{i\ge 0}(-\beta)^ic_i \in \widehat{E}^0(\Vect_\infty).
\]
\end{lemma}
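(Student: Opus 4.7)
The plan is to verify both assignments are well-defined and then check they are mutually inverse; only the final composition below is non-routine. For the forward direction, given a $t$-orientation $u$, the key observation is that $u|_{\scr O}=1$ for the class of the trivial line bundle in $\Pic$: this holds because $\scr O$ and the zero bundle classify the same element in $\pi_0\Vect_\infty(\mathrm{pt})=\K_0(\mathrm{pt})_{\rk=0}$, and cohomology only depends on this $\pi_0$. Consequently $1-u|_{\P^1}$ and $1-u|_\Pic$ lie in reduced cohomology, so $\beta\in\tilde E^0(\P^1)=E^{-1}(\1)$ and $c=\beta^{-1}(1-u|_\Pic)\in\widehat E^1(\Pic)$ are well-defined, and a direct calculation yields $c|_{\P^1}=\beta^{-1}\cdot\beta=1$ in the $E^0(\1)$-summand of $E^1(\P^1)$, confirming that $c$ is an orientation. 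For the reverse direction, the formula $u=\sum_{i\ge 0}(-\beta)^ic_i$ defines an element of $\widehat E^0(\Vect_\infty)=\lim_n\widehat E^0(\Vect_n)$ because $c_i(\scr E_n)=0$ for $i>n$ makes the sum finite on each $\Vect_n$; the condition $u|_\mathrm{pt}=1$ comes from the vanishing of higher Chern classes at the zero bundle, and $\oplus^*(u)=u_1u_2$ comes from the Whitney sum formula established earlier for $\widehat E^{**}$.

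The composition $(\beta,c)\mapsto u\mapsto(\beta',c')$ is immediate: on $\Pic=\Vect_1$ only $c_0=1$ and $c_1=c$ contribute, so $u|_\Pic=1-\beta c$ and hence $c'=\beta^{-1}(1-u|_\Pic)=c$, while the restriction decomposes as $u|_{\P^1}=(1,-\beta)$ in $E^0(\1)\oplus E^{-1}(\1)$, giving $\beta'=\beta$.

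The substantive step is the composition $u\mapsto(\beta,c)\mapsto u'$, for which the plan is to invoke the splitting principle. The iterated flag bundle $p\colon\mathrm{Flag}(\scr E_n)\to\Vect_n$ is a tower of projective bundles, so the projective bundle formula for $\widehat E^{**}$ (available for any $X\in\scr P(\Sm_S)$, as noted earlier in the section) implies that $p^*$ is injective and that the universal bundle pulls back to a direct sum of tautological line bundles $\scr L_1\oplus\dotsb\oplus\scr L_n$. Multiplicativity applied iteratively gives $p^*(u|_{\Vect_n})=\prod_i u|_{\scr L_i}=\prod_i f(x_i)$, where $x_i=c_1(\scr L_i)$ and $f(x)$ is the unique power series with $u|_\Pic=f(c)$. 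The defining relation $c=\beta^{-1}(1-u|_\Pic)$ forces $f(c)=1-\beta c$, hence $f(x)=1-\beta x$ as a power series identity. The Whitney sum formula then gives $\prod_i(1-\beta x_i)=\sum_k(-\beta)^k e_k(x_1,\dotsc,x_n)=p^*\bigl(\sum_k(-\beta)^k c_k(\scr E_n)\bigr)=p^*(u'|_{\Vect_n})$, and injectivity of $p^*$ together with passage to the limit yields $u=u'$. The main potential obstacle is justifying the splitting principle on the stack $\Vect_n$ rather than on a scheme, but this is handled by the general projective bundle formula for $\widehat E^{**}$ on arbitrary presheaves that was already developed in the section.
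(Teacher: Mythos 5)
Your forward and backward constructions and the easy composite $(\beta,c)\mapsto u\mapsto(\beta,c)$ match the paper's proof. The problem is in the substantive step $u\mapsto(\beta,c)\mapsto u'$. You assert that on the flag bundle $p\colon\mathrm{Flag}(\scr E_n)\to\Vect_n$ ``the universal bundle pulls back to a direct sum of tautological line bundles,'' and then apply the multiplicativity axiom $\oplus^*(u)=u_1u_2$ to get $p^*(u|_{\Vect_n})=\prod_i u|_{\scr L_i}$. This is not true: the tautological structure on $p^*\scr E_n$ is a \emph{filtration} with line-bundle subquotients, not a splitting, so the classifying map of $p^*\scr E_n$ to $\Vect_n$ is genuinely different from the classifying map of $\bigoplus_i\scr L_i$. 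The axiom on $u$ only controls its value on honest direct sums. In the $\A^1$-invariant world one identifies a filtered bundle with its associated graded by contracting the space of extensions, but that is exactly the kind of $\A^1$-homotopy this paper forgoes; a repair would require something like weighted $\A^1$-homotopy invariance applied to the parabolic/Levi retraction (in the spirit of Lemma~\ref{lem:V/G}), which you neither state nor prove. As written, the key identity $p^*(u|_{\Vect_n})=\prod_i f(x_i)$ is unjustified.

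The paper avoids this entirely by restricting along the direct-sum map $\oplus\colon\Pic^n\to\Vect_n$, where the pulled-back bundle is split \emph{by construction}, so iterating $\oplus^*(u)=u_1u_2$ legitimately gives $\oplus^*(u)=\prod_i(1-\beta x_i)$ using the $n=1$ case; the Whitney sum formula identifies this with $\oplus^*\bigl(\sum_k(-\beta)^kc_k\bigr)$, and one concludes from the injectivity of $\oplus^*\colon\widehat E^*(\Vect_n)\to\widehat E^*(\Pic^n)$ (which follows from the computations $\widehat E^{**}(\Vect_n)\simeq\widehat E^{**}[[c_1,\dots,c_n]]$ and $\widehat E^{**}(\Pic^n)\simeq\widehat E^{**}[[x_1,\dots,x_n]]$, with $c_i\mapsto e_i(x)$). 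If you replace your flag bundle by $\Pic^n$ and check that injectivity, your argument goes through; with the flag bundle it does not.
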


\begin{proof}
Suppose $u$ is a $t$-orientation with associated unit $\beta$, and let $c=\beta^{-1}(1-u\vert_{\Pic})\in\hat E^1(\Pic)$.
Since $u|_\mathrm{pt}=1$, we have $c|_{\P^1}=\beta^{-1}(1-u|_{\P^1})=\beta^{-1}(0,\beta)=(0,1)$, so that $c$ is an orientation.
Conversely, suppose that $E^0$ is perodic and oriented, with Bott element $\beta$ and orientation $c$, and let $u=\sum_{i}(-\beta)^ic_i$.
Then it is clear that $u\vert_\rm{pt}=1$ and that $\beta=1-u\vert_{\P^1}$. Furthermore, it follows from the Whitney sum formula that $\oplus^*(u)=u_1u_2$, so that $u$ is a $t$-orientation.
 
It remains to show that the two assignments are inverse to each other.
It is clear that the composite $(\beta,c)\mapsto u\mapsto (\beta,c)$ is the identity.
Conversely, given a $t$-orientation $u$, we have to prove the equality $u = \sum_{i\ge 0}(-\beta)^ic_i$.
Since $\widehat{E}^0(\Vect_\infty)=\lim_n\widehat{E}^0(\Vect_n)$, it suffices to show that these two elements coincide in $\widehat{E}^0(\Vect_n)$ for every $n$. This is clear for $n=1$.
Note that the map
\[
	\widehat{E}^*(\Vect_n) \to \widehat{E}^*(\Pic^n)
\]
induced by the direct sum $\oplus\colon\Pic^n\to \Vect_n$ is injective.
We now conclude by observing that the two elements coincide in the right-hand side, by the case $n=1$, the formula $\oplus^*(u)=u_1u_2$, and the Whitney sum formula for Chern classes.
\end{proof}

\begin{theorem}[Snaith theorem for $\PMGL$]\label{thm:Snaith}
For any derived scheme $S$, there is a canonical isomorphism
\[
	\PMGL \simeq \SigmaP^\infty\Vect_{\infty,+}[\beta^{-1}]
\]
in $\CAlg(\h\MotSp_S)$, where $\beta=1-[\scr O(-1)]$.
\end{theorem}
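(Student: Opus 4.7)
The plan is to follow the strategy of Section~\ref{sec:CF}, matching universal properties on both sides. Set $E=\SigmaP^\infty\Vect_{\infty,+}$. The first step is to endow $E$ with its tautological $t$-preorientation $u\in\widehat E^0(\Vect_\infty)$ classifying the identity. By Proposition~\ref{prop:MGL-colim}, the $\E_\infty$-structure on $E$ is induced by direct sum, so the multiplicativity $\oplus^*(u)=u_1u_2$ and the normalization $u|_{\mathrm{pt}}=1$ hold tautologically. Since $u|_\Pic$ classifies the universal line bundle, the Bott element $1-u|_{\P^1}$ agrees with $\beta=1-[\scr O(-1)]$, and after inverting $\beta$, Lemma~\ref{lem:GL-ori} upgrades $u$ to a periodic orientation on $E[\beta^{-1}]\in\CAlg(\h\MotSp_S)$. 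Corollary~\ref{cor:universality} then supplies a unique morphism $\phi\colon\PMGL\to E[\beta^{-1}]$ in $\CAlg(\h\MotSp_S)$ preserving both the orientation and the Bott element.

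The second step is to show $\phi$ is an equivalence, by establishing the analogue of Lemma~\ref{lem:universal-pre-G_m}. By Remark~\ref{rmk:E-hat}, specifying a $t$-preorientation on a ring cohomology theory $F^0$ on $\MotSp_S^\omega$ is exactly giving a ring cohomology theory map $E^0\to F^0$, so $E^0(\ph)$ is the initial $t$-preoriented ring cohomology theory. Inverting $\beta$ shows that $(E[\beta^{-1}])^0(\ph)$ is the initial $t$-oriented cohomology theory. Applying Lemma~\ref{lem:GL-ori} at the level of cohomology theories, $t$-oriented cohomology theories coincide with periodic oriented cohomology theories, so $(E[\beta^{-1}])^0(\ph)$ is also the initial periodic oriented cohomology theory on $\MotSp_S^\omega$. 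Matching this against Corollary~\ref{cor:PMGL-cohomology}, the natural transformation induced by $\phi$ is an isomorphism of bigraded cohomology theories on compact motivic spectra.

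For $S$ qcqs, $\MotSp_S$ is compactly generated, so an isomorphism of all bigraded cohomology theories on $\MotSp_S^\omega$ forces $\phi$ to be an equivalence in $\MotSp_S$ by Yoneda applied to all suspensions of compact generators, and hence an isomorphism in $\CAlg(\h\MotSp_S)$. For general $S$, one extends by Zariski descent, both $\PMGL$ and $E[\beta^{-1}]$ being stable under base change.

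The main obstacle I anticipate is the careful bookkeeping in the second step, namely verifying that $\beta$-inversion on the level of cohomology theories is represented by $\beta$-inversion on the representing spectrum, and that the identifications $u\leftrightarrow(\beta,c)$ in Lemma~\ref{lem:GL-ori} are compatible on both sides of $\phi$. The first point should follow because Bott localization is smashing (tensoring with $E[\beta^{-1}]$ commutes with all colimits and with taking cohomology on compacts), while the second is essentially the statement that the Bott element and orientation produced by the tautological $t$-preorientation on $E$ match the Bott element and orientation of $\PMGL$ along $\phi$, which is enforced by the universal property in Corollary~\ref{cor:universality}.
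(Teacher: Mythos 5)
Your proof follows the paper's argument essentially verbatim: construct the map via the universal property of $\PMGL$ (Corollary~\ref{cor:universality}) after equipping $\SigmaP^\infty\Vect_{\infty,+}[\beta^{-1}]$ with an orientation coming from a tautological $t$-preorientation $u$ via Lemma~\ref{lem:GL-ori}, then identify both sides as the initial periodic oriented ($=$ $t$-oriented) cohomology theory on $\MotSp_S^\omega$ using Remark~\ref{rmk:E-hat}, Lemma~\ref{lem:GL-ori}, and Corollary~\ref{cor:PMGL-cohomology}, and conclude by compact generation and Zariski descent. The only discrepancy is your choice of $u$ as the class of the identity rather than of \emph{minus} the universal rank-zero K-theory class: since the canonical map $\P^1\to\Pic$ in this paper classifies $\scr O(1)$, your $u|_{\P^1}$ is $[\scr O(1)]$ rather than $[\scr O(-1)]$, so to obtain the Bott element $\beta=1-[\scr O(-1)]$ as stated (and compatibility with the $\KGL$ conventions) you should dualize, i.e., take $u$ to be the negation, exactly as the paper does.
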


\begin{proof}
	Let $u\in (\SigmaP^\infty\Vect_{\infty,+})^0(\Vect_\infty)$ be the element induced by minus the universal K-theory element of rank $0$, so that $\beta=1-u|_{\P^1}$.
	Then $\beta^{-1}(1-u|_{\Pic})$ is an orientation of the periodic ring spectrum $\SigmaP^\infty\Vect_{\infty,+}[\beta^{-1}]$.
By the universal property of $\PMGL$ (Corollary~\ref{cor:universality}), we obtain a canonical map 
\[\PMGL\to\SigmaP^\infty\Vect_{\infty,+}[\beta^{-1}]\]
in $\CAlg(\h\MotSp_S)$.
To prove that it is an isomorphism, we may assume $S$ qcqs. In this case, $u$ defines a $t$-preorientation of $(\SigmaP^\infty\Vect_{\infty,+})^0(\ph)$ with associated Bott element $\beta$, which by Remark~\ref{rmk:E-hat} is the initial $t$-preorientation. By Corollary~\ref{cor:PMGL-cohomology} and Lemma~\ref{lem:GL-ori}, both sides then have the same universal property as cohomology theories on $\MotSp_S^\omega$, and it follows that the map is an isomorphism.
\end{proof}

\begin{remark}\label{rmk:det}
	Under the Snaith isomorphisms
	\begin{align*}
		\PMGL&\simeq \SigmaP^\infty\Vect_{\infty,+}[\beta^{-1}],\\
		\KGL&\simeq \SigmaP^\infty\Pic_+[\beta^{-1}]
	\end{align*}
	of Theorem~\ref{thm:Snaith} and \cite[Theorem 5.3.3]{AnnalaIwasa2}, the orientation map $\PMGL\to\KGL$ in $\CAlg(\h\MotSp_S)$ provided by Corollary~\ref{cor:universality} is induced by the determinant $\det\colon \K_{\rk=0}\to\Pic$. This follows from the fact that $\SigmaP^\infty\det_+$ sends $u|_{\Pic}\in (\SigmaP^\infty\Vect_{\infty,+})^0(\Pic)$ to the class in $(\SigmaP^\infty\Pic_+)^0(\Pic)$ represented by the dual of the universal invertible sheaf.
\end{remark}

\begin{remark}
Both $\PMGL$ and $\SigmaP^\infty\Vect_{\infty,+}[\beta^{-1}]$ have canonical $\E_\infty$-algebra structures, but they are not isomorphic as $\E_\infty$-algebras in general, since they are known not to be isomorphic as $\E_5$-algebras after Betti realization \cite[Theorem 1.4]{hahn:2020}.
One might expect that they are at least isomorphic as $\E_1$-algebras, but this is not known even after $\A^1$-localization.

Note that the determinant induces an $\E_\infty$-map
\[
\SigmaP^\infty\Vect_{\infty,+}[\beta^{-1}]\to\KGL
\]
(see Remark~\ref{rmk:det}). At this point we do not know if there is also an $\E_\infty$-map
\[
\PMGL\to\KGL,
\]
although this is known in $\A^1$-homotopy theory using the formalism of framed correspondences \cite[Proposition 6.2]{motive-hilb}.
\end{remark}

\bibliographystyle{alphamod}
\bibliography{references}

\newcommand{\etalchar}[1]{$^{#1}$}
\providecommand{\bysame}{\leavevmode\hbox to3em{\hrulefill}\thinspace}
\providecommand{\MR}{\relax\ifhmode\unskip\space\fi MR }
\providecommand{\MRhref}[2]{%
  \href{http://www.ams.org/mathscinet-getitem?mr=#1}{#2}
}
\providecommand{\href}[2]{#2}
\begin{thebibliography}{EHK{\etalchar{+}}20}
\providecommand{\url}[1]{\href{#1}{{\def~{\textasciitilde}\tt #1}}}

\bibitem[AI22]{AnnalaIwasa}
T.~Annala and R.~Iwasa, \emph{Cohomology of the moduli stack of algebraic
  vector bundles}, Adv. Math. \textbf{409} (2022), p.~108638

\bibitem[AI23]{AnnalaIwasa2}
\bysame, \emph{Motivic spectra and universality of K-theory}, 2023,
  \href{http://arxiv.org/abs/2204.03434}{{\sf arXiv:2204.03434}}

\bibitem[Ann20]{annala-cob}
T.~Annala, \emph{Bivariant derived algebraic cobordism}, J. Algebraic Geom.
  \textbf{30} (2020), no.~2, pp.~205--252,
  \url{https://doi.org/10.1090/jag/754}

\bibitem[Ann21]{annala-pre-and-cob}
\bysame, \emph{Precobordism and cobordism}, Algebra Number Theory \textbf{15}
  (2021), no.~10, pp.~2571--2646,
  \url{https://doi.org/10.2140/ant.2021.15.2571}

\bibitem[Ann22a]{annala-base-ind-cob}
\bysame, \emph{Base independent algebraic cobordism}, J. Pure Appl. Algebra
  \textbf{226} (2022), no.~6, p.~106977,
  \url{https://doi.org/10.1016/j.jpaa.2021.106977}

\bibitem[Ann22b]{annala-chern}
\bysame, \emph{Chern classes in precobordism theories}, J. Eur. Math. Soc.
  (2022),  \url{https://doi.org/10.4171/jems/1219}

\bibitem[Ann22c]{AnnalaThesis}
\bysame, \emph{Derived Algebraic Cobordism}, Ph.D. thesis, University of
  British Columbia, 2022, available at
  \href{http://arxiv.org/abs/2203.12096}{{\sf arXiv:2203.12096}}

\bibitem[Aok22]{AokiVerdier}
K.~Aoki, \emph{Posets for which Verdier duality holds}, 2022,
  \href{http://arxiv.org/abs/2202.05312v1}{{\sf arXiv:2202.05312v1}}

\bibitem[AY19]{AY}
T.~Annala and S.~Yokura, \emph{Bivariant algebraic cobordism with bundles},
  2019, \href{http://arxiv.org/abs/1911.12484}{{\sf arXiv:1911.12484}}

\bibitem[Bac21]{BachmannRigidity}
T.~Bachmann, \emph{Rigidity in {\'e}tale motivic stable homotopy theory},
  Algebr. Geom. Topol. \textbf{21} (2021), no.~1, pp.~173--209

\bibitem[BGN18]{cart-dual}
C.~Barwick, S.~Glasman, and D.~Nardin, \emph{Dualizing cartesian and
  cocartesian fibrations}, Theory Appl. Categ. \textbf{33} (2018), no.~4,
  pp.~67--94

\bibitem[BH21]{norms}
T.~Bachmann and M.~Hoyois, \emph{Norms in motivic homotopy theory},
  Ast{\'e}risque \textbf{425} (2021), preprint
  \href{http://arxiv.org/abs/1711.03061}{{\sf arXiv:1711.03061}}

\bibitem[BL22]{BhattLurie}
B.~Bhatt and J.~Lurie, \emph{Absolute prismatic cohomology}, 2022,
  \href{http://arxiv.org/abs/2201.06120v1}{{\sf arXiv:2201.06120v1}}

\bibitem[BMS19]{BMS}
B.~Bhatt, M.~Morrow, and P.~Scholze, \emph{Topological Hochschild homology and
  integral $p$-adic Hodge theory}, Publ. Math. Inst. Hautes {\'E}tudes Sci.
  \textbf{129} (2019), no.~1, pp.~199--310,
  \url{https://doi.org/10.1007/s10240-019-00106-9}

\bibitem[BP{\O}23]{BPOe}
F.~Binda, D.~Park, and P.~A. {\O}stv{\ae}r, \emph{Logarithmic motivic homotopy
  theory}, 2023, \href{http://arxiv.org/abs/2303.02729}{{\sf arXiv:2303.02729}}

\bibitem[BS22]{BS:2022}
B.~Bhatt and P.~Scholze, \emph{Prisms and prismatic cohomology}, Annals of
  Mathematics \textbf{196} (2022), no.~3,
  \url{https://doi.org/10.4007/annals.2022.196.3.5}

\bibitem[CHW07]{cortinas:2007}
G.~Corti{\~{n}}as, C.~Haesemeyer, and C.~Weibel, \emph{$K$-regularity,
  cdh-fibrant Hochschild homology, and a conjecture of Vorst}, J. Amer. Math.
  Soc. \textbf{21} (2007), no.~02, pp.~547--562,
  \url{https://doi.org/10.1090/s0894-0347-07-00571-1}

\bibitem[Cis13]{Cisinski}
D.-C. Cisinski, \emph{Descente par {\'e}clatements en $K$-th{\'e}orie
  invariante par homotopie}, Ann. Math. \textbf{177} (2013), no.~2,
  pp.~425--448

\bibitem[CM21]{ClausenMathew}
D.~Clausen and A.~Mathew, \emph{Hyperdescent and {\'e}tale K-theory}, Invent.
  Math. \textbf{225} (2021), pp.~981--1076

\bibitem[EHK{\etalchar{+}}20]{EHKSY3}
E.~Elmanto, M.~Hoyois, A.~A. Khan, V.~Sosnilo, and M.~Yakerson, \emph{Modules
  over algebraic cobordism}, Forum Math. Pi \textbf{8} (2020), p.~e14,
  \url{https://doi.org/10.1017/fmp.2020.13}

\bibitem[GS09]{GepnerSnaith}
D.~Gepner and V.~Snaith, \emph{On the motivic spectra representing algebraic
  cobordism and algebraic $K$-theory}, Doc. Math. \textbf{14} (2009),
  pp.~359--396

\bibitem[Haz78]{hazewinkel:1978}
M.~Hazewinkel, \emph{Formal groups and applications}, Academic Press, New York,
  1978

\bibitem[HJN{\etalchar{+}}24]{motive-hilb}
M.~Hoyois, J.~Jelisiejew, D.~Nardin, B.~Totaro, and M.~Yakerson, \emph{The
  Hilbert scheme of infinite affine space and algebraic $K$-theory}, J. Eur.
  Math. Soc. (2024), preprint \href{http://arxiv.org/abs/2002.11439}{{\sf
  arXiv:2002.11439}}

\bibitem[HSS17]{HSS}
M.~Hoyois, S.~Scherotzke, and N.~Sibilla, \emph{Higher traces, noncommutative
  motives, and the categorified Chern character}, Adv. Math. \textbf{309}
  (2017), pp.~97--154

\bibitem[HY20]{hahn:2020}
J.~Hahn and A.~Yuan, \emph{Exotic multiplications on periodic complex bordism},
  J. Topol. \textbf{13} (2020), no.~4, pp.~1839--1852

\bibitem[Kha21]{KhanExcess}
A.~A. Khan, \emph{Virtual excess intersection theory}, Ann. K-Theory \textbf{6}
  (2021), no.~3, pp.~559--570, preprint
  \href{http://arxiv.org/abs/1909.13829}{{\sf arXiv:1909.13829}}

\bibitem[KM21]{KellyMiyazaki}
S.~Kelly and H.~Miyazaki, \emph{Modulus sheaves with transfers}, 2021,
  \href{http://arxiv.org/abs/2106.12837}{{\sf arXiv:2106.12837}}

\bibitem[KR19]{KhanVCD}
A.~A. Khan and D.~Rydh, \emph{Virtual Cartier divisors and blow-ups}, 2019,
  \href{http://arxiv.org/abs/1802.05702v2}{{\sf arXiv:1802.05702v2}}

\bibitem[Lev09]{Levine:2009}
M.~Levine, \emph{Comparison of cobordism theories}, J. Algebra \textbf{322}
  (2009), no.~9, pp.~3291--3317, preprint
  \href{http://arxiv.org/abs/0807.2238}{{\sf arXiv:0807.2238}}

\bibitem[Lur09]{LurieElliptic}
J.~Lurie, \emph{A Survey of Elliptic Cohomology}, Algebraic Topology, Abel
  Symposia, vol.~4, 2009, pp.~219--277

\bibitem[Lur10]{Lurie:2010}
\bysame, \emph{Chromatic Homotopy Theory}, Lecture notes, 2010,
  \url{https://www.math.ias.edu/~lurie/252x.html}

\bibitem[Lur17]{HTT}
\bysame, \emph{Higher Topos Theory}, April 2017,
  \url{http://www.math.harvard.edu/~lurie/papers/HTT.pdf}

\bibitem[MV99]{MV}
F.~Morel and V.~Voevodsky, \emph{$\mathbb{A}^1$-homotopy theory of schemes},
  Publ. Math. Inst. Hautes {\'E}tudes Sci. \textbf{90} (1999), pp.~45--143

\bibitem[NS08]{NS}
Y.~Nakkajima and A.~Shiho, \emph{Weight filtrations on log crystalline
  cohomologies of families of open smooth varieties}, Lecture Notes in Math.,
  no. 1959, Springer-Verlag, Berlin, 2008

\bibitem[NS{\O}09]{Naumann:2009}
N.~Naumann, M.~Spitzweck, and P.~A. {\O}stv{\ae}r, \emph{Motivic {L}andweber
  Exactness}, Doc. Math. \textbf{14} (2009), pp.~551--593, preprint
  \href{http://arxiv.org/abs/0806.0274}{{\sf arXiv:0806.0274}}

\bibitem[OVV07]{OVV}
D.~Orlov, A.~Vishik, and V.~Voevodsky, \emph{An exact sequence for $K^M_*/2$
  with applications to quadratic forms}, Ann. Math. \textbf{165} (2007), no.~1,
  pp.~1--13,  \url{https://doi.org/10.4007/annals.2007.165.1}

\bibitem[Pan03]{PaninSmirnov}
I.~Panin, \emph{Oriented Cohomology Theories of Algebraic Varieties}, K-Theory
  \textbf{30} (2003), no.~3, pp.~265--314

\bibitem[PPR08]{Panin:2008}
I.~Panin, K.~Pimenov, and O.~R{\"o}ndigs, \emph{A universality theorem for
  {V}oevodsky's algebraic cobordism spectrum}, Homology Homotopy Appl.
  \textbf{10} (2008), no.~2, pp.~211--226, preprint
  \href{http://arxiv.org/abs/0709.4116}{{\sf arXiv:0709.4116}}

\bibitem[Rav03]{Ravenel:2003}
D.~C. Ravenel, \emph{Complex cobordism and stable homotopy groups of spheres},
  AMS Chelsea Publishing, 2003

\bibitem[Sch17]{Schlichting}
M.~Schlichting, \emph{Hermitian K-theory, derived equivalences and Karoubi's
  fundamental theorem}, J. Pure Appl. Algebra \textbf{221} (2017), no.~7,
  pp.~1729--1844

\bibitem[S{\O}09]{Spitzweck-Bott}
M.~Spitzweck and P.~A. {\O}stv{\ae}r, \emph{The Bott inverted infinite
  projective space is homotopy algebraic K-theory}, Bull. London Math. Soc.
  \textbf{41} (2009), no.~2, pp.~281--292

\bibitem[Sul04]{sullivan:2004}
D.~Sullivan, \emph{Ren{\'{e}} Thom's work on geometric homology and bordism},
  Bull. Amer. Math. Soc. \textbf{41} (2004), no.~3, pp.~341--350,
  \url{https://doi.org/10.1090/s0273-0979-04-01026-2}

\bibitem[Tem17]{temkin:2017}
M.~Temkin, \emph{Tame distillation and desingularization by $p$-alterations},
  Ann. Math. \textbf{186} (2017), no.~1,
  \url{https://doi.org/10.4007/annals.2017.186.1.3}

\bibitem[Voe96]{Voevodsky:1996}
V.~Voevodsky, \emph{The Milnor Conjecture}, preprint, 1996,
  \href{https://archive.mpim-bonn.mpg.de/id/eprint/703}{{\sf MPIM:703}}

\bibitem[Voe98]{VoevodskyICM}
V.~Voevodsky, \emph{$\mathbb{A}^1$-Homotopy Theory}, Doc. Math., Extra Volume:
  Proceedings of the International Congress of Mathematicians, Volume I (1998),
  pp.~579--604

\bibitem[Voe03]{VV}
\bysame, \emph{Motivic cohomology with $\mathbb{Z}/2$-coefficients}, Publ.
  Math. Inst. Hautes {\'E}tudes Sci. \textbf{98} (2003), no.~1, pp.~59--104

\bibitem[Voe11]{Voevodsky:2008}
\bysame, \emph{On motivic cohomology with $\mathbb{Z}/l$-coefficients}, Ann.
  Math. \textbf{174} (2011), no.~1, pp.~401--438, preprint
  \href{http://arxiv.org/abs/0805.4430}{{\sf arXiv:0805.4430}}

\bibitem[Wei89]{Weibel}
C.~A. Weibel, \emph{Homotopy algebraic $K$-theory}, Algebraic K-Theory and
  Number Theory, Contemp. Math., vol.~83, AMS, 1989, pp.~461--488

\end{thebibliography}
    
\end{document}